\renewcommand{\PackageWarningNoLine}[2]{}
\newcommand{\N}{\mathbb{N}}
\newcommand{\R}{\mathbb{R}}
\newcommand{\Rinfty}{\R \cup \{\infty\}}
\providecommand{\op}[1]{\operatorname{#1}}
\newcommand{\cal}{\mathcal}
\newcommand{\cE}{\cal E}
\newcommand{\cJ}{\cal J}
\newcommand{\cJJ}{\tilde{\cal J}}
\newcommand{\aJ}{\tilde{a}}
\newcommand{\VJ}{\tilde{V}}
\newcommand{\cV}{\cal V}
\newcommand{\cG}{\cal G}
\newcommand{\cX}{\cal X}
\newcommand{\cM}{\cal M}
\newcommand{\cW}{\cal W}
\newcommand{\cB}{\cal B}
\newcommand{\cI}{\cal I}
\newcommand{\REL}{\mathrel{\widehat{=}}}
\newcommand{\indicator}{{\mathbf 1}}
\newcommand{\XX}{X_0}
\newcommand{\bfN}{\mathbf{N}}
\newcommand{\bfGamma}{\mathbf{\Gamma}}
\newcommand{\bfgamma}{\boldsymbol{\gamma}}
\newcommand{\soft}{\text{soft}}
\newcommand{\hard}{\text{hard}}
\newcommand{\ds}{ds}
\newcommand{\wc}{{\mkern 2mu\cdot\mkern 2mu}}
\newcommand{\HEllipse}{{\tikz[baseline=-.8ex]\filldraw(0,0) ellipse (0.8ex and 0.3ex);}}
\newcommand{\VEllipse}{{\tikz[baseline=-.8ex]\filldraw(0,0) ellipse (0.3ex and 0.8ex);}}
\providecommand{\argmin}[1]{\underset{#1}{\op{arg\,min\,}}}
\providecommand{\inlineargmin}[1]{{\op{arg\,min\,}_{#1}}}
\providecommand{\st}{\mathrel\vert}
\providecommand{\bigst}{\bigm\vert}
\providecommand{\Bigst}{\Bigm\vert}
\newtheorem{theorem}{Theorem}[section]
\newtheorem{lemma}{Lemma}[section]
\newtheorem{corollary}{Corollary}[section]
\newtheorem{proposition}{Proposition}[section]
\newtheorem{problem}{Problem}[section]
\newtheorem{remark}{Remark}[section]
\numberwithin{equation}{section} 
\numberwithin{figure}{section}
\def\ifempty#1{\def\@temp{#1}\ifx\@temp\@empty}
\newcounter{assumptionCounter}
\newcommand{\assitem}[1][]{%
    \ifempty{#1}%
        \renewcommand{\theassumptionCounter}{A\arabic{assumptionCounter}}%
        \refstepcounter{assumptionCounter}
    \else%
        \renewcommand{\theassumptionCounter}{#1}%
        \addtocounter{assumptionCounter}{-1}
        \refstepcounter{assumptionCounter}
    \fi%
    \item[(\theassumptionCounter)]
}
\newenvironment{assumptions}{
    \begin{list}{}
    {
        \setlength{\leftmargin}{13mm}
        \setlength{\itemindent}{0mm}
        \setlength{\labelwidth}{13mm}
    }
}{\end{list}}
\newcounter{todocounter}
\newlength{\todowidthinner}
\DeclareRobustCommand{\MyChange}[3][\empty]{%
  {\color{#2}#3}
  \ifthenelse{\isempty{#1}}{}
  {%
    \addtocounter{todocounter}{1}%
    \ifmmode%
        {\color{#2}\text{$^{\framebox{\arabic{todocounter}}}$}}%
    \else%
        {\color{#2}\text{$^{\arabic{todocounter}}$}}%
    \fi%
    \marginpar{\textcolor{#2}{$^{\arabic{todocounter}}$\textnormal{#1}}}%
  }%
}
\definecolor{lightgray}{rgb}{0.7,0.7,0.7}
\definecolor{cecol}{rgb}{0,0.7,0}
\definecolor{ghcol}{rgb}{0.7,0.7,0}
\definecolor{mwcol}{rgb}{0,0.7,0.7}
\definecolor{revisioncol}{rgb}{1,0,0}
\begin{document}
\title
[Particles in Lipid Membranes]
{A Variational Approach to Particles in Lipid Membranes}

\author[Elliott]{Charles M. Elliott}
\address{Charles M. Elliott\\
Mathematics Institute\\
Zeeman Building\\
University of Warwick
Coventry CV4 7AL\\
United Kingdom}
\email{C.M.Elliott@warwick.ac.uk}

\author[Gr\"aser]{Carsten Gr\"aser}
\address{Carsten Gr\"aser\\
Freie Universit\"at Berlin\\
Institut f\"ur Mathematik\\
Arnimallee~6\\
D - 14195~Berlin\\
Germany}
\email{graeser@math.fu-berlin.de}

\author[Hobbs]{Graham Hobbs}
\address{Graham Hobbs\\
Mathematics Institute\\
Zeeman Building\\
University of Warwick
Coventry CV4 7AL\\
United Kingdom}
\email{g.hobbs@warwick.ac.uk.}

\author[Kornhuber]{Ralf Kornhuber}
\address{Ralf Kornhuber\\
Freie Universit\"at Berlin\\
Institut f\"ur Mathematik\\
Arnimallee~6\\
D - 14195~Berlin\\
Germany}
\email{kornhuber@math.fu-berlin.de}

\author[Wolf]{Maren-Wanda Wolf}
\address{Maren-Wanda Wolf\\
Freie Universit\"at Berlin\\
Institut f\"ur Mathematik\\
Arnimallee~6\\
D - 14195~Berlin\\
Germany}
\email{mawolf@math.fu-berlin.de}

\begin{abstract}
A variety of models for the membrane-mediated interaction of particles in lipid membranes,
mostly well-established in theoretical physics, is reviewed from a mathematical perspective.
We provide mathematically consistent formulations in a variational framework,
relate apparently different modelling approaches in terms of successive approximation,
and investigate existence and uniqueness.
Numerical computations illustrate that the new variational formulations are directly accessible to effective numerical methods.
\end{abstract}

\thanks{This work is partially supported by Freie Universit\"at Berlin
via the associated project {AP}01 of CRC 1114 
funded by Deutsche Forschungsgemeinschaft (DFG). G.H.  was supported by the UK Engineering
and Physical Sciences Research Council (EPSRC) Grant EP/H023364/1
within the MASDOC Centre for Doctoral Training.
The authors are grateful to Roland Netz and Thomas R.\ Weikl 
for various hints and stimulating discussions.}
\keywords{Elliptic pdes, variational methods,
constrained minimization, finite elements.}
\maketitle

\tableofcontents

\section{Introduction}\label{sec:intro}

The interplay of proteins and curvature of lipid bilayers is well-known to regulate cell
morphology and a variety of cellular functions, such as trafficking or signal detection.
Here, interplay not only means that proteins can induce curvature 
by shaping and remodelling the membrane, but also that the membrane 
curvature plays an active role in creating functional membrane domains 
and organizing membrane proteins, including their conformation dynamics, \cite{Lip91, McMahonGallupNature2005, SimVot15}. 
Moreover, microscopic causes, such as hydrophobic mismatch of proteins and amphiphilic lipids, 
may have macroscopic effects, such as budding or fission. 
For example, the membrane remodelling during endocytosis is a consequence
of the interplay between the elastic membrane and concerted actions of highly
specialized membrane proteins that can both sense and create membrane curvature. In the
case of  clathrin-mediated endocytosis more than forty different proteins are involved,
many of which are only transiently recruited to the 
plasma membrane~\cite{MeineckeBoucrotCamdereHonMittalandMcMahon2013}.

A popular particle-based approach to the simulation of proteins in lipid membranes are 
\emph{coarse-grained molecular models} in which the membrane constituents, i.e., 
lipids and proteins, are represented by short chains of beads, typically consisting of 3 -- 10 beads.
Often the coarse-grained models are simulated using dissipative particle dynamics (DPD), with
pairwise dissipative and random interactions between the beads that locally conserve
momentum and yield the correct hydrodynamics. DPD simulations can reach time and
length scales beyond those available by traditional molecular dynamics and thus
allow for studying, e.g., cluster formation, budding or anomalous diffusion of
transmembrane proteins \cite{RIH07, SGW08, SL07}. Moreover, recent coarse-grained models incorporate
structural and mechanical properties from experiments and all-atom bilayer simulations
and claim to be `semiquantitatively consistent' with experiments even on larger time
and length scales~\cite{brannigan2008model,laradji2011coarse,SaundersVoth2014,WD10}.

On the macroscopic side of the model hierarchy, there are pure \emph{continuum models}
based on the fundamental Canham-Helfrich (CH) model of lipid membranes~\cite{Can70,Hel73}.
The associated Canham-Helfrich (CH) energy is obtained by expansion 
of the bending energy with respect to the principal curvatures up to second order.
It is noteworthy that the CH energy is related to the much studied
Willmore functional in  differential geometry~\cite{KuwSch04-a,Wil93}. 
The CH model describes equilibrium
and close-to-equilibrium properties of biological membranes, such as the entropic
repulsion of fluctuating membranes or the equilibrium shapes of lipid membranes.
It has been modified so as to include the effect of different lipid
components  or large-scale protein assemblies. 
Lipids and proteins are then represented by areal concentrations. Their
impact on the membrane morphology is modelled by concentration-dependent bending
rigidities, phase-dependent spontaneous curvature, and a line energy associated with
the phase boundaries~\cite{Lip92,JulLip96}. 
The Euler-Lagrange equations associated with CH energy functionals usually are
fourth-order, nonlinear (geometric) partial differential equations.
These equations are typically coupled with nonlinear partial differential equations describing the evolution of
lipids and proteins on the surface. The numerical solution of these coupled systems is
an emerging field of current research 
(for an overview see, e.g., \cite{DziEllHui11,GarNieRoe08, DziEll13-a, DecDziEll05}). 
In numerical computations, binary and multicomponent models of membranes were able
to reproduce the typical equilibrium shapes of vesicles, such as dumbbells, discocytes
or starfishes (see, e.g., \cite{BarGarNur08-c, Du11, EllSti13, EllSti10,HuWeiLip11} and the references
cited therein). 

%
%

\emph{Hybrid models} finally intend to bridge the gap between  molecular dynamics based models
that are expensive and have certain limitations in terms of the accessible time and
length scales, and continuum models with continuous densities of proteins that do not incorporate the effect of small numbers of discrete particles. 
The basic idea is to treat proteins as discrete rigid particles 
coupled to the continuous membrane by suitable interface conditions.
Aiming at moderate length scales, 
the  continuous membrane  is often described  by a linearised CH model (Monge gauge).
We emphasize that only mechanical curvature-induced interaction of particles 
and no chemical forces are considered in this context. 
However, additional chemical forces between particles could be incorporated by appropriate additional potentials and, under suitable conditions on these potentials, our mathematical analysis can be easily extended to this case.
While the history of hybrid models can be traced back
to the early nineties and meanwhile became a well-established field of research in 
theoretical physics 
(see, e.g.~\cite{DomFou99,DomFou02,GouBruPin93,HelfrichJakobsson90,Huang86,KimNeuOst98,MarMis02,Net97,ParLub96,WeiKozHel98,YolDes12, YolHauDes14, RauRowTur15} only to mention a few of  a multitude of references)
the mathematical and numerical analysis of hybrid models is still in its infancy.

This paper is devoted to a mathematical consistent formulation of  a variety of existing and some new models
which reveals a kind of hierarchy of  different modelling approaches 
 in terms of successive approximation and is directly accessible to effective numerical methods.
We start with, in a sense, most detailed hybrid models of proteins in lipid membranes that are
based on the coupling of rigid particles of finite size to a linearised CH membrane by suitable
conditions on the membrane displacement and its normal derivative (angle condition) along the
boundaries of the particles~\cite{GouBruPin93,HelfrichJakobsson90,Huang86,KimNeuOst98,ParLub96,WeiKozHel98}.
The boundary values are determined by the specific interaction at the interface
between the hydrophobic belt of transmembrane proteins and the surrounding lipid bilayer.
We derive an equivalent reformulation of the  corresponding fourth order boundary value problem 
by incorporating the boundary conditions  in terms of constraints along the particle contour lines (curve constraints)
in a similar way as in fictitious domain methods~\cite{GlowinskiPanPeriaux94}. 
Variation of height and tilt is represented by additional degrees of freedom.  
This approach and most of our theoretical results also extend to rotation of particles
which, however, will be considered elsewhere~\cite{C04-wolfDiss15}.
For fixed location of particles we prove existence and uniqueness of minimizers of the CH energy 
under curve constraints, together with convergence of a penalty formulation for vanishing penalty parameter.
A gradient flow approach to varying locations of particles with soft-wall constraints is discussed only briefly
as more detailed investigations are the subject of current research.

Approximating the  conditions along the particle boundaries by their (constant) mean value,
we introduce a novel class of finite-size particle models. The resulting averaged hybrid models
no longer  impose any conditions on the contour of the membrane along the particle boundaries and
no longer provide a representation of tilt.   
By Green's formula, the angle conditions along the particle boundaries  now take the form of 
averaged curvature conditions over the area of particles.
In addition to existence and uniqueness for corresponding curvature-constrained 
minimization problems and its penalized counterparts,
we also prove existence of global minimizers for varying locations of particles with hard- and soft-wall constraints.

It is sometimes convenient, both for analytical and numerical purposes,
e.g., when proceeding to larger copy numbers or to larger spatial scales, 
to approximate  averages of  curvature  over the whole area of finite-size particles
by  weighted point values of curvature, e.g., in the barycenters.
The resulting point-like hybrid models have quite a history in theoretical physics,
see, e.g.~\cite{BarFou03,DomFou99,DomFou02,KimNeuOst98,MarMis02,NajAtzBro09,Net97, WeiDes13}
and also \cite{ParLub96, GouBruPin93}.
However, as  point curvature constraints are not well-defined for functions with only
second order weak derivatives, the resulting problem is not well-posed from a mathematical point of view.
While this issue is often addressed by suitable truncation of Fourier expansions in physics literature (see, e.g., \cite{DomFou02}),
we consider an extension of the linearised CH energy by regularizing third and fourth order terms (cf.~e.g.,~\cite{BarFou03}).
From a physical point of view these additional terms could be justified in terms of higher moments 
in the expansion of the membrane bending energy~\cite{C04-Mitov1978}.
Such kind of  extended CH energy then allows for existence and uniqueness results for point-like particles
that are in complete analogy to averaged hybrid models.
We also provide a representation of global minimizers in terms of suitable Green's functions.
For unbounded domains,  we thus recover a representation of solutions that was
first suggested by Bartolo and Fournier~\cite{BarFou03}.

For particles interacting with the cytoskeleton, we consider  two different classes of models,
describing  the interaction with  the membrane by point values 
or by point forces~\cite{EvaTurSen03, GovGop06,SenTur04}.
As point values of functions with second-order weak derivatives are well-defined, 
the classical linearised CH membrane energy is used in both modelling approaches.
We prove existence and uniqueness for prescribed point values at fixed locations
and derive representations of the solutions in terms of suitable Green's functions.
In addition to  existence of global minimizers for varying locations, we prove
that solutions for $N$ particles are entirely determined by the two-particle
problem considering only the particles with the largest and smallest prescribed point values.
Again, we derive  representations of the solutions of these problems in terms of Green's functions.
Similar results are presented for prescribed point forces.
Apart from existence and uniqueness results for fixed location of particles,
we derive a representation of global minimizers in terms of Green's functions and
prove clustering of point forces to one or two clustering points depending on the 
sign of point forces. In this way, computation of global minimizers with $N$ point forces can
be reduced to a problem with at most $N=2$ point forces.

The paper concludes with some numerical experiments.
First, we compute the interaction potential for two circular or elliptical particles
as described by the finite-size hybrid model with coupling conditions on the 
membrane displacement and its normal derivative at the particle contour lines 
and compare the results with those obtained from a related point-like model. 
As a second scenario, we numerically investigate the interaction potential of two point forces and, in particular, 
illustrate  how clustering depends on the ratio of bending rigidity and membrane tension.

The paper is organised as follows. 
After a short introduction to the biophysical background in Section~\ref{sec:PARTLIP},
Section~\ref{sec:MATHMEM} summarizes
the mathematical description of lipid membranes by the CH energy.
Section~\ref{sec:rpfs} is devoted to  finite-size hybrid models performing the coupling
by  conditions on the membrane displacement and its normal derivative  (angle condition) 
at the particle boundaries while Section~\ref{sec:AVCU}  concentrates on averaged hybrid models 
arising from approximations of these boundary conditions by mean values of curvature.
Hybrid models with point-like particles are introduced by approximating those mean values 
by point curvature constraints and are analysed in Section~\ref{sec:pcc}. 
Models for particles interacting with the cytoskeleton
by point values and point forces are considered in Section~\ref{sec:pvc}.
The paper concludes with some numerical experiments collected in Section~\ref{sec:NUMEX}.
The theoretical findings in Section~\ref{sec:rpfs} -- \ref{sec:pvc} are based on an abstract variational framework 
that is presented in the Appendix together with  some required regularity results on Green's functions.

\section{Particles in lipid membranes} \label{sec:PARTLIP}

A biomembrane is a thin layer that surrounds biological cells and cellular organelles  acting as a barrier between the cell and its surroundings.
It consists of a lipid bilayer with embedded and attached proteins. The lipid bilayer is composed of phospholipids, 
made up of a phosphate group and a diglyceride. The resulting form is that the molecules have a head, 
the phosphate group, and a tail comprised of the two fatty acid hydrocarbon chains that make up the diglyceride. 
The heads are hydrophilic and the tails hydrophobic.  As a consequence, when 
placed in water these molecules form structures 
in which the heads point outwards and the tails inwards. 
There are a number of such formations, 
but the one of interest here is the bilayer sheet. This is when the phospholipids line up so that the heads form 
two distinct layers, sandwiching the tails between them as depicted in Figure \ref{fig:bilayer} (taken from \cite{McMahonGallupNature2005}).

\begin{figure}[ht]
\centering
\includegraphics[width=0.59\textwidth]{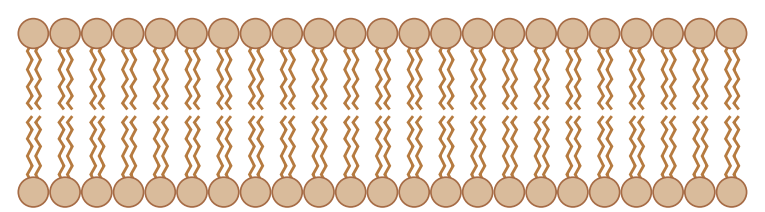}
\caption{Phospholipid bilayer sheet}
\label{fig:bilayer}. 
\end{figure}

The biological membrane is composed of different types of lipid molecules, which may differ in their head groups, in 
the length of their hydrocarbon chains, or in the number of unsaturated bonds within these chains, which is associated with 
different conformations (phases) leading to order (dense packing) or disorder (light packing).
Since the different lipid types and phases tend to separate,
 the composition of such a multicomponent membrane can become laterally inhomogeneous.
 
Another crucial source of inhomogeneity are proteins that are
responsible for the diverse 
functions performed by biological cells, which range from the transport of specific molecules across the membrane
to the reception of chemical signals from the extracellular environment. 
Transmembrane or embedded proteins,  at least partially, penetrate the membrane  
while  peripheral proteins, (for example  BAR domains  \cite{McMahonGallupNature2005}),  are attached to it by electrostatic interaction 
or other weak binding forces.
A pictorial description of a transmembrane  protein  is provided in the left picture of  
Figure~\ref{fig:membrane_deformation} (taken from \cite{McMahonGallupNature2005}).
Interaction of embedded proteins with the membrane is characterised by the shape of their hydrophobic belt
whose width might be different from the membrane thickness and might
vary along the particle contour line.
In contrast, peripheral proteins usually impose their shape and thus curvature 
to the lipid membrane like a scaffold.
Both embedded and peripheral proteins  may  not only tilt and  move up and down with the membrane,
but also move laterally and eventually cluster according  to
mechanical forces induced by membrane curvature.

\begin{figure}
\begin{subfigure}[bt]{0.45\linewidth}
\centering
\includegraphics[width=0.75\textwidth]{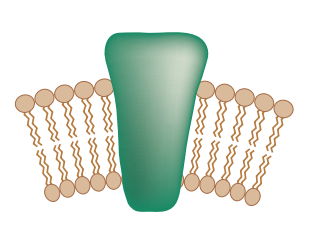}
\vspace{0.62cm}
\caption{transmembrane protein}
\label{fig:Inclusion}
\end{subfigure}
\begin{subfigure}[bt]{0.48\linewidth}
\centering
\includegraphics[width=0.6\textwidth]{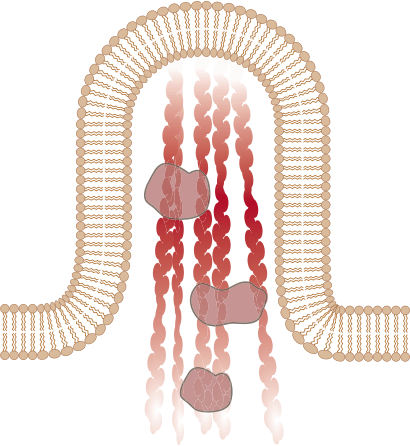}
\caption{filaments anchored in the cytoskeleton}
\label{fig:Filaments}
\end{subfigure}
\caption{Protein-induced  membrane deformation} \label{fig:membrane_deformation}
\end{figure}

Cytoskeletal protein assemblies  such as actin filaments or bundles of actin filaments 
locally impose membrane curvature  by applying a point force or constraining the 
membrane to take a fixed height. 
A cartoon picture is given in Figure~\ref{fig:Filaments} 
(taken from \cite{McMahonGallupNature2005}). 
Branching, bundling, and treadmilling of actin filaments is, e.g., responsible for the formation of filopodia which are  slim, finger-like projections 
of a cell membrane that are formed by the bundling of actin filaments attached 
to the cell cytoskeleton  at one end  and pushing against the cell membrane 
at the other~\cite{MattilaLappalainen08}. 
For further information on the mechanical interaction of particles in lipid bilayers
we refer, e.g.,  to the overviews \cite{McMahonGallupNature2005,RosVir07}  and the literature cited therein.
  
\section{Mathematical description of lipid membranes} \label{sec:MATHMEM}

\subsection{Canham-Helfrich free energy}
As the width of a bilayer ($10^{-9}m$) is much smaller than its lateral extension, 
it is natural to model the membrane as a smooth, two-dimensional
hypersurface $\mathcal M$
embedded in the three-dimensional Euclidean space $\mathbb R^{3}$.  
Note that this simplification neglects transversal 
stretching  and transversal shearing  as possible elastic deformations. 
Since the fluidity of the membrane excludes lateral shearing, 
the deformations of the membrane are caused by lateral stretching and bending.

The mathematical study of biomembranes principally concerns the minimization of the energy functional 
describing the energy associated to displacements of $\mathcal M$.
Fundamental to the  macroscopic approach to modelling biomembranes is the 
Canham-Helfrich (CH) model \cite{Can70,Hel73, Eva74}
which is based on the expansion of the bending energy 
with respect to the principal curvatures up to second order. It describes
equilibrium and close-to-equilibrium properties of 
biological membranes. 
The fundamental object of the CH model is the elastic bending energy ${\mathcal J_{CH}}$ defined by
\begin{equation} \label{eq:CANHAMHELFRICH}
    \cJ_{CH}({\mathcal M})
    =\int_{{\mathcal M}}
    {\textstyle \frac{1}{2}}\kappa (H-c_0)^2  
    + \kappa_G K \; d{\mathcal M}.
\end{equation}
Here $H$ and $K$ stand for mean and
Gaussian curvature of the membrane ${\mathcal M}\subset{\mathbb R}^{3}$ while $\kappa$
and $\kappa_G$ are the corresponding bending rigidities 
and $d{\mathcal M}$ is the
surface element of ${\mathcal M}$. 
The additional parameter $c_{0}$ is called
\emph{spontaneous curvature} and accounts for a possible asymmetry between the outer and inner layers  in the otherwise flat
reference configuration, e.g., due to different lipid compositions in the layers.
The related energy
\begin{equation}\label{CHSurfEnergy}
    \cJ_{CHS}({\mathcal M})=
    \int_{{\mathcal M}}{\textstyle \frac{1}{2}}\kappa (H-c_0)^2 
    + \kappa_G K +\sigma \; d{\mathcal M}
\end{equation}  
supplements the bending energy with a surface energy 
$\int _{\mathcal M} \sigma\; d{\mathcal M}$ that is
associated with membrane tension $\sigma \geq 0$.
Here, the surface energy penalises area change and thus accounts for the 
incompressibility constraint of the fluid membrane in the lateral direction.
These energies, depending on the type of problem, may be 
augmented with reduced volume or bilayer area
difference constraints~\cite{Sei97}.
The mathematical derivation of Canham-Helfrich-type 
models from molecular descriptions of lipid bilayers 
by $\Gamma$-convergence techniques
has been started only recently ~\cite{BloPel04,PelRoe08}.

Note that for a general membrane $\mathcal M$ the Gaussian curvature term
$\int_{\mathcal M}\kappa_G K d{\mathcal M}$ gives a non-constant contribution to $\cJ_{CHS}(\mathcal M)$.
However, assuming that $\mathcal M$ is closed or that the geodesic curvature
along $\partial\mathcal M$---composed by the membranes outer boundary
and possible particle contour lines---is fixed, this term becomes
a topological invariant by the Gauss-Bonnet theorem.
Hence, it can be ignored 
when computing equilibrium membrane shapes
minimizing $\cJ_{CHS}(\mathcal M)$.
For a detailed discussion of geodesic curvature along particle contour lines
we refer to~\cite{C04-wolfDiss15}.

\subsection{Monge gauge} \label{subsec:MONGE}
We will now outline a geometrically linearised approximation
of the Canham-Helfrich energy  $\cJ_{CHS}$ defined in \eqref{CHSurfEnergy}.
For simplicity, let us assume that  spontaneous curvature $c_{0}$ is zero, 
so that we have
\begin{equation}\label{energy}
\cJ_{CHS}(\mathcal M)
=\int_{\mathcal M} \textstyle \frac{1}{2}\kappa H^2 
+ \kappa_G K + \sigma \; d{\mathcal M}.
\end{equation}
In the Monge gauge, one assumes that the surface is nearly flat, so that   
the membrane surface can be parametrized as a graph
\begin{align}
    \label{eq:monge}
    {\mathcal M}=\{(x_{1},x_{2},u(x_{1},x_{2}))\st(x_1,x_2)\in \Omega\}
\end{align} 
over a two-dimensional reference domain $\Omega\subset \R^2$.
Then, the mean curvature $H$ and the Gau\ss\, 
curvature $K$ of the membrane $\mathcal M$ are given by
\begin{equation*}
    H = -\nabla \cdot \frac{\nabla u}{(1+|\nabla u|^{2})^{1/2}}, \qquad
    K = \left(\frac{\partial^{2}u}{\partial x_1^{2}}\frac{\partial^{2}u}{\partial x_2^{2}} 
    - \left(\frac{\partial^{2}u}{\partial x_1\partial x_2}\right)^2\right) \left /  {(1+|\nabla u|^{2})^{1/2}}\right. .
\end{equation*} 

A common approach to derive an approximate model is to assume 
that the displacement of the membrane from the $x-y$ plane produced by the particles is small, i.e. $|\nabla u|\ll 1$. 
In this case, it is sufficient to consider the geometric linearisation
\begin{equation} \label{eq:GEOLIN}
(1+|\nabla u|^{2})^{1/2}\; \rightsquigarrow \; 1 +{\textstyle \frac{1}{2}}|\nabla u|^{2},\quad
    H \;\rightsquigarrow \; -\Delta u,\quad
    K\; \rightsquigarrow \; \frac{\partial^{2}u}{\partial x_1^{2}}\frac{\partial^{2}u}{\partial x_2^{2}} - \left(\frac{\partial^{2}u}{\partial x_1\partial x_2}\right)^2,
\end{equation}
which models perturbations from a flat surface.
Inserting  the geometric linearisation \eqref{eq:GEOLIN} of mean curvature, Gauss curvature, and the surface element $d{\mathcal M}= (1+|\nabla u|^{2})^{1/2}dx$ 
into \eqref{energy} yields, up to a constant term, the quadratic energy 
\begin{equation}\label{energy_monge}
    \cJ(u) = \int_{\Omega} \textstyle \frac{1 }{2}\kappa(\Delta u)^2 
    + \kappa_G \left(\frac{\partial^{2}u}{\partial x_1^{2}}\frac{\partial^{2}u}{\partial x_2^{2}} - 
    \left(\frac{\partial^{2}u}{\partial x_1\partial x_2}\right)^2\right)  + \frac{1 }{2} \sigma |\nabla u|^2 \; dx.
\end{equation}
Ignoring Gaussian curvature in light of the Gauss-Bonnet theorem discussed above,
a quadratic approximation of the energy
$\cJ_{CHS}$ from \eqref{energy} finally takes the form
\begin{equation}\label{eq:monge_energy}
    \cJ(u)= 
    \int_\Omega {\textstyle \frac{1}{2}} \kappa(\Delta u)^2  + {\textstyle \frac{1}{2}} \sigma|\nabla u|^{2} \; dx .
\end{equation}
We emphasize that  Gaussian curvature 
may still vary across the membrane allowing for saddle-like deformations 
(cf., e.g., the numerical results in Subsection~\ref{subsec:FBCC}).
Observe that minimization of $\cJ$ is leading to fourth order plate-like equations.


\subsection{Boundary conditions and coercivity}\label{sec:BC}
We consider the Canham-Helfrich free energy $\cJ(u)$ in the Monge gauge  \eqref{eq:monge_energy} with
membrane displacement $u$ defined on a reference domain $\Omega \subset \R^2$.
From now on, we assume that  $\Omega\subset \R^2$ is a bounded, convex domain 
with a piecewise smooth Lipschitz boundary $\partial \Omega$, e.g., a square.
Only in Section~\ref{subsec:UNBD}, we shall consider $\Omega =\R^2$.
Deformations $u$ of the membrane
are taken  from a closed subspace  $V\subset H^2(\Omega)$ 
satisfying suitable boundary conditions. 
We consider  the following three cases
\begin{equation} \label{eq:VDEF}
    V= H^2_0(\Omega),\qquad V= H^2(\Omega) \cap H^1_0(\Omega), \qquad V= H^2_{p,0}(\Omega),
\end{equation}
often referred to as Dirichlet, Navier,  and periodic boundary conditions with zero mean, respectively.
Note that 
\[
   H^2_{p,0}(\Omega)=\overline{\{v|_\Omega \st v \in C^\infty(\mathbb{R}^2) \text{ is } \Omega \text{-periodic and } \textstyle \int_{ \partial \Omega}v \; ds=0\}}.
\]
is only defined for a rectangular domain $\Omega$.
The space $V$ is equipped with the canonical  norm 
$\|\wc\|_{2}= \|\wc\|_{H^2(\Omega)}$  in $H^2(\Omega)$.
Throughout the following, we assume that $\kappa >0$ as well $\sigma \geq 0$ for
all three choices of $V$.

\begin{lemma}\label{lem:GLOBAL_COERCIVITY}
    The bilinear form 
    \begin{equation*}
    a(v,w) = \int_{\Omega} \kappa \Delta v \Delta w\; dx + \sigma \nabla v \cdot \nabla w \; dx, \qquad v,w \in V,
    \end{equation*}
    associated with the energy functional $\cJ$ is continuous and coercive on $V$.
\end{lemma}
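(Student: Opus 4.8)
The plan is to dispatch continuity quickly and then concentrate on coercivity, which is the only delicate point because $\sigma$ is allowed to vanish. For \textbf{continuity}, the Cauchy--Schwarz inequality applied separately to the two integrals gives
\[
  |a(v,w)| \le \kappa\,\|\Delta v\|_{L^2}\,\|\Delta w\|_{L^2} + \sigma\,\|\nabla v\|_{L^2}\,\|\nabla w\|_{L^2} \le C\,\|v\|_2\,\|w\|_2 ,
\]
with $C$ depending only on $\kappa$ and $\sigma$; this uses nothing about the boundary conditions.

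For \textbf{coercivity} I would drop the nonnegative surface term, so that it suffices to prove the seminorm $v \mapsto \|\Delta v\|_{L^2}$ is equivalent to the full norm $\|\cdot\|_2$ on each of the three spaces, i.e.\ $\|\Delta v\|_{L^2}^2 \ge c\,\|v\|_2^2$. I would establish this through two estimates: (i) control of the full Hessian, $\|D^2 v\|_{L^2} \le \|\Delta v\|_{L^2}$, and (ii) control of the lower-order norms, $\|\nabla v\|_{L^2} + \|v\|_{L^2} \le C\,\|\Delta v\|_{L^2}$. Estimate (ii) follows from the boundary data: all three spaces admit a Poincar\'e inequality $\|v\|_{L^2}\le C\,\|\nabla v\|_{L^2}$ (vanishing trace in the Dirichlet and Navier cases, zero mean in the periodic case), and integrating by parts gives $\|\nabla v\|_{L^2}^2 = -\int_\Omega v\,\Delta v \le \|v\|_{L^2}\,\|\Delta v\|_{L^2}$, from which $\|\nabla v\|_{L^2}\le C\,\|\Delta v\|_{L^2}$ and hence also $\|v\|_{L^2}\le C'\,\|\Delta v\|_{L^2}$.

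Estimate (i) is where the hypotheses on $\Omega$ enter. On $H^2_0(\Omega)$ and on the periodic space $H^2_{p,0}(\Omega)$ it is in fact an identity: integrating by parts twice on smooth compactly supported (resp.\ smooth periodic) functions annihilates all boundary terms and yields $\int_\Omega(\Delta v)^2 = \int_\Omega|D^2 v|^2$, which extends by density. \textbf{The main obstacle is the Navier case} $H^2(\Omega)\cap H^1_0(\Omega)$, where the normal derivative on $\partial\Omega$ is not prescribed. Here one uses the Rellich--Grisvard integration-by-parts identity, whose only boundary contribution is an integral of $(\partial v/\partial n)^2$ weighted by the curvature of $\partial\Omega$; \emph{convexity of $\Omega$} forces this term to have the favourable sign, giving $\|D^2 v\|_{L^2}\le\|\Delta v\|_{L^2}$. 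This is precisely why $\Omega$ is assumed convex, and for the piecewise smooth (e.g.\ polygonal) boundary the corner contributions are handled either by direct inspection of their sign or by approximating $\Omega$ from inside by smooth convex domains. Combining (i) and (ii) yields $\|v\|_2^2 \le C\,\|\Delta v\|_{L^2}^2 \le (C/\kappa)\,a(v,v)$, which is the desired coercivity.
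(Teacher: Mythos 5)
Your proof is correct, but it is genuinely more than what the paper does: the paper's proof of this lemma is a one-liner, declaring continuity obvious and deferring coercivity entirely to the cited reference \cite{Graeser2015}. Your argument supplies the missing content in self-contained form, and it identifies the right key ingredients: the reduction to the seminorm estimate $\|\Delta v\|_{L^2}^2 \ge c\|v\|_2^2$, the lower-order bounds via Poincar\'e and one integration by parts, and---the genuinely delicate point---the Miranda--Talenti/Grisvard inequality $\|D^2v\|_{L^2}\le\|\Delta v\|_{L^2}$ for $H^2(\Omega)\cap H^1_0(\Omega)$, which is exactly where the standing convexity assumption on $\Omega$ is consumed. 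What your write-up buys is transparency about the role of the hypotheses (convexity for Navier, exact Hessian identities for Dirichlet and periodic); what the paper's citation buys is brevity and coverage of the technical details you only sketch. Two small points you should tighten if this were to replace the citation: in the periodic case $H^2_{p,0}(\Omega)$ the zero-mean condition is $\int_{\partial\Omega}v\,ds=0$, a \emph{boundary} mean rather than a domain mean, so the Poincar\'e inequality needs a short extra argument (e.g.\ Poincar\'e--Wirtinger plus the trace theorem to control the domain mean, or a Peetre--Tartar compactness argument); and in both the periodic Hessian identity and the identity $\|\nabla v\|_{L^2}^2=-\int_\Omega v\,\Delta v\,dx$ you should state explicitly that the boundary terms cancel on opposite faces by periodicity, since they do not vanish pointwise as in the Dirichlet and Navier cases.
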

\begin{proof}
    While continuity of $a(\wc,\wc)$ is obvious, we refer to \cite{Graeser2015} for a proof of coercivity.
\end{proof}

Let  $\Omega' \subset \Omega$  be any subset with sufficiently smooth boundary.
It is convenient to introduce the energy functional 
\begin{equation}\label{eq:monge_energy_subset}
    \cJ_{\Omega'}(u)= \int_{\Omega'} {\textstyle \frac{1}{2}} \kappa(\Delta u)^2  + {\textstyle \frac{1}{2}} \sigma|\nabla u|^{2}\; dx
\end{equation}
with the associated  bilinear form
\begin{equation}\label{eq:bilinearform}
    a_{\Omega'}(v,w) = \int_{\Omega'} \kappa \Delta v \Delta w \; dx + \sigma \nabla v \cdot \nabla w \; dx.
\end{equation}
Furthermore for any subset $\Omega' \subset \Omega$ we define
\begin{align*}
    V_{\Omega'} = \bigl\{v|_{\Omega'} \bigst v \in V\bigr\}
\end{align*}
with the associated norm $\|\wc\|_{2,\Omega'} = \|\wc\|_{H^2(\Omega')}$.
For notational convenience, we set $\cJ=\cJ_{\Omega}$ and
$a(\wc,\wc)=a_{\Omega}(\wc,\wc)$ in the sequel.


\section{Curve constraints}\label{sec:rpfs}

\subsection{Particles interacting with lipid membranes}
Transmembrane proteins are 
interacting with the membrane curvature by the shape of 
the hydrophobic belt of single molecules
or by oligomerisation, i.e., macromolecular clustering~\cite{McMahonGallupNature2005}. Other particles, such as
peripheral proteins, may be attached to the membrane surface, acting as active
or passive scaffolds~\cite{McMahonGallupNature2005},
others may be partially wrapped due to adhesion 
energy~\cite{BahramiEtAl2014,KoltoverEtAl1999}.
All these phenomena can be captured by the same type of mathematical model
that treats each particle as a finite-sized, rigid body $B$ 
that interacts with the membrane by 
suitable conditions on the common interface~$\Gamma$.

\subsection{Boundary values and curve constraints}  \label{sec:BVP}
We assume that the membrane occupies a graph over a subset
\[
    \Omega_B = \Omega \setminus \bigcup_{i=1}^N \overline{B_i}
\]
of the given reference domain $\Omega\subset \R^2$ with
non-empty, open subsets $B_i\subset \Omega$  representing $i=1,\dots,N$ 
particles with diameter  $2r_i$ and centres of mass $X_i$ included in the membrane.
For example, the particles $B_i$ could be (but do not have to be) 
circles with radii $r_i$ and midpoints $X_i$.
We assume that the particles have $C^3$-boundaries  $\Gamma_i=\partial B_i$. 
We also assume that the particles do not overlap 
and do not touch the boundary $\partial \Omega$ of $\Omega$
in the sense that $B_i\cap B_j= \overline{B_i} \cap \partial \Omega= \emptyset$
for all $i\neq j=1, \dots N$.
In the sequel, we will consider varying the height and location of  particles.
We will also consider linearized variation of tilt
(by neglecting  the corresponding variation of $B_i$),
but exclude rotation for simplicity.

We assume that the deformation $u$ defined on $\Omega_B$
satisfies (artificial) boundary conditions on 
$\partial \Omega$ as specified implicitly in the preceding section 
(via the function spaces in which $u$ lies)
and additional  boundary conditions on the 
particle boundaries $\Gamma_i$ that determine the membrane-particle coupling.
For a (a) transmembrane protein, (b) scaffold, and (c) partly wrapped particle
this is illustrated in Figure~\ref{C04-fig:INTERACTION1}.
For a transmembrane protein, the particle boundary $\Gamma$ of $B$ (dashed line) 
is the projection of the protein-membrane contact line $\gamma\subset \R^3$ 
(only shown as cut with the mid-surface of the protein)  to $\R^2$,
the height $h$ is the  (spatially varying) distance from $\gamma$ to $\R^2$, 
and $-s$ describes the (spatially varying) angle 
between the bilayer mid-plane and the horizontal outward normal of $B$.
Similarly, for a peripheral protein, 
$B$ stands for the projection of the particle-membrane contact area,
while this set is augmented by part of the membrane thickness for partly wrapped particles.

\begin{figure}[ht]
  \begin{subfigure}[b]{0.31\linewidth}
    \centering
    \includegraphics[scale=0.18]{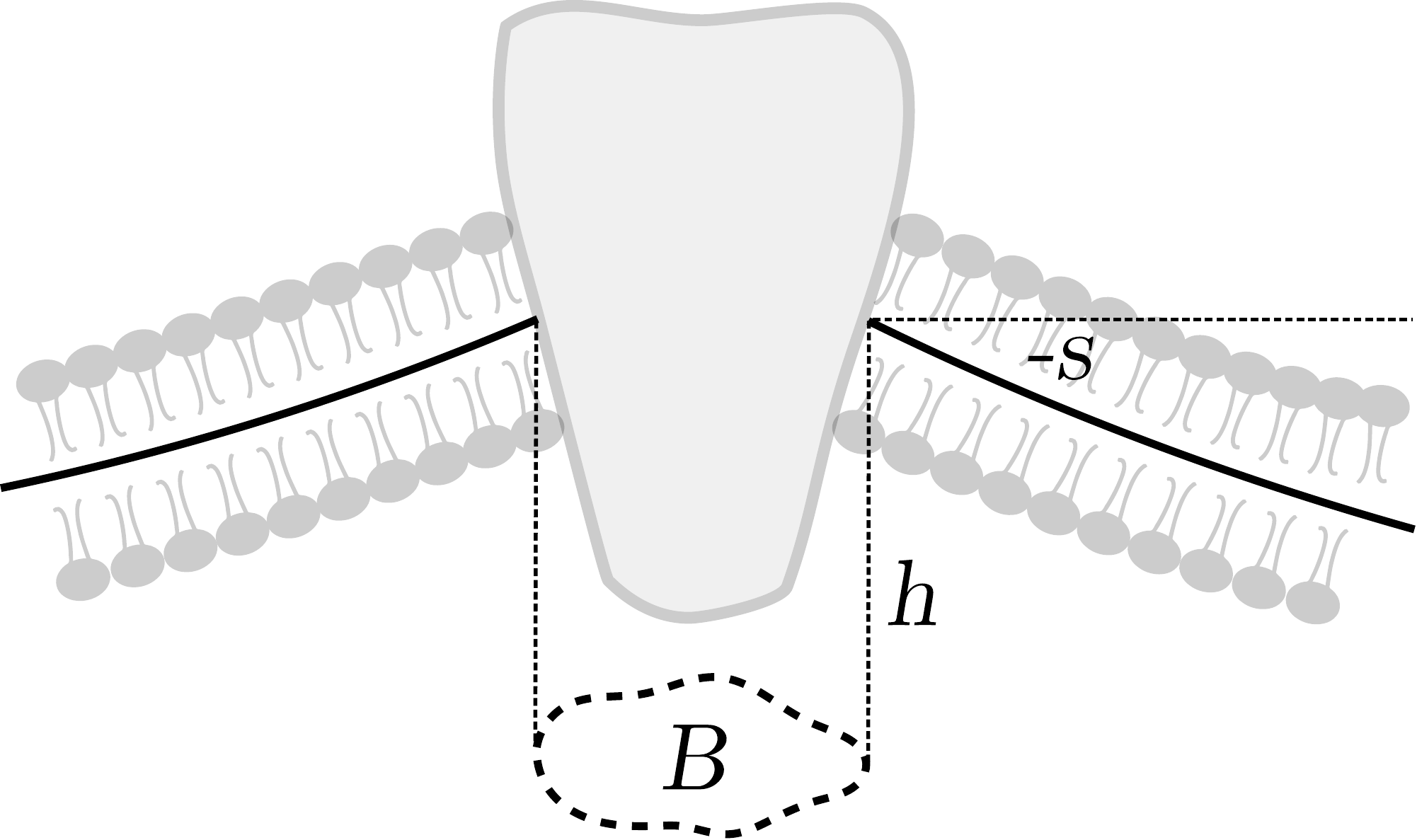}
       \caption{transmembrane protein} \label{fig:inclusion}
  \end{subfigure}
  \begin{subfigure}[b]{0.31\linewidth}
    \centering
    \includegraphics[scale=0.18]{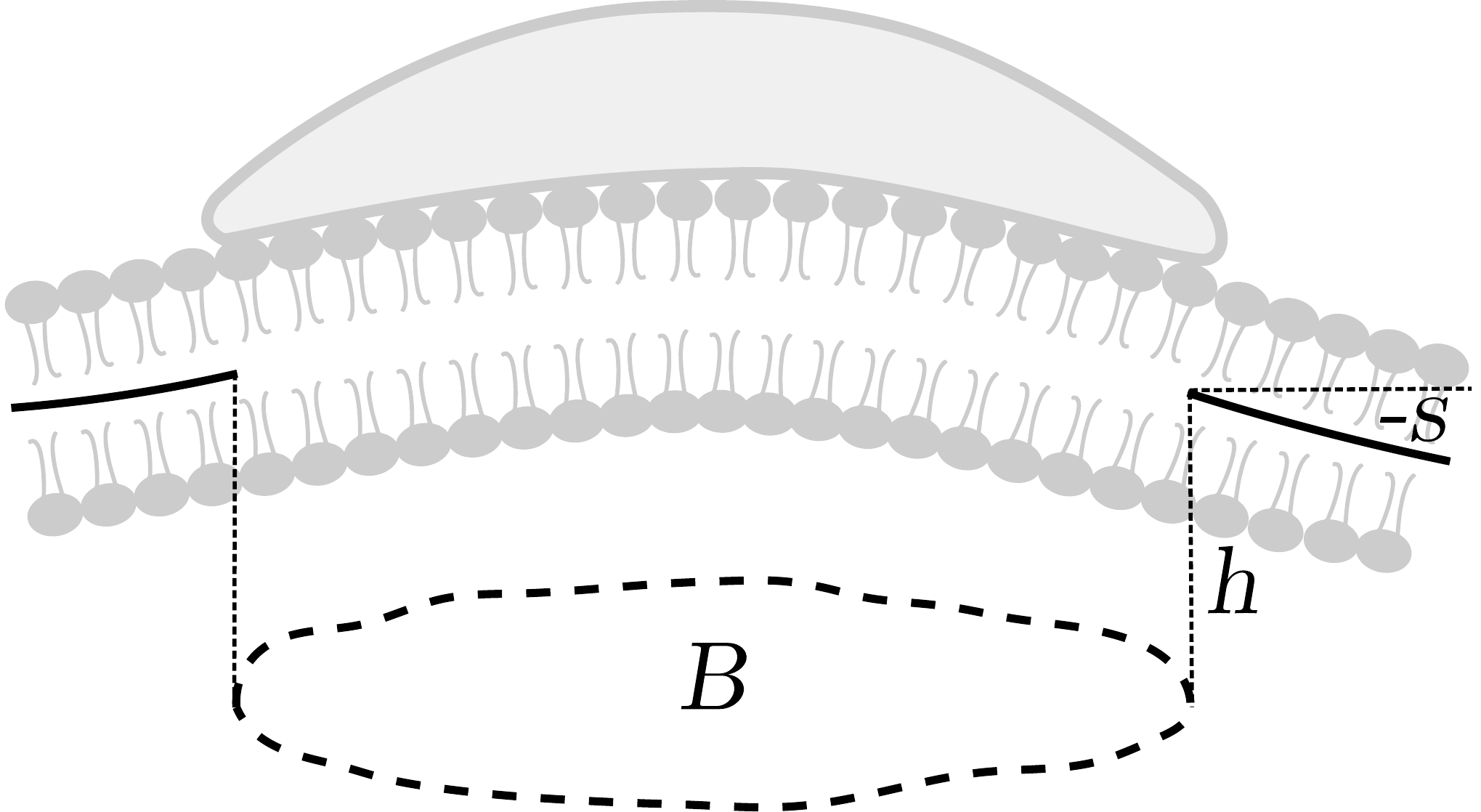}
       \caption{scaffold}
     \end{subfigure}
  \begin{subfigure}[b]{0.31\linewidth}
    \centering
    \includegraphics[scale=0.18]{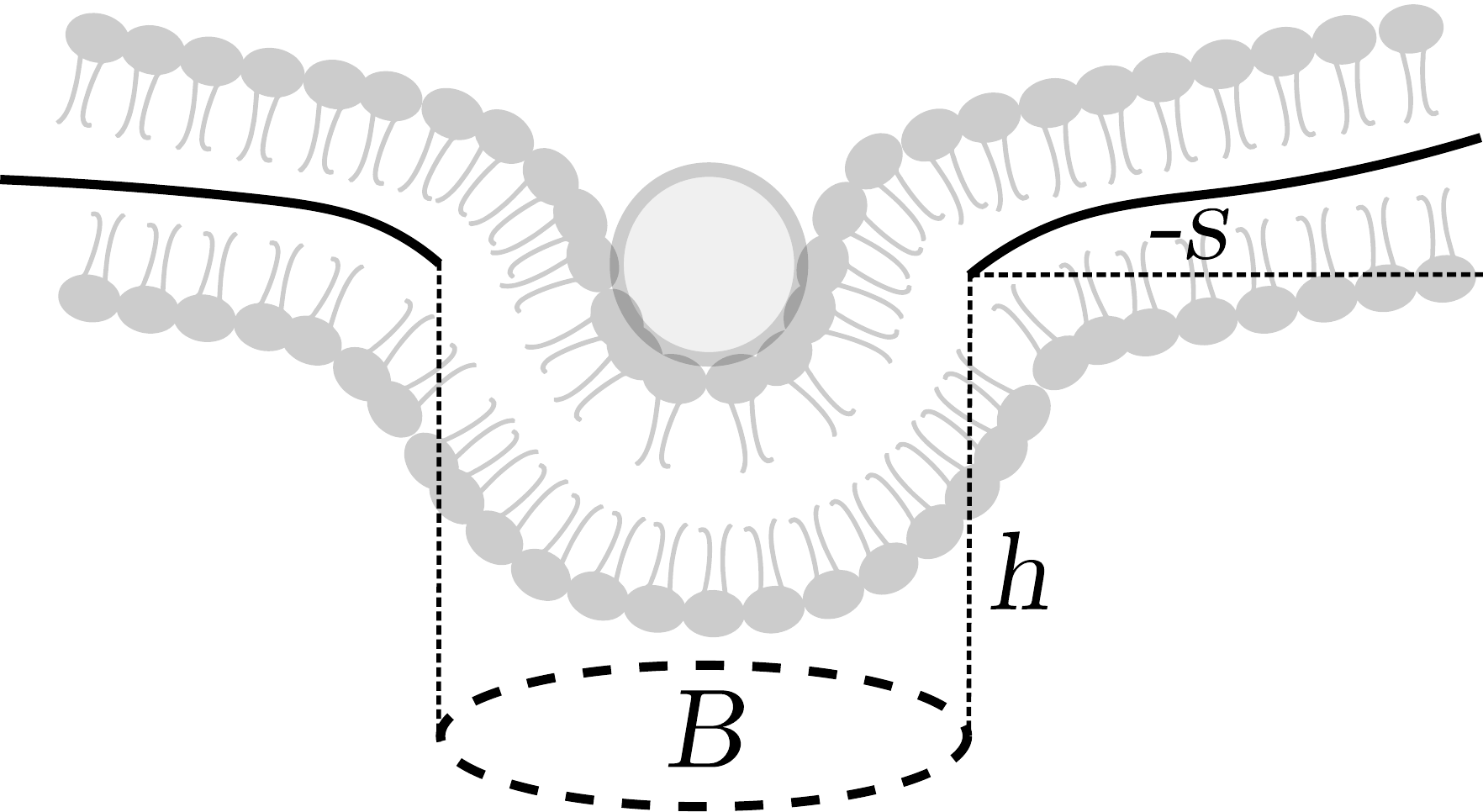}
       \caption{partly wrapped particle}
      \end{subfigure}
  \caption{Coupling of a membrane with different types of particles}
   \label{C04-fig:INTERACTION1}
\end{figure}


Assuming that height, tilt, and location of particles are fixed,
these  additional essential (Dirichlet)  boundary conditions are given by
\begin{equation} \label{eq:INTBC}
    u=h_i \in H^{\frac{3}{2}}(\Gamma_i),\quad \quad \textstyle  \frac{\partial}{\partial n}u = s_i \in H^{\frac{1}{2}}(\Gamma_i) \quad \text{ on }\Gamma_i,
\qquad i=1,\dots,N.
\end{equation} 
Here, and throughout the paper, $n$ denotes the outward normal to $\Omega_{B}$
(while the inward normal was more convenient in Figure~\ref{C04-fig:INTERACTION1}).
Furthermore,
\begin{equation} \label{eq:BNDDATA}
    h_i =h_i^0(\wc -X_i), \qquad s_i=s_i^0(\wc - X_i),\quad  i=1,\dots,N,
\end{equation}
holds with given reference data $h_i^0\in H^{\frac{3}{2}}(\Gamma_i^0)$, 
$s_i^0 \in H^{\frac{1}{2}}(\Gamma_i^0)$ on the boundaries  
$\Gamma_i^0=\Gamma_i-X_i$ of reference particles  $B_i^0=B_i-X_i$ 
with location $X_i^0=0$. 
We assume that the reference particles $B_i^0\subset \R^2$ are open sets
with $C^3$-boundaries, $X_i^0=0 \in B_i^0$, and thus $X_i \in B_i$.
The essential boundary conditions \eqref{eq:INTBC} 
are incorporated in the affine subspace 
\begin{equation} \label{eq:AFFINE_BC_SUBSPACE}
    V_{\Omega_B}^{h,s} =
    \left\{v \in V_{\Omega_B} \st ~v= h_{i}, \; \tfrac{\partial v}{\partial n}= s_{i}
    \text{ on } \Gamma_i,\; i=1,\dots,N \right\}\subset  V_{\Omega_B}.
\end{equation}
Homogeneous data $h_i=s_i=0$ provide the corresponding  linear space $V_{\Omega_B}^{0,0}$.
It is  convenient to introduce the 
vector-valued functions $h=(h_i) $ and $s=(s_i)$.

The deformation $u$ of the membrane with fixed height, tilt, and 
location of  particles is  the solution of  the following minimization problem.

\begin{problem}[Boundary values] \ \\%
    \label{p:BVP}%
    Find $u\in V_{\Omega_B}^{h,s}$ minimizing the energy $\cJ_{\Omega_B}$
    on $V_{\Omega_B}^{h,s}$.
\end{problem}

The minimization problem~\ref{p:BVP} is well-known to be equivalent 
to find $u\in V_{\Omega_B}^{h,s}$ such that
\begin{equation} \label{eq:VBVP}
    a_{\Omega_B}(u, v)= 0 \qquad \forall v \in V_{\Omega_B}^{0,0}.
\end{equation}

In order to show existence and uniqueness of a solution to Problem~\ref{p:BVP},
we now derive its reformulation in terms of 
functions defined on the \emph{whole domain} $\Omega$.

To this end, it is convenient to  introduce the particle trace operators $T_i$, 
\begin{align}\label{def:TRACEOP}
    V \ni v \to  T_iv = \Bigl(v|_{\Gamma_i}, \textstyle \frac{\partial v}{\partial n}|_{\Gamma_i}\Bigr)
    \in \cX_i :=H^{\frac32}(\Gamma_i) \times H^{\frac12}(\Gamma_i),    \qquad i=1,\dots,  N.
\end{align}

\begin{lemma}\label{lem:SINGLE_TRACE}
    The trace operator $T_i$ is a continuous, linear, and
    surjective map onto $\cX_i$ for each $i=1,\dots, N$.
\end{lemma}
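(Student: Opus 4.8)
The plan is to treat the two assertions separately. Linearity of $T_i$ is immediate from its definition, so the substance lies in continuity and surjectivity, both of which I would reduce to the classical trace theorem for $H^2$-functions on a domain with smooth boundary, exploiting crucially that $\Gamma_i$ is compactly contained in the interior of $\Omega$ and separated from the other particles via the standing assumptions $\overline{B_i}\cap\partial\Omega=\emptyset$ and $B_i\cap B_j=\emptyset$ for $i\neq j$.

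For continuity I would restrict $v\in V\subset H^2(\Omega)$ to the particle $B_i$, a bounded domain with $C^3$ boundary $\Gamma_i$, and invoke the standard trace theorem, which asserts that $u\mapsto(u|_{\Gamma_i},\tfrac{\partial u}{\partial n}|_{\Gamma_i})$ maps $H^2(B_i)$ continuously into $H^{\frac32}(\Gamma_i)\times H^{\frac12}(\Gamma_i)=\cX_i$. Since the trace of an $H^2(\Omega)$-function on the interior curve $\Gamma_i$ is two-sided and unambiguous, this yields $\|T_iv\|_{\cX_i}\le C\|v\|_{H^2(B_i)}\le C\|v\|_{2}$ with constant independent of $v$, which is the claimed continuity.

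Surjectivity is the part that requires genuine work, and the main obstacle is to produce, for arbitrary prescribed data, a lifting that both realises the data on $\Gamma_i$ \emph{and} lies in the subspace $V$ (i.e.\ respects the boundary conditions on $\partial\Omega$). Given $(g,\psi)\in\cX_i$, I would first choose a tubular collar neighbourhood $U$ of $\Gamma_i$ with $\overline U\subset\Omega$, $\overline U\cap\overline{B_j}=\emptyset$ for $j\neq i$, and $\overline U\cap\partial\Omega=\emptyset$; this is possible precisely because of the separation assumptions. Using normal coordinates, a $C^2$-diffeomorphism of $U$ onto $\Gamma_i\times(-\delta,\delta)$ since $\Gamma_i$ is $C^3$, the surjectivity part of the trace theorem provides a genuinely two-sided lifting $w\in H^2(U)$ with $w|_{\Gamma_i}=g$, $\tfrac{\partial w}{\partial n}|_{\Gamma_i}=\psi$, and $\|w\|_{H^2(U)}\le C(\|g\|_{H^{3/2}(\Gamma_i)}+\|\psi\|_{H^{1/2}(\Gamma_i)})$. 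The two-sidedness is essential: a one-sided lifting supported inside $B_i$ would have nonzero trace $g$ on $\Gamma_i$ and hence fail to extend by zero across $\Gamma_i$ as an $H^2$-function.

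Finally I would localise: choosing a cutoff $\chi\in C_c^\infty(U)$ with $\chi\equiv1$ on a smaller neighbourhood of $\Gamma_i$ and setting $v=\chi w$, extended by zero, one obtains $v\in H^2(\Omega)$ compactly supported in the interior, whence $v\in H^2_0(\Omega)$. Because $\chi\equiv1$ near $\Gamma_i$ the cutoff does not affect the traces, so $T_iv=(g,\psi)$. It then remains only to verify the inclusion $H^2_0(\Omega)\subseteq V$ in each of the three cases: this is trivial for Dirichlet data ($V=H^2_0(\Omega)$), clear for Navier data ($H^2_0(\Omega)\subset H^2(\Omega)\cap H^1_0(\Omega)$), and for the periodic case follows since every $\phi\in C_c^\infty(\Omega)$ extends to a smooth $\Omega$-periodic function with $\int_{\partial\Omega}\phi\,ds=0$, so that $C_c^\infty(\Omega)\subseteq H^2_{p,0}(\Omega)$ and hence $H^2_0(\Omega)\subseteq H^2_{p,0}(\Omega)$ after passing to $H^2$-closures. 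This exhibits a preimage $v\in V$ for every element of $\cX_i$ and completes the argument.
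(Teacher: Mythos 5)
Your proof is correct, and it rests on the same two pillars as the paper's: the classical trace theorem for $H^2$ on a $C^3$ boundary, and the observation that $\operatorname{dist}(B_i,\partial\Omega)>0$ lets you work on an auxiliary set that never touches $\partial\Omega$ or the other particles. The constructions differ, though. The paper builds an annular domain $\tilde\Omega_i=\tilde\Omega\setminus\overline{B_i}$ whose $C^3$ boundary is $\Gamma_i\cup\partial\tilde\Omega$ and reads off continuity and surjectivity from a single application of the trace theorem on that domain (implicitly: prescribe zero Cauchy data on the outer component $\partial\tilde\Omega$, lift, and extend); the extension of that lifting to an element of $V$ --- filling in $B_i$ and respecting the boundary conditions on $\partial\Omega$ --- is left entirely implicit. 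You instead obtain continuity from the trace theorem on $B_i$ and surjectivity from a two-sided lifting on a tubular collar $U$ of $\Gamma_i$ followed by a cutoff, and you explicitly verify $H^2_0(\Omega)\subseteq V$ for all three choices of boundary conditions, including the periodic case; this makes explicit exactly the step the paper compresses into ``this provides the assertion.'' One point you state more strongly than the trace theorem literally gives: since $\Gamma_i$ lies in the \emph{interior} of the collar $U$ rather than on its boundary, the theorem only yields one-sided liftings, so the two-sided lifting $w\in H^2(U)$ should be produced by gluing an interior lifting on $U\cap B_i$ and an exterior one on $U\setminus\overline{B_i}$ with the same Cauchy data $(g,\psi)$ on $\Gamma_i$ (the matching of trace and normal derivative is precisely what makes the glued function $H^2$ across $\Gamma_i$). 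This is standard, and your remark about why a one-sided lifting cannot simply be extended by zero shows you see the issue; it is a presentational gap, not a mathematical one.
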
 

\begin{proof} Let $i=1,\dots, N$ be fixed and
    recall that  the boundary $\Gamma_i$ of $B_i$ is $C^3$. 
    As $\overline{B}_i \cap \partial \Omega = \emptyset$, a compactness argument shows
    that $\op{dist}(B_i,\partial \Omega) >0$. 
    Hence, there is a subset $\tilde{\Omega} \subset \Omega$
    with  a $C^3$-boundary $\partial \tilde{\Omega}$
    and $\overline{B}_i \cap \partial \tilde{\Omega} = \emptyset$.
    In particular,  $\tilde{\Omega}_i = \tilde{\Omega}\setminus \overline{B}_i$
    then has a $C^3$-boundary
    and classical trace theorems \cite[Theorems~8.7 and~8.8]{Wloka1987} 
    imply that the trace operator
    \begin{align*}
        \tilde{T}_i : H^2(\tilde{\Omega}_i) \to
            H^{\frac32}(\partial\tilde{\Omega}_i) \times 
             H^{\frac12}(\partial\tilde{\Omega}_i),\qquad
            \tilde{T}_i v = \Bigl(v|_{\partial\tilde{\Omega}_i},
            \textstyle \frac{\partial v}{\partial n}|_{\partial\tilde{\Omega}_i}\Bigr)
    \end{align*} 
    is linear, continuous, and surjective.
    As $\partial \tilde{\Omega}_i = \Gamma_i \cup \partial \tilde{\Omega}$,
    this provides the assertion.
\end{proof} 
Note that denseness of $C^\infty(\overline{\Omega})$ in $H^2(\Omega)$
implies that $T_i$ remains the same if constructed
from the inside of $B_i$ instead of the outside, i.e.,
as trace operator for $H^2(B_i)$ instead of $H^2(\tilde{\Omega}_i)$.

\begin{lemma}\label{lem:PRODUCT_TRACE}
    The trace operator $T:V \to \cX = \prod_{i=1}^N \cX_i$ 
    defined by $(Tv)_i = T_iv$ is linear and continuous.
    If $\overline{B}_i \cap \overline{B}_j = \emptyset$ for $i\neq j$,
    then $T$ is surjective.
\end{lemma}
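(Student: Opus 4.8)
The plan is to establish continuity and surjectivity separately, leveraging Lemma~\ref{lem:SINGLE_TRACE} which already handles each individual trace operator $T_i$.

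For continuity of $T$, I would argue that the product map into $\cX = \prod_{i=1}^N \cX_i$ is continuous if and only if each component map $T_i : V \to \cX_i$ is continuous, a standard fact about maps into finite products equipped with the product (equivalently, sum) norm. Since Lemma~\ref{lem:SINGLE_TRACE} already asserts continuity of each $T_i$, this direction is immediate. Linearity is likewise inherited componentwise from the linearity of each $T_i$.

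The substance of the statement is surjectivity under the disjointness hypothesis $\overline{B}_i \cap \overline{B}_j = \emptyset$ for $i \neq j$. Here the point is that prescribing trace data on one particle boundary $\Gamma_i$ must not constrain the data achievable on another boundary $\Gamma_j$. Given target data $g = (g_i)_{i=1}^N \in \cX$, I would construct a preimage $v \in V$ by a \emph{localisation} argument. Because the closed particles are pairwise disjoint and each is separated from $\partial\Omega$, a compactness argument (as in the proof of Lemma~\ref{lem:SINGLE_TRACE}) yields pairwise disjoint open neighbourhoods $U_i \supset \overline{B}_i$ with $\overline{U}_i \cap \overline{U}_j = \emptyset$ for $i \neq j$ and $\overline{U}_i \cap \partial\Omega = \emptyset$. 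On each $U_i$ one can invoke Lemma~\ref{lem:SINGLE_TRACE} (or the underlying trace theorem of Wloka) to produce a local function realising the data $g_i$ on $\Gamma_i$; multiplying by a smooth cutoff $\chi_i \in C_c^\infty(U_i)$ that equals $1$ on a neighbourhood of $\overline{B}_i$ preserves the trace data on $\Gamma_i$ (since both $\chi_i$ and $1-\chi_i$ and all their derivatives vanish appropriately near $\Gamma_i$) while localising the support to $U_i$. Summing the resulting functions $v = \sum_i \chi_i w_i$ gives, thanks to disjoint supports, a function in $H^2$ whose trace on each $\Gamma_i$ is exactly $g_i$ and whose trace near $\partial\Omega$ is zero, so that $v$ lies in the required boundary-condition space $V$.

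The main obstacle I anticipate is ensuring that the constructed $v$ genuinely lies in $V$ rather than merely in $H^2(\Omega)$ — that is, verifying the artificial boundary conditions on $\partial\Omega$ encoded in the three choices \eqref{eq:VDEF}. The cutoff construction handles this cleanly because each $\chi_i w_i$ has compact support away from $\partial\Omega$, so $v$ vanishes identically near $\partial\Omega$ and automatically satisfies the Dirichlet, Navier, or periodic conditions. A secondary technical point is to confirm that the product norm estimate gives a bounded right inverse, but surjectivity alone does not require this quantitative bound, so I would keep the argument at the level of pointwise construction. The disjointness hypothesis is precisely what decouples the $N$ local problems; without it the cutoffs would overlap and the traces on nearby boundaries would interfere, which is why surjectivity is only claimed under that assumption.
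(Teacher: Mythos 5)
Your proof is correct, and the surjectivity argument takes a genuinely different route from the paper's. The paper glues the \emph{data} rather than the functions: since the $\Gamma_i$ are pairwise disjoint, the tuple $(g_i) \in \cX$ is identified with a single univalued function on the union $\bigcup_i \Gamma_i$, and the classical trace theorem \cite[Theorems~8.7 and~8.8]{Wloka1987} is then invoked \emph{once} for this multi-component curve system (exactly as in the proof of Lemma~\ref{lem:SINGLE_TRACE}) to produce a single $w\in V$ with the prescribed traces. You instead glue \emph{functions}: you solve $N$ separate single-particle problems via Lemma~\ref{lem:SINGLE_TRACE}, obtaining $w_i \in V$ with $T_i w_i = g_i$, then localise with cutoffs $\chi_i$ supported in pairwise disjoint neighbourhoods $U_i \supset \overline{B}_i$ with $\chi_i \equiv 1$ near $\overline{B}_i$, and set $v = \sum_i \chi_i w_i$; since $\chi_i \equiv 1$ in a neighbourhood of $\Gamma_i$, both trace components of $\chi_i w_i$ on $\Gamma_i$ coincide with those of $w_i$, and disjointness of the supports prevents any interference between particles. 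Your route buys two things: it is fully reduced to the already-proven single-particle lemma, avoiding any appeal to a trace theorem for a domain whose boundary has several connected components, and it makes membership $v \in V$ transparent for all three choices in \eqref{eq:VDEF}, since $v$ vanishes identically near $\partial\Omega$ --- a point the paper's proof leaves implicit in its ``As above''. The paper's proof is shorter. Both arguments use the disjointness hypothesis at the same essential place: yours to obtain disjoint cutoff supports, the paper's to identify $(g_i)$ with a univalued function on $\bigcup_i \Gamma_i$.
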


\begin{proof}
    Lemma~\ref{lem:SINGLE_TRACE} provides linearity and continuity of $T$.
    If $\overline{B}_i \cap \overline{B}_j = \emptyset$ for $i\neq j$,
    then each $(v_i)\in \cX= \prod_{i=1}^N \cX_i$ 
    can be identified with a univalued function 
    $v\in H^{\frac 3 2}(\bigcup \Gamma_i)\times H^{\frac 1 2}(\bigcup \Gamma_i)$
    such that $v|_{\Gamma_i}=v_i$.
    As above, classical trace theorems  \cite[Theorems~8.7 and~8.8]{Wloka1987} 
    imply that there is a $w\in V$ such that 
    $(w|_{\bigcup \Gamma_i},\frac{\partial}{\partial n}w|_{\bigcup \Gamma_i})=v$ and thus   surjectivity of $T$.
\end{proof}

Note that the condition  
$\overline{B}_i \cap \overline{B}_j = \emptyset$ for $i\neq j$
is necessary for surjectivity of $T$.

Now, we introduce the affine subspace 
\begin{equation} \label{eq:AFFINE_CC_SUBSPACE}
    V^{h,s} = 
        \left\{v \in V \st ~v= h_{i}, \; \tfrac{\partial v}{\partial n}= s_{i}
        \text{ on } \Gamma_i,\; i=1,\dots,N \right\} \subset V.
\end{equation}
As a direct consequence of Lemma~\ref{lem:PRODUCT_TRACE},
we  get $V^{h,s} =\{v \in V \st T v = (h,s) \} \neq \emptyset$.
Homogeneous data $h_i=s_i=0$ provide the linear space $V^{0,0}\subset V$.
In this way, the  boundary conditions \eqref{eq:INTBC}
now become {\em constraints on the curves} $\Gamma_i \subset \Omega$.

\begin{problem}[Curve constraints]\ \\%
    \label{p:CC}%
    Find $u\in V^{h,s}$ minimizing the energy $\cJ$ on $V^{h,s}$.
\end{problem}

The equivalent variational formulation of  Problem~\ref{p:CC} is  to find $u\in V^{h,s}$ such that
\begin{equation} \label{eq:VCCP}
    a(u, v)= 0 \qquad \forall v \in V^{0,0}.
\end{equation}

We now  show existence and uniqueness of solutions to Problem~\ref{p:CC}. 

\begin{theorem}\label{prop:EXCC}
    Assume that $\overline{B}_i \cap \overline{B}_j = \emptyset$ for $i\neq j$.
    Then there is a unique solution of Problem~\ref{p:CC}.
\end{theorem}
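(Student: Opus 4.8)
The plan is to recast Problem~\ref{p:CC} as an unconstrained elliptic problem on the linear space $V^{0,0}$ and then invoke the Lax--Milgram theorem. First I would observe that $V^{0,0}=\{v\in V\st Tv=0\}$ is a closed subspace of $V$, since the trace operator $T$ is continuous by Lemma~\ref{lem:PRODUCT_TRACE}; equipped with the restriction of the $H^2(\Omega)$-norm it is therefore itself a Hilbert space. The disjointness hypothesis $\overline{B}_i\cap\overline{B}_j=\emptyset$ enters precisely to guarantee, via the surjectivity part of Lemma~\ref{lem:PRODUCT_TRACE}, that $V^{h,s}=\{v\in V\st Tv=(h,s)\}$ is non-empty, as already noted below that lemma. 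This is the only place the hypothesis is genuinely needed.

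Fixing any $u_0\in V^{h,s}$ and writing $u=u_0+w$ with $w\in V^{0,0}$, the variational characterisation \eqref{eq:VCCP} becomes the problem of finding $w\in V^{0,0}$ with
\begin{equation*}
    a(w,v)=-a(u_0,v)\qquad\forall\, v\in V^{0,0}.
\end{equation*}
The right-hand side $v\mapsto -a(u_0,v)$ is a bounded linear functional on $V^{0,0}$ by continuity of $a$. Since $a(\wc,\wc)$ is continuous and coercive on $V$ by Lemma~\ref{lem:GLOBAL_COERCIVITY}, both properties restrict to the closed subspace $V^{0,0}$, so Lax--Milgram yields a unique $w\in V^{0,0}$, hence a unique $u=u_0+w\in V^{h,s}$ solving \eqref{eq:VCCP}. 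The result is independent of the chosen $u_0$, because any two admissible offsets differ by an element of $V^{0,0}$.

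Finally, since the equivalence of the minimisation Problem~\ref{p:CC} with \eqref{eq:VCCP} is already recorded in the excerpt, and since $a(\wc,\wc)$ is symmetric, bounded, and coercive, the quadratic functional $\cJ$ is strictly convex on the affine space $V^{h,s}$, so its unique critical point is its unique minimiser. This establishes existence and uniqueness. I do not anticipate a real obstacle in the argument: the only delicate point is the non-emptiness of $V^{h,s}$, which is exactly what the assumption $\overline{B}_i\cap\overline{B}_j=\emptyset$ secures.
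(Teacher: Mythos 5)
Your proof is correct and follows essentially the same route as the paper: non-emptiness and closedness of $V^{h,s}$ via Lemma~\ref{lem:PRODUCT_TRACE} (which is exactly where the disjointness hypothesis enters), continuity and coercivity of $a(\wc,\wc)$ via Lemma~\ref{lem:GLOBAL_COERCIVITY}, and then the Lax--Milgram lemma. The only cosmetic difference is that the paper invokes Lax--Milgram directly on the non-empty, closed, affine subspace $V^{h,s}$, whereas you spell out the standard translation $u=u_0+w$ to the homogeneous space $V^{0,0}$ --- the same argument in expanded form.
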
 
\begin{proof}
    By surjectivity, linearity, and continuity of $T$ we know that $V^{h,s}$
    is a non-empty, affine, closed, subspace of $V$. In addition, $a(\wc,\wc)$ is
    bounded and coercive on $V$ by Lemma~\ref{lem:GLOBAL_COERCIVITY}.
    Hence, the Lax-Milgram lemma 
    provides  existence and uniqueness.
\end{proof} 

We now clarify the relation of Problems~\ref{p:CC}  and \ref{p:BVP}.
\begin{lemma}\label{lem:DOMAIN_EXTENSION}
    Assume that $\overline{B}_i \cap \overline{B}_j = \emptyset$ for $i\neq j$.
    Let $v_B \in V_{\Omega_B}$ and $v_i \in H^2(B_i)$
    such that $T_i v_B = T_i v_i$ for all $i=1,\dots,N$.
    Then, the function $v$ defined by $v|_{\Omega_B}=v_B$
    and $v|_{\overline{B}_i} = v_i$, $i=1,\dots,N$, satisfies $v\in V$.
\end{lemma}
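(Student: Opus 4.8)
The plan is to show that the two pieces agree smoothly enough along each $\Gamma_i$ to glue into a single $H^2(\Omega)$-function that still lies in the closed subspace $V$. The key observation is that matching the full trace $T_i v_B = T_i v_i$ means matching both the Dirichlet trace (the boundary value) and the Neumann trace (the normal derivative) across $\Gamma_i$. For second-order Sobolev spaces, continuity of the function and its normal derivative across a sufficiently smooth interface is exactly the compatibility condition needed to guarantee that the glued function has no distributional jump contributions in its weak second derivatives, so it genuinely belongs to $H^2$ of the union.

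First I would argue the local regularity: fix $i$ and work in the reference configuration of a neighborhood $U$ of $\Gamma_i$, split into $U\cap \Omega_B$ and $U\cap B_i$ by the $C^3$-curve $\Gamma_i$. On each side $v$ is $H^2$ by hypothesis ($v_B\in V_{\Omega_B}$ restricts to $H^2(U\cap\Omega_B)$ and $v_i\in H^2(B_i)$ restricts to $H^2(U\cap B_i)$). I would then invoke the standard gluing lemma for Sobolev functions across a Lipschitz (here $C^3$) interface: if two $H^2$-functions on the two sides have matching traces up to order one (value and normal derivative) on the common boundary, the piecewise-defined function is in $H^2(U)$. This is the heart of the argument and follows by testing the piecewise weak second derivatives against $C_c^\infty(U)$ and integrating by parts on each side; the boundary terms produced on $\Gamma_i$ cancel precisely because of the matching of $v|_{\Gamma_i}$ and $\partial v/\partial n|_{\Gamma_i}$. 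Since the particles satisfy $\overline{B}_i\cap\overline{B}_j=\emptyset$ for $i\neq j$, these gluing arguments are carried out independently near each $\Gamma_i$, so no interactions between particles complicate matters.

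Having $v\in H^2$ locally near every $\Gamma_i$, and trivially $v\in H^2$ on the interior of each region where it coincides with one of the given $H^2$-functions, a standard covering/partition-of-unity argument upgrades this to $v\in H^2(\Omega)$ globally. It then remains to check that $v$ lies in the closed subspace $V$ (and not merely in $H^2(\Omega)$), i.e., that $v$ inherits the boundary conditions on $\partial\Omega$ encoded in $V$ (Dirichlet, Navier, or periodic with zero mean, cf.~\eqref{eq:VDEF}). This is immediate, since $v$ agrees with $v_B\in V_{\Omega_B}$ in a neighborhood of $\partial\Omega$, and $v_B$ by definition is the restriction to $\Omega_B$ of some function in $V$; hence the traces of $v$ on $\partial\Omega$ coincide with those of an element of $V$, and $v\in V$.

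The main obstacle I anticipate is making the gluing lemma rigorous in a way that respects the precise fractional-order trace spaces $H^{3/2}(\Gamma_i)$ and $H^{1/2}(\Gamma_i)$ appearing in the definition of $T_i$. One must confirm that matching the two traces in $\cX_i$ is exactly the right compatibility condition — neither too weak (which would fail to kill the jump terms) nor requiring more regularity than is available. The cleanest route is to reduce to the model case of a flat interface by the $C^3$-diffeomorphism flattening $\Gamma_i$, where the claim is the familiar statement that $H^2$ across a hyperplane requires continuity of the function and its first normal derivative; the $C^3$ regularity of $\Gamma_i$ ensures the change of variables preserves $H^2$ and maps the trace spaces correctly. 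Once the flat case is established, transporting back and summing over the finitely many disjoint particles completes the proof.
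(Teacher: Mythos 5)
Your proof is correct, but it takes a genuinely different route from the paper's. You prove the gluing statement directly: test the piecewise derivatives against $C_c^\infty$ functions near each $\Gamma_i$, integrate by parts on both sides, and observe that the interface terms cancel because the Dirichlet and Neumann traces match; then flatten $\Gamma_i$ and use a partition of unity. One step you compress: at second order the cancellation requires the \emph{full} gradient traces to agree on $\Gamma_i$, which follows because matching Dirichlet traces in $H^{3/2}(\Gamma_i)$ forces the tangential derivatives to agree, while the normal components agree by hypothesis --- worth making explicit. The paper instead avoids proving any gluing lemma: it picks an extension $\tilde v\in V$ of $v_B$ (which exists by the very definition of $V_{\Omega_B}$), notes that $T_i(\tilde v - v)=T_i\tilde v - T_i v_i=0$ on every $\Gamma_i$, and then invokes the characterization of $H_0^2$ by vanishing traces \cite[Theorem~8.9]{Wloka1987} to conclude $\tilde v - v\in H_0^2\bigl(\bigcup_{i=1}^N B_i\bigr)$, which extends by zero to an $H^2(\R^2)$-function; since $\tilde v - v$ vanishes on $\Omega_B$ it satisfies the outer boundary conditions trivially, so $\tilde v - v\in V$ and $v=\tilde v-(\tilde v - v)\in V$. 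The subtraction trick buys brevity and outsources the analytic content to a cited theorem; your argument is more self-contained but essentially re-proves that theorem by hand. Finally, your treatment of the outer boundary is slightly too weak for the periodic space $H^2_{p,0}(\Omega)$: membership there is not literally a condition on traces along $\partial\Omega$, so rather than arguing that the traces of $v$ coincide with those of an element of $V$, it is cleaner to note that $v-\tilde v$ vanishes on all of $\Omega_B$, hence on a neighbourhood of $\partial\Omega$, so $v-\tilde v\in H_0^2(\Omega)\subset V$ for all three choices of $V$ --- which is precisely how the paper closes its argument.
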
 
\begin{proof}
    By definition of $V_{\Omega_B}$ there must be an extension
    $\tilde{v} \in V$ with $\tilde{v}|_{\Omega_B} = v_B|_{\Omega_B}$.
    For this extension we have $T_i \tilde{v} -T_i v = T_i \tilde{v} - T_i v_i=0$
    and thus, by \cite[Theorems~8.9]{Wloka1987},
    $\tilde{v}-v \in H_0^2(\bigcup_{i=1}^N B_i) \subset H^2(\R^2)$.
    On the other hand $(\tilde{v}-v)|_{\Omega_B}=0$ guarantees
    that $\tilde{v}-v$ takes the desired boundary condition on $\partial \Omega$
    and hence $\tilde{v}-v \in V$ and $v = \tilde{v} - (\tilde{v}-v)\in V$.
\end{proof} 

\begin{proposition} \label{prop:EQBVPCC}
    Assume that $\overline{B}_i \cap \overline{B}_j = \emptyset$ for $i\neq j$.
    Then the Problems~\ref{p:CC} and \ref{p:BVP} are equivalent in the sense
    that the restriction $u|_{\Omega_B}$ of the solution $u$ to Problem~\ref{p:CC}
    is a solution to Problem~\ref{p:BVP} whereas any solution $u_B$ of
    Problem~\ref{p:BVP} (defined on $\Omega_B \subset \Omega$) can be extended
    to the solution of Problem~\ref{p:CC} (defined on $\Omega$).
\end{proposition}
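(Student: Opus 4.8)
The plan is to exploit the fact that the energy and its bilinear form split additively over the disjoint regions making up $\Omega$, i.e.\ $a(v,w)=a_{\Omega_B}(v,w)+\sum_{i=1}^N a_{B_i}(v,w)$ for all $v,w\in V$, since $\Omega$ coincides with $\Omega_B\cup\bigcup_i B_i$ up to the null sets $\Gamma_i$ and any $v\in V$ restricts to an $H^2$ function on each piece. The curve constraint defining $V^{h,s}$ fixes the traces $T_iv=(h_i,s_i)$, so that the coupling between $\Omega_B$ and the $B_i$ enters only through prescribed data on $\Gamma_i$. Throughout, the disjointness hypothesis $\overline{B}_i\cap\overline{B}_j=\emptyset$ is what makes Lemma~\ref{lem:DOMAIN_EXTENSION} available for gluing and, in particular, for extending functions by zero into the particles.

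For the restriction direction, I would take the solution $u$ of Problem~\ref{p:CC}, which exists and is unique by Theorem~\ref{prop:EXCC}. Its restriction $u|_{\Omega_B}$ lies in $V_{\Omega_B}^{h,s}$, because the trace $T_i$ computed from $\Omega_B$ agrees with the one computed from $B_i$ (the remark following Lemma~\ref{lem:SINGLE_TRACE}) and equals $(h_i,s_i)$. To verify the variational equation~\eqref{eq:VBVP}, I would take an arbitrary $v_B\in V_{\Omega_B}^{0,0}$ and extend it by zero into each $B_i$; since $T_iv_B=0=T_i0$, Lemma~\ref{lem:DOMAIN_EXTENSION} (applied with $v_i=0$) guarantees that the extension $\tilde v_B$ lies in $V$, and in fact in $V^{0,0}$. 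Because $\tilde v_B$ vanishes on every $B_i$, the additivity of $a$ reduces $0=a(u,\tilde v_B)$ to $a_{\Omega_B}(u|_{\Omega_B},v_B)=0$. As $v_B$ was arbitrary, $u|_{\Omega_B}$ solves Problem~\ref{p:BVP}.

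For the extension direction, I would first note that the same extension-by-zero trick upgrades the global coercivity of Lemma~\ref{lem:GLOBAL_COERCIVITY} to coercivity of $a_{\Omega_B}$ on $V_{\Omega_B}^{0,0}$: for $v_B\in V_{\Omega_B}^{0,0}$ with zero extension $\tilde v_B\in V^{0,0}$ one has $a_{\Omega_B}(v_B,v_B)=a(\tilde v_B,\tilde v_B)\ge c\|\tilde v_B\|_{2}^2\ge c\|v_B\|_{2,\Omega_B}^2$. Since $V_{\Omega_B}^{h,s}\neq\emptyset$ (it contains $u|_{\Omega_B}$), the Lax--Milgram lemma then shows that Problem~\ref{p:BVP} has a unique solution, which by the previous paragraph must be $u|_{\Omega_B}$. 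Hence any solution $u_B$ of Problem~\ref{p:BVP} satisfies $u_B=u|_{\Omega_B}$, and the sought extension is simply the Problem~\ref{p:CC} solution $u$ itself, which restricts to $u_B$ on $\Omega_B$.

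The main obstacle is not any single estimate but making the decoupling rigorous: one must ensure that extension by zero (and, more generally, gluing across the $\Gamma_i$) produces genuine $H^2(\Omega)$ functions with no spurious jump in value or in normal derivative across the particle contours. This is precisely controlled by the homogeneous trace condition $T_iv_B=0$ through Lemma~\ref{lem:DOMAIN_EXTENSION}, and it is here that $\overline{B}_i\cap\overline{B}_j=\emptyset$ is essential. A more constructive variant for the extension direction would avoid invoking uniqueness of Problem~\ref{p:BVP} and instead glue $u_B$ to the interior minimizers of $\cJ_{B_i}$ over $\{v_i\in H^2(B_i)\st T_iv_i=(h_i,s_i)\}$; these exist by coercivity of $a_{B_i}$ on $H^2_0(B_i)$, and the energy splitting $\cJ(v)=\cJ_{\Omega_B}(v|_{\Omega_B})+\sum_i\cJ_{B_i}(v|_{B_i})$ then shows directly that the glued function minimizes $\cJ$ on $V^{h,s}$.
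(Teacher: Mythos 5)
Your proof is correct, and your restriction direction is exactly the paper's: extend test functions $v_B\in V_{\Omega_B}^{0,0}$ by zero into the particles via Lemma~\ref{lem:DOMAIN_EXTENSION}, then use additivity of $a(\wc,\wc)$ over $\Omega_B$ and the $B_i$ to reduce \eqref{eq:VCCP} to \eqref{eq:VBVP}. Your extension direction, however, takes a genuinely different route. The paper is constructive: on each $B_i$ it solves, by Lax--Milgram on $H^2_0(B_i)$, the interior problem $a_{B_i}(u_i,v_i)=0$ for all $v_i\in H^2_0(B_i)$ with boundary data $T_iu_i=(h_i,s_i)$, glues $u_B$ to these $u_i$ by Lemma~\ref{lem:DOMAIN_EXTENSION}, and verifies $a(u,v)=0$ for all $v\in V^{0,0}$ by splitting $v$ into $v|_{\Omega_B}$ and $v|_{B_i}$ and adding the local variational identities; existence and uniqueness for Problem~\ref{p:BVP} (Theorem~\ref{prop:EXBVP}) then drops out as a corollary of the equivalence. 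You instead prove well-posedness of Problem~\ref{p:BVP} first --- transferring coercivity from $V$ to $V_{\Omega_B}^{0,0}$ through extension by zero --- and conclude that any solution $u_B$ must coincide with $u|_{\Omega_B}$, so that the required extension is simply $u$ itself. This is sound and shorter, at the cost of making what the paper obtains as output into an input; it also tells you nothing about what the extension looks like inside the particles (namely, the solution of the local homogeneous problem), which the paper's gluing makes explicit. One refinement: invoking Lax--Milgram for existence on $V_{\Omega_B}^{h,s}$ strictly requires completeness of $V_{\Omega_B}^{0,0}$ in the $H^2(\Omega_B)$-norm (true, since extension by zero identifies it isometrically with a closed subspace of $V^{0,0}$, but this deserves a line). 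You can sidestep this entirely: existence for Problem~\ref{p:BVP} is already supplied by your restriction direction, so only uniqueness is needed, and uniqueness follows from your coercivity estimate alone by testing the difference of two solutions with itself. Your closing ``constructive variant,'' phrased in terms of energies rather than variational equations, is precisely the paper's proof.
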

\begin{proof}
    Let $u$ be the solution to Problem~\ref{p:CC}. 
    Then $u_B = u|_{\Omega_B} \in V_{\Omega_B}^{h,s}$, 
    because $u|_{\Omega_B}\in V_{\Omega_B}$ and $u|_{\Omega_B}$
    satisfies the boundary conditions \eqref{eq:INTBC}.
    Now let $v_B \in V_{\Omega_B}^{0,0}$.
    By Lemma~\ref{lem:DOMAIN_EXTENSION}, the extension $v$ of $v_B$ to $\Omega$ 
    by zero  is contained in $V^{0,0}$. As $u$ satisfies \eqref{eq:VCCP}, we have
    $a_{\Omega_B}(u_B,v_B)=  a(u, v)=0$.
    Hence,  $u_B$ satisfies \eqref{eq:VBVP} 
    and thus solves Problem~\ref{p:BVP}.

    Now assume that $u_B$ is a solution to Problem~\ref{p:BVP}.
    For each fixed $i$, the bilinear form $a_{B_i}(\wc,\wc)$ 
    is continuous and coercive on $H^2_0(B_i)$ by  Lemma~\ref{lem:GLOBAL_COERCIVITY}.
    Hence, the Lax-Milgram lemma provides existence and uniqueness 
    of $u_i \in H^2(B_i)$ with boundary conditions $T_i u_i = (h_i, s_i)$ such that
    \begin{equation} \label{eq:VARB}
    a_{B_i}(u_i,v_i)=0\qquad \forall v_i \in  H_0^2(B_i), \quad i=1,\dots, N.  
    \end{equation}
    Exploiting Lemma~\ref{lem:DOMAIN_EXTENSION}, the extension $u$ 
    of $u_B\in V_B^{h,s}$ to $\Omega$ by  $u_i\in H^2(B_i)$, $i=1,\dots, N$,
    then satisfies $u \in V^{h,s}$.
    Furthermore, for each $v \in V^{0,0}$ we have
    $v_B = v|_{\Omega_B} \in V_{\Omega_B}^{0,0}$ and
    $v_i = v|_{B_i} \in H_0^2(B_i)$.
    As $u_B$ satisfies \eqref{eq:VBVP} and $u_i$ satisfies \eqref{eq:VARB}, this leads to
    \begin{align*}
        a_{\Omega_i}(u_B,v_B) = 0, \qquad a_{B_i}(u_i,v_i) = 0, \quad i=1,\dots, N.
    \end{align*}
    Adding  these equations, we obtain  $a(u,v) = 0$. Hence,
    $u$ is the solution of Problem~\ref{p:CC}.
\end{proof}

Existence and uniqueness of a solution to Problem~\ref{p:BVP}
is now an immediate consequence of Proposition~\ref{prop:EQBVPCC}.
\begin{theorem}\label{prop:EXBVP}
    Assume that $\overline{B}_i \cap \overline{B}_j = \emptyset$ for $i\neq j$.
    Then there is a unique solution $u\in V_{\Omega_B}^{h,s}$
    of Problem~\ref{p:BVP}.
\end{theorem}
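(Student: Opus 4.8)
The plan is to derive existence and uniqueness for Problem~\ref{p:BVP} as a direct corollary of the already-established results for Problem~\ref{p:CC} (Theorem~\ref{prop:EXCC}) together with the equivalence stated in Proposition~\ref{prop:EQBVPCC}. Since Theorem~\ref{prop:EXCC} guarantees a unique minimizer $u\in V^{h,s}$ of $\cJ$ on the whole domain $\Omega$, and Proposition~\ref{prop:EQBVPCC} tells us that the restriction $u|_{\Omega_B}$ solves Problem~\ref{p:BVP}, existence is immediate: $u|_{\Omega_B}\in V_{\Omega_B}^{h,s}$ is a solution.

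For uniqueness, I would argue by contradiction using the second half of the equivalence in Proposition~\ref{prop:EQBVPCC}. Suppose $u_B^{(1)}$ and $u_B^{(2)}$ are two solutions of Problem~\ref{p:BVP} on $\Omega_B$. By Proposition~\ref{prop:EQBVPCC}, each extends (via the fixed interior functions $u_i\in H^2(B_i)$ solving \eqref{eq:VARB}) to a solution of Problem~\ref{p:CC} on $\Omega$. But Theorem~\ref{prop:EXCC} asserts that Problem~\ref{p:CC} has exactly one solution, so the two extensions coincide on $\Omega$, and in particular their restrictions to $\Omega_B$ agree, i.e. $u_B^{(1)}=u_B^{(2)}$. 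This forces uniqueness on $\Omega_B$.

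The only point requiring a little care is that the extension map appearing in Proposition~\ref{prop:EQBVPCC} is well-defined and consistent: the interior pieces $u_i$ are determined uniquely by the boundary data $T_iu_i=(h_i,s_i)$ through the Lax--Milgram argument of \eqref{eq:VARB}, which depends only on the trace of $u_B$ along $\Gamma_i$ and not on the particular solution $u_B$ chosen. Because both candidate solutions satisfy the same essential boundary conditions \eqref{eq:INTBC}, they induce the same $u_i$, so the extensions differ only through their values on $\Omega_B$; the uniqueness on $\Omega$ then transfers back cleanly. I do not anticipate any genuine obstacle here, as all the substantive work---coercivity, surjectivity of the trace operator, and the domain-extension lemma---has already been carried out; this statement is essentially a bookkeeping consequence. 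The proof can therefore be written in one or two sentences invoking Theorem~\ref{prop:EXCC} and Proposition~\ref{prop:EQBVPCC}.
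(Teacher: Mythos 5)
Your proposal is correct and is exactly the paper's argument: the paper derives Theorem~\ref{prop:EXBVP} as an immediate consequence of the equivalence in Proposition~\ref{prop:EQBVPCC} combined with the existence and uniqueness for Problem~\ref{p:CC} from Theorem~\ref{prop:EXCC}. Your additional remarks on the well-definedness of the extension map merely spell out the bookkeeping the paper leaves implicit, so there is nothing to add or fix.
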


\begin{remark}
    For each given location of particles the numerical approximation of Problem~\ref{p:BVP} requires the resolution of $\Omega_B$ 
    by a corresponding finite element mesh.
    In contrast,  Problem~\ref{p:CC} allows for a fixed mesh on
    the full domain $\Omega$ and completely separate, location-dependent 
    approximations of $\Gamma_i$ for the approximate evaluation of curve constraints.
\end{remark}

\subsection{Parametrized curve constraints} \label{sec:PCC}

We consider curve constraints of the form \eqref{eq:INTBC} 
which now  allow for varying heights 
$\gamma=(\gamma_i)\in \R^N$ of the particles $B_i$, $i=1,\dots, N$.  
More precisely, we impose the conditions
\begin{equation}\label{eq:PACU}
    u=h_i + \gamma_i \in H^{\frac{3}{2}}(\Gamma_i),\quad \quad
    \tfrac{\partial}{\partial n}u = s_i \in H^{\frac{1}{2}}(\Gamma_i) \quad \text{ on }\Gamma_i, \quad i=1,\dots, N,
\end{equation}
with additional unknowns $\gamma_i \in \R$.

\begin{problem}[Parametrized curve constraints]\ \\ %
    \label{p:CCH}%
    Find $(u,\gamma) \in V\times \R^N$
    minimizing the energy $\cJ$ subject to curve constraints \eqref{eq:PACU}.
\end{problem}

We  consider existence and uniqueness of a solution
together with an equivalent variational reformulation of Problem~\ref{p:CCH}.

\begin{lemma}\label{lem:VAR_HEIGHT_CHARACTERIZATION}
Assume that $\overline{B}_i \cap \overline{B}_j = \emptyset$ for $i\neq j$.
Then there is an $\eta_0\in V$ such that 
\begin{equation} \label{eq:ETA0}
T_i\eta_0=(h_i,s_i)\qquad \forall i=1,\dots, N,
\end{equation} 
and for each fixed $i=1,\dots,N$ there is an $\eta_i\in V$,  such that 
\begin{equation} \label{eq:ETAI}
T_i \eta_i=(1,0),\quad T_j \eta_i =0,\qquad  \forall i,j=1,\dots, N,\; j\neq i.
\end{equation}
Moreover, $v\in V$ satisfies curve constraints \eqref{eq:PACU}
with some $\gamma_i\in \R$ if and only if
\begin{equation}\label{eq:SPACOND}
    v\in \eta_0 + \op{span}\{\eta_i\st i=1,\dots,N\}+ V^{0,0}.
\end{equation}
\end{lemma}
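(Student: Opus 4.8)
The plan is to establish the three claims in turn, all relying on the surjectivity of the product trace operator $T$ from Lemma~\ref{lem:PRODUCT_TRACE}, which is available precisely because we assume $\overline{B}_i \cap \overline{B}_j = \emptyset$ for $i\neq j$. First I would construct $\eta_0$: since $(h,s)\in \cX = \prod_{i=1}^N \cX_i$ and $T$ is surjective, there exists some $\eta_0\in V$ with $T\eta_0 = (h,s)$, which unpacks componentwise to $T_i\eta_0 = (h_i,s_i)$ for all $i$, giving \eqref{eq:ETA0}. The construction of each $\eta_i$ is identical: the element of $\cX$ whose $i$-th component is $(1,0)\in \cX_i$ (note the constant function $1$ lies in $H^{3/2}(\Gamma_i)$ and $0\in H^{1/2}(\Gamma_i)$) and whose other components vanish is a legitimate target, so surjectivity of $T$ yields $\eta_i\in V$ realizing \eqref{eq:ETAI}.

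Next I would prove the characterization \eqref{eq:SPACOND}. For the forward direction, suppose $v\in V$ satisfies \eqref{eq:PACU} with some $\gamma=(\gamma_i)\in\R^N$. Set $w = v - \eta_0 - \sum_{i=1}^N \gamma_i \eta_i$. Applying $T_j$ and using linearity together with \eqref{eq:ETA0} and \eqref{eq:ETAI}, I compute
\begin{equation*}
    T_j w = T_j v - T_j\eta_0 - \sum_{i=1}^N \gamma_i T_j \eta_i
    = (h_j + \gamma_j, s_j) - (h_j, s_j) - \gamma_j (1,0) = (0,0),
\end{equation*}
where I used that $T_j\eta_i = 0$ for $i\neq j$ so only the $i=j$ term survives in the sum. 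Hence $Tw = 0$, i.e. $w\in V^{0,0}$, which places $v = \eta_0 + \sum_i \gamma_i \eta_i + w$ in the right-hand side of \eqref{eq:SPACOND}. For the converse, any $v$ of the form $\eta_0 + \sum_i \gamma_i \eta_i + w$ with $w\in V^{0,0}$ satisfies, by the same linear computation run in reverse, $T_j v = (h_j + \gamma_j, s_j)$ for each $j$, which is exactly the statement that $v$ obeys \eqref{eq:PACU} with these heights $\gamma_i$.

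I do not anticipate a serious obstacle here; the lemma is essentially a clean bookkeeping consequence of surjectivity and linearity of $T$. The one point deserving a moment's care is verifying that the constant data $(1,0)$ genuinely lies in $\cX_i = H^{3/2}(\Gamma_i)\times H^{1/2}(\Gamma_i)$, so that $\eta_i$ may be produced by Lemma~\ref{lem:PRODUCT_TRACE}; this holds because $\Gamma_i$ is a compact $C^3$ curve, on which constants are smooth and hence belong to every Sobolev space. Everything else is the elementary observation that the affine condition \eqref{eq:PACU} decouples across the disjoint particle boundaries, so the $\eta_i$ act as independent "height generators" while $\eta_0$ absorbs the fixed reference data $(h_i,s_i)$, and the residual freedom is exactly $V^{0,0}$.
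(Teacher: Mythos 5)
Your proof is correct and follows essentially the same route as the paper: both obtain $\eta_0$ and the $\eta_i$ from the surjectivity of the product trace operator in Lemma~\ref{lem:PRODUCT_TRACE}, and both prove the characterization \eqref{eq:SPACOND} by showing $T_j\bigl(v - \eta_0 - \sum_i \gamma_i\eta_i\bigr) = 0$, with the converse following by linearity. Your write-up merely makes explicit the steps the paper leaves implicit (the componentwise computation and the observation that $(1,0)\in\cX_i$), which is fine.
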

\begin{proof} The first two  assertions follow from Lemma~\ref{lem:PRODUCT_TRACE}.
    Assume that $v\in V$ satisfies the curve constraints \eqref{eq:PACU} 
    with some $\gamma_i \in \R$.
    Then, by definition of $\eta_i$, we have  
    \[
    T_i(v -  \eta_0 - \sum_{i=1}^N\gamma_i\eta_i)=0 \qquad \forall i=1,\dots, N,
    \]
    and thus  $v\in \eta_0 + \op{span}\{\eta_i\st i=1,\dots,N\}+ V^{0,0}$. 
    The converse is obvious.
\end{proof}

\begin{proposition} \label{prop:CCHEIGHT}
    Assume that $\overline{B}_i \cap \overline{B}_j = \emptyset$ for $i\neq j$.
    Then Problem~\ref{p:CCH} admits a unique solution $(u,\gamma) \in V\times \R^N$.
    The equivalent variational formulation is to find 
    $u_0\in \eta_0+V^{0,0}$ and $\gamma= (\gamma_i)\in \R^N$
    such that $u=u_0 + \sum_{i=1}^N\gamma_i \eta_i$ satisfies
    \begin{equation} \label{eq:VCCH}
        a(u,v)= 0 \quad \forall v\in V^{0,0}\quad \text{and}\quad
        a(u,\eta_i)= 0\quad  \forall i=1,\dots, N,
    \end{equation}
    with  $\eta_i\in V$, $i=0,\dots,N$, 
    defined in Lemma~\ref{lem:VAR_HEIGHT_CHARACTERIZATION}.
\end{proposition}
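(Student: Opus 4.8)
The plan is to reduce Problem~\ref{p:CCH} to a standard quadratic minimization over a closed affine subspace of $V$, using the characterization of the constraint set furnished by Lemma~\ref{lem:VAR_HEIGHT_CHARACTERIZATION}, and then to identify the first-order optimality condition with \eqref{eq:VCCH}. The key observation is that, since $a$ is symmetric and $\cJ(u)=\tfrac12 a(u,u)$, minimizing $\cJ$ over an affine subspace is equivalent to a Galerkin-orthogonality condition, which will split exactly into the two families in \eqref{eq:VCCH}.

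First I would invoke Lemma~\ref{lem:VAR_HEIGHT_CHARACTERIZATION}: the set of all $v\in V$ satisfying the parametrized curve constraints \eqref{eq:PACU} for some $\gamma\in\R^N$ is precisely the affine subspace $W=\eta_0 + V_0$ with $V_0 = \op{span}\{\eta_i \st i=1,\dots,N\} + V^{0,0}$. I would note that $V_0$ is closed, being the sum of the finite-dimensional subspace $\op{span}\{\eta_i\}$ and the closed subspace $V^{0,0}$; hence $W$ is a nonempty closed affine subspace. Since $a$ is continuous and coercive on $V$ by Lemma~\ref{lem:GLOBAL_COERCIVITY}, its restriction to $V_0$ is again continuous and coercive, so minimizing $\cJ$ over $W$, equivalently minimizing $w\mapsto \tfrac12 a(w,w) + a(\eta_0,w)$ over $w\in V_0$ (up to the additive constant $\tfrac12 a(\eta_0,\eta_0)$), admits a unique minimizer by the Lax-Milgram lemma. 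This yields a unique minimizing $u\in W$.

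Next I would extract the optimality condition. Writing $u = \eta_0 + w$, the unique minimizer is characterized by $a(u, \bar w) = 0$ for all $\bar w\in V_0$. Because $V_0 = \op{span}\{\eta_i\} + V^{0,0}$, this single condition is equivalent to the two families $a(u,v)=0$ for all $v\in V^{0,0}$ together with $a(u,\eta_i)=0$ for $i=1,\dots,N$, which is exactly \eqref{eq:VCCH}; conversely these equations force $a(u,\bar w)=0$ on all of $V_0$ and hence characterize the minimizer, giving the asserted equivalence of the variational formulation with Problem~\ref{p:CCH}.

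Finally I would recover the height parameters and settle uniqueness of the full pair. Since $T_i\eta_0 = (h_i,s_i)$, $T_i\eta_j = \delta_{ij}(1,0)$, and $T_i v=0$ for $v\in V^{0,0}$, every $u\in W$ satisfies $T_i u = (h_i + \gamma_i, s_i)$, so reading off the first trace component determines each $\gamma_i$ uniquely and recovers the decomposition $u = u_0 + \sum_{i=1}^N \gamma_i\eta_i$ with $u_0\in\eta_0 + V^{0,0}$; in particular $(u,\gamma)$ solves Problem~\ref{p:CCH} and is unique. I expect no serious obstacle: the proof is essentially Lax-Milgram on a suitable subspace. The only steps needing genuine care are the closedness of $V_0$ (so that Lax-Milgram applies on a genuine Hilbert subspace) and the bookkeeping that converts the constraint with explicit unknowns $\gamma_i$ into the reduced variational equations, both of which rest directly on Lemma~\ref{lem:VAR_HEIGHT_CHARACTERIZATION} and the surjectivity of $T$ from Lemma~\ref{lem:PRODUCT_TRACE}.
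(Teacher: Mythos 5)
Your proof is correct and follows essentially the same route as the paper: both use Lemma~\ref{lem:VAR_HEIGHT_CHARACTERIZATION} to rewrite Problem~\ref{p:CCH} as minimization of $\cJ$ over the non-empty, closed, affine subspace $\eta_0 + \op{span}\{\eta_i \st i=1,\dots,N\} + V^{0,0}$ and then apply the Lax-Milgram lemma. The only cosmetic difference is that you establish closedness via the fact that a finite-dimensional subspace plus a closed subspace is closed, whereas the paper reads it off from the representation $\{v \in V \st Tv \in (h,s) + \prod_{i=1}^N \op{span}\{(1,0)\}\}$ and continuity of $T$; your extra bookkeeping (splitting the Galerkin orthogonality and recovering $\gamma_i$ from the traces) just makes explicit what the paper leaves implicit.
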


\begin{proof}
    By Lemma~\ref{lem:VAR_HEIGHT_CHARACTERIZATION}, the Problem~\ref{p:CCH} is
    equivalent to minimizing $\cJ$ on  the  non-empty, affine, and closed subspace
    \begin{align*}
        \eta_0 + \op{span}\{ \eta_i\st i=1,\dots,N \}+ V^{0,0}
        =\Bigl\{ v \in V \Bigst T v \in (h,s) + \prod_{i=1}^N \op{span} \{(1,0)\} \Bigr\}
    \end{align*}
    of $V$.
    Hence, the Lax-Milgram lemma   
    provides  existence, uniqueness, and the variational formulation \eqref{eq:VCCH}.
\end{proof}

We now explicitly decouple the computation of $u$ and $\gamma$
using the orthogonal projections
\begin{align}  \label{eq:OPROP}
    P_{\Gamma_i} v = v- \frac{1}{|\Gamma_i|}\int_{\Gamma_i}v_1\;  \ds (1,0) \in \cX_i, \quad  v=(v_1,v_2)  \in \cX_i,
\end{align}
with respect to the $L^2$-scalar product
$
(\wc,\wc)_{\Gamma_i} = (\wc,\wc)_{L^2(\Gamma_i) \times L^2(\Gamma_i)} 
$.
The kernel of $P_{\Gamma_i}$ is given by
$\op{ker}P_{\Gamma_i} = \op{span}\{(1,0)\}\subset \cX_i$.
The induced projection $P=(P_{\Gamma_i}):\cX \to \cX$ 
on the product space $\cX= \prod_{i=1}^N \cX_i$ now provides a decoupling of $u$ and $\gamma$.


\begin{problem}[Projected curve constraints]\ \\ %
    \label{p:PCCH}%
    Find $u \in V$ minimizing the energy $\cJ$ on $\{ v \in V \st P T v = P (h,s) \}$.
\end{problem}

\begin{proposition} \label{prop:CCH_EQUIVALENCE}
    The pair $(u, \gamma)\in V\times\R^N$ solves Problem~\ref{p:CCH}, if and only if
    $u\in V$ solves Problem~\ref{p:PCCH}
    and $\gamma_i = (T_i u)_1-h_i  \in \R$, $i=1,\dots,N$.
\end{proposition}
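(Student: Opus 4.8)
The plan is to exploit the fact that the energy $\cJ$ depends only on the displacement $u$ and not on the heights $\gamma$, so that solving Problem~\ref{p:CCH} amounts to minimizing $\cJ$ over the set of all \emph{admissible} $u\in V$, i.e., those for which there exists some $\gamma\in\R^N$ making the curve constraints \eqref{eq:PACU} hold. The key step is therefore to show that this admissible set coincides with the feasible set $\{v\in V\st PTv = P(h,s)\}$ of Problem~\ref{p:PCCH}.

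First I would record the admissible set for Problem~\ref{p:CCH}. A function $v\in V$ satisfies \eqref{eq:PACU} for some $\gamma\in\R^N$ if and only if $T_i v - (h_i,s_i) = (\gamma_i,0)\in\op{span}\{(1,0)\}$ for each $i$, that is,
\[
    Tv \in (h,s) + \prod_{i=1}^N \op{span}\{(1,0)\},
\]
which is exactly the characterization already used in the proof of Proposition~\ref{prop:CCHEIGHT} (equivalently, Lemma~\ref{lem:VAR_HEIGHT_CHARACTERIZATION}). Next I would identify the feasible set of Problem~\ref{p:PCCH}. By linearity of $P$ we have $PTv = P(h,s)$ if and only if $Tv - (h,s)\in\op{ker}P$, and since $\op{ker}P_{\Gamma_i} = \op{span}\{(1,0)\}$ componentwise, it follows that $\op{ker}P = \prod_{i=1}^N \op{span}\{(1,0)\}$. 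Hence the feasible set of Problem~\ref{p:PCCH} is precisely the admissible set from the previous step, and so $u$ minimizes $\cJ$ over one set if and only if it minimizes over the other.

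Finally I would reconstruct $\gamma$ from $u$ and verify the bijective correspondence. If $(u,\gamma)$ solves Problem~\ref{p:CCH}, the first condition in \eqref{eq:PACU} reads $(T_iu)_1 = h_i + \gamma_i$ on $\Gamma_i$, whence $\gamma_i = (T_iu)_1 - h_i$; moreover $u$ lies in the common feasible set and minimizes $\cJ$ there, so it solves Problem~\ref{p:PCCH}. Conversely, if $u$ solves Problem~\ref{p:PCCH}, then $Tu - (h,s)\in\op{ker}P$, so each difference $(T_iu)_1 - h_i$ lies in $\op{span}\{1\}$ and is therefore a genuine constant $\gamma_i\in\R$; setting these $\gamma_i$ reproduces the constraints \eqref{eq:PACU}, and since $u$ minimizes $\cJ$ on the common set, the pair $(u,\gamma)$ solves Problem~\ref{p:CCH}.

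The only point requiring real care is this last observation, namely that $(T_iu)_1 - h_i$ is a constant rather than a general function on $\Gamma_i$. This is guaranteed precisely by the kernel structure $\op{ker}P_{\Gamma_i} = \op{span}\{(1,0)\}$, and it is exactly where the projection $P$ in Problem~\ref{p:PCCH} does its work: it quotients out the one-dimensional height degree of freedom on each contour, decoupling the computation of $u$ from that of $\gamma$ while leaving the feasible displacement set unchanged.
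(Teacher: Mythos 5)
Your proof is correct and is essentially the paper's argument: the paper simply invokes the abstract Lemma~\ref{lem:A_CONST_EQUIV} from the appendix, whose one-line proof is exactly your observation that the admissible sets of displacements coincide because $\op{ker}P = \prod_{i=1}^N \op{span}\{(1,0)\}$ and the energy does not depend on $\gamma$. Your direct verification, including the recovery of the constants $\gamma_i = (T_iu)_1 - h_i$ from the kernel structure, is the concrete instantiation of that same reasoning.
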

\begin{proof}
    The assertion follows from a  general result stated in  Lemma~\ref{lem:A_CONST_EQUIV}.
\end{proof}

\begin{remark} \label{rem:TILTCURVE}
    The existence and uniqueness result in Proposition~\ref{prop:CCHEIGHT}
    can be extended to particles with  additionally varying  tilt angle,
    as expressed by parametrized curve constraints of the form
    \begin{equation}\label{eq:HEIGHTTILT1}
        u=h_i+ \gamma_i + \alpha_i^\top (\wc -X_i), \qquad
        \textstyle  \frac{\partial u}{\partial n}=s_i + \alpha_i^\top \frac{\partial}{\partial n}(\wc -X_i)
        \quad \text{on}\quad \Gamma_i.
    \end{equation}
    Here,  the dot stands for the Euclidean scalar product in $\R^2$
    and the two components of the  varying angles $\alpha_i=(\alpha_{i,1},\alpha_{i,2})$
    are describing tilt in the two coordinate directions.
    Now, let $\eta^1_i \in V$ and $\eta^2_i \in V$  coincide with
    $(\wc - X_i)_1$ and $(\wc - X_i)_2$ in a small neighbourhood of $\Gamma_i$.
    Then  \eqref{eq:HEIGHTTILT1} can be equivalently written as 
    \[        
     u = h_i+ \gamma_i \eta_i + \alpha_{i,1} \eta_i^1 + \alpha_{i,2} \eta_i^2, \quad
        \tfrac{\partial u}{\partial n} = s_i +  
        \tfrac{\partial}{\partial n} \Bigl(\gamma_i \eta_i + \alpha_{i,1} \eta_i^1 + \alpha_{i,2} \eta_i^2\Bigr)
        \quad  \text{on}\quad \Gamma_i
    \]    
    or, equivalently,
    \begin{align*}\label{eq:HEIGHTTILT}
        u  \in  \eta_0 + \op{span} \{\eta_i, \eta^1_i , \eta^2_i\st i=1,\dots, N \} + V^{0,0}.
    \end{align*}
    Utilizing suitable orthogonal projections, the computation of $u\in V$ and of the
    coefficients $\gamma_i,\alpha_{i,j}\in \R$ can be decoupled
    in analogy to  Problem~\ref{p:PCCH} and Proposition~\ref{prop:CCH_EQUIVALENCE}.
    We refer to \cite{C04-wolfDiss15} for details.
\end{remark}

\subsection{Varying the location of  particles}  \label{subsec:VCC}
\subsubsection{Hard and soft wall constraints} \label{sec:HARDSOFT}
For varying locations  $X=(X_i)\in \Omega^N$ of particles $B_i=B_i(X)= X_i+B_i^0$
we want to enforce that particles do not overlap, 
touch  the boundary $\partial \Omega$ or even escape 
from the 
considered domain $\Omega$. 
To this end, we constrain the locations $X=(X_i)$ to the subset
\begin{equation} \label{eq:FFSP}
    \omega=\{X \in \Omega^N \st B_i(X) \cap B_j(X) = B_i(X) \cap \partial \Omega= \emptyset 
        \;\forall i \neq j \}
\end{equation}
of all positions with non-overlapping particles contained in the domain.
Recall that the  particles $B_i(X)\subset \R^2$ are open sets
with $C^3$-boundaries and $X_i \in B_i (X)$.

\begin{lemma} \label{lem:COMPO}
    The subset $\omega \subset  \Omega^N$  is compact in $\R^{N \times 2}$.
\end{lemma}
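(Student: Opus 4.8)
The plan is to prove compactness via the Heine--Borel theorem: since $\omega$ lives in the finite-dimensional space $\R^{N\times 2}$, it suffices to show that $\omega$ is bounded and closed. Boundedness is immediate, as $\omega\subset\Omega^N$ and $\Omega$ is bounded, so $\omega\subset\overline{\Omega}^N$. All the work therefore goes into closedness. The only subtle point is that the three conditions defining $\omega$---the centre condition $X\in\Omega^N$ and the two disjointness conditions $B_i(X)\cap B_j(X)=\emptyset$ and $B_i(X)\cap\partial\Omega=\emptyset$---all involve \emph{open} particles $B_i(X)=X_i+B_i^0$, so it is not a priori clear that they cut out a closed set. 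The key observation that makes everything work is that ``an open set does not meet a given set'' is a \emph{closed} condition on the position of that open set, precisely because tangential contact of the closures is permitted.

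To make this precise, the first step is an elementary auxiliary claim: for any open set $U\subset\R^2$ and any set $W\subset\R^2$, the translation set $S=\{z\in\R^2 \st (z+U)\cap W=\emptyset\}$ is closed. I would prove this by showing the complement is open. If $z\notin S$, pick $u\in U$ and $w\in W$ with $z+u=w$; since $U$ is open there is $\varepsilon>0$ with $B(u,\varepsilon)\subset U$, and then for every $z'$ with $|z'-z|<\varepsilon$ the point $u'=u+z-z'$ still lies in $U$ and satisfies $z'+u'=w\in(z'+U)\cap W$, so $z'\notin S$. Note this argument uses only that $U$ is open and needs no hypothesis on $W$.

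Next I would apply this claim to the two disjointness conditions. The boundary condition $B_i(X)\cap\partial\Omega=\emptyset$ reads $(X_i+B_i^0)\cap\partial\Omega=\emptyset$, which depends only on $X_i$, so by the claim (with $U=B_i^0$, $W=\partial\Omega$) it defines a closed subset of $\R^{N\times2}$. The non-overlap condition rewrites, after translating by $-X_j$, as $(\,(X_i-X_j)+B_i^0\,)\cap B_j^0=\emptyset$; by the claim (with $U=B_i^0$, $W=B_j^0$) the admissible values of $z=X_i-X_j$ form a closed set, and pulling back along the continuous map $X\mapsto X_i-X_j$ shows this condition too is closed. Intersecting over all pairs $i\neq j$ leaves a closed set.

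The remaining obstacle is the centre condition $X\in\Omega^N$, which is \emph{open} and so cannot simply be intersected with the closed conditions above. Here I would use that $0\in B_i^0$, hence $X_i\in B_i(X)$: on the closed set where $B_i(X)\cap\partial\Omega=\emptyset$ for all $i$ one automatically has $X_i\notin\partial\Omega$, so that the open condition $X_i\in\Omega$ coincides there with the closed condition $X_i\in\overline{\Omega}$. Consequently $\omega$ equals the intersection of $\overline{\Omega}^N$ with the two closed disjointness sets, hence is closed, and together with boundedness this yields compactness. The main difficulty is thus conceptual rather than computational: recognising that openness of the particles turns the emptiness-of-intersection constraints into closed conditions, and that the boundary constraint is exactly what upgrades the open membership $X_i\in\Omega$ to a closed one.
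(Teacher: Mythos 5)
Your proof is correct, and it rests on the same underlying insight as the paper's---openness of the particles turns the emptiness-of-intersection constraints into closed conditions---but the organization is genuinely different. The paper absorbs both the boundary condition and the membership condition $X\in\Omega^N$ into a single family of pairwise disjointness constraints by introducing an artificial $(N+1)$-st ``particle'' $B_{N+1}=\R^2\setminus\overline{\Omega}$, and then proves openness of the complement of $\omega$ with one perturbation argument: if two open particles overlap, they contain a common ball $\cB_\varepsilon(x)$, and moving the centres by less than $\varepsilon/2$ keeps $\cB_{\varepsilon/2}(x)$ inside both. Your auxiliary claim is slightly more general (only $U$ must be open, $W$ is arbitrary), which is precisely what lets you keep $W=\partial\Omega$ as the obstacle instead of passing to the exterior; you then dispose of the open condition $X\in\Omega^N$ separately, upgrading it to the closed condition $X\in\overline{\Omega}^N$ on the set where $B_i(X)\cap\partial\Omega=\emptyset$, via $X_i\in B_i(X)$. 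Each route buys something: yours avoids the paper's reformulation step ``$B_i(X)$ meets $\partial\Omega$ if and only if it meets $\R^2\setminus\overline{\Omega}$,'' which implicitly uses connectedness of the particles and regularity of $\partial\Omega$ (every boundary point being a limit of exterior points), so your argument is more robust under weaker hypotheses; the paper's version is more economical, reducing everything to a single type of constraint handled by a single lemma. Note that both proofs ultimately need $0\in B_i^0$, i.e.\ $X_i\in B_i(X)$---you use it explicitly in the $\overline{\Omega}$ upgrade, the paper implicitly when it discards the condition $X\in\Omega^N$ in its rewriting of $\omega$.
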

\begin{proof}
    Denoting $B_{N+1}(X) = \R^2 \setminus \overline{\Omega}$, we have 
    $B_i(X) \cap \partial \Omega \neq \emptyset$,
    if and only if $B_i(X) \cap B_{N+1}(X) \neq \emptyset$,
    since $B_i(X)$ is open.
    Hence, $\omega$ can be written as
    \begin{align*}
        \omega=\{X \in \R^{N \times 2} \st B_i(X) \cap B_j(X) = \emptyset 
        \;\forall i,j\in\{1,\dots,N+1\}, i \neq j \}.
    \end{align*} 

    Now let $X \in \R^{N \times 2}\setminus \omega$.
    Then there are $i \neq j$ such that $B_i(X) \cap B_j(X) \neq \emptyset$.
    Hence there is $x \in \R^2$ and $\varepsilon>0$ such that
    $\cB_\varepsilon(x) \subset B_i(X) \cap B_j(X)$ for the open
    ball $\cB_\varepsilon(x)$. 
    Then we also have $\cB_{\varepsilon/2}(Y) \subset B_i(Y)\cap B_j(Y)$
    if $|X-Y| < \tfrac{\varepsilon}{2}$ and thus $Y \in \R^{N \times 2} \setminus \omega$.
    Hence, $\R^{N \times 2}\setminus \omega$ is open. Thus $\omega$ is closed.
    As $\Omega$ is bounded, $\omega \subset \Omega^N$ is also bounded and therefore compact.
\end{proof}
Later on, we will also require that particles do not touch neither each other nor the boundary.
It can be shown by elementary arguments that the set
of such particle positions is given by the interior of $\omega$ which can be characterized by
\begin{align}\label{eq:INTERIOR_OF_OMEGA}
    \begin{split}
    \op{int} \omega
        &= \{X \in \Omega^N \st \overline{B_i(X)} \cap \overline{B_j(X)} = \overline{B_i(X)} \cap \partial \Omega= \emptyset
            \;\forall i \neq j \} \\
            &= \{X \in \Omega^N \st \op{dist}(B_i(X), B_j(X))>0 \textnormal{ and } \op{dist}(B_i(X), \partial \Omega)>0
            \;\forall i \neq j \}.
    \end{split} 
\end{align} 

The constraints $X=(X_i)\in \omega$ are enforced by  additional terms in the energy.
We first consider so-called {\em soft-wall constraints}
$\cV_{\soft}=\cV_1+\cV_2$ consisting of a Lennard-Jones potential 
\begin{equation} \label{eq:LJP}
    \cV_1(X)=\sum_{\underset{i\neq j}{i,j=1}}^N \cV_{ij},\quad 
    \cV_{ij}=4 \epsilon_{ij}\left[ \left(\frac{\sigma_{ij}}{\text{dist}(B_i,B_j)}\right)^{12}-\left(\frac{\sigma_{ij}}{\text{dist}(B_i,B_j)}\right)^{6} \right],
\end{equation}
for $X\in \omega$ such that $\text{dist}(B_i,B_j) > 0$, $i\neq j$ and $\cV_1(X)=\infty$ otherwise.
This term  accounts for  the repulsion and attraction of particles. We also define 
\begin{equation} \label{eq:BCP}
    \cV_2(X)=\sum_{i=1}^N \left(\frac{\sigma_{i}}{\text{dist}(B_i,\partial \Omega)}\right)^6.
\end{equation}
for $X\in \omega$ such that $\text{dist}(B_i,\partial \Omega)>0$, $i=1,\dots,N$, 
and $\cV_2(X)=\infty$ otherwise.  This term is accounting for escaping particles. 
For circular particles we have $\text{dist}(B_i,B_j)=|X_i-X_j|-(r_i+r_j)$.
Note that the soft-wall potential $\cV_{\soft}=\cV_1 + \cV_2$ is continuously differentiable on $\op{int}\omega$.

The energy $\cV_{\soft}$  associated with soft-wall constraints can be regarded
as a penalization of {\em hard-wall constraints}
$\cV_{\hard}=\bar{\cV}_1+ \bar{\cV}_2$ with
\begin{equation} \label{eq:HWC}
    \bar{\cV}_1(X)= \left\{
        \begin{array}{cc}
            0, & B_i \cap  B_j = \emptyset  \;\forall  i,j \\[5mm]
            \infty, &\text{otherwise}
        \end{array}
        \right .
        \quad 
        \bar{\cV}_2 (X)= \left\{
            \begin{array}{cc}
                0, & B_ i \subset \bar{\Omega}\; \forall   i  \\[5mm]
                \infty , & \text{otherwise}
            \end{array}
            \right .
\end{equation}
Note that $\omega$ is the domain of $\cV_{\hard}$.

%
%

%
%
 
\subsubsection{A Gradient flow approach} \label{subsec:GRADFLOW}
With given reference particles  $B_i^0$ located at  $0$
 and data $(h_i,s_i)$ prescribed on $\Gamma_i(X)=X_i+\Gamma_i^0$
according to \eqref{eq:BNDDATA} (see Section~\ref{sec:BVP}),
we now allow for varying locations $X=(X_i)$ of particles $B_i(X)=X_i+B_i^0$, 
and  fix height and tilt, for simplicity. We impose soft wall constraints $\cV_{\soft}$
according to \eqref{eq:LJP}, \eqref{eq:BCP} .

\begin{problem}[Varying locations of particles with soft wall constraints]\ \\%
    \label{p:VLOCBVP}%
    Find $(u,X)\in V\times \op{int} \omega$ minimizing the energy $\cJ + \cV_{\soft}$  
    subject to curve constraints \eqref{eq:INTBC}, \eqref{eq:BNDDATA}  on
    $\Gamma_i(X)=X_i+ \Gamma_i^0, \quad i=1,\dots,N$.
\end{problem}

We describe a gradient flow approach to the iterative solution of Problem~\ref{p:VLOCBVP}.
For $t\geq 0$ let $X(t)$ be a  trajectory of locations of particles $B_i(X(t))$ with
corresponding boundaries $\Gamma_i(X(t))$, $i=1,\dots,N$. 
Assuming $\overline{B}_i(X(t)) \cap \overline{B}_j(X(t))=\emptyset$ for $i\neq j$ and 
$\overline{B}_i(X(t))\cap \partial \Omega=\emptyset$,
let $u(X)\in V^{h,s}$  be the   solution of Problem~\ref{p:CC} with  fixed $X=X(t)$.  
Denoting  $\nabla_X=(\nabla_{X_1},\dots, \nabla_{X_N})$ and $\nabla_{X_i}=\left(\frac{\partial}{\partial X_{i,1}},\frac{\partial}{\partial X_{i,2}}\right)$, 
we consider the gradient flow
\begin{equation}\label{eq:GFCC}
X' = -\nabla_X\cJ(u(X)) -  \nabla_X \cV_{\soft}(X),\quad t>0,\qquad X(0)=X_0,
\end{equation}
with suitable initial locations $X_0\in \omega$. Then, by construction, we have monotonically decreasing
energy $\cJ(u(X(t))) + \cV_{\soft}(X(t))$  along a solution $X(t)$ of \eqref{eq:GFCC} and we may expect 
that $(u(X(t)),X(t))$ tends to a solution  of Problem~\ref{p:VLOCBVP} for $t\to \infty$.

While the gradient $ \nabla_X\cV_{\soft}(X)$ can be obtained by straightforward calculus, the derivation
of  $\nabla_X\cJ(u(X))$ requires some care. 
In particular, in light of a generic lack of smoothness of the minimizers 
$u(X(t))\in V^{h,s}$ across $\Gamma_i$,
it is convenient to consider their restriction  to $\Omega_B$.   
Utilizing basic techniques  from shape calculus,
we then obtain the following representation of $\nabla_{X_i}\cJ(u(X))$
(see \cite{C04-wolfDiss15} for details).

\begin{proposition}
Assume that
$u: \Omega_B\times [0,\infty)\to \R$ is sufficiently smooth. Then
\begin{equation*}
    \nabla_{X_i}\cJ(u(X)) = \int_{\Gamma_j} \left( \kappa {\textstyle \frac{\partial}{\partial n} }\Delta u - \sigma s_j \right)\nabla u 
    - \kappa \Delta u  \nabla (  {\textstyle\frac{\partial}{\partial n} }u )
    + \left( {\textstyle \frac{1}{2}} \kappa(\Delta u)^2  + {\textstyle \frac{1}{2}} \sigma|\nabla u|^{2} \right)  n\; \ds
\end{equation*}
holds for $i=1,\dots, N$ and the abbreviation $u=u(X)$.
\end{proposition}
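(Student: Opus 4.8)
The plan is to read $\nabla_{X_i}\cJ(u(X))$ as a shape derivative and compute it by the velocity (perturbation-of-identity) method. Fix a direction $e\in\R^2$ and choose a smooth vector field $\mathcal V$ that equals $e$ on a neighbourhood of $\overline{B_i(X)}$ and vanishes near $\partial\Omega$ and near every $\overline{B_j(X)}$, $j\neq i$; such a field exists because the particles neither touch each other nor the boundary under the standing assumption $\overline B_i\cap\overline B_j=\emptyset$. The associated flow $\Phi_\tau$ rigidly translates $B_i$ to $X_i+\tau e+B_i^0$ while leaving the other particles and $\partial\Omega$ fixed, so that $\Omega_B(\tau)$ depends on $\tau$ only through the motion of $\Gamma_i$. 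Writing $u(\tau)$ for the solution of Problem~\ref{p:CC} on $\Omega_B(\tau)$, the object to differentiate is $\tau\mapsto\cJ_{\Omega_B(\tau)}(u(\tau))$ at $\tau=0$, and the coefficient of $e$ in its derivative will be $e\cdot\nabla_{X_i}\cJ(u(X))$.

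First I would split the derivative using the Reynolds transport theorem applied to \eqref{eq:monge_energy_subset}. Since $\cJ_{\Omega_B}=\tfrac12 a_{\Omega_B}(\wc,\wc)$ with $a_{\Omega_B}$ symmetric (Lemma~\ref{lem:GLOBAL_COERCIVITY}), the product rule in $\tau$ contributes a domain (Hadamard) term and a single state term,
\[
\frac{d}{d\tau}\Big|_{\tau=0}\cJ_{\Omega_B(\tau)}(u(\tau)) = a_{\Omega_B}(u,u') + \int_{\Gamma_i} F\,(\mathcal V\cdot n)\,\ds,
\]
where $u'$ is the local (shape) derivative of the state and $F=\tfrac12\kappa(\Delta u)^2+\tfrac12\sigma|\nabla u|^2$. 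The boundary term is supported on $\Gamma_i$ alone, because $\mathcal V$ vanishes on the rest of $\partial\Omega_B$, and after contraction with $e$ it already yields the third term $F\,n$ of the asserted formula. Note that the self-adjoint structure is exactly what makes the only state contribution the form $a_{\Omega_B}(u,u')$, with no separate adjoint needed.

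Next I would rewrite $a_{\Omega_B}(u,u')$ by Green's formula for the plate operator. As $u$ solves the Euler--Lagrange equation $\kappa\Delta^2u-\sigma\Delta u=0$ in $\Omega_B$ (the strong form of \eqref{eq:VBVP}) and $\partial_n u=s_i$ on $\Gamma_i$ by \eqref{eq:INTBC}, the interior integral collapses to a boundary integral over $\Gamma_i$,
\[
a_{\Omega_B}(u,u') = \int_{\Gamma_i}\Big[\kappa\,\Delta u\,\partial_n u' - \big(\kappa\,\partial_n\Delta u - \sigma s_i\big)\,u'\Big]\,\ds .
\]
The decisive input is the boundary behaviour of $u'$ on the moving curve. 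Because the reference data $h_i,s_i$ are frozen to the particle in \eqref{eq:BNDDATA}, the material derivatives of $u$ and of $\partial_n u$ along the comoving boundary $\Gamma_i(\tau)$ vanish; combining this with $\dot u=u'+\nabla u\cdot\mathcal V$ and with the fact that a pure translation does not rotate the normal field gives $u'=-e\cdot\nabla u$ and $\partial_n u'=-e\cdot\nabla(\partial_n u)$ on $\Gamma_i$. Substituting these and reading off the coefficient of $e$ turns the two boundary integrals into precisely $(\kappa\,\partial_n\Delta u-\sigma s_i)\,\nabla u$ and $-\kappa\,\Delta u\,\nabla(\partial_n u)$; together with the Hadamard term this assembles the stated representation (the integral being over the single moving contour $\Gamma_i$).

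The main obstacle — and the step that needs real care rather than routine calculus — is the rigorous derivation of the boundary conditions for $u'$ on the moving curve $\Gamma_i$, together with the justification that $u(X)$ is shape differentiable with $u'$ in a space for which these traces and the integration by parts are legitimate. This is exactly where the hypothesis that $u$ is sufficiently smooth is used: it guarantees that $\partial_n\Delta u$ and $\nabla(\partial_n u)$ are well defined on $\Gamma_i$ and that the material/shape derivative calculus can be carried out on the transported problem. Once these facts are in place, the three boundary integrals combine as above to give $\nabla_{X_i}\cJ(u(X))$.
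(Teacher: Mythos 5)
Your argument is correct and is essentially the derivation the paper intends: the paper states this proposition without an internal proof, appealing to ``basic techniques from shape calculus'' with details deferred to the cited thesis, and your velocity-method computation --- the Reynolds transport (Hadamard) term on the moving contour, integration by parts of $a_{\Omega_B}(u,u')$ against the Euler--Lagrange equation $\kappa\Delta^2u-\sigma\Delta u=0$, and the shape-derivative boundary conditions $u'=-e\cdot\nabla u$, $\partial_n u'=-e\cdot\nabla(\partial_n u)$ coming from the data being frozen to the rigidly translating particle --- is exactly that computation, including the correct reading of the statement's index slip ($\Gamma_j$, $s_j$ should be $\Gamma_i$, $s_i$). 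The only convention worth making explicit is that $\nabla\bigl(\tfrac{\partial}{\partial n}u\bigr)$ must be read with the normal field frozen at each boundary point, i.e.\ as $(\nabla^2 u)\,n$, which is precisely what rigid translation justifies and what both your key step and the stated formula require.
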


\subsection{Soft curve constraints} \label{sec:SOFT}
\subsubsection{Penalization of curve constraints}
The numerical approximation of the curve constrained  Problem~\ref{p:CC}
leads to saddle point problems involving suitable approximating spaces
for  primal and dual variables.   
As an alternative, we present an adaptation of the boundary penalty approach, \cite{Bab73, BarEll86} 
to essential boundary conditions, see also Nitsches approach~\cite{Nit71}. More precisely,
we  penalize  deviation from curve constraints \eqref{eq:INTBC} by 
the additional energy term
\begin{align} \label{eq:SCPAR}
    \frac{1}{2\varepsilon}\sum_{i=1}^N \|T_iu - (h_i,s_i)\|_{\Gamma_i}^2
\end{align}
with given  penalty parameter $\varepsilon>0$,  
the trace operators $T_i$ defined in \eqref{def:TRACEOP}, 
and given data  $h_i \in H^{\frac{3}{2}}(\Gamma_i)$, $s_i \in H^{\frac{1}{2}}(\Gamma_i)$, $i=1,\dots,N$.
Here, $\|\wc\|_{\Gamma_i} = (\wc,\wc)_{\Gamma_i}^{1/2}$ is the $L^2$-norm
on the trace space $\cX_i$. Note that the resulting mathematical problem may be considered as a model in its own right by 
interpreting the resulting boundary conditions as physical contact conditions and $\epsilon$ as a modelling parameter.
The counterpart of Problem~\ref{p:CC} with {\em soft constraints}
then reads as follows.

\begin{problem}[Soft curve constraints] \ \\ \label{p:SCC}%
    Find $u_\varepsilon \in V$ minimizing the energy
    $$\cJ(u_\varepsilon) + \frac{1}{2\varepsilon}\sum_{i=1}^N \|T_i u_\varepsilon - (h_i,s_i)\|_{\Gamma_i}^2.$$
\end{problem}

An equivalent variational formulation of Problem~\ref{p:SCC} is to find $u_\varepsilon \in V$ such that
\begin{equation}\label{eq:SCCVAR}
    a(u_\varepsilon,v) +  \frac{1}{\varepsilon}\sum_{i=1}^N (T_i u_\varepsilon, T_i v)_{\Gamma_i}
    =\frac{1}{\varepsilon}\sum_{i=1}^N ((h_i,s_i), T_i v)_{\Gamma_i} \qquad \forall v\in V.
\end{equation}

\begin{proposition}\label{Pro:EXCCH}
    Problem~\ref{p:SCC} admits a unique solution $u_\varepsilon \in V$.
\end{proposition}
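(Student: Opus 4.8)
The plan is to apply the Lax--Milgram lemma to the variational formulation \eqref{eq:SCCVAR}. Define the bilinear form
\[
    a_\varepsilon(u,v) = a(u,v) + \frac{1}{\varepsilon}\sum_{i=1}^N (T_i u, T_i v)_{\Gamma_i}
    \qquad \text{on } V\times V,
\]
and the linear functional $\ell(v) = \frac{1}{\varepsilon}\sum_{i=1}^N ((h_i,s_i), T_i v)_{\Gamma_i}$. I would verify that $a_\varepsilon(\wc,\wc)$ is bounded, coercive, and symmetric on $V$, and that $\ell$ is a bounded linear functional; existence, uniqueness, and equivalence with the minimization Problem~\ref{p:SCC} then follow immediately, since $a_\varepsilon$ is the bilinear form associated with the strictly convex quadratic energy in the problem statement.

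First I would establish boundedness of the additional penalty term. By Lemma~\ref{lem:SINGLE_TRACE} each $T_i : V \to \cX_i$ is continuous, and the $L^2$-inner product $(\wc,\wc)_{\Gamma_i}$ is continuous on $\cX_i$ relative to its natural $H^{3/2}\times H^{1/2}$ topology; combining these gives a constant $C$ with $|(T_i u, T_i v)_{\Gamma_i}| \le C \|u\|_2 \|v\|_2$. Together with continuity of $a(\wc,\wc)$ from Lemma~\ref{lem:GLOBAL_COERCIVITY}, this yields continuity of $a_\varepsilon$. Coercivity is even simpler: since each penalty summand satisfies $(T_i v, T_i v)_{\Gamma_i} = \|T_i v\|_{\Gamma_i}^2 \ge 0$ and $\varepsilon > 0$, we have
\[
    a_\varepsilon(v,v) = a(v,v) + \frac{1}{\varepsilon}\sum_{i=1}^N \|T_i v\|_{\Gamma_i}^2 \ge a(v,v) \ge \alpha \|v\|_2^2,
\]
where $\alpha>0$ is the coercivity constant of $a$ on $V$ supplied by Lemma~\ref{lem:GLOBAL_COERCIVITY}. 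Thus the penalty only helps coercivity and the constant is inherited directly from the unperturbed form.

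For the right-hand side, boundedness of $\ell$ follows from continuity of each $T_i$ together with the Cauchy--Schwarz inequality in $(\wc,\wc)_{\Gamma_i}$: one obtains $|\ell(v)| \le \frac{1}{\varepsilon}\sum_{i=1}^N \|(h_i,s_i)\|_{\Gamma_i}\,\|T_i v\|_{\Gamma_i} \le C' \|v\|_2$, where $C'$ depends on $\varepsilon$ and the prescribed data $(h_i,s_i)$, both of which are fixed. The Lax--Milgram lemma then delivers a unique $u_\varepsilon \in V$ solving \eqref{eq:SCCVAR}, and by symmetry of $a_\varepsilon$ this is precisely the unique minimizer of the energy in Problem~\ref{p:SCC}.

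I do not anticipate a serious obstacle here: every ingredient is already in place, since coercivity of $a$ is the substantive fact (proved elsewhere and cited in Lemma~\ref{lem:GLOBAL_COERCIVITY}) and the penalty term is manifestly nonnegative. The only point requiring a little care is keeping the trace norms consistent: $T_i$ maps into $\cX_i = H^{3/2}(\Gamma_i)\times H^{1/2}(\Gamma_i)$, whereas the penalty and the functional are expressed through the weaker $L^2$-norm $\|\wc\|_{\Gamma_i}$, so one should note that $\|T_i v\|_{\Gamma_i} \lesssim \|T_i v\|_{\cX_i} \lesssim \|v\|_2$ to close both the continuity estimate for $a_\varepsilon$ and the boundedness of $\ell$.
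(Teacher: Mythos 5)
Your proof is correct and follows essentially the same route as the paper: both apply the Lax--Milgram lemma to the variational formulation \eqref{eq:SCCVAR}, obtaining ellipticity from the $V$-ellipticity of $a(\wc,\wc)$ plus nonnegativity of the penalty term, continuity of the bilinear form and boundedness of the right-hand side from the continuity of the trace operators $T_i$ (Lemma~\ref{lem:SINGLE_TRACE}) and the assumptions on the data. Your additional remarks on symmetry (linking \eqref{eq:SCCVAR} to the minimization problem) and on the comparison of the $L^2$- and $\cX_i$-norms are fine details the paper leaves implicit.
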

\begin{proof}
   The bilinear form  in \eqref{eq:SCCVAR}
    is $V$-elliptic, because $a(\wc,\wc)$ is $V$-elliptic and,
    by Lemma~\ref{lem:SINGLE_TRACE}, the trace operators $T_i:V\to L^2(\Gamma_i)^2$
    are continuous. The right hand side in \eqref{eq:SCCVAR} is bounded by continuity of $T_i$
    and the assumptions on the data.    
    Thus the Lax-Milgram lemma provides existence and uniqueness.
\end{proof}

Problem~\ref{p:SCC} with soft constraints can be regarded as an approximation
of  Problem~\ref{p:CC} with curve constraints.

\begin{proposition}\label{prop:SCC}
    Assume that $\overline{B}_i \cap \overline{B}_j = \emptyset$ for $i\neq j$.
    Let $u$ denote the solution of Problem~\ref{p:CC} and  $u_\varepsilon$  
    denote the solution of Problem~\ref{p:SCC} for  fixed $\varepsilon >0 $.
    Then we have
    \[
     u_\varepsilon \to u \quad \text{in }  V \qquad \text{as }  \quad \varepsilon \to 0.
    \]
\end{proposition}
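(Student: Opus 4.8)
The plan is to exploit the fact that the exact solution $u$ of Problem~\ref{p:CC} makes the penalty term vanish, which at once produces a uniform energy bound. Since $u \in V^{h,s}$ satisfies $T_i u = (h_i,s_i)$, the minimizing property of $u_\varepsilon$ together with $u \in V$ gives
$$
\cJ(u_\varepsilon) + \frac{1}{2\varepsilon}\sum_{i=1}^N \|T_i u_\varepsilon - (h_i,s_i)\|_{\Gamma_i}^2 \le \cJ(u).
$$
First I would read off two consequences. Using coercivity of $a(\wc,\wc)$ (Lemma~\ref{lem:GLOBAL_COERCIVITY}) and $\cJ(u_\varepsilon)=\tfrac12 a(u_\varepsilon,u_\varepsilon)$, the family $\{u_\varepsilon\}$ is bounded in $V$ uniformly in $\varepsilon$; and, since $\cJ(u_\varepsilon)\ge 0$, the penalty contribution obeys $\sum_{i=1}^N \|T_i u_\varepsilon - (h_i,s_i)\|_{\Gamma_i}^2 \le 2\varepsilon\,\cJ(u) \to 0$ as $\varepsilon\to 0$.

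Next I would extract a subsequence $u_{\varepsilon'}\rightharpoonup u^*$ converging weakly in the Hilbert space $V$. Because each $T_i$ is bounded and linear (Lemma~\ref{lem:SINGLE_TRACE}), it is weakly continuous, so $T_i u_{\varepsilon'}\rightharpoonup T_i u^*$ in $\cX_i$ and hence weakly in $L^2(\Gamma_i)^2$. On the other hand, the penalty bound above shows $T_i u_{\varepsilon'}\to (h_i,s_i)$ strongly in $L^2(\Gamma_i)^2$. Uniqueness of limits then forces $T_i u^* = (h_i,s_i)$, so that $u^*\in V^{h,s}$.

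To identify $u^*$ with $u$, I would invoke weak lower semicontinuity of the convex, continuous functional $\cJ$, which yields $\cJ(u^*)\le \liminf \cJ(u_{\varepsilon'}) \le \cJ(u)$. Since $u^*\in V^{h,s}$ and $u$ is the \emph{unique} minimizer of $\cJ$ on $V^{h,s}$ (Theorem~\ref{prop:EXCC}), this forces $\cJ(u^*)=\cJ(u)$ and $u^*=u$; a standard subsequence argument then promotes the conclusion from a subsequence to the whole family. Finally, I would upgrade weak to strong convergence: the inequalities above combined with lower semicontinuity give $\cJ(u_\varepsilon)\to \cJ(u)$, i.e. $a(u_\varepsilon,u_\varepsilon)\to a(u,u)$; together with $a(u_\varepsilon,u)\to a(u,u)$ from weak convergence, expanding $a(u_\varepsilon-u,\,u_\varepsilon-u)$ shows it tends to $0$, whence coercivity delivers $\|u_\varepsilon-u\|_2\to 0$.

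The main obstacle is the passage in the second paragraph: the penalty measures the trace deviation only in the weak $L^2(\Gamma_i)^2$ norm $\|\wc\|_{\Gamma_i}$, whereas the constraint lives in the stronger space $\cX_i = H^{\frac32}(\Gamma_i)\times H^{\frac12}(\Gamma_i)$. The point to get right is that one need not recover $\cX_i$-convergence of the traces; it suffices to match the strong $L^2$ limit $(h_i,s_i)$ with the weak $L^2$ limit $T_i u^*$, which pins down the constraint exactly because $u^*\in V$ already guarantees $T_i u^*\in\cX_i$.
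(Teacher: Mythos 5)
Your proof is correct, but it takes a genuinely different route from the paper's. The paper disposes of this proposition in two lines: it checks that $V^{h,s}=\{v\in V \st Tv=(h,s)\}$ is non-empty (Theorem~\ref{prop:EXCC}) and that $T$ is a bounded linear map into $\cX$, and then invokes the abstract penalty result, Proposition~\ref{prop:A_PENALTY_CONV}, whose proof is equation-based: it tests the Euler--Lagrange equation of the penalized problem with $v=u_\varepsilon-u$, adds the variational equation of the constrained problem, and reads off from coercivity both a uniform bound on $\|u_\varepsilon-u\|$ and the rate $\|Tu_\varepsilon-(h,s)\|_X^2\leq C\varepsilon$; weak compactness and a second pass through the same estimate then yield strong convergence. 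Your argument never touches the variational characterizations \eqref{eq:VCCP} and \eqref{eq:SCCVAR}: it is a direct-method ($\Gamma$-convergence-style) argument, using only minimality of $u_\varepsilon$ and feasibility of $u$ for the energy comparison, then weak compactness, weak lower semicontinuity of the convex functional $\cJ$, and uniqueness of the constrained minimizer to identify the weak limit, and finally energy convergence plus coercivity to upgrade weak to strong convergence. Both routes resolve the crucial norm mismatch (penalty measured in $L^2(\Gamma_i)^2$, constraint living in $\cX_i$) in the same way --- the weak limit already lies in $V$, so its trace lies in $\cX_i$ and identifying it in $L^2$ suffices --- and both deliver the $O(\varepsilon)$ bound on the constraint violation, yours with the explicit constant $2\cJ(u)$. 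What the paper's detour through the appendix buys is reusability: the same abstract proposition also covers the projected and parametrized penalty problems (Propositions~\ref{prop:SCCH_CONV}, \ref{prop:SAMC}, \ref{prop:SPDC}), where $X_0\neq\{0\}$ and the projection $P$ enters, whereas your argument as written is tailored to the pure constraint $Tv=(h,s)$. What your argument buys is a self-contained, more elementary proof of this particular statement.
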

\begin{proof}
    By Theorem~\ref{prop:EXCC} Problem~\ref{p:CC} admits a unique solution $u\in V^{h,s}=
    \{v\in V\st Tv=(h,s)\}$, 
    and the trace operator $T$ introduced in Lemma~\ref{lem:PRODUCT_TRACE}
    is a bounded, linear map into  $\cX$.
    Hence, the assertion follows from Proposition~\ref{prop:A_PENALTY_CONV} in the appendix.
\end{proof}

\subsubsection{Parametrized soft curve constraints}
We consider a soft version of  the  para\-me\-trized curve constraints \eqref{eq:PACU}
involving  variable heights $\gamma_i\in \R$ of particles $B_i$.
The corresponding additional energy term reads
\begin{align} \label{eq:Pen1}
    \frac{1}{2\varepsilon}\sum_{i=1}^N \|T_i u - (h_i+\gamma_{i},s_i)\|_{\Gamma_i}^2
\end{align}
with given penalty parameter $\varepsilon >0$,  given functions $h_i,s_i$ as in \eqref{eq:PACU}, 
and unknown heights  $\gamma_{i}\in \R$, $i=1,\dots,N$.
The corresponding  minimization problem reads as follows.

\begin{problem}[Soft parametrized curve constraints]\ \\ \label{p:SCCH}%
Find $(u_\varepsilon, \gamma_\varepsilon)\in V \times \R^N$  minimizing the energy
\begin{align*}
   \cJ(u_\varepsilon) + \frac{1}{2\varepsilon}\sum_{i=1}^N \|T_i u_\varepsilon - (h_i+\gamma_{\varepsilon,i},s_i)\|_{\Gamma_i}^2.
\end{align*} 
\end{problem}

Again, we  use the projections $P_{\Gamma_i}$  defined in \eqref{eq:OPROP}
to decouple the computation of $u_{\varepsilon}$ and $\gamma_{\varepsilon}$.
In particular,  orthogonality of $P_{\Gamma_i}$ and $(\gamma_{\varepsilon,i},0)\in \op{ker}P_{\Gamma_i}$ 
provides the  reformulation
\[
    \| T_iu_{\varepsilon} -  (h_i+ \gamma_{\varepsilon,i},s_i)\|^2_{\Gamma_i}= 
    \| P_{\Gamma_i}(T_i u_{\varepsilon} -  (h_i,s_i))\|^2_{\Gamma_i} +
    \| (I-P_{\Gamma_i})(T_i u_{\varepsilon} - (h_i + \gamma_{\varepsilon,i}, s_i)) \|^2_{\Gamma_i}
\]
of the penalty term \eqref{eq:Pen1}.
Here, the second summand on the right hand side is vanishing for 
\[
    \gamma_{\varepsilon, i} =  ((I-P_{\Gamma_i})(T_i u_{\varepsilon} - (h_i,s_i)))_1
    =\frac{1}{|\Gamma_i|}\int_{\Gamma_i} (u_\varepsilon-h_i)\, ds.
\]

\begin{proposition} \label{prop:SCCH_EQUIVALENCE}
    The pair $(u_\varepsilon, \gamma_\varepsilon)\in V\times\R^N$ solves Problem~\ref{p:SCCH},  if and only if
    $u_\varepsilon\in V$ is minimizing the energy
    \begin{align} \label{eq:SEPEN}
        \cJ(u_\varepsilon) + \frac{1}{2\varepsilon}\sum_{i=1}^N \|P_{\Gamma_i}(T_i u_\varepsilon - (h_i,s_i))\|_{\Gamma_i}^2
    \end{align}
    and $\gamma_{\varepsilon,i} = \frac{1}{|\Gamma_i|}\int_{\Gamma_i} (u_\varepsilon-h_i)\, ds$, $i=1,\dots,N$.
\end{proposition}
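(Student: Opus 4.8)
The plan is to exploit the orthogonal splitting of the penalty term recorded just above the statement and to reduce the equivalence to a standard partial-minimization argument. Writing $F(u_\varepsilon,\gamma_\varepsilon)$ for the energy in Problem~\ref{p:SCCH}, I would first fix an arbitrary $u_\varepsilon\in V$ and minimize over $\gamma_\varepsilon\in\R^N$. By orthogonality of $P_{\Gamma_i}$ to $(\gamma_{\varepsilon,i},0)\in\op{ker}P_{\Gamma_i}$, the $i$-th penalty contribution decomposes as
$$\|T_iu_\varepsilon-(h_i+\gamma_{\varepsilon,i},s_i)\|^2_{\Gamma_i}=\|P_{\Gamma_i}(T_iu_\varepsilon-(h_i,s_i))\|^2_{\Gamma_i}+\|(I-P_{\Gamma_i})(T_iu_\varepsilon-(h_i+\gamma_{\varepsilon,i},s_i))\|^2_{\Gamma_i}.$$
Here the first summand is independent of $\gamma_{\varepsilon,i}$, since $P_{\Gamma_i}$ annihilates the $(1,0)$-direction along which $\gamma_{\varepsilon,i}$ acts, while the second summand is nonnegative and is the only place where $\gamma_{\varepsilon,i}$ enters.

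The key observation is then that, for fixed $u_\varepsilon$, the second summand is driven to zero --- hence to its unique minimum --- by the explicit choice $\gamma_{\varepsilon,i}=\frac{1}{|\Gamma_i|}\int_{\Gamma_i}(u_\varepsilon-h_i)\,ds$ already computed. Substituting this optimal $\gamma_\varepsilon$ back into $F$ leaves exactly the reduced energy \eqref{eq:SEPEN} depending on $u_\varepsilon$ alone. Thus, denoting the reduced energy by $G(u_\varepsilon)$, I obtain $\min_{\gamma}F(u_\varepsilon,\gamma)=G(u_\varepsilon)$, with the minimum attained at the stated $\gamma_\varepsilon$ and uniquely so because $I-P_{\Gamma_i}$ is injective on $\op{span}\{(1,0)\}$.

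The remaining step is the elementary partial-minimization principle: a pair $(u_\varepsilon,\gamma_\varepsilon)$ minimizes $F$ over $V\times\R^N$ if and only if $u_\varepsilon$ minimizes $G$ over $V$ and $\gamma_\varepsilon$ attains the inner minimum. I would spell out both implications. If $(u_\varepsilon,\gamma_\varepsilon)$ is a joint minimizer, then $\gamma_\varepsilon$ must already be the optimal height --- otherwise lowering the second summand would strictly decrease $F$ --- which forces the stated formula and yields $G(u_\varepsilon)\le G(v)$ for all $v\in V$. Conversely, if $u_\varepsilon$ minimizes $G$ and $\gamma_\varepsilon$ is given by the formula, then for any $(v,\mu)\in V\times\R^N$ one has $F(u_\varepsilon,\gamma_\varepsilon)=G(u_\varepsilon)\le G(v)\le F(v,\mu)$. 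I expect no genuine obstacle here, as the orthogonal decomposition performs all the analytic work; the only points requiring care are verifying that the first summand is truly $\gamma$-independent --- so that the two minimizations genuinely decouple --- and confirming uniqueness of the optimal $\gamma_\varepsilon$, both of which follow directly from $\op{ker}P_{\Gamma_i}=\op{span}\{(1,0)\}$.
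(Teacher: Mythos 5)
Your proof is correct, but it takes a genuinely different route from the paper. The paper disposes of this proposition in one line by invoking the abstract appendix result, Proposition~\ref{lem:A_PENALIZED_MIN_EQIV}, on penalized parametrized minimization: there the equivalence is proved through the coupled variational (Euler--Lagrange) equations \eqref{eq:A_PENALIZED_VAR_PARA1}--\eqref{eq:A_PENALIZED_VAR_PARA2}, from which one reads off $x_\varepsilon=(I-P)(Tu_\varepsilon-b)$ and then eliminates the parameter to obtain the projected variational equation \eqref{eq:A_PENALIZED_VAR_PROJ}, with the Lax--Milgram lemma handling well-posedness of the reduced problem. You instead argue purely at the level of energies: the Pythagorean splitting of the penalty term (which the paper states just before the proposition as setup) shows that for fixed $u_\varepsilon$ the infimum over $\gamma$ is attained uniquely at the stated mean-value formula, and the elementary partial-minimization principle --- $(u_\varepsilon,\gamma_\varepsilon)$ minimizes jointly if and only if $u_\varepsilon$ minimizes the reduced energy and $\gamma_\varepsilon$ attains the inner minimum --- yields both implications; your verification that the projected summand is $\gamma$-independent and that the complementary summand is a strictly convex quadratic in $\gamma_{\varepsilon,i}$ vanishing at exactly one point is exactly what makes the decoupling work. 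Your argument is self-contained and more elementary, never needing first-order conditions or Lax--Milgram; what the paper's detour buys is reusability of the abstract framework --- the same appendix proposition also covers soft point value constraints (cf.\ Section~\ref{sec:pvc}) and supplies the variational equation \eqref{eq:A_PENALIZED_VAR_PROJ} that underlies the convergence result of Proposition~\ref{prop:SCCH_CONV}.
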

\begin{proof}
     The assertion follows from a  general result stated in  Proposition~\ref{lem:A_PENALIZED_MIN_EQIV}.
\end{proof}

Proposition~\ref{prop:SCCH_EQUIVALENCE} provides an explicit representation
of the heights $\gamma_{\varepsilon}=(\gamma_{\varepsilon,i})$ 
in terms of the deformation $u_\varepsilon$ 
that in turn  can be computed from a fully decoupled minimization problem.
The  variational form of  the minimization problem  is  to find $u_\varepsilon \in V$ such that
\begin{align*}
    a(u_\varepsilon,v)
    + \frac{1}{\varepsilon} \sum_{i=1}^N(P_{\Gamma_i}(T_i u_\varepsilon), P_{\Gamma_i}(T_i v))_{\Gamma_i}
    =   \frac{1}{\varepsilon} \sum_{i=1}^N (P_{\Gamma_i}(h_i,s_i), P_{\Gamma_i}(T_i v))_{\Gamma_i} \quad \forall v \in V.
\end{align*}

\begin{proposition}
    Problem~\ref{p:SCCH} admits a unique solution
    $(u_\varepsilon,\gamma_\varepsilon)\in V \times \R^N$.
\end{proposition}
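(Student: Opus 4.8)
The plan is to establish existence and uniqueness for Problem~\ref{p:SCCH} by reducing it to the already-solved decoupled problem via Proposition~\ref{prop:SCCH_EQUIVALENCE}. First I would invoke that proposition, which tells us that $(u_\varepsilon,\gamma_\varepsilon)$ solves Problem~\ref{p:SCCH} if and only if $u_\varepsilon$ minimizes the reduced functional \eqref{eq:SEPEN} (with the projected penalty term) and $\gamma_{\varepsilon,i}$ is given by the explicit formula $\gamma_{\varepsilon,i} = \frac{1}{|\Gamma_i|}\int_{\Gamma_i}(u_\varepsilon - h_i)\,ds$. This equivalence is the crux: it converts the coupled minimization over the pair $(u_\varepsilon,\gamma_\varepsilon)\in V\times\R^N$ into a single unconstrained minimization over $u_\varepsilon\in V$ together with a post-hoc reconstruction of $\gamma_\varepsilon$.

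The main work is then to show that the reduced problem for $u_\varepsilon$ admits a unique minimizer. I would argue this exactly as in Proposition~\ref{Pro:EXCCH} by passing to the variational formulation and applying the Lax--Milgram lemma. The relevant bilinear form is
\[
    b(v,w) = a(v,w) + \frac{1}{\varepsilon}\sum_{i=1}^N (P_{\Gamma_i}(T_i v), P_{\Gamma_i}(T_i w))_{\Gamma_i},
\]
which is continuous since $a(\wc,\wc)$ is continuous on $V$ (Lemma~\ref{lem:GLOBAL_COERCIVITY}), the trace operators $T_i$ are continuous into $L^2(\Gamma_i)^2$ (Lemma~\ref{lem:SINGLE_TRACE}), and the projections $P_{\Gamma_i}$ are bounded. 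Coercivity is immediate from the coercivity of $a(\wc,\wc)$ on $V$ together with the fact that the added penalty term is nonnegative, so it only strengthens ellipticity. The right-hand side $\frac{1}{\varepsilon}\sum_{i=1}^N (P_{\Gamma_i}(h_i,s_i), P_{\Gamma_i}(T_i v))_{\Gamma_i}$ defines a bounded linear functional by the same continuity of $T_i$ and the boundedness of $P_{\Gamma_i}$, using the assumptions $h_i\in H^{3/2}(\Gamma_i)$, $s_i\in H^{1/2}(\Gamma_i)$. Lax--Milgram then yields a unique $u_\varepsilon\in V$.

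Having secured a unique $u_\varepsilon$, I would define $\gamma_\varepsilon=(\gamma_{\varepsilon,i})$ by the explicit averaging formula; this is well-defined and uniquely determined by $u_\varepsilon$. Proposition~\ref{prop:SCCH_EQUIVALENCE} then guarantees that this pair $(u_\varepsilon,\gamma_\varepsilon)$ solves Problem~\ref{p:SCCH}, and uniqueness of the pair follows because both components are uniquely determined ($u_\varepsilon$ by Lax--Milgram and $\gamma_\varepsilon$ as an explicit function of $u_\varepsilon$).

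I expect the only genuine subtlety, rather than a real obstacle, to be the verification that the projected penalty term $\frac{1}{\varepsilon}\sum_i \|P_{\Gamma_i}(T_i\wc - (h_i,s_i))\|_{\Gamma_i}^2$ is compatible with coercivity and continuity in the same way as the unprojected penalty in Proposition~\ref{Pro:EXCCH}; since $P_{\Gamma_i}$ is an orthogonal projection it is a contraction on $\cX_i$, so it neither breaks continuity nor introduces any loss, and the nonnegativity of the penalty preserves coercivity. In other words, once the equivalence of Proposition~\ref{prop:SCCH_EQUIVALENCE} is in hand, the argument is essentially a repetition of the Lax--Milgram reasoning already used for Problem~\ref{p:SCC}, and the proof is short.
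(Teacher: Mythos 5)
Your proposal is correct and follows essentially the same route as the paper: the paper likewise relies on the decoupling of Proposition~\ref{prop:SCCH_EQUIVALENCE} and then applies the Lax--Milgram lemma to the projected-penalty bilinear form, using $V$-ellipticity of $a(\wc,\wc)$ and continuity of $T_i$ and $P_{\Gamma_i}$. Your write-up merely spells out the coercivity, continuity, and reconstruction of $\gamma_\varepsilon$ that the paper's one-line proof leaves implicit.
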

\begin{proof}
    Since $a(\wc,\wc)$ is $V$-elliptic and  $T_i:V\to L^2(\Gamma_i)^2$  
    and $P_{\Gamma_i}: L^2(\Gamma_i)^2\to L^2(\Gamma_i)^2$  are continuous,
    the Lax-Milgram Lemma provides the assertion.
\end{proof}

\begin{proposition}\label{prop:SCCH_CONV}
    Assume that $\overline{B}_i \cap \overline{B}_j = \emptyset$ for $i\neq j$.
    Let $(u,\gamma)$ denote the solution of Problem~\ref{p:CCH} and
    $(u_\varepsilon, \gamma_\varepsilon)$ denote the solution of
    Problem~\ref{p:SCCH} for fixed $\varepsilon >0 $.
    Then we have
    \begin{align*}
        (u_\varepsilon,\gamma_\varepsilon) \to (u,\gamma) \quad \text{in }  V\times \R^N
        \qquad \text{as }  \quad \varepsilon \to 0.
    \end{align*} 
\end{proposition}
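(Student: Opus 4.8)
The plan is to reduce Proposition~\ref{prop:SCCH_CONV} to the already-established convergence result for the non-parametrized case, namely Proposition~\ref{prop:SCC}, by exploiting the decoupling provided by Proposition~\ref{prop:SCCH_EQUIVALENCE}. First I would observe that by Proposition~\ref{prop:SCCH_EQUIVALENCE}, the deformation component $u_\varepsilon$ of the solution to Problem~\ref{p:SCCH} is exactly the minimizer of the projected penalty functional \eqref{eq:SEPEN}. This functional has precisely the structure of a soft-constraint problem as in Problem~\ref{p:SCC}, but with the trace operator $T_i$ replaced by the composed operator $P_{\Gamma_i} \circ T_i$ and the data $(h_i,s_i)$ replaced by its projection $P_{\Gamma_i}(h_i,s_i)$. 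Since $P_{\Gamma_i}$ is a continuous linear projection on $\cX_i$, the composed operator $P T : V \to \cX$ is bounded and linear, so the abstract penalty convergence framework in the appendix (as invoked in the proof of Proposition~\ref{prop:SCC} via Proposition~\ref{prop:A_PENALTY_CONV}) applies directly. The limit problem for this projected penalization is the constrained minimization of $\cJ$ over $\{v \in V \st PTv = P(h,s)\}$, which is exactly Problem~\ref{p:PCCH}.

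The key steps, in order, are as follows. Step one: apply Proposition~\ref{prop:SCCH_EQUIVALENCE} to identify $u_\varepsilon$ as the minimizer of \eqref{eq:SEPEN}. Step two: invoke the abstract penalty convergence result on this projected problem to conclude $u_\varepsilon \to u^\ast$ in $V$, where $u^\ast$ solves Problem~\ref{p:PCCH}. Step three: apply Proposition~\ref{prop:CCH_EQUIVALENCE}, which identifies the solution of Problem~\ref{p:PCCH} together with the heights $\gamma_i = (T_i u^\ast)_1 - h_i$ as the solution of Problem~\ref{p:CCH}; by uniqueness from Proposition~\ref{prop:CCHEIGHT} this gives $u^\ast = u$ and these heights equal $\gamma$. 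Hence $u_\varepsilon \to u$ in $V$. Step four: establish convergence of the heights. Here I would use the explicit formulas $\gamma_{\varepsilon,i} = \frac{1}{|\Gamma_i|}\int_{\Gamma_i}(u_\varepsilon - h_i)\,ds$ and $\gamma_i = \frac{1}{|\Gamma_i|}\int_{\Gamma_i}(u - h_i)\,ds$, the latter following from Proposition~\ref{prop:CCH_EQUIVALENCE}. Since $\gamma_{\varepsilon,i} - \gamma_i = \frac{1}{|\Gamma_i|}\int_{\Gamma_i}(u_\varepsilon - u)\,ds$ is controlled by the $L^2(\Gamma_i)$-norm of the trace difference, continuity of the trace operator $T_i : V \to L^2(\Gamma_i)^2$ from Lemma~\ref{lem:SINGLE_TRACE} combined with $u_\varepsilon \to u$ in $V$ yields $\gamma_{\varepsilon,i} \to \gamma_i$. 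Combining the two convergences gives $(u_\varepsilon,\gamma_\varepsilon) \to (u,\gamma)$ in $V \times \R^N$.

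The main obstacle, and the point requiring the most care, is step two: verifying that the abstract penalty convergence theorem genuinely covers the \emph{projected} operator $PT$ rather than the bare trace operator $T$. In particular one must check that the limit of the projected penalty problems is indeed the constrained problem associated with the closed range condition $PTv = P(h,s)$, which presupposes that the feasible set $\{v \in V \st PTv = P(h,s)\}$ is nonempty, closed, and affine. Nonemptiness and affineness are inherited from $V^{h,s} \subset \{v : PTv = P(h,s)\}$ via surjectivity of $T$ (Lemma~\ref{lem:PRODUCT_TRACE}), and closedness follows from continuity of $PT$. Once this is in place, the remaining steps are essentially bookkeeping: the identifications via Propositions~\ref{prop:CCH_EQUIVALENCE} and~\ref{prop:SCCH_EQUIVALENCE} and the uniqueness from Proposition~\ref{prop:CCHEIGHT} do all the structural work, and the convergence of the heights is a direct consequence of trace continuity applied to the already-obtained convergence $u_\varepsilon \to u$.
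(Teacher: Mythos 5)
Your proposal is correct, and it ultimately rests on the same key tool as the paper, namely the abstract penalty-convergence result Proposition~\ref{prop:A_PENALTY_CONV}; but the way you instantiate it differs from the paper's proof in a meaningful way. The paper's (one-line) argument identifies Problem~\ref{p:CCH} and Problem~\ref{p:SCCH} directly with the \emph{parametrized} abstract Problems~\ref{eq:A_CONSTR_MIN_PARA} and~\ref{eq:A_PENALIZED_MIN_PARA}, taking $X=\cX$, $T=(T_i)$, $b=(h,s)$ and the nontrivial subspace $X_0=\prod_{i}\op{span}\{(1,0)\}$, exactly as in the proof of Proposition~\ref{prop:SCCH_EQUIVALENCE}; Proposition~\ref{prop:A_PENALTY_CONV} then delivers convergence of \emph{both} components $(u_\varepsilon,x_\varepsilon)\to(u,x)$ in $V\times X$ at once, and the height convergence $\gamma_\varepsilon\to\gamma$ in $\R^N$ is just the coefficient reading of $x_\varepsilon\to x$ in the span of $(1,0)$. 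You instead first pass through the decoupling Propositions~\ref{prop:CCH_EQUIVALENCE} and~\ref{prop:SCCH_EQUIVALENCE}, apply the abstract result in its non-parametrized form (trivial $X_0$, operator $PT$, data $P(h,s)$) to obtain only $u_\varepsilon\to u$, and then recover $\gamma_\varepsilon\to\gamma$ by hand from the mean-value formulas and trace continuity. Both routes are sound: your step-two concern about whether the framework covers the composed operator $PT$ is unproblematic, since $P$ is an $L^2$-orthogonal projection and hence $PT:V\to\cX$ is bounded and linear, which is precisely the pattern already used in the proof of Proposition~\ref{prop:SCC}; and your nonemptiness check via surjectivity of $T$ (Lemma~\ref{lem:PRODUCT_TRACE}) is where the disjointness assumption enters, as in the paper. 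What the paper's route buys is economy: the parametrized version of the abstract result packages the height convergence for free. What your route buys is transparency: it makes explicit that the convergence of the heights is nothing more than trace continuity applied to $u_\varepsilon\to u$, at the modest cost of re-deriving something the appendix already provides.
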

\begin{proof}
    The same arguments as in the proof of Proposition~\ref{prop:SCCH_EQUIVALENCE}
    show that the assumptions of Proposition~\ref{prop:A_PENALTY_CONV} in the appendix
    are satisfied and the latter provides the assertion.
\end{proof}

\begin{remark} \label{rem:SOFTTILT}
    The same arguments can be applied to particles with variable height  and
    tilt, i.e., to parametrized data of the form~\eqref{eq:HEIGHTTILT1} with
    unknown height $\gamma_i$ and tilt $\alpha_i$, $i=1,\dots, N$,
    as introduced in Remark~\ref{rem:TILTCURVE}. In this case the
    projections $P_{\Gamma_i}$ have to be replaced by
    $L^2(\Gamma_i)^2$-orthogonal projections $\tilde{P}_{\Gamma_i}$
    with the kernel
    \begin{align*}
        \textstyle \op{ker} \tilde{P}_{\Gamma_i} = \op{span}\{(0,1), ((\wc -X_i)_1, \frac{\partial}{\partial n}(\wc -X_i)_1), ((\wc -X_i)_2, \frac{\partial}{\partial n}(\wc -X_i)_2)\}
    \end{align*}
    where the additional two functions describe tilt  as  in Remark~\ref{rem:TILTCURVE}.
\end{remark}

\subsubsection{Varying the location of  particles} \label{subsubsec:SCCL}
We consider varying locations $X = (X_i)$ of particles $B_i = B_i(X) = X_i + B_i^0$ 
with  reference particles $B_i^0$ and data $(h_i,s_i)$ prescribed on $\Gamma_i(X)=X_i+\Gamma_i^0$
according to \eqref{eq:BNDDATA}.
In light of Proposition~\ref{prop:SCCH_EQUIVALENCE} and Remark~\ref{rem:SOFTTILT},  
we assume that height and tilt of particles are fixed, for simplicity.

We  first consider  hard-wall constraints  $\cV_{\hard}$ as defined in \eqref{eq:HWC}. 
This means that we constrain  the locations $X=(X_i)$ to the domain 
$\omega$ of $\cV_{\hard}$    as given in  \eqref{eq:FFSP}.

\begin{problem}[Soft curve constraints with varying locations and hard-wall constraints] \label{p:SCCLH}
    Find $(u_{\varepsilon}, X_{\varepsilon})\in V \times \omega$
    minimizing the energy
    \[
        \cJ(u_\varepsilon)+ \frac{1}{2\varepsilon}\sum_{i=1}^N \|T (X_\varepsilon)_i u_\varepsilon - (h_i,s_i)\|_{\Gamma_i(X_\varepsilon)}^2.
    \]
\end{problem}

Here, the trace operators $T(X)_i $ are defined  according to~\eqref{def:TRACEOP} 
but with $\Gamma_i$ replaced by $\Gamma_i(X)$.  We set $T(X)=(T(X)_i)$ in accordance with 
Lemma~\ref{lem:PRODUCT_TRACE}.

Now we augment the energy by the additional term $\cV_{\soft}(X)$ associated with soft wall constraints
that penalize overlapping and escaping particles (cf.\ Subsection~\ref{sec:HARDSOFT}).

\begin{problem}[Soft  curve constraints with varying locations and soft wall constraints] \label{p:SCCLS} 
    Find $(u_{\varepsilon}, X_{\varepsilon})\in V \times \op{int}\omega$ minimizing the energy
    \[
    \cE(u_{\varepsilon},X)=\cJ(u_{\varepsilon})+ 
    \frac{1}{2\varepsilon}\sum_{i=1}^N \|T(X_\varepsilon)_i u_\varepsilon - (h_i,s_i)\|_{\Gamma_i(X)}^2 + \cV_{\soft}(X).
    \]
\end{problem}
We briefly describe a straightforward 
gradient flow approach to the iterative solution of Problem~\ref{p:SCCLS}.
As in Subsection~\ref{subsec:GRADFLOW}, we consider a  trajectory  
$X(t)\in \op{int}\omega$ of locations of particles with
corresponding boundaries $\Gamma_i(X(t))$, $i=1,\dots,N$.
Let $u(X)\in V$ denote the unique minimizer of $\cE(\cdot,X)$ on $V$
for given $X=X(t)$ (cf.\ Proposition~\ref{Pro:EXCCH}).
Again denoting 
$\nabla_X=(\nabla_{X_1},\dots, \nabla_{X_N})$ and $\nabla_{X_i}=\left(\frac{\partial}{\partial X_{i,1}},\frac{\partial}{\partial X_{i,2}}\right)$, 
we might consider the gradient flow 
\begin{equation}
    X' = -\nabla_X \cE(u(X),X), \quad t>0 ,
\end{equation}
with given initial iterate  $X(0)=X_0\in \op{int}\omega$.
For $i=1,\dots,N$ a formal calculation yields the representation
 \[
        \displaystyle 
        \nabla_{X_i}\cE(u(X)) = 
        \displaystyle 
        \frac{1}{\varepsilon}\int_{\Gamma_i(X)}  (u-h_i)\nabla u  
        + \left( {\textstyle  \frac{\partial}{\partial n}}u-s_i \right)
        \nabla ( {\textstyle \frac{\partial}{\partial n}} u) \; ds  +
        \nabla_{X_i}\cV(X)
    \]
    with $u=u(X)$ which, however, might not be applicable for lack of smoothness. 
    Existence and approximation of global minimizers for varying locations will be considered in a separate paper.

\subsection{Discussion}
We have analysed various descriptions of proteins in lipid membranes
in terms of rigid particles of finite size, 
interacting with a linearised  Canham-Helfrich membrane model
by suitable conditions on the membrane displacement and its normal derivative  (angle condition) at the particle boundaries.
The history of such kind of hybrid models
can be traced back to the early nineties, see, e.g., \cite{GouBruPin93,HelfrichJakobsson90,Huang86,KimNeuOst98,ParLub96,WeiKozHel98}.
For example, Problems~\ref{p:BVP} and~\ref{p:CCH}  
set out in Section~\ref{sec:BVP} and \ref{sec:PCC} are equivalent 
to what is referred to as the `strong coupling regime' in \cite{GouBruPin93} 
or the `microscopic model' in \cite{ParLub96}.
See, e.g., \cite{NieGouAnd98} for a 
careful discussion of the boundary conditions~\eqref{eq:INTBC}.
Investigations particularly focus on the 
effect of the mechanical properties of the membrane
on the interaction between different particles in interdependency with  corresponding membrane deformations.
Such kind of membrane-mediated interaction was studied for particles with parametrized 
height and tilt angle, circular or non-circular cross-section, respecting or breaking 
reflection symmetry~\cite{GouBruPin93,KimNeuOst00,ParLub96}.
Varying particle positions as treated in Section~\ref{subsec:VCC}
have also been considered in this context, see, e.g., \cite{KimNeuOst98}.
The notion of 'soft inclusions' as introduced from a physical point of view, 
also referred to as `perturbative regime'  \cite{GouBruPin93} 
or `phenomenological model' \cite{ParLub96},
does not completely agree with the notion of `soft constraints' as considered
in Section~\ref{sec:SOFT}.
Soft inclusions were originally introduced to study the influence of regions with excess concentration of lipids on fluctuating membranes,
where it is appropriate to assume the bending rigidity of these regions to be close to the value of the surrounding membrane.
Note, that regions of excess lipid concentration usually 
respect reflection symmetry (across the membrane) and thus, 
do not impose any constraints on the curvature. However, applying this approach to model `soft', i.e., non-rigid proteins 
that break reflection symmetry, due to spontaneous curvature terms $c_i$,
leads to an energy $\cJ_{\Omega_B}(u) + \sum_{i=1}^{N}\int_{B_i}\frac{1}{2}\kappa_i(\Delta u - c_i)^2\,dx$, 
which takes the form of soft point curvature constraints as considered
in Section~\ref{sec:pcc}, when formally  passing to the limit of point-like particles.

\section{Averaged curve constraints} \label{sec:AVCU}
\subsection{Fixed heights and locations}
In order to derive   approximations of boundary conditions
or equivalent curve constraints  \eqref{eq:INTBC}  on $\Gamma_i$, 
we introduce the averaging functionals $f_i$, $g_i:V \rightarrow \R$, 
\begin{equation} \label{eq:DISCO}
    f_i(v)= \fint_{\Gamma_i} v\;\ds,   \qquad 
    g_i(v)= \fint_{\Gamma_i} \frac{\partial}{\partial n}v\;\ds, \qquad i=1,\dots, N,
\end{equation}
where $\fint_D:=\frac{1}{|D|}\int_D.$
Possible variations of traces along $\Gamma_i$ are ignored in this way.
Recall from \eqref{def:TRACEOP} that $(T_i v)_1 = v|_{\Gamma_i}$ and 
$(T_i v)_2 = \frac{\partial}{\partial n}v|_{\Gamma_i}$.
The functionals $f_i$, $g_i$, are  linear and bounded on $V$,
because each $T_i$ is a linear and bounded map into
$\cX_i \subset L^1(\Gamma_i)\times L^1(\Gamma_i)$ (cf.\ Lemma~\ref{lem:SINGLE_TRACE}).

\begin{remark} \label{rem:CURVATURE}
    Utilizing  Green's formula,  the functionals $g_i$, $i=1,\dots,N$, can be expressed as 
    averaged mean curvature according to
    \begin{equation} \label{eq:CURVATCO}
        g_i(v)=-\frac{|B_i|}{|\Gamma_i|}\fint_{B_i}\Delta v\; dx, \qquad v\in V,
    \end{equation}
    where the sign results from the fact that $n$ is an inward normal to $B_i$.
    Conditions on $g_i$ are therefore called {\em mean curvature constraints} in the sequel.
\end{remark}
We introduce the averaged data 
\begin{equation} \label{eq:AVDAT}
    \bar{h}_i= \fint_{\Gamma_i}h_i\;\ds, \quad \bar{s_i}=\fint_{\Gamma_i}s_i\;\ds, \quad i=1,\dots, N,
\end{equation}
with  $h_i$, $s_i$ given according to \eqref{eq:BNDDATA} and the notation
\begin{equation} \label{eq:VECFG}
    f_X=(f_i)\in (V')^N, \;g_X=(g_i) \in (V')^N, \qquad \bar{h}=(\bar{h}_i) \in \R^N,\; \bar{s}=(\bar{h}_i)  \in \R^N.
\end{equation}
As an approximation of the boundary value Problem~\ref{p:BVP} or
its  equivalent fixed-domain formulation Problem~\ref{p:CC}, we now consider the 
following minimization problem with {\em average constraints}.

\begin{problem}[Averaged curve constraints]\ \\ \label{p:AC} %
    Find   $u\in V$ minimizing the energy $\cJ$ subject to the  average constraints 
    \begin{equation} \label{eq:AC}
        f_X(u) =\bar{h}, \qquad g_X(u)=\bar{s}.
    \end{equation}
\end{problem}
Note that, using the componentwise integral operator $\int:\cX \to \R^{ N \times 2}$ defined by
\[
\textstyle \fint v  
= \left(\big(\fint_{\Gamma_i}v_{i,1}\;\ds,\fint_{\Gamma_i}v_{i,2}\;\ds\big)\right), 
\qquad v=\big( (v_{i,1}, v_{i,2}) \big)\in \cX,
\]
the constraints \eqref{eq:AC} can  be rewritten as
\begin{align} \label{eq:INTEGRATED_CONSTRAINTS}
   \textstyle \fint T u
       = \fint (h,s)
       = (\bar{h},\bar{s})
,\qquad T=(T_i),
\end{align}
i.e., as an average of the constraints in Problem~\ref{p:CC}
(cf. Theorem~\ref{prop:EXCC}).
Obviously, averaged  constraints \eqref{eq:AC} or \eqref{eq:INTEGRATED_CONSTRAINTS} 
are weaker than  curve constraints  \eqref{eq:INTBC}.

\begin{proposition}
    Assume that $\overline{B}_i \cap \overline{B}_j = \emptyset$ for $i\neq j$.
    Then there is a unique solution $u\in V$ of Problem~\ref{p:AC}.
\end{proposition}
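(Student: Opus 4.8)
The plan is to apply the Lax--Milgram lemma, exactly as in the proof of Theorem~\ref{prop:EXCC}, but now with the affine constraint set determined by the averaging functionals instead of the full trace operator. First I would observe that the constraints \eqref{eq:AC} define the affine set
\[
    W = \{v \in V \st f_X(v) = \bar{h},\; g_X(v) = \bar{s}\},
\]
and that Problem~\ref{p:AC} is precisely the minimization of $\cJ$ over $W$. Since $\cJ$ has the associated bilinear form $a(\wc,\wc)$, which is continuous and coercive on $V$ by Lemma~\ref{lem:GLOBAL_COERCIVITY}, the Lax--Milgram lemma will yield a unique minimizer provided $W$ is a nonempty, closed, affine subspace of $V$.

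Closedness is immediate: each $f_i$ and $g_i$ is linear and bounded on $V$, as already noted just before the statement, so the combined map $v \mapsto (f_X(v), g_X(v)) \in \R^N \times \R^N$ is continuous, and $W$ is the preimage of the single point $(\bar{h},\bar{s})$ under this continuous affine map. The main point to check is therefore \emph{nonemptiness}, i.e.\ that the prescribed average data $(\bar{h},\bar{s})$ is actually attained by some $v \in V$. Here I would invoke Theorem~\ref{prop:EXCC}: under the hypothesis $\overline{B}_i \cap \overline{B}_j = \emptyset$ for $i \neq j$, that theorem already furnishes a function $u^\ast \in V$ satisfying the full curve constraints $T u^\ast = (h,s)$ on every $\Gamma_i$. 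Averaging these pointwise constraints over each $\Gamma_i$, as expressed in \eqref{eq:INTEGRATED_CONSTRAINTS}, shows that this $u^\ast$ satisfies $f_X(u^\ast) = \bar{h}$ and $g_X(u^\ast) = \bar{s}$, so $u^\ast \in W$ and $W \neq \emptyset$. (Equivalently, surjectivity of $T$ from Lemma~\ref{lem:PRODUCT_TRACE} produces such a function directly.)

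With $W$ established as a nonempty, closed, affine subspace of $V$ on which $a(\wc,\wc)$ is continuous and coercive, the Lax--Milgram lemma applied on the underlying linear space $W_0 = \{v \in V \st f_X(v) = 0,\; g_X(v) = 0\}$ delivers a unique solution. I do not anticipate any genuine obstacle: the only place requiring the disjointness hypothesis is the nonemptiness argument, where it is needed to guarantee (via Theorem~\ref{prop:EXCC} or Lemma~\ref{lem:PRODUCT_TRACE}) that the constraints are simultaneously realizable. The coercivity of $a$ on all of $V$ means no additional coercivity verification on the constrained subspace is required, since coercivity is inherited by subspaces. The whole argument is a direct parallel of Theorem~\ref{prop:EXCC}, with the averaging functionals replacing the trace operator and \eqref{eq:INTEGRATED_CONSTRAINTS} supplying the reduction from the curve constraints to their averages.
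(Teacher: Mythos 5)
Your proposal is correct and follows essentially the same route as the paper: nonemptiness of the constraint set via surjectivity of the trace operator $T$ (Lemma~\ref{lem:PRODUCT_TRACE}) combined with averaging, closedness via continuity of the averaging functionals, and then the Lax--Milgram lemma with the coercivity of $a(\wc,\wc)$ from Lemma~\ref{lem:GLOBAL_COERCIVITY}. The detour through Theorem~\ref{prop:EXCC} for nonemptiness is harmless but unnecessary, as you yourself note in the parenthetical remark, which is exactly the paper's argument.
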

\begin{proof}
    The trace operator $T=(T_i)$ is linear, continuous,
    and surjective by Lemma~\ref{lem:PRODUCT_TRACE}.
    Continuity of averaging $\int$ implies continuity of  $\int T: V \to \R^{N \times 2}$.
    Therefore, the Lax-Milgram lemma provides  existence and uniqueness in the
    non-empty, affine, closed, subspace
    \[
        \{v \in V \st \textstyle\fint T v = \fint (h,s) \}\subset V.
    \]
    See also Proposition~\ref{prop:A_CONSTR_MIN} in the appendix.
\end{proof}

\subsection{Averaged  mean curvature constraints}
We now investigate averages of parametrized curve constraints \eqref{eq:PACU} 
of the form 
\begin{equation}\label{eq:AVPAR}
    f_i(u)=\fint_{\Gamma_i} h_i  \ds+ \gamma_i\;, \qquad  g_i(u)= \int_{\Gamma_i} s_i \;\ds, \qquad i=1,\dots,N,
\end{equation}
with  $h_i$, $s_i$ given according to \eqref{eq:BNDDATA} and
freely varying heights $\gamma=(\gamma_i)\in \R^N$.
In analogy to \eqref{eq:INTEGRATED_CONSTRAINTS} the 
parametrized constraints \eqref{eq:AVPAR} can be rewritten as
\begin{equation} \label{eq:APACO}
    \textstyle \fint Tu =  (\gamma,0) + \fint(h,s)
\end{equation}
i.e.  as averages of parametrized curve constraints \eqref{eq:PACU} occurring in  Problem~\ref{p:CCH}.
Utilizing the orthogonal projection $P=(P_{\Gamma_i}): \cX \to \cX$ with $P_{\Gamma_i}$ 
introduced in \eqref{eq:OPROP}, the constraints \eqref{eq:APACO} can be decoupled according to
\begin{equation} \label{eq:APACOP}
    \textstyle \fint PTu = \fint P (h,s),\qquad  \gamma=\fint (Tu)_1 -\fint h.
\end{equation}
Now observe that by definition of $P=(P_{\Gamma_i})$ the constraints
\[
    \textstyle (\fint PTv)_{i,1}= \fint_{\Gamma_i} (P_{\Gamma_i}T_iv)_1=0= \fint_{\Gamma_i} (P_{\Gamma_i} (h_i,s_i))_1,
    \qquad i=1,\dots,N,
\]
hold for all $v\in V$.  The remaining constraints in \eqref{eq:APACOP} 
and thus the parametrized constraints \eqref{eq:AVPAR} take the form
\[
    \textstyle (\fint PTu)_{i,2}=(\fint Tu)_{i,2}= \int s.
\]
In light of $g_X(u)= (\fint Tu)_{i,2}$, Remark~\ref{rem:CURVATURE}, 
and $\bar{s}=\fint s$,  minimization of $\cJ$ subject to averaged
parametrized constraints \eqref{eq:AVPAR} finally amounts to  the following 
averaged version  of Problem~\ref{p:PCCH}.

\begin{problem}[Averaged mean curvature constraints]\ \\  \label{p:AMC}%
    Find $u \in V$ minimizing the energy $\cJ$ subject to the constraints
    \begin{equation} \label{eq:AVCUCO}
        g_X(u) = \bar{s}.
    \end{equation}
\end{problem}
According to \eqref{eq:APACOP}, the optimal heights $\gamma=(\gamma_i)\in \R^N$ are obtained from
 \begin{equation} \label{eq:HEICON}
    \gamma_i= f_i(u)-\fint_{\Gamma_i}h_i\; \ds, \qquad i=1,\dots,N.
\end{equation}
once the solution $u\in V$ of Problem~\ref{p:AMC} is available.
Note that, by the representation \eqref{eq:CURVATCO}, one can
construct a function $w \in V$ with $g_X(w)=\bar{s}$, even
without the assumption $\overline{B}_i \cap \overline{B}_j \neq \emptyset$.
Hence, the following existence result  is a consequence of  the Lax-Milgram lemma.

\begin{proposition} \label{prop:AMC}
    There exists a unique solution $u\in V$ to Problem~\ref{p:AMC}.
\end{proposition}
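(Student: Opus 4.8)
The plan is to recognize Problem~\ref{p:AMC} as the minimization of a coercive quadratic functional over an affine subspace of $V$ cut out by finitely many bounded linear functionals, and then to invoke the abstract constrained-minimization result (Proposition~\ref{prop:A_CONSTR_MIN} in the appendix), exactly as in the treatment of the curve-constrained Problem~\ref{p:CC}. The ingredients I need are continuity and coercivity of $a(\wc,\wc)$, continuity of the constraint map $g_X$, and non-emptiness of the feasible set.

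First I would note that continuity and coercivity of $a$ on $V$ are already furnished by Lemma~\ref{lem:GLOBAL_COERCIVITY}, so $\cJ$ is a strictly convex, continuous, coercive quadratic functional. Next, each $g_i$ is linear and bounded on $V$ (as recorded right after \eqref{eq:DISCO}, since $T_i$ maps boundedly into $\cX_i\subset L^1(\Gamma_i)\times L^1(\Gamma_i)$), so the product map $g_X=(g_i):V\to\R^N$ is linear and continuous. Consequently the feasible set $\{v\in V : g_X(v)=\bar s\}$ is a closed affine subspace of $V$ as soon as it is shown to be non-empty.

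The only genuinely non-routine point is non-emptiness, and this is where the curvature representation \eqref{eq:CURVATCO} does the work. Through $g_i(v)=-\tfrac{|B_i|}{|\Gamma_i|}\fint_{B_i}\Delta v\,dx$, realizing the target $\bar s$ reduces to prescribing the bulk averages $\fint_{B_i}\Delta w\,dx$, $i=1,\dots,N$. I would construct $w$ as a finite linear combination $w=\sum_j\beta_j\psi_j$ of smooth functions $\psi_j\in V$ chosen so that the matrix $(g_i(\psi_j))_{i,j}$ is invertible, and then solve the resulting $N\times N$ linear system for the coefficients $\beta_j$. Crucially, because only volume integrals of $\Delta w$ enter, no trace matching along the curves $\Gamma_i$ is required; this is precisely why feasibility persists even when the particle closures are allowed to touch, in contrast to Problem~\ref{p:CC}, where surjectivity of $T$ forced the disjointness hypothesis $\overline{B}_i\cap\overline{B}_j=\emptyset$.

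Having assembled a feasible $w$, the feasible set is a non-empty, closed, affine subspace of $V$, and the bounded, coercive bilinear form $a$ together with the Lax--Milgram lemma in its constrained form (Proposition~\ref{prop:A_CONSTR_MIN}) yields a unique minimizer $u\in V$. The step I expect to require most care is the non-emptiness construction when particles overlap: one must verify that the chosen $\psi_j$ render $(g_i(\psi_j))_{i,j}$ non-singular, which amounts to each $B_i$ carrying some bulk not shared with the others so that the bulk-integral functionals $g_i$ remain linearly independent.
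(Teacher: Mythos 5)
Your proposal is correct and follows essentially the same route as the paper: the paper likewise uses the representation \eqref{eq:CURVATCO} to produce a feasible $w\in V$ with $g_X(w)=\bar{s}$ (noting, as you do, that this needs no disjointness of the closures $\overline{B}_i$, unlike Problem~\ref{p:CC}), and then concludes existence and uniqueness on the resulting non-empty closed affine subspace via the Lax--Milgram lemma. Your explicit construction of $w$ as a combination $\sum_j \beta_j \psi_j$ with invertible matrix $(g_i(\psi_j))_{i,j}$ simply fills in the feasibility step that the paper asserts without detail, and it is valid under the paper's standing assumption $B_i\cap B_j=\emptyset$.
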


\begin{remark}
    As a consequence of 
    \[
        \int_{\Gamma_i} \alpha_i^\top {\textstyle  \frac{\partial}{\partial n}} (x-X_i)\; \ds =0 \qquad \forall \alpha_i, X_i \in \R^2
    \]
    the tilt of particles is no longer represented by parametrized constraints of the form \eqref{eq:HEIGHTTILT1}
    after averaging.
\end{remark}

\subsection{Varying the location of  particles}
We now consider particles with varying location and hard- and soft-wall constraints
as defined in Section~\ref{sec:HARDSOFT}.
In case of hard-wall constraints, we allow  the locations $X=(X_i)$  
to vary only in the domain  $\omega$ of $\cV_{\hard}$  as given in \eqref{eq:FFSP}.

\begin{problem}
    [Averaged mean curvature constraints with varying locations and hard-wall constraints] %
    \label{p:AMCLH} %
    Find $(u,X)\in V\times \omega$ minimizing the energy $\cJ(u)$
    subject to the constraint
    \begin{equation}\label{eq:AVH}
        g_X(u)=\bar{s}.
    \end{equation}
\end{problem}

Notice that Problem~\ref{p:AMCLH} is equivalent to minimizing
$\cJ(u) + \cV_{\hard}(X)$ over $V \times \omega$ under the
constraint~\eqref{eq:AVH}.
For soft-wall constraints, we obtain the following related problem.

\begin{problem}
    [Averaged mean curvature constraints with varying locations and soft-wall constraints] %
    \label{p:AMCLS} %
    Find $(u,X)\in V\times \omega$ minimizing the energy $\cJ(u) + \cV_{\soft}(X)$
    subject to the constraint
    \begin{equation*}
        g_X(u)=\bar{s}.
    \end{equation*}
\end{problem}

The key ingredient to show existence of solutions for Problems~\ref{p:AMCLH} and~\ref{p:AMCLS}
is the following lemma.

\begin{lemma}\label{lem:CONTINUITY_OF_AVERAGING}
    Let $M \subset \R^n$ be bounded and measurable and $p>1$.
    Then the operator
    \begin{align*}
        A: \Omega \to (L^p(\Omega)'), \qquad
        A(x)v = \int_{(M+x)\cap \Omega} v(\xi)\; d\xi
    \end{align*}
    is continuous.
    Especially, $A(\wc)v:\Omega \to \R$ is continuous for all $v \in L^p(\Omega)$.
\end{lemma}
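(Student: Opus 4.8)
The plan is to identify each functional $A(x)$ with integration against an indicator function and then reduce continuity of $x\mapsto A(x)$ to continuity of translation in a suitable $L^{p'}$-space. First I would observe that
\[
    A(x)v = \int_\Omega \indicator_{(M+x)\cap\Omega}(\xi)\, v(\xi)\; d\xi, \qquad v\in L^p(\Omega),
\]
so that $A(x)$ is represented by the function $\indicator_{(M+x)\cap\Omega}$, which is bounded and supported on a set of finite measure and hence lies in $L^{p'}(\Omega)$. Since $p>1$ we have $p'=p/(p-1)<\infty$, and the duality $L^p(\Omega)'=L^{p'}(\Omega)$ together with linearity of $A(x)$ gives
\[
    \|A(x)-A(y)\|_{L^p(\Omega)'} = \|\indicator_{(M+x)\cap\Omega}-\indicator_{(M+y)\cap\Omega}\|_{L^{p'}(\Omega)}.
\]
This is precisely where the hypothesis $p>1$ enters: for $p=1$ the right-hand side would be the $L^\infty$-norm of a difference of indicators, which equals $1$ whenever the relevant symmetric difference has positive measure and therefore would not tend to $0$.

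Next I would estimate this norm independently of $\Omega$. Writing $\indicator_{(M+x)\cap\Omega}=\indicator_\Omega\,\indicator_{M+x}$ and using that a difference of two $\{0,1\}$-valued functions takes values in $\{-1,0,1\}$ (so its $p'$-th power coincides with its absolute value), I obtain
\[
    \|\indicator_{(M+x)\cap\Omega}-\indicator_{(M+y)\cap\Omega}\|_{L^{p'}(\Omega)}^{p'}
    \le \|\indicator_{M+x}-\indicator_{M+y}\|_{L^{p'}(\R^n)}^{p'}
    = \bigl|(M+x)\,\triangle\,(M+y)\bigr|,
\]
where $\triangle$ denotes symmetric difference and $|\wc|$ Lebesgue measure. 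By translation invariance of Lebesgue measure this equals $|M\,\triangle\,(M+(y-x))|$, so everything is reduced to showing that $h\mapsto|M\,\triangle\,(M+h)|$ is continuous at $h=0$.

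The remaining step, which I expect to be the only substantial point, is exactly this continuity, i.e.\ the statement that translation is continuous in $L^1(\R^n)$ applied to $\indicator_M$: one has $\|\indicator_{M+h}-\indicator_M\|_{L^1(\R^n)}=|M\,\triangle\,(M+h)|$, and since $M$ is bounded and measurable, $\indicator_M\in L^1(\R^n)$. I would establish translation-continuity by the standard density argument: approximate $\indicator_M$ in $L^1(\R^n)$ by a compactly supported continuous function $\varphi$, for which $\|\tau_h\varphi-\varphi\|_{L^1}\to 0$ follows from uniform continuity, and control the error by $2\|\indicator_M-\varphi\|_{L^1}$ using translation invariance of the $L^1$-norm. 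Combining the three displays yields $\|A(x)-A(y)\|_{L^p(\Omega)'}\le |M\,\triangle\,(M+(y-x))|^{1/p'}\to 0$ as $x\to y$, which is the asserted continuity of $A$. The final claim that $A(\wc)v$ is continuous for each fixed $v\in L^p(\Omega)$ is then immediate, since $|A(x)v-A(y)v|\le \|A(x)-A(y)\|_{L^p(\Omega)'}\,\|v\|_{L^p(\Omega)}$.
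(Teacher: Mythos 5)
Your proof is correct and takes essentially the same route as the paper: both reduce $\|A(x)-A(y)\|_{L^p(\Omega)'}$ to the $L^{p'}(\R^n)$-norm of $\indicator_{M+x}-\indicator_{M+y}$ (the paper via H\"older's inequality, you via the $L^p$--$L^{p'}$ duality) and then conclude by continuity of translations, which the paper obtains by citing \cite[Lemma~4.3]{Brezis2011FunctionalAnalysis} and you obtain by rewriting the norm as the symmetric-difference measure $|M \,\triangle\, (M+(y-x))|$ and running the standard density argument. The differences are purely cosmetic (equality versus inequality in the duality step, and proving rather than citing translation-continuity), so there is nothing to fix.
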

\begin{proof}
    Assume that $v \in L^p(\Omega)$ is extended by zero to $\R^n$ and
    $q = \tfrac{p}{p-1}$ is the dual exponent.
    Now let $x,y \in \Omega$.
    Then H\"older's inequality gives
    \begin{align*}
        \begin{split}
            |A(x)v - A(y)v|
            &= \Bigl| \int_{\R^n} (\indicator_{M+x} - \indicator_{M+y})(\xi) v(\xi) d \xi \Bigr| \\
            &\leq \|\indicator_{M+x} - \indicator_{M+y}\|_{L^q(\R^n)} \|v\|_{L^p(\Omega)}\\
            &= \|\indicator_{M+x-y} - \indicator_{M}\|_{L^q(\R^n)} \|v\|_{L^p(\Omega)}
        \end{split}
    \end{align*}
    where $\indicator_{M+z} \in L^q(\R^n)$ is the
    indicator function of $M+z$ for $z \in \R^n$.
    Hence, by \cite[Lemma~4.3]{Brezis2011FunctionalAnalysis}, we have
        $\|A(x)-A(y)\|_{L^p(\Omega)'} 
        \leq \|\indicator_{M+x-y} - \indicator_{M}\|_{L^q(\R^n)}
        \xrightarrow[y\to x]{} 0$.
\end{proof}

\begin{proposition} \label{prop:EXISTENCE_AMCLH}
    Assume that $\omega \neq \emptyset$.
    Then there exists a solution $(u,X)\in V\times \omega$ to Problem~\ref{p:AMCLH}.
\end{proposition}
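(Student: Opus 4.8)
The plan is to apply the direct method of the calculus of variations, combining weak compactness in $V$ with the compactness of $\omega$ from Lemma~\ref{lem:COMPO}. First I would check that the feasible set is non-empty and the infimum finite: since $\omega\neq\emptyset$, fix any $X\in\omega$, and by the curvature representation \eqref{eq:CURVATCO} one can construct a $w\in V$ with $g_X(w)=\bar s$; as $\cJ\geq 0$, the value $m=\inf\cJ(u)$ over all admissible pairs $(u,X)\in V\times\omega$ with $g_X(u)=\bar s$ is a finite, non-negative number. Next I would take a minimizing sequence $(u_n,X_n)$, so that $X_n\in\omega$, $g_{X_n}(u_n)=\bar s$, and $\cJ(u_n)\to m$. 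Because $\cJ(u)=\tfrac12 a(u,u)$ and $a$ is coercive (Lemma~\ref{lem:GLOBAL_COERCIVITY}), the bound on $\cJ(u_n)$ yields a uniform bound on $\|u_n\|_2$; hence, after passing to a subsequence, $u_n\rightharpoonup u^*$ weakly in $V$. Using compactness of $\omega$, I would simultaneously assume $X_n\to X^*\in\omega$.

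Since $\cJ$ is a continuous, convex (indeed quadratic) functional, it is weakly lower semicontinuous, so $\cJ(u^*)\leq\liminf_n\cJ(u_n)=m$. It then remains to verify that the limit pair is admissible, namely $g_{X^*}(u^*)=\bar s$; granting this, $(u^*,X^*)$ is feasible with $\cJ(u^*)\leq m$, and minimality forces $\cJ(u^*)=m$, which finishes the proof.

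The crux, and the step I expect to be the main obstacle, is passing the constraint to the limit, because $g_X(u)$ depends on \emph{both} arguments while I only have weak convergence of $u_n$ against the \emph{moving} integration domains $B_i(X_n)$. Here I would use the curvature form $g_{X,i}(v)=-\tfrac{1}{|\Gamma_i^0|}\int_{B_i^0+X_i}\Delta v\,dx$ of Remark~\ref{rem:CURVATURE} (noting that $B_i(X)\subset\Omega$ for $X\in\omega$, so the intersection with $\Omega$ in Lemma~\ref{lem:CONTINUITY_OF_AVERAGING} is inactive) and split the error componentwise,
\[
g_{X_n,i}(u_n)-g_{X^*,i}(u^*)=\bigl(g_{X_n,i}(u_n)-g_{X^*,i}(u_n)\bigr)+\bigl(g_{X^*,i}(u_n)-g_{X^*,i}(u^*)\bigr).
\]
The second bracket tends to $0$ since $g_{X^*,i}$ is a fixed bounded linear functional on $V$ and $u_n\rightharpoonup u^*$. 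For the first bracket I would invoke Lemma~\ref{lem:CONTINUITY_OF_AVERAGING} with $M=B_i^0$ and $p=2$, writing the difference (up to the factor $-1/|\Gamma_i^0|$) as $A(X_{n,i})(\Delta u_n)-A(X^*_i)(\Delta u_n)$ and bounding it by $\|A(X_{n,i})-A(X^*_i)\|_{L^2(\Omega)'}\,\|\Delta u_n\|_{L^2(\Omega)}$; the first factor tends to $0$ by continuity of $A$ since $X_{n,i}\to X^*_i$, while the second stays bounded by the uniform $H^2$-bound on $u_n$. Thus $g_{X_n,i}(u_n)\to g_{X^*,i}(u^*)$, and since every left-hand side equals $\bar s_i$, I conclude $g_{X^*}(u^*)=\bar s$, giving admissibility and hence the claimed existence of a minimizer.
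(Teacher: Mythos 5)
Your proof is correct, and its mathematical core coincides with the paper's: the same ingredients carry the argument, namely compactness of $\omega$ (Lemma~\ref{lem:COMPO}), continuity of $X \mapsto g_X \in V'$ (via Remark~\ref{rem:CURVATURE}, Lemma~\ref{lem:CONTINUITY_OF_AVERAGING} with $p=2$, and boundedness of $\Delta: V \to L^2(\Omega)$), and non-emptiness of the feasible set (Proposition~\ref{prop:AMC}). The difference is structural: the paper verifies exactly these hypotheses and then cites the abstract existence result, Proposition~\ref{QPP_Global_exist} in the appendix (together with lower semicontinuity of $\cV_{\hard}$, which is harmless here since $\cV_{\hard}\equiv 0$ on $\omega$), whereas you unfold that abstract argument into a self-contained direct-method proof with a joint minimizing sequence, coercivity, weak compactness, weak lower semicontinuity of $\cJ$, and passage to the limit in the constraint. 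Your two-term splitting of $g_{X_n,i}(u_n)-g_{X^*,i}(u^*)$ is precisely what happens inside the proof of Proposition~\ref{QPP_Global_exist}, where norm-continuity of the parametrized constraint operator combined with weak convergence of $u_n$ yields convergence of the constrained values; the only cosmetic deviation is that the appendix works with the reduced functional $\xi$ and exact minimizers $u_n$ for each $X_n$, while you take a general minimizing sequence of the joint problem. What the paper's route buys is reuse: the same abstract proposition also settles existence for Problems~\ref{p:AMCLS}, \ref{p:SAMCLH}, \ref{p:SAMCLS}, \ref{min_curv}, and \ref{min_heights}. What yours buys is a self-contained argument readable without the appendix, at the cost of repeating the compactness machinery that the paper factors out.
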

\begin{proof}
    We want to apply the general result of Proposition~\ref{QPP_Global_exist}
    in the appendix.
    To this end we first select an $X_0 \in \omega \neq \emptyset$.
    Note that we have $\cV_{\hard}(X_0)=0<\infty$.
    Furthermore Proposition~\ref{prop:AMC} implies that there is
    $u_0 \in V$ satisfying $g_{X_0}(u_0)=\bar{s}$.

    By Lemma~\ref{lem:COMPO}, the set $\omega$ is compact.
    Furthermore it is easily checked that the functional
    $\cV_{\hard}: \R^{N \times 2} \to \Rinfty$ is
    lower semi-continuous.
    In order to apply Proposition~\ref{QPP_Global_exist}, it remains to show that
    $X \mapsto g_X(\wc) \in V'$ is continuous.
    This follows from the representation of $g_X$ given in Remark~\ref{rem:CURVATURE},
    Lemma~\ref{lem:CONTINUITY_OF_AVERAGING} with $p=2$, and
    the continuity of $\Delta: V \to L^2(\Omega)$.
\end{proof}

Under a slightly stronger assumption we also get existence for soft-wall constraints.
\begin{proposition} \label{prop:EXISTENCE_AMCLS}
    Assume that $\op{int} \omega \neq \emptyset$.
    Then there exists a solution $(u,X)\in V\times \op{int}\omega$ to Problem~\ref{p:AMCLS}.
\end{proposition}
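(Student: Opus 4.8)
The plan is to establish existence by the direct method, reducing the problem to the compact (hard-wall) situation already handled in Proposition~\ref{prop:EXISTENCE_AMCLH}. The only structural difference between Problem~\ref{p:AMCLS} and Problem~\ref{p:AMCLH} is that the admissible locations must now be taken in the \emph{open} set $\op{int}\omega$ rather than the compact set $\omega$, since $\cV_{\soft}$ is finite only on $\op{int}\omega$. Accordingly I would first fix a feasible reference configuration: as $\op{int}\omega \neq \emptyset$, choose $X_0 \in \op{int}\omega$, so that $\cV_{\soft}(X_0) < \infty$, and invoke Proposition~\ref{prop:AMC} to obtain $u_0 \in V$ with $g_{X_0}(u_0) = \bar{s}$. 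Setting $C_0 := \cJ(u_0) + \cV_{\soft}(X_0) < \infty$, the infimum $m$ of $\cJ(u) + \cV_{\soft}(X)$ over feasible pairs is finite and bounded above by $C_0$.

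The crucial new ingredient is to replace $\op{int}\omega$ by a compact subset that captures all competitive configurations. Here I would exploit that both the Lennard-Jones term $\cV_1$ and the boundary term $\cV_2$ blow up to $+\infty$ as $\op{dist}(B_i,B_j)\to 0$ or $\op{dist}(B_i,\partial\Omega)\to 0$, that is, as $X$ approaches $\partial\omega$ from within $\op{int}\omega$. Consequently $\cV_{\soft}$ is lower semicontinuous on the compact set $\omega$ (Lemma~\ref{lem:COMPO}), with value $+\infty$ on $\partial\omega$, and its sublevel set $K := \{X \in \omega \st \cV_{\soft}(X) \leq C_0\}$ is a compact subset of $\op{int}\omega$ on which $\cV_{\soft}$ is continuous. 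Because $\cJ \geq 0$, every feasible pair with $X \notin K$ has energy $\cJ(u) + \cV_{\soft}(X) > C_0 \geq m$, so it suffices to minimize over $V \times K$.

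On $V \times K$ the hypotheses of the abstract existence result are met verbatim as in the hard-wall proof: $K$ is compact, $\cV_{\soft}|_K$ is continuous (hence lower semicontinuous), the map $X \mapsto g_X \in V'$ is continuous by the representation in Remark~\ref{rem:CURVATURE} together with Lemma~\ref{lem:CONTINUITY_OF_AVERAGING} (with $p=2$) and the continuity of $\Delta\colon V \to L^2(\Omega)$, and $(u_0,X_0)$ provides a feasible point with finite penalty. Hence Proposition~\ref{QPP_Global_exist} yields a minimizer $(u^*,X^*)\in V\times K$ of $\cJ(u)+\cV_{\soft}(X)$ subject to $g_X(u)=\bar{s}$ over $V\times K$. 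Since no competitor with $X \in \op{int}\omega\setminus K$ can undercut the value at $(u^*,X^*)$, this pair is in fact a global minimizer over $V\times\op{int}\omega$, with $X^*\in K\subset\op{int}\omega$, which is the assertion.

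I expect the main obstacle to be precisely the non-compactness of the location domain $\op{int}\omega$: unlike the hard-wall case, the compactness-based existence result cannot be applied directly. The whole difficulty is therefore concentrated in showing that the repulsive singularities of $\cV_{\soft}$ confine all near-minimal configurations to a compact subset $K$ of $\op{int}\omega$. Once this coercivity-type property is in place, the weak compactness of minimizing sequences in $u$ (from coercivity of $a$, Lemma~\ref{lem:GLOBAL_COERCIVITY}), the weak lower semicontinuity of $\cJ$, and the continuity of the constraint map $X\mapsto g_X$ all behave exactly as in Proposition~\ref{prop:EXISTENCE_AMCLH}.
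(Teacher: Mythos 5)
Your argument is correct, and it rests on the same two pillars as the paper's proof: the abstract existence result of Proposition~\ref{QPP_Global_exist} and the continuity of $X\mapsto g_X$ obtained from Remark~\ref{rem:CURVATURE}, Lemma~\ref{lem:CONTINUITY_OF_AVERAGING} (with $p=2$), and continuity of $\Delta:V\to L^2(\Omega)$. The difference lies in how the set where $\cV_{\soft}=\infty$ is handled. You excise it in advance: exploiting lower semicontinuity of $\cV_{\soft}$ on the compact set $\omega$ (Lemma~\ref{lem:COMPO}) and its blow-up on $\omega\setminus\op{int}\omega$, you trap all competitive locations in the compact sublevel set $K=\{X\in\omega\st \cV_{\soft}(X)\leq C_0\}\subset \op{int}\omega$, apply the abstract result on $K$ with a finite, continuous penalty, and finish with an energy-comparison argument. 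The paper instead observes that Proposition~\ref{QPP_Global_exist} only requires $\cV:\cM\to\Rinfty$ to be lower semicontinuous --- infinite values are explicitly permitted --- so it takes $\cM=\omega$ and $\cV=\cV_{\soft}$ and repeats the hard-wall proof of Proposition~\ref{prop:EXISTENCE_AMCLH} verbatim; the resulting minimizer lies in $\op{int}\omega$ a posteriori, because $\cV_{\soft}$ is infinite on $\omega\setminus\op{int}\omega$ while the minimal energy is finite. Thus the ``main obstacle'' you identify (non-compactness of $\op{int}\omega$) dissolves in the paper simply by minimizing over $\omega$ instead, which is equivalent. Your sublevel-set detour is sound and has the mild advantage of invoking the abstract result only for finite penalties; the price is the extra (correctly justified) verification that sublevel sets of $\cV_{\soft}$ are compact subsets of $\op{int}\omega$, which the paper's shorter route does not need.
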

\begin{proof}
    We want to apply Proposition~\ref{QPP_Global_exist} in the appendix.
    To this end, we now fix $X_0 \in \op{int} \omega \neq \emptyset$.
    Then, equation~\eqref{eq:INTERIOR_OF_OMEGA} implies that we have
    $\op{dist}(B_i(X_0),B_j(X_0)) >0$ and $\op{dist}(B_i(X_0),\partial \Omega)>0$
    for $i \neq j$ and thus $\cV_{\soft}(X_0)<\infty$.
    Now we can literally proceed as in the proof of Proposition~\ref{prop:EXISTENCE_AMCLH},
    but with $\cV_{\hard}$ replaced by $\cV_{\soft}$.
    This provides a minimizer $(u,X) \in V \times \omega$.
    Since $\cV_{\soft}$ is infinite on $\omega \setminus \op{int} \omega$,
    we even have $X \in \op{int} \omega$.
\end{proof}

Note that we cannot expect uniqueness, e.g., for reasons of symmetry.

\subsection{Soft averaged mean curvature  constraints}
\subsubsection{Fixed locations of particles}
We skip penalty approximations of  the fixed-height  Problem~\ref{p:AC}
and directly concentrate on averaged mean curvature constraints \eqref{eq:AVCUCO} and varying heights
at fixed location of particles.
We introduce the penalty term
\begin{equation}\label{eq:SAMC}
    \frac{1}{2 \varepsilon}  \|g_X(u)-\bar{s}\|_{\R^N}^2
\end{equation}
with $g_X=(g_i)$ defined in \eqref{eq:CURVATCO} and  penalty parameter $\varepsilon >0$.
The corresponding minimization problem reads:

\begin{problem}[Soft averaged mean curvature constraints] \ \\  \label{p:SAMC}%
    Find $u_{\varepsilon}\in V$ minimizing 
    \[
        \cJ(u) +  \frac{1}{2 \varepsilon}   \|g_X(u)-\bar{s}\|_{\R^N}^2.
    \]
\end{problem}

An equivalent variational formulation amounts to find $u_{\varepsilon}\in V$ such that
\begin{equation} \label{eq:VARSAC}
    a(u_{\varepsilon},v)+ \frac{1}{\varepsilon}  \sum_{i=1}^N  g_i(u_{\varepsilon}) g_i(v) =
    \frac{1}{\varepsilon} \sum_{i=1}^N  \bar{s}_i g_i(v)\qquad \forall v\in V.
\end{equation}
The Lax-Milgram lemma yields existence and uniqueness.
\begin{proposition}
    There exists a unique solution $u_{\varepsilon}\in V$ to Problem~\ref{p:SAMC}.
\end{proposition}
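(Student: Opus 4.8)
The plan is to establish the claim by reducing Problem~\ref{p:SAMC} to its Euler--Lagrange equation \eqref{eq:VARSAC} and invoking the Lax--Milgram lemma, exactly in the spirit of the preceding existence results for penalized problems (e.g.\ Proposition~\ref{Pro:EXCCH}). Writing the energy to be minimized as
\[
    \cE_\varepsilon(u) = \cJ(u) + \frac{1}{2\varepsilon}\|g_X(u)-\bar{s}\|_{\R^N}^2,
\]
and using that $\cJ(u)=\tfrac12 a(u,u)$, one sees after expanding the square that $\cE_\varepsilon$ is a quadratic functional of the form $\cE_\varepsilon(u)=\tfrac12 b_\varepsilon(u,u)-\ell(u)+c$, up to the constant $c=\tfrac{1}{2\varepsilon}\|\bar{s}\|_{\R^N}^2$, with the symmetric bilinear form
\[
    b_\varepsilon(u,v) = a(u,v) + \frac{1}{\varepsilon}\sum_{i=1}^N g_i(u)g_i(v)
\]
and the linear functional $\ell(v)=\tfrac{1}{\varepsilon}\sum_{i=1}^N \bar{s}_i\, g_i(v)$. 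The stationarity condition for $\cE_\varepsilon$ is then precisely \eqref{eq:VARSAC}, so it suffices to show that $b_\varepsilon$ and $\ell$ meet the hypotheses of Lax--Milgram.

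First I would verify continuity. Continuity of $a(\wc,\wc)$ on $V$ is provided by Lemma~\ref{lem:GLOBAL_COERCIVITY}, while the functionals $g_i:V\to\R$ are bounded and linear, as already noted after \eqref{eq:DISCO} (alternatively one may use the representation \eqref{eq:CURVATCO} together with continuity of $\Delta:V\to L^2(\Omega)$). Hence each map $(u,v)\mapsto g_i(u)g_i(v)$ is a continuous bilinear form, so $b_\varepsilon$ is continuous on $V\times V$; boundedness of $\ell$ follows from the same bound on the $g_i$ and from $\bar{s}\in\R^N$.

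The key point is coercivity, and here the structure of the penalty makes it immediate: the additional term is a sum of squares, hence nonnegative, so
\[
    b_\varepsilon(u,u) = a(u,u) + \frac{1}{\varepsilon}\sum_{i=1}^N g_i(u)^2 \;\geq\; a(u,u) \;\geq\; \alpha\|u\|_2^2,
\]
where $\alpha>0$ is the coercivity constant of $a$ from Lemma~\ref{lem:GLOBAL_COERCIVITY}. Thus $b_\varepsilon$ is $V$-elliptic, and the Lax--Milgram lemma yields a unique $u_\varepsilon\in V$ solving \eqref{eq:VARSAC}; by the strictly convex quadratic structure of $\cE_\varepsilon$, this $u_\varepsilon$ is also its unique minimizer, i.e.\ the unique solution of Problem~\ref{p:SAMC}. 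I do not expect any genuine obstacle here: the only nonroutine ingredient is the boundedness of the averaging functionals $g_i$, which is already available, and coercivity is inherited from $a$ at no cost because the penalty contributes only a nonnegative term.
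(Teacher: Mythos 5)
Your proof is correct and takes essentially the same route as the paper: the paper likewise passes to the equivalent variational formulation \eqref{eq:VARSAC} and invokes the Lax--Milgram lemma, with coercivity inherited from $a(\wc,\wc)$ because the penalty term is nonnegative, and continuity resting on the boundedness of the functionals $g_i$. You have simply spelled out the routine continuity, coercivity, and strict-convexity checks that the paper leaves implicit.
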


Problem~\ref{p:SAMC} can be regarded as an approximation of Problem~\ref{p:AMC}.

\begin{proposition} \label{prop:SAMC}
    Let $u$ denote the solution of Problem~\ref{p:AMC} and  $u_\varepsilon$  
    denote the solution of Problem~\ref{p:SAMC}  for fixed $\varepsilon >0$.
    Then we have
    \[
        u_\varepsilon \to u \quad \text{in }  V \qquad \text{as }  \quad \varepsilon  \to 0.
    \]
\end{proposition}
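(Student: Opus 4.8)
The plan is to recognize Problem~\ref{p:SAMC} as a standard quadratic penalty approximation of the linearly constrained Problem~\ref{p:AMC} and to reduce the claim to the abstract penalty-convergence result Proposition~\ref{prop:A_PENALTY_CONV} in the appendix, exactly as in the proofs of Proposition~\ref{prop:SCC} and Proposition~\ref{prop:SCCH_CONV}. That framework treats minimization of $\cJ(v)=\tfrac12 a(v,v)$ over $\{v\in V\st Bv=b\}$ with penalty $\tfrac1{2\varepsilon}\|Bv-b\|^2$, and the ingredients are already in place: the constraint map is $B=g_X=(g_i):V\to\R^N$ with target $b=\bar s$ under the Euclidean norm, the functionals $g_i$ are linear and bounded on $V$ (as noted after \eqref{eq:DISCO}), and $a(\wc,\wc)$ is continuous and coercive by Lemma~\ref{lem:GLOBAL_COERCIVITY}. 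Since Proposition~\ref{prop:AMC} guarantees that the feasible set $\{v\st g_X(v)=\bar s\}$ is a nonempty closed affine subspace and the constrained problem is solvable, all hypotheses of Proposition~\ref{prop:A_PENALTY_CONV} are met and it yields $u_\varepsilon\to u$ in $V$.

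For a self-contained argument I would run the classical penalty proof directly. First, since $u$ is feasible the penalty term vanishes at $u$, so minimality of $u_\varepsilon$ gives $\cJ(u_\varepsilon)+\tfrac1{2\varepsilon}\|g_X(u_\varepsilon)-\bar s\|_{\R^N}^2\le\cJ(u)$. As $\cJ\ge0$ this produces the uniform energy bound $\cJ(u_\varepsilon)\le\cJ(u)$ — hence a uniform $V$-bound via coercivity — together with $\|g_X(u_\varepsilon)-\bar s\|_{\R^N}^2\le 2\varepsilon\,\cJ(u)\to0$. I would then extract a weakly convergent subsequence $u_\varepsilon\rightharpoonup u^\ast$ in $V$; weak continuity of the bounded linear map $g_X$ forces $g_X(u^\ast)=\bar s$, so $u^\ast$ is feasible, and weak lower semicontinuity of the convex continuous functional $\cJ$ gives $\cJ(u^\ast)\le\liminf\cJ(u_\varepsilon)\le\cJ(u)$. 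Uniqueness of the minimizer over the feasible set (Proposition~\ref{prop:AMC}) then identifies $u^\ast=u$.

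The hard part is upgrading weak convergence to the asserted strong convergence in $V$, and this is where coercivity of $a$ does the essential work. Using symmetry of $a$ and $\cJ=\tfrac12 a(\wc,\wc)$, I would expand
\begin{equation*}
    a(u_\varepsilon-u,u_\varepsilon-u)=2\cJ(u_\varepsilon)-2a(u_\varepsilon,u)+a(u,u),
\end{equation*}
observe that $a(u_\varepsilon,u)\to a(u,u)$ by weak convergence, and that $2\cJ(u_\varepsilon)\to 2\cJ(u)=a(u,u)$ (since $\cJ(u^\ast)=\cJ(u)$ pins down both $\liminf$ and $\limsup$). Hence $a(u_\varepsilon-u,u_\varepsilon-u)\to0$, and coercivity converts this into $\|u_\varepsilon-u\|_2\to0$. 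Finally, because every weakly convergent subsequence has the same limit $u$, a standard subsequence argument promotes the subsequential convergence to convergence of the whole family as $\varepsilon\to0$.
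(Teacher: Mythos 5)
Your proposal is correct, and its primary route is precisely the paper's proof: the paper disposes of Proposition~\ref{prop:SAMC} by the one-line reduction to Proposition~\ref{prop:A_PENALTY_CONV}, applied with $X=\R^N$, $X_0=\{0\}$, $T=g_X$, $b=\bar s$, and $\ell=0$ (the appendix explicitly notes that constraints of the form $Tu=b$ are covered by $X_0=\{0\}$), with feasibility of the constraint set supplied, as you say, by the construction preceding Proposition~\ref{prop:AMC}. Your supplementary self-contained argument is also sound, and it differs in technique from the appendix's own proof of the abstract result: you argue by energy comparison ($\cJ(u_\varepsilon)\le\cJ(u)$ because the penalty vanishes at the feasible $u$), extract a weakly convergent subsequence, identify the limit through weak continuity of $g_X$, weak lower semicontinuity of $\cJ$, and uniqueness of the constrained minimizer, and then upgrade to strong convergence via the expansion $a(u_\varepsilon-u,u_\varepsilon-u)=2\cJ(u_\varepsilon)-2a(u_\varepsilon,u)+a(u,u)\to 0$ together with coercivity. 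The appendix instead tests the penalized variational equation with $v=u_\varepsilon-u$ and uses that the exact solution satisfies $a(u,v)=\ell(v)$ for all $v\in V_0$; that route additionally yields the quantitative bound $\|PT(u_\varepsilon-u)\|_X^2\le C\varepsilon$, which your energy-comparison argument does not produce. One small caveat: your step $\cJ\ge 0$ is legitimate here only because $\ell=0$ in this application; the appendix argument covers the general case $\ell\in V'$, where $J$ is merely bounded below.
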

\begin{proof}
The assertion is a direct consequence of Proposition~\ref{prop:A_PENALTY_CONV} in the appendix.
\end{proof}

\subsubsection{Varying the locations  of particles} \label{subsubsec:SAMCL}
We  first consider  hard wall constraints  $\cV_{\hard}$ as defined in \eqref{eq:HWC}. 
This means that we allow  the locations $X=(X_i)$  to vary only in the domain 
$\omega$ of $\cV_{\hard}$  as defined in \eqref{eq:FFSP}.
Recall that $\omega$ is compact in $\R^{N \times 2}$.

\begin{problem}[Soft averaged mean curvature constraints with varying locations and hard wall constraints] \label{p:SAMCLH}
    Find $(u_\varepsilon,  X_\varepsilon)\in V \times \omega$ minimizing the energy
    \[
      \cJ(u)+  \frac{1}{2 \varepsilon}  \|g_X(u)-\bar{s}\|_{\R^N}^2.
    \]
\end{problem}

\begin{proposition} \label{prop:SAMCLHEX}
Assume that $\omega \neq \emptyset$.
Then there exists a solution $(u_\varepsilon, X_\varepsilon)\in V \times \omega$  to Problem~\ref{p:SAMCLH}.
\end{proposition}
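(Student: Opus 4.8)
The plan is to use the direct method of the calculus of variations, exploiting that the penalty parameter $\varepsilon>0$ is fixed and that the admissible set of locations $\omega$ is compact. Writing $\cE_\varepsilon(u,X) = \cJ(u) + \frac{1}{2\varepsilon}\|g_X(u)-\bar{s}\|_{\R^N}^2$ for the energy in Problem~\ref{p:SAMCLH}, I first observe that the infimum $m = \inf\{\cE_\varepsilon(u,X) \st (u,X)\in V\times\omega\}$ is finite: it is nonnegative because the penalty term is nonnegative, and it is bounded above by choosing any fixed $X_0\in\omega\neq\emptyset$ together with the associated minimizer $u_0\in V$ furnished for Problem~\ref{p:SAMC}.

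Next I would extract a convergent minimizing sequence. Let $(u_n,X_n)\in V\times\omega$ satisfy $\cE_\varepsilon(u_n,X_n)\to m$. Since $\cJ(u_n)\le\cE_\varepsilon(u_n,X_n)$ stays bounded and $a(\wc,\wc)$ is coercive on $V$ by Lemma~\ref{lem:GLOBAL_COERCIVITY}, the sequence $(u_n)$ is bounded in $V$, so after passing to a subsequence $u_n\rightharpoonup u_\varepsilon$ weakly in $V$. As $\omega$ is compact by Lemma~\ref{lem:COMPO}, a further subsequence gives $X_n\to X_\varepsilon\in\omega$.

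The decisive step is lower semicontinuity of $\cE_\varepsilon$ along this sequence. The quadratic form $\cJ$ is convex and continuous, hence weakly lower semicontinuous, so $\cJ(u_\varepsilon)\le\liminf\cJ(u_n)$. For the penalty term I need $g_{X_n}(u_n)\to g_{X_\varepsilon}(u_\varepsilon)$, which I would obtain from the splitting
\[
    g_{X_n}(u_n) - g_{X_\varepsilon}(u_\varepsilon)
    = (g_{X_n}-g_{X_\varepsilon})(u_n) + g_{X_\varepsilon}(u_n - u_\varepsilon).
\]
The first summand tends to zero because $X\mapsto g_X\in(V')^N$ is continuous (as established in the proof of Proposition~\ref{prop:EXISTENCE_AMCLH} via Remark~\ref{rem:CURVATURE}, Lemma~\ref{lem:CONTINUITY_OF_AVERAGING}, and continuity of $\Delta:V\to L^2(\Omega)$) while $\|u_n\|_2$ remains bounded; the second summand tends to zero because $g_{X_\varepsilon}\in(V')^N$ is fixed and $u_n\rightharpoonup u_\varepsilon$. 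Consequently $\|g_{X_n}(u_n)-\bar{s}\|_{\R^N}^2\to\|g_{X_\varepsilon}(u_\varepsilon)-\bar{s}\|_{\R^N}^2$, and together with the weak lower semicontinuity of $\cJ$ this yields $\cE_\varepsilon(u_\varepsilon,X_\varepsilon)\le\liminf\cE_\varepsilon(u_n,X_n)=m$, so that $(u_\varepsilon,X_\varepsilon)$ is a minimizer.

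I expect the delicate point to be exactly this joint passage to the limit in the penalty term, where $X_n$ converges strongly (forcing $g_{X_n}\to g_{X_\varepsilon}$ in $(V')^N$) but $u_n$ only converges weakly; the splitting above is what reconciles the two modes of convergence. Alternatively, the entire argument can be packaged by invoking the general existence result Proposition~\ref{QPP_Global_exist} in the appendix, exactly as in Proposition~\ref{prop:EXISTENCE_AMCLH}, with the hard-wall potential $\cV_{\hard}$ encoding the restriction $X\in\omega$ and the soft curvature term playing the role of the perturbation; the required hypotheses, namely compactness of $\omega$, lower semicontinuity of $\cV_{\hard}$, continuity of $X\mapsto g_X$, and nonemptiness witnessed by $X_0$, are all already available.
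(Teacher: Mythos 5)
Your proof is correct, but it takes a genuinely different route from the paper's. The paper disposes of this proposition in two lines: the hypotheses of the abstract framework (compactness of $\omega$ from Lemma~\ref{lem:COMPO}, continuity of $X \mapsto g_X$, lower semicontinuity of $\cV_{\hard}$, nonemptiness) were already verified in the proof of Proposition~\ref{prop:EXISTENCE_AMCLH}, and the abstract existence result for \emph{penalized} parametrized problems, Proposition~\ref{QPP_Global_penalized_exist}, applies under exactly those hypotheses. That abstract result is itself proved not by a joint direct method but by a reduction: the penalty is absorbed into a parametrized bilinear form $a_y(\wc,\wc)$ and functional $\ell_y$, the inner minimizer $u_y$ is obtained from Lax--Milgram for each fixed $y$, Strang's first lemma gives \emph{strong} convergence $u_{y_n} \to u_y$ and hence continuity of the value function $\eta(y)=\min_v J_y(v)$, and one then minimizes the continuous function $\eta + \cV$ over the compact set. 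Your argument instead runs the direct method on the joint variable $(u,X)$: weak compactness in $V$, strong compactness in $\omega$, weak lower semicontinuity of $\cJ$, and the splitting $g_{X_n}(u_n)-g_{X_\varepsilon}(u_\varepsilon) = (g_{X_n}-g_{X_\varepsilon})(u_n) + g_{X_\varepsilon}(u_n-u_\varepsilon)$ to pass to the limit in the penalty term; this is self-contained, avoids both the abstract machinery and Strang's lemma, and correctly identifies the only delicate point (mixing weak convergence in $u$ with strong convergence in $X$). What the paper's route buys is modularity — the same hypothesis check serves Propositions~\ref{prop:EXISTENCE_AMCLH}, \ref{prop:EXISTENCE_AMCLS}, \ref{prop:SAMCLHEX} and \ref{prop:SAMCLSEX} at once — while yours buys transparency. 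One small correction to your closing remark: the penalized problem cannot be packaged via Proposition~\ref{QPP_Global_exist}, since there the energy $J$ is independent of the parameter and the parameter enters only through the hard constraint set; the correct abstract reference is Proposition~\ref{QPP_Global_penalized_exist}, which is precisely what the paper invokes.
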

\begin{proof}
    In the proof of Proposition~\ref{prop:EXISTENCE_AMCLH}
    we have shown that Problem~\ref{p:AMCLH}
    satisfies the assumptions of Proposition~\ref{QPP_Global_exist} in the appendix.
    Under the same assumptions Proposition~\ref{QPP_Global_penalized} in the appendix
    provides existence  of a solution of its penalized analogue, i.e., Problem~\ref{p:SAMCLH}.
\end{proof}

Now we consider soft wall constraints associated with  the additional energy term $\cV_{\soft}(X)$
that penalize overlapping and escaping particles (cf.\ Subsection~\ref{sec:HARDSOFT}).

\begin{problem}[Soft averaged mean curvature constraints with varying locations and soft wall constraints] \label{p:SAMCLS}
    Find $(u_\varepsilon, X_\varepsilon)\in V \times  \op{int}\omega$ minimizing the energy
    \[
      \cJ(u)+  \frac{1}{2 \varepsilon}  \|g_X(u)-\bar{s}\|_{\R^N}^2 + \cV_{\soft}(X).
    \]
\end{problem}

\begin{proposition} \label{prop:SAMCLSEX}
    Assume that $\op{int} \omega \neq \emptyset$.
    Then there exists a solution $(u_\varepsilon,  X_\varepsilon)\in V \times \op{int}\omega$
    to Problem~\ref{p:SAMCLS}.
\end{proposition}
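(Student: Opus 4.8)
The plan is to deduce the claim from the abstract penalized existence result, Proposition~\ref{QPP_Global_penalized} in the appendix, in exactly the way Proposition~\ref{prop:SAMCLHEX} was obtained from Proposition~\ref{QPP_Global_exist} in the hard-wall case. The starting observation is that Problem~\ref{p:SAMCLS} is precisely the penalized counterpart of the constrained Problem~\ref{p:AMCLS}: the hard constraint $g_X(u)=\bar{s}$ is replaced by the coercive penalty $\frac{1}{2\varepsilon}\|g_X(u)-\bar{s}\|_{\R^N}^2$, while the soft-wall potential $\cV_{\soft}(X)$ is retained unchanged.

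First I would recall that the proof of Proposition~\ref{prop:EXISTENCE_AMCLS} already verifies all the structural hypotheses needed for the abstract framework. Namely: the admissible set $\omega$ is compact by Lemma~\ref{lem:COMPO}; the potential $\cV_{\soft}:\R^{N\times 2}\to\Rinfty$ is lower semicontinuous and satisfies $\cV_{\soft}(X_0)<\infty$ for a chosen $X_0\in\op{int}\omega\neq\emptyset$, since by~\eqref{eq:INTERIOR_OF_OMEGA} this point enjoys $\op{dist}(B_i(X_0),B_j(X_0))>0$ and $\op{dist}(B_i(X_0),\partial\Omega)>0$; the map $X\mapsto g_X(\wc)\in V'$ is continuous by the curvature representation in Remark~\ref{rem:CURVATURE} together with Lemma~\ref{lem:CONTINUITY_OF_AVERAGING} (with $p=2$) and continuity of $\Delta:V\to L^2(\Omega)$; and a feasible $u_0$ with $g_{X_0}(u_0)=\bar{s}$ exists by Proposition~\ref{prop:AMC}. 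Since these are exactly the hypotheses under which Proposition~\ref{QPP_Global_penalized} yields a minimizer of the penalized functional, I obtain a solution $(u_\varepsilon,X_\varepsilon)\in V\times\omega$.

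It then remains to upgrade $X_\varepsilon\in\omega$ to $X_\varepsilon\in\op{int}\omega$. This follows, just as in Proposition~\ref{prop:EXISTENCE_AMCLS}, from the fact that $\cV_{\soft}$ equals $+\infty$ on $\omega\setminus\op{int}\omega$, because as $\op{dist}(B_i,B_j)\to 0$ the Lennard-Jones term blows up and as $\op{dist}(B_i,\partial\Omega)\to 0$ the wall term does likewise. Comparing against the finite energy value attained at $X_0\in\op{int}\omega$ forces any minimizer to avoid the boundary of $\omega$, so $X_\varepsilon\in\op{int}\omega$.

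I expect the only point requiring care to be confirming that Proposition~\ref{QPP_Global_penalized} applies under \emph{identically} the same hypotheses as Proposition~\ref{QPP_Global_exist}, i.e.\ that passing from a hard constraint on $g_X$ to a coercive penalty introduces no new requirement on $g_X$ or $\cV_{\soft}$. This is precisely what the abstract appendix result asserts and what was already exploited in Proposition~\ref{prop:SAMCLHEX}; consequently no new estimates beyond invoking the appendix are needed, and the nonuniqueness caveat noted after Proposition~\ref{prop:EXISTENCE_AMCLS} applies here as well.
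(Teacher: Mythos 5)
Your proposal is correct and follows essentially the same route as the paper: invoke the abstract penalized existence result (Proposition~\ref{QPP_Global_penalized_exist}) under the hypotheses already checked for Problem~\ref{p:AMCLS} in the proof of Proposition~\ref{prop:EXISTENCE_AMCLS} (compactness of $\omega$, lower semicontinuity and finiteness of $\cV_{\soft}$ at some $X_0\in\op{int}\omega$, continuity of $X\mapsto g_X$), obtaining a minimizer $(u_\varepsilon,X_\varepsilon)\in V\times\omega$, and then conclude $X_\varepsilon\in\op{int}\omega$ because $\cV_{\soft}=\infty$ on $\omega\setminus\op{int}\omega$. The only cosmetic difference is your verification of feasibility of the hard constraint at $X_0$ via Proposition~\ref{prop:AMC}, which the penalized problem does not actually require (this is precisely why the penalized existence result holds under weaker assumptions).
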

\begin{proof}
    Again we only need to note that the existence result
    of Proposition~\ref{QPP_Global_penalized} for penalized problems
    is valid under the same assumptions as the one for the non-penalized analogue
    in Proposition~\ref{QPP_Global_exist}, and that we already verified these
    assumptions in the proof of Proposition~\ref{prop:EXISTENCE_AMCLS}.
    Hence we have a solution $(u_\varepsilon,X_\varepsilon) \in V \times \omega$
    that must also satisfy $X_\varepsilon \in \op{int} \omega$ because we would
    otherwise have $\cV_{\soft}(X_\varepsilon)=\infty$.
\end{proof}

\subsection{Discussion}
To our knowledge,  averaged hybrid models  were not yet considered  in the existing literature.
They could be regarded as an intermediate approximation step
from  finite-size hybrid models, performing the coupling
by  boundary conditions at the particle boundaries  (cf.\ Section~\ref{sec:rpfs}),
to  well-established point-particle models to be considered below  
(cf.\ Section~\ref{sec:pcc}).
In particular,  
the transition from particle boundary conditions  on the membrane displacement and its normal derivative  (angle condition) 
to curvature constraints turns out to be equivalent to simple averaging of boundary conditions
(see, e.g.,  \cite{DomFou02} for another  motivation in terms of finite differences).
Further  transition to point-particle models  amounts to approximations of mean values by point values.
Averaged curve constraint models no longer represent tilt (cf.\ Remark~\ref{rem:TILTCURVE}),
but  still preserve some information on the shape of particles which is no longer present  in point-particle models.

Averaged hybrid models could as well be regarded as  regularizations of, 
in the first instance ill-posed, point-particle models.
The size of particles then acts as a regularization parameter
which,
in contrast to previous approaches (cf., e.g., \cite{NajAtzBro09,NajBro07}),
 is physically meaningful  and independent of discretization.

\section{Point  curvature constraints}\label{sec:pcc}
\subsection{Point approximation of mean values} When the particles have a small diameter with respect to the diameter of 
the domain then it is of interest to consider modelling the particles as points. 
One approach to obtaining such models is to replace integrals by point evaluations.
That is, the mean value
$\fint_{B_i}\Delta u\; dx$ is naturally approximated by $\Delta u(X_i)$ 
by sending the diameter of the 
particle $B_i$ to zero.  This may be understood in a different way as  
approximating the integrals in the averages \eqref{eq:DISCO}
by a first-order Gauss formula. 

The  constraints \eqref{eq:AVCUCO} 
in Problem~\ref{p:AMC} then take the form
\begin{equation}\label{eq:PC}
    \textstyle 
    G u= \left( \frac{1}{|B_i|}\bar{s}_i \right)
\end{equation} 
with $G=(G_i)$,  and functionals $G_i$ defined by
\begin{equation}\label{eq:GDELTA}
    G_i u= \delta_{X_{i}}(\Delta u), \qquad i=1,\dots,N,
\end{equation}
and given  $\bar{s}_i\in \R$ according to \eqref{eq:AVDAT}.
Here, 
\[
    \delta_{x} v = v(x),\qquad x\in \overline{\Omega},
\]
denotes the Dirac functional.


\subsection{Well posedness}
Due to the continuous embedding $H^2(\Omega)\subset C(\overline{\Omega})$
(see, e.g., \cite[Theorem~4.12]{AdamsFournier2003SobolevSpaces}), 
the Dirac functional  $\delta_x$ is a bounded linear functional on $H^2(\Omega)$.
However, the functionals $G_i$ are not well-defined on $v\in H^2(\Omega)$,
because  the linearised mean curvature $\Delta v \in L^2(\Omega)$ 
in general does not allow for point values.
In order to state a well-posed minimization problem
on a smaller solution space of  sufficiently regular functions,
we augment the Canham-Helfrich energy $\cJ$ defined in \eqref{eq:monge_energy}
by  additional higher order terms to obtain
\begin{equation} \label{eq:RHT}
 \cJJ(u)=\cJ(u) + \int_\Omega \textstyle \frac{\kappa_{8}}{2} |\Delta^2 u|^2+\frac{\kappa_{6}}{2} |\nabla \Delta u|^2\;dx,
 \qquad u\in H^4(\Omega),
\end{equation}
with some given regularization parameters  $\kappa_{8}$, $\kappa_{6}>0$.
This artificial  extension could be replaced by 
a more realistic fourth-order expansion
of the bending energy with respect to principal 
curvatures~\cite{C04-JudDiss1998,C04-Mitov1978}.

The strict positivity of $\kappa_{8}$ 
guarantees that functions which have bounded energy  lie in $H^4(\Omega)$.
In turn, the  continuous embedding $H^4(\Omega)\subset C^2(\overline{\Omega})$ 
implies that  the $G_i=\delta_{X_{i}}(\Delta\wc)$
are bounded linear functionals on $H^4(\Omega)$.
Note that the functionals  $G_i$ are linearly 
independent for distinct locations $X_i$, $i=1,\dots, N$.
This also holds for point values of  second-order derivatives
$\delta_{X_{i}}(\partial_{xx}\wc)$, $\delta_{X_{i}}(\partial_{xy}\wc)$,
and $\delta_{X_{i}}(\partial_{yy}\wc)$.

Differentiation  of $\cJJ$ yields the associated bilinear form
\begin{equation} \label{eq:HBI}
\aJ(u,v)= \int_\Omega \kappa_8\Delta^2 u \Delta^2 v + \kappa_6 \nabla\Delta u \cdot \nabla\Delta v + \kappa\Delta u \Delta v + \sigma \nabla u \cdot \nabla v.
\end{equation} 
The higher order terms in $\cJJ$ give rise to additional boundary conditions
defining a suitable closed solution space $\VJ\subset H^4(\Omega)$.
For example, we might choose
\[
\VJ = \begin{cases}
	H^4(\Omega) \cap H^3_0(\Omega)=\{v \in H^4(\Omega)\st v  = \textstyle \frac{\partial}{\partial n} v  = \frac{\partial^2}{\partial n^2}v =0\text{ on }\partial \Omega \}, \\
	\{v \in H^4(\Omega)\st v=0,\; \Delta v =0 \text{ on }\partial \Omega\}, \\
    H^4_{p,0}(\Omega) =\overline{\{v|_\Omega \st v \in C^\infty(\mathbb{R}^2) \text{ is } \Omega \text{-periodic and } \textstyle \int_{ \partial\Omega}v \; ds=0\}}  .
	\end{cases}
\]
For the final two cases we consider only rectangular domains $\Omega$.
Each choice for $\VJ$ also provides complementary natural boundary conditions for 
solutions to variational problems posed in that space. 
Observe that $\aJ(\wc,\wc)$ is bounded and symmetric on $\VJ$. It is also coercive for any 
$\kappa_8>0$ and $\kappa_6$, $\kappa$, $\sigma \geq 0$, we refer to \cite{Graeser2015, hobbsDiss16} for a proof. For a more detailed discussion of boundary conditions see \cite{hobbsDiss16}.

\subsection{Fixed locations of particles} 
We consider the following version of Problem~\ref{p:AMC} with hard point constraints.

\begin{problem}[Point mean curvature constraints]\ \\ \label{p:PC}%
    Find $u\in \VJ$ minimizing the energy $\cJJ(u)$ on $\VJ$ subject to the constraints \eqref{eq:PC}.
\end{problem}

\begin{proposition}
    There exists a unique solution $u\in \VJ$ to Problem~\ref{p:PC}.
\end{proposition}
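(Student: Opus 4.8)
The plan is to apply the Lax-Milgram lemma (or equivalently the projection theorem for constrained quadratic minimization) on the space $\VJ$, exactly mirroring the argument used for Proposition~\ref{prop:AMC} but now with the regularized bilinear form $\aJ(\wc,\wc)$ in place of $a(\wc,\wc)$ and with the point functionals $G_i$ in place of the averaging functionals $g_i$. First I would record that $\aJ(\wc,\wc)$ is bounded, symmetric and coercive on $\VJ$; this is stated in the paragraph preceding the problem (with the references to \cite{Graeser2015, hobbsDiss16}), so it may be invoked directly. This provides an equivalent scalar product on $\VJ$, which is the key structural ingredient.

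Next I would verify that the constraint set is a non-empty, closed, affine subspace of $\VJ$. Boundedness of the functionals $G_i = \delta_{X_i}(\Delta\wc)$ on $\VJ$ follows from the continuous embedding $H^4(\Omega) \subset C^2(\overline\Omega)$ together with continuity of $\Delta: H^4(\Omega) \to H^2(\Omega)$, both already noted in the well-posedness subsection; this gives closedness of $\{u \in \VJ \st G u = (\tfrac{1}{|B_i|}\bar s_i)\}$ as the preimage of a point under a continuous linear map. Non-emptiness requires exhibiting one admissible function; since the $G_i$ are linearly independent for distinct locations $X_i$ (again stated in the excerpt), the linear map $G=(G_i):\VJ \to \R^N$ has rank $N$, hence is surjective onto $\R^N$, so some $u_0 \in \VJ$ realizes any prescribed constraint data. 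With boundedness, coercivity, non-emptiness and closedness in hand, Lax-Milgram (or Proposition~\ref{prop:A_CONSTR_MIN} in the appendix, used analogously in the proof of Proposition~\ref{prop:AMC}) yields a unique minimizer.

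The main obstacle is establishing surjectivity of $G$, i.e.\ constructing an admissible competitor: unlike the averaged case, where Remark~\ref{rem:CURVATURE} gives an explicit $w$ with $g_X(w)=\bar s$ through a simple volume average, here one must produce a function in $\VJ \subset H^4(\Omega)$ with prescribed Laplacian values $\Delta u(X_i)$ at the $N$ distinct points while still respecting the essential boundary conditions encoded in $\VJ$. I would argue this abstractly from linear independence of the functionals $G_i$ on $\VJ$ (which forces surjectivity of the map into $\R^N$), or concretely by taking smooth compactly supported bumps $\varphi_i$ near each $X_i$, chosen so that $\Delta \varphi_i$ is supported in a neighbourhood of $X_i$ disjoint from the others, and forming a suitable linear combination; compact support keeps them inside $\VJ$ for any of the three listed boundary-condition choices. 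Once an admissible $u_0$ exists, the remainder is the routine Lax-Milgram machinery on the closed affine subspace, identical in structure to the earlier constrained problems, so I would simply cite the appendix result rather than rewriting it.
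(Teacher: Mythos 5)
Your proposal is correct and follows essentially the same route as the paper: distinct locations $X_i$ give linear independence of the functionals $G_i$, hence surjectivity of $G$ and a non-empty closed affine constraint set, after which coercivity of $\aJ(\wc,\wc)$ and the abstract constrained-minimization result (Proposition~\ref{prop:A_CONSTR_MIN}) yield existence and uniqueness. The paper simply compresses all of this into a citation of Proposition~\ref{quad_exist_NEU}, whose own proof is exactly the argument you unpacked.
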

\begin{proof}
    The particles $B_i$ are disjoint, so that we have  $X_i\neq X_j$, for $i\neq j=1,\dots, N$.
    Hence, the functionals $G_i$ are linearly independent.
    Now the assertion follows from
    Proposition~\ref{quad_exist_NEU} in the appendix.
\end{proof}

Possible anisotropies can be represented by the geometry of particles $B_i$
and boundary conditions in Problem~\ref{p:BVP}.
These are lost completely in the approximation by point mean curvature constraints. 
Accounting for anisotropies we now prescribe different curvatures 
\begin{equation} \label{eq:CF}
    G_{i,1} u=\delta_{X_i}(\partial_{xx} u),\quad G_{i,2} u=\delta_{X_i}(\partial_{xy} u)
        \quad G_{i,3} u=\delta_{X_i}(\partial_{yy} u)
\end{equation}
at one point $X_i$ for $i=1,\dots,N$. We set  $G=(G_{i,j})\in (\VJ')^{N \times 3}$.

\begin{problem}[Point curvature constraints]\ \\ \label{p:PDC}%
    Find $u\in \VJ$ minimizing the energy $\cJJ(u)$ on $\VJ$ subject to the constraints
    \begin{equation} \label{eq:GDEF}
        G(u)=r
    \end{equation}
    with given $r=(r_{i,j}) \in \R^{N \times 3}$.
\end{problem}

The functionals $G_{i,j}$ defined in \eqref{eq:CF} are linearly independent for 
distinct locations $X_i$.
Hence, existence and uniqueness again follows from Proposition~\ref{quad_exist_NEU}.

\begin{proposition}\label{prop:PDC_EXIST}
    There exists a unique solution $u\in \VJ$ to Problem~\ref{p:PDC}.
\end{proposition}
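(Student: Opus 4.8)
The plan is to recognize Problem~\ref{p:PDC} as the minimization of a bounded, coercive quadratic energy subject to finitely many bounded linear constraints, so that existence and uniqueness follow from the abstract result of Proposition~\ref{quad_exist_NEU}, exactly in analogy to the treatment of Problem~\ref{p:PC}. First I would recall from the well-posedness subsection that the bilinear form $\aJ(\wc,\wc)$ associated with $\cJJ$ is bounded, symmetric, and coercive on $\VJ$ for any $\kappa_8>0$; this supplies precisely the hypothesis on the energy demanded by the abstract framework. Second, I would note that each constraint functional $G_{i,j}$ is a bounded linear functional on $\VJ$: since $\VJ\subset H^4(\Omega)$ and the embedding $H^4(\Omega)\subset C^2(\overline{\Omega})$ is continuous, the point evaluations $\delta_{X_i}(\partial_{xx}\wc)$, $\delta_{X_i}(\partial_{xy}\wc)$, $\delta_{X_i}(\partial_{yy}\wc)$ of second-order derivatives are continuous on $\VJ$, as already observed above.

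The key step, and the point I expect to be the main obstacle, is the linear independence of the family $\{G_{i,j}\st i=1,\dots,N,\ j=1,2,3\}$ for pairwise distinct locations $X_i$. I would establish this by a localization argument: because the points $X_i$ are distinct they admit pairwise disjoint neighbourhoods, and for each $i$ one can construct a test function in $\VJ$ that is supported near $X_i$, whose Hessian at $X_i$ realizes an arbitrary prescribed symmetric $2\times 2$ matrix, and whose second derivatives vanish at all $X_j$ with $j\neq i$. This local control shows that no nontrivial linear combination $\sum_{i,j} c_{i,j}G_{i,j}$ can annihilate all of $\VJ$, hence the functionals are linearly independent.

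Finally, I would convert linear independence into the non-emptiness of the constraint set: linear independence of the $G_{i,j}$ is equivalent to surjectivity of the map $G:\VJ\to\R^{N\times 3}$, which guarantees that the affine set $\{u\in\VJ\st G(u)=r\}$ is non-empty and closed for every $r\in\R^{N\times 3}$. With a bounded, symmetric, coercive bilinear form and a non-empty closed affine constraint set, Proposition~\ref{quad_exist_NEU} yields both existence and uniqueness of the constrained minimizer; uniqueness reflects the strict convexity of $\cJJ$ induced by coercivity, while existence follows from the direct method (equivalently, the Lax-Milgram lemma applied on the affine subspace).
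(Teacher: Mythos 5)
Your proposal is correct and follows essentially the same route as the paper: the paper likewise observes that the functionals $G_{i,j}$ are linearly independent for distinct locations $X_i$ (hence the constraint set is non-empty) and then invokes the abstract Proposition~\ref{quad_exist_NEU}, whose hypotheses are met by the boundedness and coercivity of $\aJ(\wc,\wc)$ on $\VJ$ and the continuity of the $G_{i,j}$ via $H^4(\Omega)\subset C^2(\overline{\Omega})$. Your localization argument with cutoff functions realizing prescribed Hessians simply spells out the linear-independence claim that the paper states without proof.
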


We now derive an explicit representation of $u$ in terms of Green's functions
$\phi_{i,j}\in \VJ$, which are defined as 
the unique  solutions of the variational problems
\begin{equation}\label{eq:GREENS_FUNC_DEF}
    \aJ(\phi_{i,j},v)=G_{i,j} v \qquad \forall v\in \VJ,\quad i=1,\dots,N, \quad j=1,2,3.
\end{equation} 
Note that existence and uniqueness of solutions $\phi_{i,j}$ 
of \eqref{eq:GREENS_FUNC_DEF}
follows from the Lax-Milgram lemma, because $ \aJ(\cdot,\cdot)$ is bounded and coercive 
and  the linear functionals $G_{i,j}$ are bounded on $\VJ\subset H^4(\Omega)$.

\begin{proposition} \label{prop:FLREP}
    Let  $A=(\aJ(\phi_{i,j},\phi_{k,l}))\in \R^{(N\times 3) \times (N \times 3)}$.
    Then 
    \begin{equation}
        u= \sum_{i=1}^{N} \sum_{j=1}^3 u_{i,j}\phi_{i,j}
    \end{equation}
    holds with $(u_{i,j})=A^{-1} r \in \R^{N \times 3}$.
\end{proposition}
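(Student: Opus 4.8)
The plan is to exploit the symmetric, bounded, and coercive structure of $\aJ(\wc,\wc)$ on $\VJ$ together with the defining property \eqref{eq:GREENS_FUNC_DEF} of the Green's functions $\phi_{i,j}$. I would proceed in three steps: first establish that the matrix $A$ is invertible, then show that the minimizer $u$ lies in $\op{span}\{\phi_{i,j}\}$, and finally recover its coefficients from the constraints \eqref{eq:GDEF}.

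First I would observe that $A$ is the Gram matrix of the family $\{\phi_{i,j}\}$ with respect to the inner product $\aJ(\wc,\wc)$ on $\VJ$: by \eqref{eq:GREENS_FUNC_DEF} we have $\aJ(\phi_{i,j},\phi_{k,l})=G_{i,j}(\phi_{k,l})$, and by symmetry of $\aJ$ this matrix is symmetric. Since the functionals $G_{i,j}$ are linearly independent for distinct $X_i$ (as noted after \eqref{eq:CF}), the $\phi_{i,j}$ are linearly independent as well: if $\sum_{i,j}c_{i,j}\phi_{i,j}=0$, then testing \eqref{eq:GREENS_FUNC_DEF} shows $\sum_{i,j}c_{i,j}G_{i,j}=0$ as a functional on $\VJ$, forcing $c_{i,j}=0$. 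Coercivity of $\aJ$ then makes $A$ symmetric positive definite, and in particular invertible.

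Next I would use the first-order optimality condition for the constrained minimization of the quadratic energy $\cJJ$ over the affine set $\{v\in\VJ\st G(v)=r\}$. By Proposition~\ref{prop:PDC_EXIST} the minimizer $u$ exists and is unique, and it is characterized by $\aJ(u,w)=0$ for every $w$ in the kernel $\{w\in\VJ\st G(w)=0\}$. Because the constraints consist of finitely many bounded, linearly independent functionals $G_{i,j}$, this orthogonality produces Lagrange multipliers $\lambda_{i,j}\in\R$ with $\aJ(u,v)=\sum_{i,j}\lambda_{i,j}G_{i,j}(v)$ for all $v\in\VJ$, which is exactly the saddle-point structure underlying Proposition~\ref{quad_exist_NEU}. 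Substituting $G_{i,j}(v)=\aJ(\phi_{i,j},v)$ from \eqref{eq:GREENS_FUNC_DEF} gives $\aJ\bigl(u-\sum_{i,j}\lambda_{i,j}\phi_{i,j},\,v\bigr)=0$ for all $v$, and nondegeneracy of $\aJ$ forces $u=\sum_{i,j}\lambda_{i,j}\phi_{i,j}$; thus $u\in\op{span}\{\phi_{i,j}\}$ with $u_{i,j}=\lambda_{i,j}$.

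Finally I would pin down the coefficients by imposing the constraints. Writing $u=\sum_{k,l}u_{k,l}\phi_{k,l}$ and inserting into $G_{i,j}(u)=r_{i,j}$ yields $r_{i,j}=\sum_{k,l}u_{k,l}G_{i,j}(\phi_{k,l})=\sum_{k,l}\aJ(\phi_{i,j},\phi_{k,l})\,u_{k,l}$, i.e. $r=A(u_{i,j})$, so that $(u_{i,j})=A^{-1}r$ by invertibility of $A$. I expect the only delicate point to be the passage from the variational optimality condition to the explicit multiplier representation, namely confirming that orthogonality of $\aJ(u,\wc)$ to the constraint kernel is equivalent to $\aJ(u,\wc)$ lying in $\op{span}\{G_{i,j}\}$; this, however, is standard for finitely many independent constraints and is precisely the abstract constrained-minimization result invoked for Proposition~\ref{prop:PDC_EXIST}.
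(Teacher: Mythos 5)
Your proof is correct, but it runs in the opposite direction to the paper's. The paper disposes of Proposition~\ref{prop:FLREP} in one line by invoking the abstract Proposition~\ref{quad_exist_NEU} with $\ell=0$ (hence $\phi_0=0$); the proof of that abstract result is a \emph{verification} argument: it plugs the candidate $u=\sum_{i,j}u_{i,j}\phi_{i,j}$ with $(u_{i,j})=A^{-1}r$ into the constraints and into the variational characterization $\aJ(u,v)=0$ for all $v$ with $G(v)=0$, and concludes by uniqueness of the constrained minimizer. You instead \emph{derive} the representation as a necessary condition: from the first-order optimality condition you pass, via the standard annihilator lemma (a functional vanishing on the intersection of the kernels of finitely many functionals is a linear combination of them), to Lagrange multipliers, convert $\aJ(u,\wc)\in\op{span}\{G_{i,j}\}$ into $u\in\op{span}\{\phi_{i,j}\}$ using \eqref{eq:GREENS_FUNC_DEF} and coercivity, and then solve the Gram system $r=A\,(u_{i,j})$. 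Both routes rest on the same two ingredients: invertibility of the Gram matrix $A$ (which you prove carefully from linear independence of the $G_{i,j}$, whereas the paper's appendix proof essentially asserts it) and the Green's function identity. What the paper's verification buys is economy --- no multiplier lemma is needed, only Lax--Milgram uniqueness; what your derivation buys is self-containedness and motivation --- it shows the formula is forced rather than merely checking that a formula pulled out of the air works, and it does not route through the appendix machinery.
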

\begin{proof}
    The assertion follows from the abstract Proposition~\ref{quad_exist_NEU}
    as applied to the special case $\ell=0$ and thus $\varphi=0$. 
\end{proof}

\begin{remark}
    Note that Proposition~\ref{quad_exist_NEU} also provides a 
    corresponding representation of the solution of Problem~\ref{p:AMC}. 
\end{remark}

We now consider {\em soft curvature constraints}. To this end we augment the energy $\cJJ$
by the penalty term
\begin{equation}\label{eq:PDCPENALTY}
    \frac{1}{2\varepsilon} \|G u- r\|_{\R^{N \times 3}}^2
\end{equation}
with some small penalty parameter $\varepsilon >0$
and the Frobenius norm $\| \cdot \|_{\R^{N \times 3}}$.

\begin{problem}[Soft point curvature constraints]\ \\ \label{p:SPDC}%
    Find $u_{\varepsilon}\in \VJ$ minimizing the energy 
    \[
        \cJJ(u_\varepsilon)+\frac{1}{2\varepsilon}  \|G u_\varepsilon - r\|_{\R^{N \times 3}}^2
    \]
    on $\VJ$.
\end{problem}

While existence and uniqueness follows from the Lax-Milgram lemma,
Proposition~\ref{prop:A_PENALTY_CONV} implies convergence to the hard-constrained solution.

\begin{proposition}\label{prop:SPDC}
    Let $u$ denote the solution of Problem~\ref{p:PDC} and  $u_\varepsilon$  
    denote the solution of Problem~\ref{p:SPDC}  for fixed $\varepsilon >0$.
    Then we have
    \[
        u_\varepsilon \to u \quad \text{in }  \VJ \qquad \text{as }  \quad \varepsilon  \to 0.
    \]
\end{proposition}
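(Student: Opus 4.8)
The plan is to recognize Problem~\ref{p:SPDC} as the penalized counterpart of the linearly constrained quadratic minimization Problem~\ref{p:PDC} and to invoke the abstract convergence result Proposition~\ref{prop:A_PENALTY_CONV} from the appendix, exactly as was done for the analogous Propositions~\ref{prop:SCC} and~\ref{prop:SAMC}. The whole argument is thus a template instantiation, and the work lies entirely in identifying the right objects in the abstract framework and checking that they satisfy its hypotheses.

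First I would fix the abstract data on the Hilbert space $\VJ \subset H^4(\Omega)$. The relevant bilinear form is $\aJ(\cdot,\cdot)$ from \eqref{eq:HBI}, which the well-posedness subsection records as bounded, symmetric, and coercive on $\VJ$ for $\kappa_8 > 0$. The constraint is encoded by the operator $G = (G_{i,j}) \in (\VJ')^{N\times 3}$ defined in \eqref{eq:CF}, so that Problem~\ref{p:PDC} is the minimization of $\cJJ$ over the affine set $\{v \in \VJ \st Gv = r\}$, while Problem~\ref{p:SPDC} replaces the hard constraint by the penalty $\frac{1}{2\varepsilon}\|Gv - r\|_{\R^{N\times 3}}^2$. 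This matches the structure assumed in Proposition~\ref{prop:A_PENALTY_CONV}.

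Second I would verify the hypotheses of Proposition~\ref{prop:A_PENALTY_CONV}. The map $G$ is bounded and linear because, via the continuous embedding $H^4(\Omega) \subset C^2(\overline{\Omega})$, each point evaluation $G_{i,j}$ of a second-order derivative is a bounded linear functional on $\VJ$; its range lies in the finite-dimensional space $\R^{N\times 3}$, on which the Frobenius norm is admissible. The feasible set is non-empty and the hard Problem~\ref{p:PDC} has a unique solution $u \in \VJ$ by Proposition~\ref{prop:PDC_EXIST}, where linear independence of the $G_{i,j}$ for distinct $X_i$ supplies uniqueness. With boundedness and coercivity of $\aJ$, boundedness of $G$, and solvability of the constrained problem all in hand, Proposition~\ref{prop:A_PENALTY_CONV} directly yields $u_\varepsilon \to u$ in $\VJ$ as $\varepsilon \to 0$.

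I do not expect a genuine obstacle here; the only points requiring a moment's care are cosmetic. One is matching the penalty norm to the abstract one, which is harmless since $\R^{N\times 3}$ is finite-dimensional and all norms there are equivalent. The other is confirming that the abstract coercivity and boundedness assumptions are met on $\VJ \subset H^4(\Omega)$ rather than on $H^2(\Omega)$; both hold precisely because of the higher-order regularization $\frac{\kappa_8}{2}|\Delta^2 u|^2 + \frac{\kappa_6}{2}|\nabla\Delta u|^2$ built into $\cJJ$, which is also what makes the functionals $G_{i,j}$ bounded in the first place.
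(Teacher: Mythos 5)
Your proposal is correct and follows exactly the paper's route: the paper states Proposition~\ref{prop:SPDC} as an immediate consequence of the abstract penalty-convergence result Proposition~\ref{prop:A_PENALTY_CONV}, applied with $V=\VJ$, $a(\wc,\wc)=\aJ(\wc,\wc)$, $X=\R^{N\times 3}$, $X_0=\{0\}$, $T=G$, and $b=r$, with non-emptiness of the feasible set supplied by the linear independence of the $G_{i,j}$ (Proposition~\ref{prop:PDC_EXIST}). Your verification of the hypotheses (coercivity of $\aJ$ on $\VJ$ via $\kappa_8>0$, boundedness of $G$ via $H^4(\Omega)\subset C^2(\overline{\Omega})$) is precisely the checking the paper leaves implicit.
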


\begin{remark}\label{rem:MCPOINT}
The results stated in Proposition~\ref{prop:FLREP} and~\ref{prop:SPDC}) 
also hold literally for  $G=(G_i)\in (\VJ')^N$ with functionals $G_i$ defined in~\eqref{eq:GDELTA}.
\end{remark}

\subsection{Varying the locations of particles} 
We now seek  a global minimizer over prescribed curvatures 
in the sense that we allow for varying locations $X=(X_i)$ of particles.
To emphasize that the functionals
$G=(G_{i,j})$ defined in  \eqref{eq:CF}
depend on the locations $X_i$, we introduce the notation 
\begin{equation}\label{eq:GLOC}
    G_X= (G_{X,i,j}) \in (\VJ')^{N \times 3}
    \qquad G_{X,i} \in (\VJ')^3.
\end{equation}
First we consider hard-wall constraints, i.e., 
we restrict the locations $X$ to $\omega$ defined in \eqref{eq:FFSP}
so that the particles would not overlap.

\begin{problem}[Point curvature constraints with varying locations]\ \\ \label{min_curv}%
    Find $(u,X) \in \VJ\times \omega$ 
    minimizing the energy $\cJJ(u)$ on $\VJ$ subject to the
    constraint that there is a $X=(X_i)\in \omega$  
    such that 
    \[
        G_{X} u=r
    \] 
    holds with given $r\in \R^{N \times 3}$.
\end{problem}   

\begin{lemma}\label{lem:GX_SURJECTIVE}
    For any $X \in \omega$ the family $(G_{X,i,j}) \in (\VJ')^{N \times 3}$ of functionals $G_{X,i,j} \in \VJ'$
    is linearly independent and $G_X:\VJ \to \R^{N \times 3}$ is surjective.
\end{lemma}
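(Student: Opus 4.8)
The plan is to reduce both assertions to a single linear‑algebraic statement and then verify it with a family of localized test functions. Since the codomain $\R^{N\times 3}$ is finite‑dimensional, the bounded linear map $G_X:\VJ\to\R^{N\times 3}$ is surjective if and only if its $3N$ component functionals $G_{X,i,j}$ are linearly independent: the image of $G_X$ fails to fill $\R^{N\times 3}$ exactly when some nonzero $c=(c_{i,j})$ annihilates it, i.e.\ when $\sum_{i,j}c_{i,j}G_{X,i,j}=0$ in $\VJ'$. Hence it suffices to show that the relation
\[
\sum_{i=1}^N\bigl(c_{i,1}\,\partial_{xx}u(X_i)+c_{i,2}\,\partial_{xy}u(X_i)+c_{i,3}\,\partial_{yy}u(X_i)\bigr)=0\qquad\forall u\in\VJ
\]
forces $c_{i,j}=0$ for all $i,j$; this simultaneously yields linear independence and surjectivity.

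The key tool is a family of localized test functions exploiting that the centres are decoupled. Because $X\in\omega$ as in \eqref{eq:FFSP}, the points $X_i$ are pairwise distinct interior points of $\Omega$ (each $X_i\in B_i\subset\Omega$, the open sets $B_i$ being disjoint and disjoint from $\partial\Omega$). I therefore fix $\rho>0$ so small that the balls $\cB_\rho(X_i)\subset\Omega$ are pairwise disjoint, and pick cut‑offs $\chi_k\in C_c^\infty(\cB_\rho(X_k))$ with $\chi_k\equiv 1$ near $X_k$. For a fixed index $k$ and a symmetric matrix $H$ I set
\[
u_H(x)=\chi_k(x)\,\tfrac12\,(x-X_k)^\top H\,(x-X_k).
\]
Then $u_H\in C_c^\infty(\Omega)$, it vanishes together with all derivatives at every $X_i$ with $i\neq k$ (these lie outside $\op{supp}\chi_k$), and near $X_k$ it equals the pure quadratic $\tfrac12(x-X_k)^\top H(x-X_k)$, so its Hessian at $X_k$ is $H$. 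Thus $G_{X,k,1}u_H=H_{11}$, $G_{X,k,2}u_H=H_{12}$, $G_{X,k,3}u_H=H_{22}$, while $G_{X,i,j}u_H=0$ for $i\neq k$, the functionals $G_{X,i,j}$ being well defined by the embedding $H^4(\Omega)\hookrightarrow C^2(\overline\Omega)$ already used to construct $\cJJ$.

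It remains to confirm that each $u_H$ is admissible, i.e.\ that $C_c^\infty(\Omega)\subset\VJ$ for all three choices of $\VJ$: a function with compact support in the open set $\Omega$ vanishes to infinite order on $\partial\Omega$, hence satisfies the Dirichlet conditions $v=\partial_n v=\partial_n^2 v=0$, the Navier conditions $v=\Delta v=0$, and (its zero extension being smooth and $\Omega$‑periodic with $\int_{\partial\Omega}v\,ds=0$) the periodic zero‑mean condition. Inserting $u_H$ into the annihilation relation, only the $i=k$ terms survive, giving $c_{k,1}H_{11}+c_{k,2}H_{12}+c_{k,3}H_{22}=0$ for every symmetric $H$; running over the three basis matrices forces $c_{k,1}=c_{k,2}=c_{k,3}=0$, and since $k$ was arbitrary all coefficients vanish.

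The argument is essentially routine: the distinctness of the $X_i$ decouples the constraints at different particles completely, so no genuine analytic obstacle arises. The only points demanding care are the verification that the localized quadratic‑times‑cut‑off functions indeed lie in \emph{every} admissible space $\VJ$ — which is where interior support of the bumps is essential, especially in the periodic case — and the well‑definedness of the pointwise second‑derivative functionals, both of which are secured above.
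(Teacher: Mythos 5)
Your proposal is correct and follows essentially the same route as the paper: both reduce surjectivity to linear independence of the $G_{X,i,j}$ via the finite-dimensional codomain, use $X\in\omega$ and $X_i\in B_i(X)$ to get pairwise distinct interior points, and then construct smooth test functions realizing arbitrary prescribed second derivatives. The paper merely asserts the existence of such functions, while you exhibit them explicitly (cut-off times quadratic) and verify membership in all three choices of $\VJ$ — a legitimate filling-in of the details the paper leaves implicit.
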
 
\begin{proof}
    Let $X \in \omega$.
    First we note that surjectivity of $G_X:\VJ \to \R^{N \times 3}$
    is equivalent to linear independence of the family $(G_{X,i,j})$.
    By the assumption $0 \in B_i^0$ we have $X_i \in B_i(X)$ and hence,
    by~\eqref{eq:FFSP}, $X_i \neq X_j$ for $i\neq j$.
    Hence we can construct smooth functions $v$ with $G_X v=r$
    for any $r \in \R^{N \times 3}$.
\end{proof}

\begin{proposition} \label{prop:min_curv_existence}
    Assume that $\omega \neq \emptyset$.  Then
    there exists a solution $(u,X) \in \VJ\times \omega$ to Problem~\ref{min_curv}.
\end{proposition}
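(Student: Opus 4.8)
The plan is to derive Problem~\ref{min_curv} from the abstract existence result Proposition~\ref{QPP_Global_exist}, mirroring the argument used for the averaged case in Proposition~\ref{prop:EXISTENCE_AMCLH}. Since restricting $X$ to $\omega$ is exactly what the hard-wall potential $\cV_{\hard}$ encodes, I would first recast the task as minimizing $\cJJ(u) + \cV_{\hard}(X)$ over $\VJ \times \omega$ under the equality constraint $G_X u = r$, with $\cV_{\hard}$ vanishing on $\omega$ and $+\infty$ elsewhere.

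Next I would verify the structural hypotheses of the abstract proposition. Fixing any $X_0 \in \omega \neq \emptyset$ gives $\cV_{\hard}(X_0) = 0 < \infty$, and Lemma~\ref{lem:GX_SURJECTIVE} supplies surjectivity of $G_{X_0}$, hence a feasible $u_0 \in \VJ$ with $G_{X_0} u_0 = r$. Compactness of $\omega$ follows from Lemma~\ref{lem:COMPO}, lower semicontinuity of $\cV_{\hard}$ is immediate, and boundedness together with coercivity of $\aJ(\cdot,\cdot)$ on $\VJ$ is recorded after \eqref{eq:HBI}.

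The hard part will be the remaining hypothesis: continuity of the location map $X \mapsto G_X \in (\VJ')^{N \times 3}$. Here the averaging argument of Lemma~\ref{lem:CONTINUITY_OF_AVERAGING} is unavailable, because the $G_{X,i,j}$ are \emph{point} evaluations of second derivatives. Instead I would exploit that $\VJ \subset H^4(\Omega) \subset C^2(\overline{\Omega})$, so that the second derivatives $\partial^\alpha u$ with $|\alpha| = 2$ carry the unit ball of $\VJ$ into a bounded subset of $H^2(\Omega)$, which embeds compactly into $C^0(\overline{\Omega})$. By the Arzel\`a--Ascoli theorem this image is equicontinuous, so for every $\epsilon > 0$ there is a $\delta > 0$ with
\[
    |(\partial^\alpha u)(X_i) - (\partial^\alpha u)(X_i')| < \epsilon
    \qquad \text{whenever } |X_i - X_i'| < \delta,
\]
uniformly in $\|u\|_{\VJ} \le 1$. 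Passing to the supremum over the unit ball yields $\|G_{X,i,j} - G_{X',i,j}\|_{\VJ'} \to 0$ as $X' \to X$, which is the sought continuity.

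With every hypothesis in place, Proposition~\ref{QPP_Global_exist} delivers a minimizer $(u,X) \in \VJ \times \omega$. Because $\cV_{\hard}$ merely restricts $X$ to $\omega$ rather than blowing up near its boundary, the minimizer is obtained in $\omega$ itself, and no additional interior argument (as in the soft-wall setting) is required.
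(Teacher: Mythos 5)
Your proposal is correct, and its overall architecture coincides with the paper's proof: both reduce Problem~\ref{min_curv} to the abstract existence result of Proposition~\ref{QPP_Global_exist}, obtain feasibility from Lemma~\ref{lem:GX_SURJECTIVE}, and take compactness of $\omega$ from Lemma~\ref{lem:COMPO}. (Whether one phrases the restriction to $\omega$ via $\cV_{\hard}$, as you do in analogy with Proposition~\ref{prop:EXISTENCE_AMCLH}, or simply takes $\cM=\omega$ with $\cV=0$, as the paper implicitly does, is immaterial.)

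The one step where you genuinely diverge is the continuity of the location map $X \mapsto G_X \in (\VJ')^{N\times 3}$, which is indeed the only nontrivial verification. The paper uses the H\"older--Sobolev embedding $H^4(\Omega) \to C^{2,\lambda}(\overline{\Omega})$, $0<\lambda<1$, which yields the quantitative bound
\[
    |(G_X - G_Y)_{i,j}\, v| \leq C\, \|v\|\, |X_i - Y_i|^{\lambda},
\]
i.e.\ H\"older continuity of the map $X \mapsto G_X$ in operator norm. You instead observe that each second derivative maps the unit ball of $\VJ$ into a bounded subset of $H^2(\Omega)$, invoke the compact embedding $H^2(\Omega) \subset\subset C^0(\overline{\Omega})$, and extract uniform equicontinuity from Arzel\`a--Ascoli; taking the supremum over the unit ball then gives $\|G_X - G_Y\|_{(\VJ')^{N\times 3}} \to 0$. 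This is a valid argument (precompactness in $C(\overline{\Omega})$ does imply equicontinuity, and the embedding is compact since $\Omega$ is a bounded Lipschitz domain in $\R^2$), and it has the mild advantage of needing only the $C^2$ embedding plus compactness rather than the finer H\"older-space embedding. What it gives up is the explicit modulus of continuity: the paper's estimate shows the map is H\"older-$\lambda$ continuous, whereas yours delivers only qualitative uniform continuity -- which is all that Proposition~\ref{QPP_Global_exist} requires, so nothing is lost for the purpose at hand.
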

\begin{proof}
    In order to apply Proposition~\ref{QPP_Global_exist}, we first note that,
    by Lemma~\ref{lem:GX_SURJECTIVE}, for any $Y \in \omega$ there is a $v\in \VJ$ with $G_{Y} v=r$,
    i.e., the feasible set  is non-empty.
    
    It remains to show that the mapping $\omega \ni X \to G_{X}  \in (\VJ')^{N \times 3}$ is continuous
    on the compact set $\omega$ (cf. Lemma~\ref{lem:COMPO}).
    To this end let $X,Y \in \omega$, $v \in \VJ$, $i \in \{1,\dots,N\}$, and, without loss of generality, $j=1$.
    Since the Sobolev embedding theorem provides
    $\VJ \subset H^4(\Omega) \to C^{2,\lambda}(\overline{\Omega})$
    for any H\"older-exponent $0<\lambda<1$
    (see, e.g., \cite[Theorem~4.12]{AdamsFournier2003SobolevSpaces}),
    we have
    \begin{align*}
        \begin{split}
            |(G_X - G_Y)_{i,j} v|
                = |\partial_{xx}v(X_i) - \partial_{xx}v(Y_i)|
                \leq  \|v\|_{C^{2,\lambda}} |X_i - Y_i|^\lambda
                \leq  C \|v\| |X_i - Y_i|^\lambda
        \end{split}
    \end{align*}
    and thus $\|G_X - G_Y\|_{(\VJ')^{N \times 3}} \to 0$ as $X \to Y$.
    This concludes the proof.
\end{proof}

We now provide a reformulation of Problem~\ref{min_curv} in terms of suitable Green's functions.

\begin{proposition}\label{lem:EQCOND}
    Assume that $\omega \neq \emptyset$.
    For given $Y \in \omega$, let the Green's functions $\phi_{Y,i,j}\in \VJ$
    and the matrix $A_Y=\aJ(\phi_{Y,i,j},\phi_{Y,k,l})\in \R^{(N\times 3) \times (N\times 3)}$ be defined 
    as in~\eqref{eq:GREENS_FUNC_DEF} and Proposition~\ref{prop:FLREP}, respectively.
    Then each solution $(u,X)$  of Problem~\ref{min_curv} has the representation
    \begin{equation} \label{eq:REPVL}
            u= \sum_{i=1}^N\sum_{j=1}^{3} u_{i,j} \phi_{X,i,j}, \quad (u_{i,j})=A_{X}^{-1}r
    \end{equation}
    where $X$ is  a minimizer of the mapping 
    \[
        \omega \ni Y \mapsto r^\top A_Y^{-1}r \in \R.
    \]
\end{proposition}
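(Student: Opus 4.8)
The plan is to reduce the joint minimization over $(u,X)$ to an iterated minimization, solving the inner problem in $u$ for frozen $X$ by the Green's-function representation already at hand and then optimizing the resulting value over the location $X$. Existence of at least one minimizer $(u,X)$ is guaranteed by Proposition~\ref{prop:min_curv_existence}, so it only remains to characterize its structure.

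First I would fix $X \in \omega$ and consider the inner problem of minimizing $\cJJ(v)$ over all $v \in \VJ$ subject to $G_X v = r$. This is precisely Problem~\ref{p:PDC} with the location-dependent functionals $G_{X,i,j}$, whose linear independence for $X \in \omega$ is provided by Lemma~\ref{lem:GX_SURJECTIVE}. Hence Proposition~\ref{prop:FLREP} applies and yields the unique inner minimizer $u_X = \sum_{i=1}^N \sum_{j=1}^3 u_{i,j}\phi_{X,i,j}$ with $(u_{i,j}) = A_X^{-1} r$, where invertibility of $A_X$ follows from coercivity of $\aJ(\wc,\wc)$ together with the linear independence of the Riesz representatives $\phi_{X,i,j}$, so that $A_X$ is the positive-definite Gram matrix of the $\phi_{X,i,j}$ with respect to the inner product $\aJ$.

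Next I would compute the optimal inner value. Since $\cJJ(v) = \tfrac12 \aJ(v,v)$, substituting the representation of $u_X$ and using the definition $A_X = (\aJ(\phi_{X,i,j},\phi_{X,k,l}))$ gives
\begin{equation*}
    \cJJ(u_X) = \tfrac12 (A_X^{-1}r)^\top A_X (A_X^{-1}r) = \tfrac12\, r^\top A_X^{-1} r,
\end{equation*}
where the last equality uses the symmetry of $A_X$ and hence of $A_X^{-1}$. Consequently, for every feasible pair $(v,Y)$ of Problem~\ref{min_curv} one has $\cJJ(v) \geq \cJJ(u_Y) = \tfrac12\, r^\top A_Y^{-1} r$, so that the minimal energy of Problem~\ref{min_curv} equals $\min_{Y\in\omega} \tfrac12\, r^\top A_Y^{-1} r$. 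Any minimizer $(u,X)$ must therefore attain this double infimum, which forces $X$ to minimize $Y \mapsto r^\top A_Y^{-1} r$ over $\omega$ and, for this $X$, forces $u = u_X$ to be the inner minimizer, i.e.\ the representation~\eqref{eq:REPVL}.

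I expect the main obstacle to be the careful bookkeeping around $A_X$: one must ensure it is well defined and invertible for \emph{every} $X \in \omega$ (not merely for a single fixed location), which rests on combining the coercivity of $\aJ$ with the location-uniform linear independence from Lemma~\ref{lem:GX_SURJECTIVE}, and one must justify replacing the joint minimization by the iterated one. Both points are conceptually routine, but they are exactly where the hypothesis $\omega \neq \emptyset$ and the disjointness built into $\omega$ are genuinely used.
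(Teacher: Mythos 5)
Your proof is correct and takes essentially the same route as the paper: the paper's own proof simply verifies linear independence via Lemma~\ref{lem:GX_SURJECTIVE} and then cites the abstract Proposition~\ref{min_con} (with $\ell=0$, hence $\phi_0=0$, and $\cV=0$), which encapsulates exactly the iterated-minimization reduction, the Green's-function representation of the inner minimizer, and the inner-value computation $\tfrac12\, r^\top A_Y^{-1} r$ that you carry out explicitly. Your inlined version, including the positive-definiteness argument for $A_X$ uniformly over $X \in \omega$, matches the content of Lemma~\ref{lem:min_con} and Proposition~\ref{quad_exist_NEU} on which the paper's citation rests.
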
 
\begin{proof}
    Note that Lemma~\ref{lem:GX_SURJECTIVE} implies that the family $(G_{Y,i,j})$
    is linearly independent for all $Y \in \omega$.
    Hence Proposition~\ref{min_con} can be applied for the special case $\ell=0$ (and thus $\phi_0=0$).
\end{proof}

Now we consider soft-wall constraints by augmenting the energy functional
with the term $\cV_{\soft}(X)$.

\begin{problem}[Point curvature constraints with varying locations and soft-wall constraints]\ \\ \label{min_curv_softwall}%
    Find $(u,X) \in \VJ\times \op{int} \omega$ 
    minimizing the energy $\cJJ(u) + \cV_{\soft}(X)$ subject to the
    constraint
    \[
        G_{X} u=r
    \] 
    for given $r\in \R^{N \times 3}$.
\end{problem}   

\begin{proposition} \label{prop:min_curv_softwall_existence}
    Assume that $ \op{int} \omega \neq \emptyset$. Then
    there exists a solution $(u,X) \in \VJ\times \op{int} \omega$ to Problem~\ref{min_curv_softwall}.
\end{proposition}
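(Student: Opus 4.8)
The plan is to apply the abstract existence result Proposition~\ref{QPP_Global_exist} from the appendix, proceeding exactly as in the hard-wall case Proposition~\ref{prop:min_curv_existence} but with the additional soft-wall term $\cV_{\soft}$, following the pattern already used for averaged mean curvature constraints in Proposition~\ref{prop:EXISTENCE_AMCLS}. Note that Problem~\ref{min_curv_softwall} still carries the \emph{hard} point-curvature constraint $G_X u = r$, so the relevant abstract tool is the (non-penalized) Proposition~\ref{QPP_Global_exist} rather than its penalized counterpart.

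First I would check that the feasible set is non-empty and that the energy is finite somewhere. To this end, fix any $X_0 \in \op{int}\omega$, which exists by assumption. By the characterization~\eqref{eq:INTERIOR_OF_OMEGA} we have $\op{dist}(B_i(X_0),B_j(X_0))>0$ and $\op{dist}(B_i(X_0),\partial\Omega)>0$ for all $i\neq j$, hence $\cV_{\soft}(X_0)<\infty$. By Lemma~\ref{lem:GX_SURJECTIVE} the map $G_{X_0}:\VJ\to\R^{N\times 3}$ is surjective, so there exists $u_0\in\VJ$ with $G_{X_0}u_0=r$; thus $(u_0,X_0)$ is admissible with finite energy $\cJJ(u_0)+\cV_{\soft}(X_0)$.

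Next I would assemble the remaining hypotheses of Proposition~\ref{QPP_Global_exist}. The set $\omega$ is compact by Lemma~\ref{lem:COMPO}, the quadratic energy $\cJJ$ is coercive and $\aJ(\wc,\wc)$ is bounded and coercive on $\VJ$, and the constraint map $\omega\ni X\mapsto G_X\in(\VJ')^{N\times 3}$ is continuous --- the latter was already established in the proof of Proposition~\ref{prop:min_curv_existence} via the embedding $\VJ\subset H^4(\Omega)\hookrightarrow C^{2,\lambda}(\overline{\Omega})$ and H\"older continuity of the second derivatives. The only genuinely new ingredient compared with the hard-wall case is that the position penalty $\cV_{\soft}:\omega\to\Rinfty$ be lower semicontinuous; this holds because $\cV_{\soft}$ is continuous on $\op{int}\omega$ and equals $+\infty$ on $\omega\setminus\op{int}\omega$. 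With all hypotheses verified, Proposition~\ref{QPP_Global_exist} yields a minimizer $(u,X)\in\VJ\times\omega$ of $\cJJ(u)+\cV_{\soft}(X)$ subject to $G_X u=r$.

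Finally, I would upgrade membership in $\omega$ to membership in $\op{int}\omega$. Since $(u_0,X_0)$ is admissible with finite energy, the minimal value is finite; but $\cV_{\soft}\equiv+\infty$ on $\omega\setminus\op{int}\omega$, so any minimizer necessarily satisfies $X\in\op{int}\omega$, giving $(u,X)\in\VJ\times\op{int}\omega$ as required. I do not anticipate a real obstacle here, as the argument essentially repeats Proposition~\ref{prop:min_curv_existence} with the lower-semicontinuous term $\cV_{\soft}$ adjoined. The one point requiring slight care is exactly this passage from minimization over the compact set $\omega$ to a minimizer lying in the non-compact interior $\op{int}\omega$, which is secured precisely by the infinite value of $\cV_{\soft}$ on the boundary.
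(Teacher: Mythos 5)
Your proposal is correct and follows essentially the same route as the paper: apply Proposition~\ref{QPP_Global_exist} exactly as in the hard-wall case (Proposition~\ref{prop:min_curv_existence}) with $\cV_{\soft}$ adjoined, use $\cV_{\soft}(X_0)<\infty$ for $X_0\in\op{int}\omega$ together with Lemma~\ref{lem:GX_SURJECTIVE} to produce a feasible point of finite energy, and conclude $X\in\op{int}\omega$ from $\cV_{\soft}\equiv\infty$ on $\omega\setminus\op{int}\omega$. One small caveat: your stated reason for lower semicontinuity of $\cV_{\soft}$ (``continuous on $\op{int}\omega$ and $+\infty$ elsewhere'') is not a valid principle in general---such a function can fail to be lsc at boundary points---but lsc does hold here because the terms \eqref{eq:LJP} and \eqref{eq:BCP} blow up as $\op{dist}(B_i,B_j)\to 0$ or $\op{dist}(B_i,\partial\Omega)\to 0$, so $\cV_{\soft}(y)\to\infty$ as $y$ approaches $\omega\setminus\op{int}\omega$ from the interior.
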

\begin{proof}
    Noting that $\cV_{\soft}(Y)<\infty$ for all $Y \in \op{int} \omega$,
    we can prove existence of solutions $(u,X) \in \VJ \times \omega$
    as in the proof of Proposition~\ref{prop:min_curv_existence}.
    Then $X \in \op{int} \omega$ follows from the definition of $\cV_{\soft}$.
\end{proof} 

\begin{proposition}\label{prop:min_curv_softwall_representation}
    Assume that $\omega \neq \emptyset$.
    Then for each solution $(u,X)$ of Problem~\ref{min_curv_softwall} we can represent $u$ as in
    Proposition~\ref{lem:EQCOND} where
    where $X$ is now a minimizer of the mapping 
    \[
        \omega \ni Y \mapsto r^\top A_Y^{-1}r + \cV_{\soft}(Y) \in \R.
    \]
\end{proposition}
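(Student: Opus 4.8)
The plan is to reduce the joint minimization over $(u,X)$ to a minimization over the location alone, exactly as in Proposition~\ref{lem:EQCOND}, but now carrying the soft-wall potential $\cV_{\soft}$ through to the outer problem. Since the statement only asserts a representation ``for each solution'', I would first invoke Proposition~\ref{prop:min_curv_softwall_existence} to guarantee that a solution $(u,X)\in\VJ\times\op{int}\omega$ exists whenever $\op{int}\omega\neq\emptyset$; when $\op{int}\omega=\emptyset$ the claim is vacuous. I would also note at the outset that $\cV_{\soft}\equiv+\infty$ on $\omega\setminus\op{int}\omega$, so that minimizing over $\omega$ and over $\op{int}\omega$ give the same value and any minimizer of the outer functional automatically lies in $\op{int}\omega$.

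The heart of the argument is a partial (inner) minimization over $u$ for frozen location. Fix an admissible $Y\in\op{int}\omega$; then $\cV_{\soft}(Y)$ is a constant, so minimizing $\cJJ(v)+\cV_{\soft}(Y)$ over the feasible set $\{v\in\VJ \st G_Y v=r\}$ is the same as minimizing $\cJJ$ alone subject to $G_Y v=r$. By Lemma~\ref{lem:GX_SURJECTIVE} the functionals $(G_{Y,i,j})$ are linearly independent for every $Y\in\omega$, so Proposition~\ref{prop:FLREP} (equivalently the abstract Proposition~\ref{quad_exist_NEU} with $\ell=0$) applies and yields the unique inner minimizer $u_Y=\sum_{i,j}(A_Y^{-1}r)_{i,j}\,\phi_{Y,i,j}$. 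Inserting this into $\cJJ(u_Y)=\tfrac12\aJ(u_Y,u_Y)$ and using $\aJ(\phi_{Y,i,j},\phi_{Y,k,l})=(A_Y)_{(i,j),(k,l)}$ together with the symmetry of $A_Y$ gives the reduced inner energy $\cJJ(u_Y)=\tfrac12\,r^\top A_Y^{-1}r$.

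With this in hand the conclusion is immediate. Applying the inner step to the fixed optimal location $X$ shows that $u$ must coincide with $u_X$, i.e.\ $u$ has the representation~\eqref{eq:REPVL}. Comparing the optimal value at $X$ with that obtained by replacing $X$ by any other $Y\in\op{int}\omega$ and $u$ by $u_Y$, global optimality of $(u,X)$ forces $\tfrac12\,r^\top A_X^{-1}r+\cV_{\soft}(X)\le\tfrac12\,r^\top A_Y^{-1}r+\cV_{\soft}(Y)$ for all such $Y$; hence $X$ minimizes $Y\mapsto\tfrac12\,r^\top A_Y^{-1}r+\cV_{\soft}(Y)$ over $\omega$, which is the characterization claimed in the statement (the factor $\tfrac12$ on the quadratic term stemming from $\cJJ=\tfrac12\aJ$ is suppressed in the notation of Proposition~\ref{lem:EQCOND}, where it is harmless). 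The cleanest write-up simply cites the abstract representation result Proposition~\ref{min_con}, used as in Proposition~\ref{lem:EQCOND} with $\ell=0$ but with the location-dependent, $u$-independent potential $\cV_{\soft}$ appended to the outer minimization. I do not expect a genuine obstacle here: the only points needing care are verifying that this abstract result tolerates an additive outer potential and that the outer functional $Y\mapsto\tfrac12\,r^\top A_Y^{-1}r$ is well defined and lower semicontinuous on $\omega$ — the latter following from invertibility of $A_Y$ (a consequence of coercivity of $\aJ$ and Lemma~\ref{lem:GX_SURJECTIVE}) and from the continuity of $Y\mapsto G_Y$ already established in the proof of Proposition~\ref{prop:min_curv_existence}.
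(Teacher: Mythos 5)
Your proof is correct and takes essentially the same route as the paper: the paper's proof simply states that the argument of Proposition~\ref{lem:EQCOND} carries over literally, i.e.\ one applies the abstract Proposition~\ref{min_con} (whose Problem~\ref{QPP_Global} already accommodates an additive outer potential $\cV$) with $\ell=0$, $T(Y)=G_Y$, $b=r$, $\cM=\omega$, $\cV=\cV_{\soft}$, and your inner-minimization plus comparison argument is exactly that abstract result unrolled, with the non-degeneracy supplied by Lemma~\ref{lem:GX_SURJECTIVE} and the passage between $\omega$ and $\op{int}\omega$ handled correctly via $\cV_{\soft}\equiv\infty$ on $\omega\setminus\op{int}\omega$. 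One caveat: your derivation correctly produces the outer functional $\tfrac12\,r^\top A_Y^{-1}r+\cV_{\soft}(Y)$, and unlike the hard-wall case of Proposition~\ref{lem:EQCOND} the factor $\tfrac12$ is here \emph{not} harmless, since rescaling the quadratic term relative to $\cV_{\soft}$ can change the set of minimizers; so the mapping should really read $\tfrac12\,r^\top A_Y^{-1}r+\cV_{\soft}(Y)$ (a factor the paper's own statement also drops, inconsistently with its proof via Proposition~\ref{min_con}), and your parenthetical dismissing it as a suppressed, harmless notational constant is the one loose point in an otherwise sound argument.
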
 
\begin{proof}
    The proof can be carried out literally as for Proposition~\ref{lem:EQCOND}.
\end{proof} 

\begin{remark}\label{rem:MCPOINTVAR}
    The results stated in the Propositions~\ref{prop:min_curv_existence},
    \ref{lem:EQCOND}, \ref{prop:min_curv_softwall_existence},
    and~\ref{prop:min_curv_softwall_representation} 
    also hold  for  $G_X=(G_{X,i})\in (\VJ')^N$ 
    with functionals $G_{X,i}=\delta_{X_i}(\Delta \wc)$ defined according to~\eqref{eq:GDELTA}.
\end{remark}

\begin{remark}
    A similar representation to \eqref{eq:REPVL} can be derived for the solutions
    of the average constrained
    Problems~\ref{p:AMCLH} and~\ref{p:AMCLS} with hard- and soft-wall constraints.
\end{remark}

\subsection{Unbounded domains} \label{subsec:UNBD}
In the preceding sections, we have  focussed on problems with particles on a membrane that is
parametrized over a bounded domain $\Omega\subset \R^2$.
However, our variational approach is not limited to this case. 
As an example, we will now consider a physical model
as suggested by Bartolo and Fournier~\cite{BarFou03} 
with an unbounded membrane parametrized over $\R^2$.
We will  formulate  this model
in terms of our variational framework 
and then use our general theory to recover some, but not all results
that were obtained  for bounded domains. 
In order to facilitate the transfer 
between~\cite{BarFou03} and our presentation, we will mostly adopt new notation to~\cite{BarFou03}.

\subsubsection{Hard and soft point constraints in $\R^2$}

Following Bartolo and Fournier~\cite{BarFou03}, we consider an extension
\begin{equation}
    F_m(u)=\cJJ(u) + \bfgamma \int_{\R^2} u^2 \;dx,\qquad u\in H^4(\R^2),
\end{equation}
of the energy functional  $\cJJ$ defined in \eqref{eq:RHT}
by an additional  confining potential $\bfgamma u^2$
with a given constant $\bfgamma>0$ (there is no danger of confusion with 
the height $\bfgamma$ used elsewhere in this paper). The coupling of the membrane with
$N$  pointwise isotropic particles  at pairwise distinct locations $X_i\in \R^2$
is represented by the interaction term
\begin{equation} \label{eq:INTAC}
    \frac{\bfGamma}{2}  \sum_{i=1}^N (G_{X,i} u-C_{i})^\top \bfN (G_{X,i} u -C_{i}) 
\end{equation}
with the $3\times 3$ matrix
\begin{equation}
    \bfN=
    \left(
        \begin{matrix}
            1+ \epsilon & 0         & \epsilon\\
            0           & 2 & 0\\
            \epsilon    & 0         & 1 + \epsilon
        \end{matrix}
    \right),
\end{equation}
the Dirac functionals $G_{X,i,j}$ defined in \eqref{eq:GLOC}, $i=1,\dots,N$,
and given data $C=(C_{i,j})\in \R^{N \times 3}$, $\bfGamma \geq 0$, and $\epsilon > -1/2$,
such that $\bfN$ is symmetric positive definite.
Now the model considered by Bartolo and Fournier~\cite{BarFou03} reads as follows.

\begin{problem}[Soft point curvature constraints in  $\mathbb{R}^2$] \ \\ \label{p:BFSOFT}%
    Find $u\in H^4(\R^2)$ minimizing the energy 
    \begin{equation} \label{eq:BFEN}
        F_m(u)+ \frac{\bfGamma}{2}\sum_{i=1}^N (G_{X_i}u-C_i)^\top \bfN (G_{X_i}u-C_i).
    \end{equation}
\end{problem}

\begin{proposition}
    There exists a unique solution to Problem~\ref{p:BFSOFT}.
\end{proposition}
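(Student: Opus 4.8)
The plan is to recognize the objective in Problem~\ref{p:BFSOFT} as a coercive, continuous quadratic functional on $H^4(\R^2)$ and then invoke the Lax--Milgram lemma, exactly as in the abstract framework used throughout the paper (cf.\ Proposition~\ref{quad_exist_NEU}). First I would expand \eqref{eq:BFEN} as $\tfrac12 \mathcal{B}(u,u) - \ell(u) + \text{const}$, where the symmetric bilinear form is
\[
    \mathcal{B}(u,v) = \aJ(u,v) + 2\bfgamma \int_{\R^2} u\,v\; dx
        + \bfGamma \sum_{i=1}^N (G_{X,i}u)^\top \bfN\, (G_{X,i}v),
\]
with $\aJ$ as in \eqref{eq:HBI} but integrated over $\R^2$, and the linear functional is $\ell(v) = \bfGamma \sum_{i=1}^N C_i^\top \bfN\, (G_{X,i}v)$. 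Minimizing \eqref{eq:BFEN} over $H^4(\R^2)$ is then equivalent to solving $\mathcal{B}(u,v) = \ell(v)$ for all $v \in H^4(\R^2)$.

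Continuity of $\mathcal{B}$ and $\ell$ is routine. The first two terms of $\mathcal{B}$ are manifestly bounded on $H^4(\R^2)$, while each component of $G_{X,i}$ is a point evaluation of a second-order partial derivative and is therefore a bounded functional on $H^4(\R^2)$ by the Sobolev embedding $H^4(\R^2) \hookrightarrow C^2(\R^2)$; since $\bfN$ is a fixed matrix, the coupling terms in $\mathcal{B}$ and the functional $\ell$ are continuous as well.

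The main obstacle is coercivity, since the coercivity of $\aJ$ quoted earlier was proved on $\VJ \subset H^4(\Omega)$ for a \emph{bounded} domain together with boundary conditions, and that argument does not carry over to $\R^2$. Instead I would exploit the confining potential. As $\bfN$ is positive definite and $\bfGamma \ge 0$, the coupling term in $\mathcal{B}(u,u)$ is nonnegative and may be discarded, so it suffices to bound $\aJ(u,u) + 2\bfgamma\|u\|_{L^2(\R^2)}^2$ below by $c\|u\|_{H^4(\R^2)}^2$. Passing to the Fourier side by Plancherel, this reduces to the pointwise polynomial estimate
\[
    \kappa_8 |\xi|^8 + \kappa_6 |\xi|^6 + \kappa |\xi|^4 + \sigma |\xi|^2 + 2\bfgamma
        \;\ge\; c\,(1+|\xi|^2)^4, \qquad \xi \in \R^2 .
\]
Viewed as a function of $t = |\xi|^2 \ge 0$, the left-hand side is a quartic with strictly positive leading coefficient $\kappa_8$ and strictly positive constant term $2\bfgamma$, so its ratio to $(1+t)^4$ is continuous and strictly positive on $[0,\infty)$ with positive limits both at $t=0$ and as $t\to\infty$; it is hence bounded below by some $c>0$. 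This is precisely where the standing assumptions $\kappa_8>0$ and $\bfgamma>0$ are indispensable: $\kappa_8$ controls the top order and the confining constant $\bfgamma$ controls the zeroth order, which together yield coercivity on all of $H^4(\R^2)$ despite the absence of boundary conditions.

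With continuity and coercivity established, the Lax--Milgram lemma provides a unique $u\in H^4(\R^2)$ solving $\mathcal{B}(u,v)=\ell(v)$ for all $v$, which, by symmetry and positivity of $\mathcal{B}$, is the unique minimizer of \eqref{eq:BFEN}. I expect the coercivity estimate to be the only step requiring genuine care; the remaining verifications are immediate from the definitions and the Sobolev embedding.
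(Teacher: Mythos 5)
You are correct, and your argument has the same skeleton as the paper's proof: split the energy \eqref{eq:BFEN} into a symmetric bilinear form plus a bounded linear functional, obtain continuity of the functionals $G_{X,i,j}$ from the embedding $H^4(\R^2)\hookrightarrow C^2(\R^2)$, note that positive definiteness of $\bfN$ makes the $\bfGamma$-coupling term a nonnegative perturbation that cannot destroy ellipticity, and conclude with the Lax--Milgram lemma. The only real divergence is at the key step, coercivity of $\aJ(\wc,\wc)+2\bfgamma\|\wc\|_{L^2(\R^2)}^2$ on $H^4(\R^2)$: the paper disposes of it in one line (``by partial integration and the fact that the $C^\infty$-functions with compact support are dense''), while you supply an actual proof via Plancherel and a symbol estimate. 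Your route is precisely the rigorous version of that remark, and it matches the Fourier computation the paper itself uses for the Green's function regularity in Lemma~\ref{BF_4o_reg}; your identification of $\bfgamma>0$ (rather than boundary conditions) as the mechanism that rescues coercivity on the unbounded domain is exactly right. One sentence should be tightened: a strictly positive leading coefficient together with a strictly positive constant term does not by itself force a quartic to be positive on $[0,\infty)$ (consider $t^4-10t^2+1$); your bound is valid because the intermediate coefficients $\kappa_6$, $\kappa$, $\sigma$ are nonnegative under the paper's standing assumptions, so that the symbol dominates $\kappa_8 t^4 + 2\bfgamma \geq c\,(1+t)^4$, and this nonnegativity should be invoked explicitly.
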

\begin{proof}
    First we note that the bilinear form
    \begin{equation}
        a_m(u,v) = \aJ(u,v) + \bfgamma \int_{\R^2} u v\;dx
    \end{equation} 
    associated with the energy functional $F_m$
    is bounded on $H^4(\R^2)$.
    By partial integration and the fact that the $C^\infty$-functions with compact support
    are dense $a_m(\wc,\wc)$ is also coercive on this space.
    Furthermore the Sobolev embedding $H^4(\R^2) \to C^{2}(\R^2)$
    (see, e.g., \cite[Theorem~4.12]{AdamsFournier2003SobolevSpaces}),
    guarantees continuity of each $G_{X,i,j}$.
    Since $\bfN$ is symmetric and positive definite for $\epsilon>-1/2$
    this implies that the  bilinear form 
    \begin{equation*}
        a_m(u,v) + \frac{\bfGamma}{2}\sum_{i=1}^N (G_{X,i} u)^\top \bfN (G_{X,i} v)
    \end{equation*} 
    associated with the energy functional in \eqref{eq:BFEN}  is 
    symmetric and $H^4(\R^2)$-elliptic.
    Hence, the assertion follows from the Lax-Milgram lemma.
\end{proof}

In order to identify the hard constrained version of Problem~\ref{p:BFSOFT},
we reformulate the interaction term defined in \eqref{eq:INTAC} according to
\begin{equation}
     \frac{\bfGamma}{2}  \sum_{i=1}^N (G_{X,i} u-C_{i})^\top \bfN (G_{X,i} u -C_{i}) = 
     \frac{\bfGamma}{2}  \sum_{i=1}^N   |Q (G_{X,i} u- C_{i})|^2,
\end{equation}
where $Q = \bfN^{\frac{1}{2}} \in  \R^{3\times 3}$.
For increasing penalty parameter
$\bfGamma\to \infty$, we therefore obtain the following hard-constrained problem.

\begin{problem}[Point curvature constraints in  $\mathbb{R}^2$]\ \\
    \label{p:BFHARD} %
    Find $u\in H^4(\R^2)$ minimizing the energy $F_m(u)$ subject to the constraints
    \begin{equation} \label{eq:BFCON}
        G_{X} u =C.
    \end{equation}
\end{problem} 

\begin{proposition} \label{prop:UBDP}
    There exists a unique solution $u\in H^4(\R^2)$ to Problem~\ref{p:BFHARD}.
    Moreover, denoting the solution of Problem~\ref{p:BFSOFT} for fixed $\bfGamma\geq 0$
    by $u_\bfGamma$, we have 
    \begin{equation} \label{eq:PENLIM}
        u_\bfGamma \to u\quad \text{in} \quad H^4(\R^2)\quad \text{for}\quad \bfGamma \to \infty.
    \end{equation}
\end{proposition}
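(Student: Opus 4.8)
The plan is to obtain both assertions by reducing to the abstract machinery already used for the bounded-domain counterparts (Propositions~\ref{prop:PDC_EXIST} and~\ref{prop:SPDC}), the only new point being that the underlying space is now $H^4(\R^2)$. For existence and uniqueness I would apply the abstract Proposition~\ref{quad_exist_NEU} with $V = H^4(\R^2)$, the symmetric bilinear form $a_m(u,v) = \aJ(u,v) + \bfgamma\int_{\R^2} uv\,dx$ associated with $F_m$, and the family $(G_{X,i,j})$ as constraint functionals. From the proof of the existence result for Problem~\ref{p:BFSOFT} I may take for granted that $a_m$ is bounded and coercive on $H^4(\R^2)$ and that each $G_{X,i,j}$ is a bounded linear functional there, the latter via the embedding $H^4(\R^2)\hookrightarrow C^2(\R^2)$. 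Since the locations $X_i$ are pairwise distinct, the $3N$ functionals $G_{X,i,j}$ are linearly independent: by the same argument as in Lemma~\ref{lem:GX_SURJECTIVE} one constructs smooth, compactly supported functions realizing any prescribed $C\in\R^{N\times 3}$, so the affine feasible set $\{u\in H^4(\R^2) \st G_X u = C\}$ is nonempty and closed. These are exactly the hypotheses of Proposition~\ref{quad_exist_NEU}, which then yields the unique minimizer $u$.

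For the penalty limit \eqref{eq:PENLIM} I would cast the soft energy into the abstract penalty framework. Writing $\bfN = Q^\top Q$ with $Q = \bfN^{\frac12}$ (well-defined and invertible since $\bfN$ is symmetric positive definite for $\epsilon>-1/2$), the interaction term becomes $\frac{\bfGamma}{2}\sum_{i=1}^N |Q(G_{X,i}u - C_i)|^2 = \frac{1}{2\varepsilon}\|Q G_X u - QC\|^2$ with $\varepsilon = 1/\bfGamma$. Setting $\tilde T = Q G_X : H^4(\R^2)\to\R^{N\times 3}$, a bounded linear operator, and $\tilde C = QC$, Problem~\ref{p:BFSOFT} is precisely the penalized minimization of $F_m$ subject to $\tilde T u = \tilde C$. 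Because $Q$ is invertible, $\tilde T u = \tilde C$ is equivalent to $G_X u = C$, so the associated hard constraint is exactly that of Problem~\ref{p:BFHARD}. Proposition~\ref{prop:A_PENALTY_CONV} then gives $u_{\bfGamma}\to u$ in $H^4(\R^2)$ as $\varepsilon\to 0$, i.e.\ as $\bfGamma\to\infty$.

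The main thing to verify — and the only ingredient not immediately inherited from the bounded-domain analysis — is that the abstract results are genuinely applicable over the unbounded domain, which hinges on coercivity of $a_m$ on $H^4(\R^2)$ and boundedness of the point functionals there. Both are already settled in the proof of the preceding proposition: the confining potential $\bfgamma\int_{\R^2}u^2\,dx$ supplies the lower-order control that, together with the strictly positive $\kappa_8$-term, makes $a_m$ coercive (via density of compactly supported smooth functions and integration by parts), while $H^4(\R^2)\hookrightarrow C^2(\R^2)$ supplies the bounds on the functionals $G_{X,i,j}$. Granting these, the argument is a verbatim transcription of the bounded-domain case, and no further obstacle arises.
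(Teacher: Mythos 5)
Your proposal is correct and follows essentially the same route as the paper: existence and uniqueness via Proposition~\ref{quad_exist_NEU} (using coercivity of $a_m$ on $H^4(\R^2)$, boundedness of $G_X$, and linear independence of the $G_{X,i,j}$ as in Lemma~\ref{lem:GX_SURJECTIVE}), and the limit \eqref{eq:PENLIM} via Proposition~\ref{prop:A_PENALTY_CONV} after rewriting the interaction term with $Q=\bfN^{\frac12}$ and $\varepsilon = 1/\bfGamma$, noting that invertibility of $Q$ makes the penalized constraint equivalent to the hard one. Your explicit verification of the unbounded-domain ingredients (coercivity from the confining term, boundedness of the point functionals from the Sobolev embedding) matches what the paper recalls from the preceding proposition.
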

\begin{proof}
    First recall that the bilinear form $\aJ(u,v) + \bfgamma \int_{\R^2}u v\;dx$ is bounded, symmetric, and coercive
    on $H^4(\R^2)$ and that the operator $G_X$ is bounded.
    Furthermore by Lemma~\ref{lem:GX_SURJECTIVE} the functionals $G_{X,i,j}$ are
    linearly independent for pairwise distinct locations $X_i$.
    Hence existence and uniqueness follows from Proposition~\ref{quad_exist_NEU}.

    Finally, since $Q$ is regular, the constraint~\eqref{eq:BFCON} is equivalent to 
    \begin{equation*}
        Q(G_{X,i} u -C_{i})=0,
        \quad i=1,\dots,N
    \end{equation*}
    such that the convergence~\eqref{eq:PENLIM} is a consequence of Proposition~\ref{prop:A_PENALTY_CONV}.
    \end{proof}

We now derive a representation of the stationary energy $F_m(u)$
of Problem~\ref{p:BFHARD} which will  finally turn out to be 
a hard constrained limit version of the stationary energy obtained by Bartolo and Fournier~\cite{BarFou03}.
Since we would like to emphasize the dependence of this energy
on the locations $X=(X_i)$, $X_i\in \R^2$, we will denote the solution of Problem~\ref{p:BFHARD}
for fixed $X$ by $u_X$ from now on.
Then Proposition~\ref{quad_exist_NEU}
provides the representation
\begin{align*}
    u_X = \sum_{i=1}^N\sum_{j=1}^{3} u_{X,i,j} \phi_{X,i,j}, \qquad (u_{X,i,j})=A_X^{-1}C \in \R^{N \times 3}
\end{align*} 
with Green's functions $\phi_{X,i,j}$ and the
matrix $A_X \in \R^{(N\times 3) \times (N \times 3)}$
given by
\begin{align}\label{eq:UNBOUNDED_GREENS_FUNCTIONS}
    a_m(\phi_{X,i,j},v )=G_{X,i,j}(v) \quad \forall v \in H^4(\R^2)
\end{align} 
and $A_X=(a_m(\phi_{X,i,j},\phi_{X,k,l}))$, respectively.
Lemma~\ref{lem:min_con} implies that the energy at the minimizer
is given by
\begin{align} \label{eq:BFENERGY}
    F_m(u) = \tfrac{1}{2} C^\top A_X^{-1}C.
\end{align}
We conclude this section with  an explicit representation of the entries of $A_X$ 
as appearing in~\cite{BarFou03}.
To this end let $\cG \in H^4(\R^2)$ denote the Green's function
given by
\begin{align}\label{eq:eigth_order_green_def}
    a_m(\cG,v) = v(0) \qquad \forall v \in H^4(\R^2)
\end{align}
and define the differential operators
$\partial^{(1)} = \partial_{xx}$, $\partial^{(2)} = \partial_{xy}$,
and $\partial^{(3)} = \partial_{yy}$.
Then $\cG$ can be related to the Green's functions $\phi_{X,i,j}$ in the following way.
\begin{proposition} \label{prop:GREENSREP}
    For $j=1,2,3$ we have $\partial^{(j)} \cG(\wc - X_i) = \phi_{X,i,j}$.
\end{proposition}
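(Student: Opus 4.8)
The plan is to exploit that $a_m$ has constant coefficients, so that it is invariant under translations and commutes, up to integration by parts, with the second-order operators $\partial^{(j)}$; the identity then drops out of the defining variational equations and the uniqueness guaranteed by coercivity of $a_m$. Throughout write $\Phi := \cG(\wc - X_i)$ and $\psi := \partial^{(j)}\Phi = \partial^{(j)}\cG(\wc - X_i)$. Since $a_m(\wc,\wc)$ is bounded and $H^4(\R^2)$-elliptic, the Lax--Milgram lemma shows that $\phi_{X,i,j}$ is the \emph{unique} element of $H^4(\R^2)$ satisfying $a_m(\phi_{X,i,j},v)=G_{X,i,j}(v)=\partial^{(j)}v(X_i)$ for all $v\in H^4(\R^2)$. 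It therefore suffices to prove that $\psi\in H^4(\R^2)$ and that $\psi$ satisfies this same variational equation.

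First I would establish the translation step. Substituting $x\mapsto x+X_i$ in each integral defining $a_m$ and using that all coefficients $\kappa_8,\kappa_6,\kappa,\sigma,\bfgamma$ are constant yields $a_m(w(\wc-X_i),v)=a_m(w,v(\wc+X_i))$ for all admissible $w,v$. Applying this with $w=\cG$ together with the defining relation $a_m(\cG,v)=v(0)$ gives $a_m(\Phi,v)=v(X_i)$ for every $v\in H^4(\R^2)$; that is, $\Phi$ is the Green's function for point evaluation at $X_i$. Next comes the differentiation step, which is the algebraic heart. For a test function $v\in C_c^\infty(\R^2)$ I would move $\partial^{(j)}$ off $\Phi$ and onto $v$ in every term of $a_m(\psi,v)$: because each $\partial^{(j)}\in\{\partial_{xx},\partial_{xy},\partial_{yy}\}$ has even order, two integrations by parts produce a net $+$ sign, and the compact support of $v$ annihilates all boundary terms on $\R^2$. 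Interpreting the derivatives of $\Phi$ distributionally (legitimate since $\Phi\in H^4(\R^2)$ makes $\Delta^2\Phi,\nabla\Delta\Phi,\dots$ square-integrable while $v$ is smooth), this gives $a_m(\psi,v)=a_m(\Phi,\partial^{(j)}v)$.

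Since $\partial^{(j)}v\in C_c^\infty(\R^2)\subset H^4(\R^2)$, the translation step applied to the admissible test function $\partial^{(j)}v$ yields $a_m(\Phi,\partial^{(j)}v)=(\partial^{(j)}v)(X_i)=G_{X,i,j}(v)$. Both $v\mapsto a_m(\psi,v)$ and $v\mapsto G_{X,i,j}(v)$ are continuous on $H^4(\R^2)$, the former once $\psi\in H^4(\R^2)$ is known, the latter by the embedding $H^4(\R^2)\hookrightarrow C^2(\R^2)$; so by density of $C_c^\infty(\R^2)$ in $H^4(\R^2)$ the identity $a_m(\psi,v)=G_{X,i,j}(v)$ extends to all $v\in H^4(\R^2)$, and uniqueness forces $\psi=\phi_{X,i,j}$.

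The step I expect to be the main obstacle is verifying $\psi\in H^4(\R^2)$, i.e.\ that applying the second-order operator $\partial^{(j)}$ to $\cG$ does not destroy membership in $H^4$. This is a genuine regularity statement about $\cG$ near its singularity: the leading part $\kappa_8\Delta^4$ forces the fundamental solution to behave like $|x|^6\log|x|$ at the origin, so $\partial^{(j)}\cG$ behaves like $|x|^4\log|x|$, whose fourth derivatives are only logarithmically singular and hence still square-integrable in two dimensions, while decay at infinity is supplied by the confining term $\bfgamma u^2$. I would therefore invoke the regularity results on Green's functions collected in the Appendix to secure $\partial^{(j)}\cG(\wc-X_i)\in H^4(\R^2)$, which simultaneously legitimises the distributional integrations by parts above and makes $\psi$ an admissible competitor in the uniqueness argument.
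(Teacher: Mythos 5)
Your proposal is correct and takes essentially the same route as the paper's proof: both exploit translation invariance of the constant-coefficient form $a_m$, integration by parts to shift $\partial^{(j)}$ between arguments (legitimate since the operators are of even order and boundary terms vanish on $\R^2$), the Appendix regularity result (Lemma~\ref{BF_4o_reg}, which gives $\cG \in H^s(\R^2)$ for $s<7$ and hence $\partial^{(j)}\cG \in H^4(\R^2)$), and a density argument combined with Lax--Milgram uniqueness. The only differences are cosmetic: you translate first and then differentiate, while the paper inserts the composite test function $v = \partial^{(j)}w(\wc + X_i)$ directly into the defining equation, and you work with $C_c^\infty(\R^2)$ test functions, which is in fact slightly cleaner than the paper's use of $C^\infty(\R^2)$.
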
 
\begin{proof}
    Let $w \in C^\infty(\R^2)$ and set $v = \partial^{(j)}w(\wc + X_i) \in C^\infty(\R^2)$.
    Inserting this in \eqref{eq:eigth_order_green_def} gives
    \begin{align*}
        a_m(\cG,v) = v(0) = \partial^{(j)}w(X_i) = G_{X,i,j}(w).
    \end{align*}
    By the regularity result given in Lemma~\ref{BF_4o_reg}
    in the appendix we have $\cG \in H^6(\R^2)$.
    Hence we have $\partial^{(j)} \cG \in H^2(\R^2)$
    and partial integration and translation invariance of integrals over $\R^2$ yields
    \begin{align*}
        a_m(\cG,v)
        = a_m(\cG,\partial^{(j)}w(\wc + X_i))
        = a_m(\partial^{(j)}\cG,w(\wc + X_i))
        = a_m(\partial^{(j)}\cG(\wc -X_i), w).
    \end{align*}
    Since $C^\infty(\R^2)$ is dense in $H^4(\R^2)$
    we have shown that $\partial^{(j)}\cG(\wc -X_i)$ coincides
    with the unique solution $\phi_{X,i,j}$ of~\eqref{eq:UNBOUNDED_GREENS_FUNCTIONS}.
\end{proof} 
Now we can insert $\phi_{X,i,j} = \partial^{(j)}\cG(\wc-X_i)$
into~\eqref{eq:UNBOUNDED_GREENS_FUNCTIONS} to obtain
\begin{align*}
    (A_X)_{(i,j),(k,l)} = a_m(\phi_{X,i,j},\phi_{X,k,l}) = G_{X,i,j}\phi_{X,k,l} = \partial^{(j)} \partial^{(l)} \cG(X_k-X_i).
\end{align*} 
Therefore, in the case of $N=2$ particles, identify $\kappa A_X \in \R^{(2\times 3) \times (2\times 3)}$
with the matrix $\mathbf{M} \in \R^{6 \times 6}$ given in equation (8) of~\cite{BarFou03}.
Hence, \eqref{eq:BFENERGY} leads to
\[
    F_m(u_{X})=  {\textstyle \frac{1}{2}} \kappa C^\top( \mathbf{M} )^{-1}C
\]
which precisely agrees with equation (14) in~\cite{BarFou03}, 
i.e.,
 \[
 F_{\text{tot,min}} = \frac{1}{2} \kappa C^\top \left( \mathbf{M} + 
 \frac{\kappa}{\bfGamma} \left( \bfN \otimes 
 \left(   \begin{array}{cc}
 1 & 0 \\
 0 & 1\\ \end{array} \right)
 \right)^{-1}  \right)^{-1} C,
 \]
after formally taking the limit $\bfGamma\to \infty$. 
As a consequence, using the  approximation for the matrix 
$\mathbf{M}$ from~\cite{BarFou03}, 
our results are reproducing the interaction potential for hard 
constraints denoted by $F_{\text{int,hard}}(r) = F_{\text{int,hard}}(|X_2-X_1|)$
in equation (20) of~\cite{BarFou03}.    

\subsubsection{Varying the location of particles}
In analogy to Problem~\ref{min_curv} with locations of particles 
varying in the compact set $\overline{\Omega}$,
we now consider  varying locations in $\R^2$. We first fix some notation for the $\mathbb{R}^2$ analogue of the set $\omega \subset \overline{\Omega}^N$.  
\[
  \tilde{\omega} = \{X \in \mathbb{R}^{N \times 2} \st B_i(X) \cap B_j(X) =\emptyset  \;\forall i \neq j \}. 
\]
Recall $G_X=(G_{X,i,j})$ with $G_{X,i,j}$ defined in \eqref{eq:GLOC}
and let $C=(C_{i,j}) \in \R^{N\times 3}$ be given.

\begin{problem}[Point curvature constraints with varying locations in $\R^2$]\ \\ \label{p:BFVAR}%
    Find $(u,X)\in H^4(\R^2)\times \tilde{\omega}$ such that $u$ minimizes the energy $F_m(u)$ 
    subject to the constraint
    \begin{equation}
        G_X=C.
    \end{equation}
\end{problem}

Note that Proposition~\ref{QPP_Global_exist} can be no longer applied to prove existence,
because $\tilde{\omega}$ is not compact.
While we could still show that $\tilde{\omega}$ is closed, it fails to be bounded here.
Furthermore, we should not expect a solution to Problem~\ref{p:BFVAR} to exist. 
To see this, consider the case of $N=2$  particles at varying locations 
$X=(X_1,X_2)\in \tilde{\omega}$ and choose $C=(C_{1}, C_{2})$
in such a way that their interaction  energy decreases with separation. 
As an example, one might chose  identical isotropic particles  
as considered, e.g., by Bartolo and Fournier~\cite{BarFou03}. 
Let $u_X$ be the unique solution of 
the corresponding Problem~\ref{p:BFHARD} with fixed locations $X$. 
Then $F_m(u_X)$ is precisely the interaction energy 
and is well-known to strictly decrease as the points $X_1$ and $X_2$ are moved apart. 
As the distance between $X_1$ and $X_2$ can become arbitrary large,
there can be no minimal set of locations and thus  no solution of the corresponding Problem~\ref{p:BFVAR}.

\subsection{Discussion}
The constrained minimization
Problems~\ref{p:PC} and~\ref{p:PDC} stem from the models discussed, e.g., in
\cite{BarFou03,DomFou99,DomFou02,KimNeuOst98,MarMis02,NajAtzBro09,Net97, WeiDes13}.
See also \cite{GouBruPin93,ParLub96}. The addition of the higher order 
terms to the energy functional (see \eqref{eq:RHT}) to ensure well 
posedness is done in \cite{BarFou03}. Prior to this the ill posedness 
is dealt with by only studying large separation distances between 
particles \cite{MarMis02} or by truncating the Fourier expansion 
of the solution, termed the high wave-vector cutoff in \cite{DomFou02}.    
When put in our framework, these papers study an energy functional of the form given in  equation \eqref{eq:BFEN}. 

The hard inclusions limit as described in \cite{BarFou03} corresponds to the limit $\bfGamma\rightarrow \infty$, here we are able to rigorously understand this limit. 
By what we have established in Proposition~\ref{prop:UBDP}, 
the hard inclusions limit is equivalent to the quadratic minimization Problem~\ref{p:BFHARD}.

This limit problem with anisotropic particles prescribing curvatures as in \eqref{eq:CF} is studied in \cite{DomFou02}. The elastic interaction energy is calculated and a thermal equilibrium is approximated using a Monte-Carlo algorithm. In the equilibrium configuration proteins aggregate into one region and form an egg carton type structure with the anisotropic particles located at the saddle points of this structure. This equilibrium is analogous to the global minimizer in Problem \ref{p:BFVAR}.

\section{Point value constraints and  point forces }\label{sec:pvc}

\subsection{Particles interacting with the cytoskeleton}
In the preceding sections we have studied a variety of models  describing the 
interaction of lipid membranes with embedded particles, scaffolds or wrapped particles
(cf. Figure~\ref{C04-fig:INTERACTION1}).
We now  consider interactions of  the membrane with thin actin filaments
that are anchored to the cytoskeleton 
and may prescribe displacements of the membrane
or with particles  that apply forces
that may be due to either entropic or direct chemical
interactions (see, e.g.,~\cite{EvaTurSen03, GovGop06}).
In the mathematical models to be considered in this section,
these effects are represented by  point value constraints or  point forces.

\subsection{Point value constraints at fixed locations} \label{sec:PVF}
Prescribed point values at given locations $X=(X_i)\in \overline{\Omega}^N$ are represented by the constraints
\begin{equation} \label{dispcon}
    F_X u = \alpha
\end{equation}
with given $\alpha \in \mathbb{R}^N$ and   $F_X= (F_{X,i}) : V \to \R^N$  defined by
\begin{equation} \label{Fdelta}
    F_{X,i} v = \delta_{X_i} v\in \R,
\end{equation}
as illustrated in Figure~\ref{fig:INTERACTION2}.
\begin{figure}[ht]
    \includegraphics[width=0.25\textwidth]{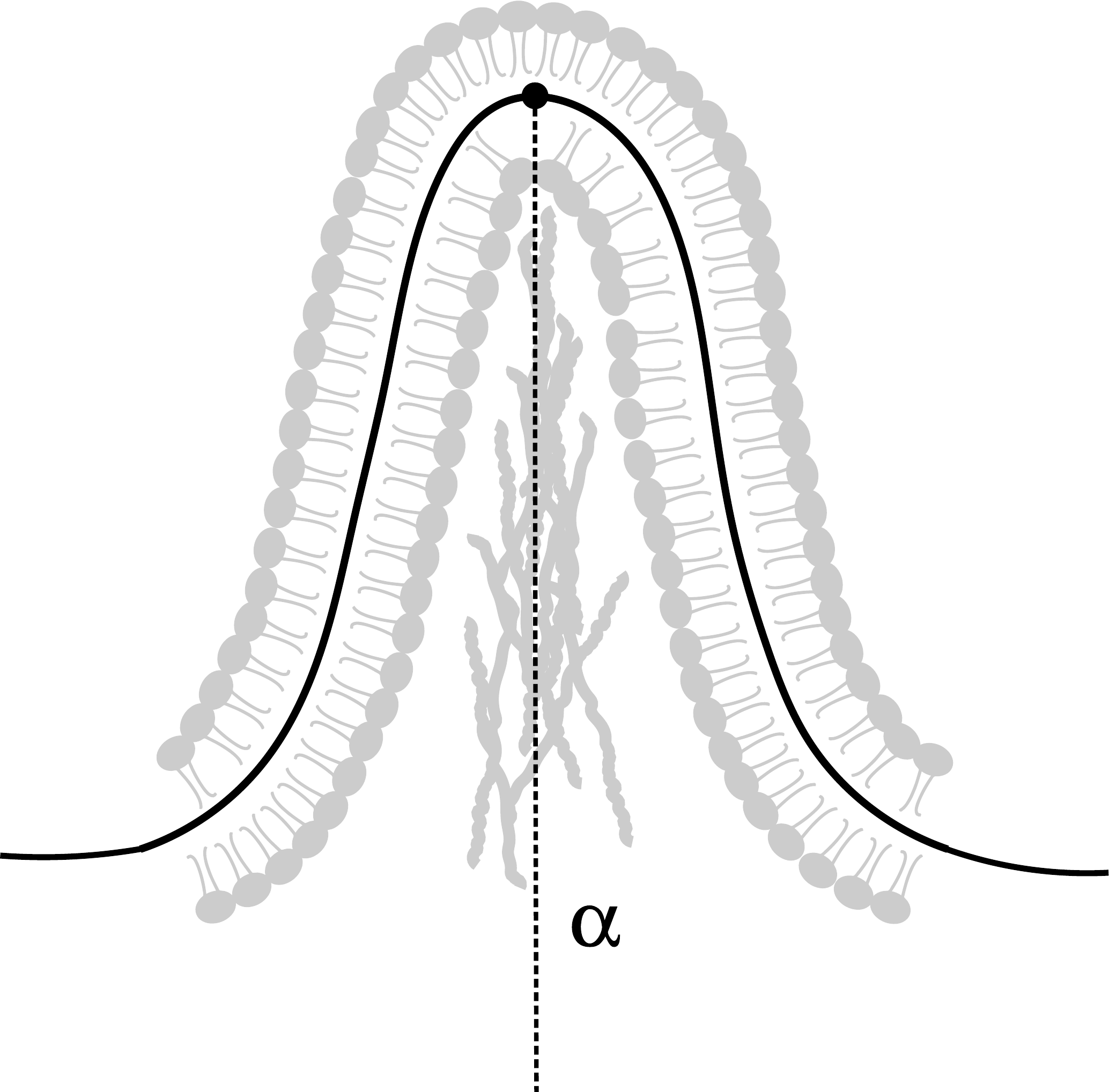}
   \caption{Displacements 
    caused by filaments anchored in the cytoskeleton.}
    \label{fig:INTERACTION2}
\end{figure}
Note that $\delta_{X_i}\in V'$ and thus $F_X\in (V')^N$
due to the continuous embedding 
$V\subset H^2(\Omega) \subset C(\overline{\Omega})$.
Hence, from a mathematical point of view, 
minimization problems with point value constraints \eqref{dispcon}
share  their basic properties 
with minimization problems with point curvature constraints \eqref{eq:PC}
as considered above.
We first consider prescribed  point values at fixed locations.

\begin{problem}[Point value constraints]\ \\ \label{p:PHC}%
Find $u \in V$ minimising the energy $\cJ$ on $V$ subject to the constraints \eqref{dispcon}.
\end{problem}

In order to avoid  possible conflicts of point constraints \eqref{dispcon}
with boundary conditions \eqref{eq:VDEF}, we  exclude $X_i \in \partial \Omega$.

\begin{proposition}
    For distinct locations $X_1, \dots, X_N \subset \Omega$ there exists a unique solution $u\in V$ to Problem~\ref{p:PHC}.
\end{proposition}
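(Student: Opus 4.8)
The plan is to recognize that Problem~\ref{p:PHC} is an instance of the same abstract constrained quadratic minimization that already handled the point curvature constraints, so that it can be reduced to the abstract result in Proposition~\ref{quad_exist_NEU} in the appendix. Concretely, this abstract framework requires three ingredients: a continuous and coercive bilinear form on the Hilbert space $V$, a finite family of \emph{bounded} linear constraint functionals, and \emph{linear independence} of these functionals (equivalently, surjectivity of the constraint map onto $\R^N$). The first ingredient is exactly Lemma~\ref{lem:GLOBAL_COERCIVITY}, which supplies continuity and coercivity of $a(\wc,\wc)$ on $V$. The second is immediate from the continuous embedding $V\subset H^2(\Omega)\subset C(\overline{\Omega})$, which makes each point evaluation $\delta_{X_i}$ a bounded functional on $V$; note that here, in contrast to Section~\ref{sec:pcc}, no regularizing higher-order terms are needed, since point values of $H^2$-functions (rather than of their Laplacians) are already well-defined. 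The only genuine work is therefore the third ingredient.

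For linear independence, I would argue that whenever $\sum_{i=1}^N c_i\,\delta_{X_i}=0$ in $V'$, i.e. $\sum_i c_i v(X_i)=0$ for all $v\in V$, then all $c_i$ vanish. Since the $X_i\in\Omega$ are distinct interior points and there are only finitely many of them, I can, for each fixed $i$, construct a test function $v\in V$ with $v(X_i)\neq 0$ and $v(X_j)=0$ for $j\neq i$: take a smooth bump $\psi$ supported in a small ball around $X_i$ that avoids $\partial\Omega$ and the remaining $X_j$, with $\psi(X_i)=1$. Because the support sits strictly inside $\Omega$, such a $\psi$ automatically satisfies the Dirichlet and Navier boundary conditions in \eqref{eq:VDEF}. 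Inserting $v=\psi$ into the relation forces $c_i=0$, and $i$ was arbitrary. I expect the periodic zero-mean space $H^2_{p,0}(\Omega)$ to be the main obstacle, since a raw bump need not have vanishing mean: there I would correct $\psi$ by subtracting a multiple of a second bump $\chi$ supported in a region of $\Omega$ containing none of the $X_k$, choosing the scalar so that $v=\psi-c\chi$ has zero mean while still satisfying $v(X_i)=1$ and $v(X_j)=0$. This is possible because $\Omega$ has positive measure away from the finitely many constraint points.

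Having established linear independence, surjectivity of $F_X:V\to\R^N$ follows (the codomain being finite-dimensional), so the feasible set $\{v\in V\st F_X v=\alpha\}$ is a non-empty, closed, affine subspace of $V$. Minimizing the coercive continuous quadratic functional $\cJ$ over this affine subspace then yields existence and uniqueness directly from Proposition~\ref{quad_exist_NEU} (or, equivalently, from the Lax--Milgram lemma applied on the associated linear subspace), completing the proof. As a by-product the same proposition furnishes a Green's-function representation of $u$, entirely analogous to Proposition~\ref{prop:FLREP}, which will be useful for the subsequent reduction of the $N$-particle problem to a two-particle problem.
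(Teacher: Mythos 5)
Your proposal is correct and takes essentially the same route as the paper: the paper's proof simply observes that the functionals $\delta_{X_i}$ are linearly independent for distinct $X_i\in\Omega$ under all three choices \eqref{eq:VDEF} of $V$ and then invokes Proposition~\ref{quad_exist_NEU}, exactly as you do, with your bump-function argument merely filling in the linear-independence detail the paper leaves unstated. One small remark: for $H^2_{p,0}(\Omega)$ the side condition is $\int_{\partial\Omega} v \, ds=0$ (a boundary integral, not a domain mean), which any bump supported strictly inside $\Omega$ satisfies automatically, so your zero-mean correction, while harmless, is not actually needed.
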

\begin{proof}
    As the locations  $X_1, \dots, X_N$ are distinct and
    contained in $\Omega$
    the functionals $\delta_{X_i}$ are linearly independent
    for all three  choices \eqref{eq:VDEF} of $V$.
    Hence, the assertion follows from Proposition~\ref{quad_exist_NEU}. 
\end{proof}

\begin{remark}
In applying Proposition~\ref{quad_exist_NEU}, we can also derive a representation of the solution
in terms of Green's functions $\phi_i\in V$, defined by
\[
a(\phi_i,v)=\delta_{X_i}(v)\quad \forall v\in V, \qquad i=1,\dots, N.
\]   
\end{remark}
\begin{remark}
Utilizing  general results stated in Propositions~\ref{lem:A_PENALIZED_MIN_EQIV} 
and~\ref{prop:A_PENALTY_CONV},
soft point value constraints at fixed locations can be treated in complete analogy to
soft point curvature constraints \eqref{eq:PC} as considered in  Problem~\ref{p:SPDC}
and Proposition~\ref{prop:SPDC}.
\end{remark}

\subsection{Point value constraints with varying locations} \label{sec:PVG}
\subsubsection{Existence of global minimizers}
We now seek a global minimizer over prescribed point values 
in the sense that we allow for varying locations $X=(X_i)\in \overline{\Omega}^N$.
This can be viewed as finding the optimal locations for filaments which prescribe a particular set of displacements.

\begin{problem}[Point value constraints with varying locations]\ \\ \label{min_heights}%
Find $(u,X) \in V \times \overline{\Omega}^N$ such that $u$ is minimising the energy 
$\cJ$ on $V$ subject to the constraint
\[
    F_X u = \alpha
\]
with given $\alpha \in \mathbb{R}^N$. 
\end{problem}

\begin{lemma}\label{lem:FCONT}
    The mapping 
    $\overline{\Omega} \ni X_i \to \delta_{X_i}\in (V')$
    and thus the mapping 
    $\overline{\Omega}^N \ni X \to F_X\in (V')^N$ is continuous.
\end{lemma}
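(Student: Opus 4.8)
The plan is to reduce the claim to the Sobolev embedding of $H^2(\Omega)$ into a Hölder space together with an elementary pointwise estimate, in complete analogy with the continuity argument for the curvature functionals $G_X$ carried out in the proof of Proposition~\ref{prop:min_curv_existence}. First I would record that, since $\Omega\subset\R^2$ is bounded with Lipschitz boundary, the Sobolev embedding theorem (e.g.\ \cite[Theorem~4.12]{AdamsFournier2003SobolevSpaces}) yields $V\subset H^2(\Omega)\to C^{0,\lambda}(\overline{\Omega})$ for every Hölder exponent $0<\lambda<1$, with an embedding constant $C$ such that $\|v\|_{C^{0,\lambda}}\leq C\|v\|_2$ for all $v\in V$.

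The main step is then the estimate: for any $X_i,Y_i\in\overline{\Omega}$ and any $v\in V$,
\[
    |(\delta_{X_i}-\delta_{Y_i})(v)|=|v(X_i)-v(Y_i)|\leq \|v\|_{C^{0,\lambda}}\,|X_i-Y_i|^\lambda\leq C\|v\|_2\,|X_i-Y_i|^\lambda.
\]
Taking the supremum over $\|v\|_2\leq 1$ gives
\[
    \|\delta_{X_i}-\delta_{Y_i}\|_{V'}\leq C\,|X_i-Y_i|^\lambda \xrightarrow[Y_i\to X_i]{} 0,
\]
which proves continuity of the map $\overline{\Omega}\ni X_i\to\delta_{X_i}\in V'$.

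Finally, I would deduce continuity of $X\to F_X$ from that of its components. Since $F_X=(F_{X,i})=(\delta_{X_i})$ and $(V')^N$ carries the product norm, one has $\|F_X-F_Y\|_{(V')^N}\leq\sum_{i=1}^N\|\delta_{X_i}-\delta_{Y_i}\|_{V'}$, so the componentwise estimate above forces $\|F_X-F_Y\|_{(V')^N}\to 0$ as $Y\to X$. I do not expect a genuine obstacle; the only point requiring a little care is the borderline nature of the embedding $H^2\hookrightarrow C^{0,\lambda}$, which holds for every $\lambda<1$ but fails at $\lambda=1$ in dimension two. Since any fixed $\lambda\in(0,1)$ suffices to control the modulus of continuity, this causes no difficulty.
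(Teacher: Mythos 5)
Your proof is correct and follows essentially the same route as the paper: the Sobolev embedding $V\subset H^2(\Omega)\to C^{0,\lambda}(\overline{\Omega})$ combined with the H\"older estimate $|v(X_i)-v(Y_i)|\leq C\|v\|_2\,|X_i-Y_i|^\lambda$, then passing to the operator norm. The only difference is that you spell out the componentwise passage to $F_X$ in the product norm, which the paper leaves implicit.
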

\begin{proof}
    Since the Sobolev embedding theorem provides 
    continuity of the injection
    $V \subset H^2(\Omega) \to C^{0,\lambda}(\overline{\Omega})$
    for any H\"older-exponent $0<\lambda<1$
    (see, e.g., \cite[Theorem~4.12]{AdamsFournier2003SobolevSpaces}),
    the estimate
    \begin{align*}
        \begin{split}
            |(\delta_{X_i} - \delta_{Y_i}) v|
                = |v(X_i) - v(Y_i)|
                \leq  \|v\|_{C^{0,\lambda}} |X_i - Y_i|^\lambda
                \leq  C \|v\| |X_i - Y_i|^\lambda
        \end{split}
    \end{align*}
    holds for  each $i=1,\dots,N$.
    Thus $\|\delta_{X_i} - \delta_{Y_i}\|_{(V')^N} \to 0$ as $X \to Y$.
\end{proof}

\begin{proposition} \label{prop:FGHWC}
    There exists a solution $(u,X) \in V \times \overline{\Omega}^N$ to Problem \ref{min_heights}.
\end{proposition}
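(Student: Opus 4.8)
The plan is to apply the direct method of the calculus of variations, in the form of the abstract existence result Proposition~\ref{QPP_Global_exist}, with the compact location set $\overline{\Omega}^N$ playing the role of $\omega$ and with no wall potential (i.e.\ the lower semicontinuous penalty taken to be identically zero). First I would fix any $N$ distinct points $X_i^0\in\Omega$; since the $\delta_{X_i^0}$ are then linearly independent, $F_{X^0}:V\to\R^N$ is surjective and there is a feasible $u^0\in V$ with $F_{X^0}u^0=\alpha$, so the feasible set is non-empty and the infimum $m$ of $\cJ$ over feasible pairs is finite and non-negative. The two structural ingredients are coercivity and continuity of the constraint map: by Lemma~\ref{lem:GLOBAL_COERCIVITY} the form $a$ is $V$-coercive, so $\cJ(u)\ge \tfrac{c}{2}\|u\|_2^2$ and any sequence of bounded energy is bounded in $V$, while Lemma~\ref{lem:FCONT} gives continuity of $\overline{\Omega}^N\ni X\mapsto F_X\in (V')^N$.

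Taking a minimizing sequence $(u_n,X_n)$ with $F_{X_n}u_n=\alpha$ and $\cJ(u_n)\to m$, compactness of $\overline{\Omega}^N$ yields a subsequence with $X_n\to X^*\in\overline{\Omega}^N$, and the uniform energy bound together with coercivity gives $\|u_n\|_2\le C$, hence (after passing to a further subsequence) $u_n\rightharpoonup u^*$ weakly in $V$. To pass the constraint to the limit I would split
\[
F_{X_n}u_n-F_{X^*}u^*=(F_{X_n}-F_{X^*})u_n+F_{X^*}(u_n-u^*),
\]
bounding the first term by $\|F_{X_n}-F_{X^*}\|_{(V')^N}\,\|u_n\|_2\to0$ via Lemma~\ref{lem:FCONT} and the uniform bound, and sending the second to zero because $F_{X^*}\in(V')^N$ is fixed and $u_n\rightharpoonup u^*$. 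Thus $F_{X^*}u^*=\lim_n F_{X_n}u_n=\alpha$, so $(u^*,X^*)$ is feasible, and since $\cJ$ is convex and continuous, hence weakly lower semicontinuous, $\cJ(u^*)\le\liminf_n\cJ(u_n)=m$. Therefore $(u^*,X^*)$ is a solution of Problem~\ref{min_heights}.

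The one delicate point, and the main obstacle, is that, in contrast to the curvature case of Lemma~\ref{lem:GX_SURJECTIVE}, the operator $F_X$ is \emph{not} surjective on all of $\overline{\Omega}^N$: if two locations coincide, $X_i^*=X_j^*$ with $\alpha_i\neq\alpha_j$, then no feasible $u^*$ exists at $X^*$. What makes the argument go through is precisely the limit step above: feasibility of the limit is \emph{derived} rather than assumed, so one never needs $F_{X^*}$ to be surjective. Equivalently, the uniform $V$-bound on $(u_n)$ forbids a collision of incompatible constraints, since the embedding $V\subset H^2(\Omega)\subset C^{0,\lambda}(\overline{\Omega})$ gives $|\alpha_i-\alpha_j|=|u_n(X_{n,i})-u_n(X_{n,j})|\le C\|u_n\|_2\,|X_{n,i}-X_{n,j}|^\lambda\to0$ whenever $X_{n,i}$ and $X_{n,j}$ approach a common limit; hence any limiting coincidence $X_i^*=X_j^*$ forces $\alpha_i=\alpha_j$ and the limit stays feasible. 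I would then check that this is exactly the hypothesis format required by Proposition~\ref{QPP_Global_exist}, so that the abstract result applies once non-emptiness of the feasible set and continuity of $X\mapsto F_X$ have been verified.
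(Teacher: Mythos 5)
Your proposal is correct and follows essentially the same route as the paper: the paper's proof simply verifies the three hypotheses of Proposition~\ref{QPP_Global_exist} (compactness of $\overline{\Omega}^N$, continuity of $X\mapsto F_X$ via Lemma~\ref{lem:FCONT}, and non-emptiness of the feasible set for pairwise distinct locations in $\Omega$, with $\cV\equiv 0$) and then invokes the abstract result. Your additional direct-method argument and the discussion of colliding constraint points merely unfold what the proof of Proposition~\ref{QPP_Global_exist} already handles internally --- feasibility of the limit is derived there in exactly the way you describe --- so they are sound but redundant.
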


\begin{proof}
    It is well-known  that $\overline{\Omega}^N \in \R^N$ is compact.
    Furthermore, the mapping $\overline{\Omega}^N \ni Y \to F_Y\in (V')^N$
    is continuous by Lemma~\ref{lem:FCONT} and 
    $V_{\alpha,Y}=\{v\in V\st F_{Y} v=\alpha\}$ is non-empty for some
    $Y =(Y_i) \in  \overline{\Omega}^N$, e.g. for pairwise distinct locations $Y_i\in \Omega$.
    Now the assertion follows from Proposition~\ref{QPP_Global_exist}.
\end{proof}

\begin{remark}
Existence of a solution of  a penalized version of Problem~\ref{min_heights}
follows from Proposition~\ref{QPP_Global_penalized_exist} in complete analogy 
to penalized curvature constraints as considered in Problems~\ref{p:SAMCLH} and \ref{p:SAMCLS}.
\end{remark}

\subsubsection{Characterization of global minimizers}
Having shown the existence of  global minimizers we will now  produce equivalent characterizations
of solutions. First, we note considerable simplifications of Problem~\ref{min_heights} 
depending upon the signs of the prescribed point values.
%
%
%

\begin{proposition} \label{prop:SAMESIGN}
    Assume that the prescribed point values have the same sign
    and let $0\leq |\alpha_1| \leq \cdots \leq |\alpha_N|$.
    Then $(u,X)\in V\times \overline{\Omega}^N$ is a solution of Problem~\ref{min_heights}, if and only if
    $(u,X_N)\in V\times \overline{\Omega}$ solves Problem~\ref{min_heights} with $N=1$ and $\alpha=\alpha_N$.
    %
\end{proposition}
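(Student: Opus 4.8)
The plan is to exploit the fact that all admissible deformations are continuous and that the sign hypothesis confines all the prescribed values to a single interval, so that the intermediate value theorem lets one realize the smaller constraints ``for free'' once the largest one is met. First I would reduce to the case of non-negative data: since the energy is even, $\cJ(-v)=\cJ(v)$, and $V$ is a linear space, the map $u\mapsto -u$ identifies the solution set of Problem~\ref{min_heights} for data $\alpha$ with that for data $-\alpha$. Hence one may assume $0\le \alpha_1\le\cdots\le\alpha_N$.

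The crux is the following range observation. Because $V\subset H^2(\Omega)\subset C(\overline\Omega)$, every $u\in V$ is continuous on the connected set $\overline\Omega$, so $u(\overline\Omega)=[\min_{\overline\Omega}u,\ \max_{\overline\Omega}u]$ is an interval. Moreover every $u\in V$ attains the value $0$: for $V=H^2_0(\Omega)$ and $V=H^2(\Omega)\cap H^1_0(\Omega)$ this is immediate from $u|_{\partial\Omega}=0$, and for the periodic zero-mean space the constraint $\int_{\partial\Omega}u\;\ds=0$ forces $u|_{\partial\Omega}$ to take a nonpositive and a nonnegative value, hence $0$ by continuity on the connected curve $\partial\Omega$. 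Consequently, if $\alpha_N\in u(\overline\Omega)$ then $[0,\alpha_N]\subseteq u(\overline\Omega)$, so in particular each $\alpha_i\in u(\overline\Omega)$ and one may pick locations $X_1,\dots,X_{N-1}\in\overline\Omega$ with $u(X_i)=\alpha_i$.

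I would package this as: a deformation $u\in V$ is admissible for the $N$-particle problem (that is, $\{\alpha_1,\dots,\alpha_N\}\subseteq u(\overline\Omega)$) if and only if it is admissible for the one-particle problem with datum $\alpha_N$ (that is, $\alpha_N\in u(\overline\Omega)$). One implication is trivial, since $u(X_N)=\alpha_N$ already follows by dropping the remaining constraints; the converse is exactly the intermediate value observation above. Thus both problems minimize the same functional $\cJ$ over the \emph{same} set of admissible deformations, so their minimal energies coincide and their minimizing deformations are identical; existence of these minimizers is guaranteed by Proposition~\ref{prop:FGHWC} (with $N=1$ in the one-particle case).

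The claimed equivalence then follows. If $(u,X)$ solves the $N$-particle problem, then $u(X_N)=\alpha_N$, so $(u,X_N)$ is admissible for the one-particle problem and attains the common minimal energy, hence is optimal. Conversely, if $(u,X_N)$ solves the one-particle problem, the range observation supplies $X_1,\dots,X_{N-1}$ completing $(u,X)$ to an admissible pair for the $N$-particle problem of the same, minimal, energy, hence optimal. The only genuinely delicate point is the verification that $0\in u(\overline\Omega)$ for every $u\in V$: routine for the Dirichlet and Navier spaces, but needing the short connectedness argument on $\partial\Omega$ in the periodic case. Everything else is the intermediate value theorem together with a comparison of the two minimal values.
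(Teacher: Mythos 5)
Your proof is correct and follows essentially the same route as the paper's: both reduce the claim to showing that the set of deformations admissible for the $N$ constraints coincides with that for the single constraint $\alpha_N$, using the fact that every $v\in V$ attains the value $0$ together with the intermediate value theorem on the connected set $\overline{\Omega}$. Your additional touches --- the symmetry reduction $u\mapsto -u$ to nonnegative data, and the explicit verification that $0$ is attained in the periodic zero-mean case via $\int_{\partial\Omega}u\,\ds=0$ --- merely spell out details the paper leaves implicit.
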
 
\begin{proof}
    The solution of Problem~\ref{min_heights} is equivalent to solve
    \[
        u= \argmin{v \in V_{\alpha,N}}  \cJ(v), \qquad 
    V_{\beta,k}=\{ v \in V\st \exists Y \in \overline{\Omega}^k: \;\; \delta_{Y_i} v=\beta_i \textnormal{ for } i=1,\dots,k \},
    \]
    and to take $X \in \overline{\Omega}^N$ such that $F_X u= \alpha$.
    Hence, it is sufficient to show that $V_{\alpha,N}=V_{\alpha_1,1}$.
    The inclusion $V_{\alpha,N} \subset V_{\alpha_1,1}$ is obvious by definition.
    It remains to show $V_{\alpha_1,1} \subset V_{\alpha,N}$.

    To this end let $v \in V_{\alpha_1,1}$ and $X_N \in \overline{\Omega}$ such that $v(X_N)= \alpha_N$.
    Then, by continuity of $v$ on $\overline{\Omega}$,
    for all three  choices \eqref{eq:VDEF} of $V$
    there is $X_0\in \overline{\Omega}$ with $v(X_0)=0$.
    Now, by continuity of $v$ and convexity of $\Omega$,
    the intermediate value theorem implies that $v$ attains each value
    $\alpha_i \in \op{co}\{0, \alpha_N\}$
    at some point $X_i \in \overline{\Omega}$ and hence $v \in V_{\alpha,N}$.
\end{proof}

We now move on to the case where the prescribed point values $\alpha_i$  do not have the same sign.  Similarly to the previous case, the behaviour is governed by the extreme values of $\alpha$, in this case the greatest and least.

\begin{proposition}  \label{prop:DIFFSIGN}
    Let $\alpha_1 \leq ... \leq \alpha_N$.
    Then $(u,X)\in V\times \overline{\Omega}^N$ is a solution of Problem~\ref{min_heights}, if and only if
    $(u,(X_1,X_N))\in V\times \overline{\Omega}^2$ 
    solves Problem~\ref{min_heights} with $N=2$ and $\alpha=(\alpha_1,\alpha_N)$.
    %
\end{proposition}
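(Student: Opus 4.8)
The plan is to follow the pattern already established in the proof of Proposition~\ref{prop:SAMESIGN}. First I would reformulate both minimization problems so that the location variable is eliminated: solving Problem~\ref{min_heights} is equivalent to computing $u=\argmin{v\in V_{\alpha,N}}\cJ(v)$, with the feasible set
\[
V_{\beta,k}=\{v\in V\st \exists\, Y\in\overline{\Omega}^k:\ \delta_{Y_i}v=\beta_i\ \text{for}\ i=1,\dots,k\},
\]
and then recovering some $X\in\overline{\Omega}^N$ with $F_Xu=\alpha$. The analogous statement for the two-particle problem is that its solution minimizes $\cJ$ over $V_{(\alpha_1,\alpha_N),2}$. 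Hence the proposition reduces to the single set identity $V_{\alpha,N}=V_{(\alpha_1,\alpha_N),2}$: once the feasible sets agree, the optimal $u$ coincides, and compatible intermediate locations $X_2,\dots,X_{N-1}$ can be read off from $u$.

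The inclusion $V_{\alpha,N}\subset V_{(\alpha_1,\alpha_N),2}$ is immediate, since a function attaining all the values $\alpha_1,\dots,\alpha_N$ in particular attains the two extreme ones. For the reverse inclusion I would take $v\in V_{(\alpha_1,\alpha_N),2}$, so that $v(X_1)=\alpha_1$ and $v(X_N)=\alpha_N$ for some $X_1,X_N\in\overline{\Omega}$. Because $\Omega$ is convex the segment $[X_1,X_N]$ lies in $\overline{\Omega}$, and because $V\subset H^2(\Omega)\subset C(\overline{\Omega})$ the function $v$ is continuous there. Applying the intermediate value theorem to $v$ along this segment shows that $v$ attains every value in $\op{co}\{\alpha_1,\alpha_N\}=[\alpha_1,\alpha_N]$. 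Since the data are ordered, $\alpha_1\le\alpha_i\le\alpha_N$ for every $i$, so each $\alpha_i$ is realized at some $X_i\in\overline{\Omega}$, giving $v\in V_{\alpha,N}$ and closing the identity.

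The conceptual point, and the only place where this case genuinely differs from Proposition~\ref{prop:SAMESIGN}, is why two anchor constraints are needed here rather than one. In the same-sign case the boundary conditions (or the zero-mean condition) force $v$ to vanish somewhere in $\overline{\Omega}$, so a single anchor at the extreme value together with this automatic zero already covers the whole relevant interval. When the prescribed values straddle zero, that free zero only covers one side, so both extremes $\alpha_1<0<\alpha_N$ must be anchored to produce the full range $[\alpha_1,\alpha_N]$ via the intermediate value theorem; this is the main thing to get right, and it is conceptual rather than technical. Everything else is routine: the strict convexity of $\cJ$ (coercivity of $a$) makes the optimal $u$ over the common feasible set well-behaved, and existence of global minimizers for both the $N$- and the $2$-particle problems is already furnished by Proposition~\ref{prop:FGHWC}.
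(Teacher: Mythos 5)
Your proposal is correct and follows exactly the paper's route: reduce Problem~\ref{min_heights} to minimization over the location-free feasible sets, prove the identity $V_{\alpha,N}=V_{(\alpha_1,\alpha_N),2}$ (forward inclusion trivial, reverse via convexity of $\Omega$, continuity of $v\in V\subset C(\overline{\Omega})$, and the intermediate value theorem along the segment joining $X_1$ and $X_N$), and read off the intermediate locations from the common minimizer. The paper's proof is just a terser statement of the same argument, so your write-up — including the correct observation that, unlike Proposition~\ref{prop:SAMESIGN}, no auxiliary zero of $v$ is needed here — is a faithful, fleshed-out version of it.
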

\begin{proof}
    Utilizing the notation as introduced in  the proof of Proposition~\ref{prop:SAMESIGN},
    it is sufficient to show $V_{\alpha,N}=V_{(\alpha_1, \alpha_N),2}$.
    While $V_{\alpha,N} \subset V_{(\alpha_1, \alpha_N),2}$
    is obvious by definition, the converse inclusion again follows by convexity of $\Omega$,  continuity of $v\in V$,
    and the intermediate value theorem.
\end{proof}
%
%

We have thus reduced  $N$  to one or two constraints, based upon the signs of the prescribed point values.
We first concentrate on the case of point values with the same sign 
and reformulate Problem~\ref{min_heights} 
in terms of the Green's function $\phi_x\in V$, defined by
\begin{equation}\label{eq:GREEN_DEF}
    a(\phi_x,v)=\delta_{x} v \quad \forall v \in V,  \qquad x \in \overline{\Omega}.
\end{equation} 
By definition, $a(\phi_x,\phi_x)=\phi_x(x)$ holds for all $x \in \overline{\Omega}$.
In order to exclude the degenerate case $\phi_x=0$ we will constrain $x$ to the set
\begin{align*}
    \Omega_V = \{x \in \overline{\Omega} \st \delta_x \neq 0 \in V'\}.
\end{align*}
Notice that $\Omega_V$ depends on the choices \eqref{eq:VDEF} of the boundary
conditions incorporated in $V$:
For $V=H_0^2(\Omega)$ and $V=H^2(\Omega) \cap H_0^1(\Omega)$ we have $\Omega_V=\Omega$
whereas $V=H^2_{p,0}(\Omega)$ allows for $\Omega_V=\overline{\Omega}$.

\begin{proposition}  \label{one_height}
    Assume that the prescribed point values have the same sign and 
    let  $0\leq |\alpha_1| \leq \cdots \leq |\alpha_N|$.
    Then the solution of Problem~\ref{min_heights} is given by
    \begin{equation} \label{eq:1REP}
        X_N= \argmin{x\in \Omega_V} \frac{\alpha_N^2}{\phi_x(x)} , \qquad u= \frac{\alpha_N}{\phi_{X_N}(X_N)}\phi_{X_N}
    \end{equation}
    and $X_1,\dots, X_{N-1}$ such that $u(X_i) = \alpha_i$, $i=1,\dots, N-1$.
\end{proposition}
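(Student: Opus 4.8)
The plan is to reduce, via Propositions~\ref{prop:SAMESIGN} and~\ref{prop:DIFFSIGN}, to the single-constraint case $N=1$ with prescribed value $\alpha_N$ at a location $X_N\in\overline{\Omega}$ that is free to vary, and then to solve that one-point problem explicitly using the Green's function $\phi_x$ defined in~\eqref{eq:GREEN_DEF}. By Proposition~\ref{prop:SAMESIGN}, $(u,X)$ solves Problem~\ref{min_heights} iff $(u,X_N)$ solves the $N=1$ problem with $\alpha=\alpha_N$, and the remaining locations $X_1,\dots,X_{N-1}$ are recovered by the intermediate value argument already established there, namely by choosing $X_i\in\overline{\Omega}$ with $u(X_i)=\alpha_i$. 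So the entire content to be proved is the representation~\eqref{eq:1REP} for the one-point problem.

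First I would fix $x\in\Omega_V$ and solve the inner minimization: minimize $\cJ(v)$ over $\{v\in V\st \delta_x v=\alpha_N\}$. Since $\cJ(v)=\tfrac12 a(v,v)$ with $a$ bounded and coercive by Lemma~\ref{lem:GLOBAL_COERCIVITY}, and $\delta_x\in V'$ is a single nonzero functional for $x\in\Omega_V$, this is a quadratic minimization under one linear constraint. The Lagrange/projection argument (the abstract version being Proposition~\ref{quad_exist_NEU}) shows the minimizer is a multiple of the Riesz representer of $\delta_x$, i.e.\ of $\phi_x$: writing $v=c\,\phi_x$, the constraint $\delta_x(c\phi_x)=c\,a(\phi_x,\phi_x)=c\,\phi_x(x)=\alpha_N$ forces $c=\alpha_N/\phi_x(x)$, which is well defined because $\phi_x(x)=a(\phi_x,\phi_x)>0$ for $x\in\Omega_V$ by coercivity and $\delta_x\neq0$. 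The minimal energy is then
\begin{equation*}
    \cJ\!\left(\tfrac{\alpha_N}{\phi_x(x)}\phi_x\right)
    = \tfrac12\,\frac{\alpha_N^2}{\phi_x(x)^2}\,a(\phi_x,\phi_x)
    = \tfrac12\,\frac{\alpha_N^2}{\phi_x(x)}.
\end{equation*}
Hence minimizing the total energy over locations amounts to minimizing $x\mapsto \alpha_N^2/\phi_x(x)$ over $\Omega_V$, giving the first formula in~\eqref{eq:1REP}, and substituting the optimal $X_N$ gives $u=\tfrac{\alpha_N}{\phi_{X_N}(X_N)}\phi_{X_N}$, the second formula.

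The step needing the most care is the existence of a minimizing location $X_N$, i.e.\ that the outer minimization over $\Omega_V$ is attained; equivalently, that the infimum is not approached only along a sequence escaping to $\partial\Omega\setminus\Omega_V$ where $\phi_x(x)\to0$ and the energy blows up. This is exactly where the general existence result applies: Proposition~\ref{prop:FGHWC} already guarantees a solution $(u,X)$ of Problem~\ref{min_heights} exists, using compactness of $\overline{\Omega}^N$ and continuity of $X\mapsto F_X$ from Lemma~\ref{lem:FCONT}; so I would invoke that existence and merely identify the minimizer through the computation above rather than re-prove attainment from scratch. The only genuine subtlety is the behaviour near the excluded set: for $V=H^2_0(\Omega)$ or $V=H^2(\Omega)\cap H^1_0(\Omega)$ one has $\Omega_V=\Omega$ and $\phi_x(x)\to0$ as $x\to\partial\Omega$, so the map $\alpha_N^2/\phi_x(x)\to+\infty$ there, confirming the minimizer lies in the interior and that restricting to $\Omega_V$ loses nothing; for the periodic case $\Omega_V=\overline{\Omega}$ and no boundary degeneracy occurs. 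I would therefore close by noting that continuity of $x\mapsto\phi_x(x)$ on $\Omega_V$ (a consequence of Lemma~\ref{lem:FCONT} and continuity of the solution map of~\eqref{eq:GREEN_DEF}) together with the blow-up at the boundary yields attainment of the minimum in~\eqref{eq:1REP}, completing the proof.
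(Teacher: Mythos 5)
Your proposal is correct and follows essentially the same route as the paper: reduce to the single-constraint problem via Proposition~\ref{prop:SAMESIGN}, represent the constrained minimizer for each fixed location by the Green's function (the computation you carry out by hand is exactly the paper's appeal to Propositions~\ref{quad_exist_NEU} and~\ref{min_con} with $N=1$, $\ell=0$, $T_1(y)=\delta_y$, $\cM'=\Omega_V$), and obtain attainment of the outer minimum from the existence result of Proposition~\ref{prop:FGHWC}. The only differences are cosmetic: the paper disposes of the degenerate case $\alpha_N=0$ in one line and, for $\alpha_N\neq 0$, replaces your boundary blow-up discussion by the simpler observation that the constraint $\delta_x v=\alpha_N$ is infeasible for $x\in\overline{\Omega}\setminus\Omega_V$, so every solution automatically has $X_N\in\Omega_V$.
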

\begin{proof}
    First we note that~\eqref{eq:1REP} is well-defined because we
    have $\phi_x(x)\neq 0$ for $x \in \Omega_V$.
    Proposition~\ref{prop:SAMESIGN} implies that the solution $u$ 
    of Problem~\ref{min_heights}  is the minimizer of $\cJ$ subject to the constraint that
    $\delta_{X_N} u=\alpha_N$ holds with some  $X_N\in \overline{\Omega}$.
    For $\alpha_N=0$ we only have the trivial minimizer $u=0$ which
    is in accordance with~\eqref{eq:1REP}.

    Now let $\alpha_N\neq 0$.
    For $x \in \overline{\Omega} \setminus \Omega_V$ we have $\delta_x v=0 \neq \alpha_N$
    for all $v\in V$.
    Hence we must have $X_N \in \Omega_V$ for all solutions $(u,X_N)$ and
    the representation \eqref{eq:1REP} follows from Proposition~\ref{min_con} for $N=1$,
    $\cM = \overline{\Omega}$, {$\cM'=\Omega_V$}, and $T_1(y)=\delta_y$ for $y \in \cM$.
\end{proof}

Note that in the generic case $\alpha_N\neq 0$, the optimal location $X_N$  is independent of $\alpha_N$ and $u$ depends linearly on $\alpha_N$.

If the prescribed point values do not have the same sign,
then Problem~\ref{min_heights} can be reformulated in terms of 
two Green's functions $\phi_{Y_1}$, $\phi_{Y_2}\in V$ defined by 
\[
    a(\phi_{Y_1},v)=\delta_{Y_1}(v),\quad a(\phi_{Y_2},v)=\delta_{Y_2}(v)  \qquad \forall v\in V,
    \quad\qquad (Y_1,Y_2) \in \overline{\Omega}^2,
\]
and the associated Gramian matrix $A_Y=(a(\phi_{Y_j}, \phi_{Y_i}))=(\phi_{Y_i}(Y_j))$.
\begin{proposition} \label{two_heights}
    Let $\alpha_1 \leq \dots \leq \alpha_N$ and assume that $\alpha_1 < 0 < \alpha_N$.
    Then the solution of Problem~\ref{min_heights} is given by
    \begin{align}
        \label{eq:2REP_X}
        (X_1,X_N) &= \argmin{Y\in \Omega_V^2, Y_1 \neq Y_2} (\alpha_1,\alpha_N) A^{-1}_Y (\alpha_1,\alpha_N)^\top, \\
        \label{eq:2REP_u}
        u &= U_1 \phi_{X_1} + U_2 \phi_{X_N}  \qquad U = A^{-1}_{(X_1,X_N)}(\alpha_1,\alpha_N)^\top \in \R^2, 
    \end{align}
    and $X_2,\dots, X_{N-1}$ such that $u(X_i) = \alpha_i$, $i=2,\dots, N-1$.

\end{proposition}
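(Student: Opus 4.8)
The plan is to mirror the single-constraint argument of Proposition~\ref{one_height}, now carrying two active constraints instead of one. First I would apply Proposition~\ref{prop:DIFFSIGN} to reduce the $N$-particle problem to the two-particle problem with prescribed values $(\alpha_1,\alpha_N)$: it then suffices to find $(u,(X_1,X_N)) \in V \times \overline{\Omega}^2$ minimizing $\cJ$ subject to $\delta_{X_1}u = \alpha_1$ and $\delta_{X_N}u = \alpha_N$, and to recover the intermediate locations $X_2,\dots,X_{N-1}$ afterwards. Existence of a global minimizer is already secured by Proposition~\ref{prop:FGHWC}, so the work is entirely in characterizing it.

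Next I would identify the admissible locations. Since $\alpha_1 < 0 < \alpha_N$, both prescribed values are nonzero, so any feasible pair $(X_1,X_N)$ must have $\delta_{X_1} \neq 0$ and $\delta_{X_N} \neq 0$ in $V'$, that is $X_1, X_N \in \Omega_V$, and must satisfy $X_1 \neq X_N$, because a continuous function cannot take two distinct values at a single point. This is what justifies restricting the outer minimization in \eqref{eq:2REP_X} to $\{Y \in \Omega_V^2 : Y_1 \neq Y_2\}$; on this set the functionals $\delta_{Y_1}, \delta_{Y_2}$ are linearly independent, so by Riesz representation the Green's functions $\phi_{Y_1}, \phi_{Y_2}$ are linearly independent and the Gramian $A_Y = (\phi_{Y_i}(Y_j))$ is symmetric positive definite, hence invertible.

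With the degenerate configurations excluded, I would invoke the abstract representation result Proposition~\ref{min_con} exactly as in Proposition~\ref{one_height}, but with $N=2$, $\cM = \overline{\Omega}$, $\cM' = \Omega_V$, and $T_i(y) = \delta_y$. For each fixed admissible $Y$ this produces the constrained minimizer $u = U_1\phi_{Y_1} + U_2\phi_{Y_2}$ with $U = A_Y^{-1}(\alpha_1,\alpha_N)^\top$ and the corresponding minimal energy $\tfrac{1}{2} (\alpha_1,\alpha_N) A_Y^{-1}(\alpha_1,\alpha_N)^\top$; minimizing this energy over $Y$ (the harmless factor $\tfrac{1}{2}$ playing no role in the location that attains the minimum) yields precisely \eqref{eq:2REP_X} and \eqref{eq:2REP_u}. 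Finally, to produce $X_2,\dots,X_{N-1}$ I would argue as in the proof of Proposition~\ref{prop:DIFFSIGN}: the minimizer $u$ is continuous on the convex domain $\Omega$ and attains both the negative value $\alpha_1$ and the positive value $\alpha_N$, so the intermediate value theorem supplies points $X_i \in \overline{\Omega}$ with $u(X_i) = \alpha_i$ for every intermediate value.

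The step I expect to demand the most care is checking that the hypotheses of Proposition~\ref{min_con} genuinely hold on the restricted location set --- in particular, that no global minimizer is lost by passing to $\{Y \in \Omega_V^2 : Y_1 \neq Y_2\}$ and that $A_Y$ cannot degenerate there. The feasibility analysis of the second paragraph is exactly what rules out the excluded configurations, so once it is in place the remainder reduces to the standard Green's-function computation already packaged in Proposition~\ref{min_con}.
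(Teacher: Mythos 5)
Your proposal is correct and takes essentially the same route as the paper's proof: reduction to two constraints via Proposition~\ref{prop:DIFFSIGN}, exclusion of the degenerate configurations (some $Y_i \notin \Omega_V$, or $Y_1 = Y_2$) by the same feasibility argument, application of Proposition~\ref{min_con} to get the Green's-function representation, and recovery of the intermediate points by the intermediate value theorem. The only flaw is notational: with $N=2$ the abstract result must be instantiated with $\cM = \overline{\Omega}^2$, $\cM' = \{Y \in \Omega_V^2 \st Y_1 \neq Y_2\}$, and $(T_1(y_1),T_2(y_2)) = (\delta_{y_1},\delta_{y_2})$, not with $\cM = \overline{\Omega}$ and $\cM' = \Omega_V$ as you wrote — though the rest of your argument already works with the correct pair-valued sets, so this is a slip rather than a gap.
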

\begin{proof}
    First we note that $A_Y$ is regular and~\eqref{eq:2REP_X}, \eqref{eq:2REP_u} are well-defined because we
    have $\phi_{Y_1} \neq 0 \neq \phi_{Y_2}$
    and $\phi_{Y_1} \neq \phi_{Y_2}$ for $Y \in \cM' = \{Y \in \Omega_V^2 \st Y_1 \neq Y_2\}$.
    
    Proposition~\ref{prop:DIFFSIGN} implies that  the solution $u\in V$
    of Problem~\ref{min_heights}  is the minimizer of $\cJ$ subject to the constraints that
    $\delta_{X_1} u=\alpha_1$, $\delta_{X_N} u=\alpha_N$  hold with some  $(X_1,X_N)\in \overline{\Omega}^2$.
    For $Y \in \overline{\Omega}^2 \setminus \cM'$ we either have $Y_i \in \overline{\Omega} \setminus \Omega_V$
    for some $i$ and hence $\alpha_1 < \delta_{Y_i} v = 0 < \alpha_N$ for all $v \in V$
    or we have $Y_1=Y_2$ and thus $\delta_{Y_1} = \delta_{Y_2}$ such that
    there is  again no $v \in V$  that
    satisfies the constraints $\delta_{Y_1}v =\alpha_1 < \alpha_N =\delta_{Y_2}v $.
    Hence,   $(X_1,X_N) \in \cM'$  must hold for all solutions $(u,(X_1,X_N))$ of Problem~\ref{min_heights}.
    Now the representation \eqref{eq:2REP_X}, \eqref{eq:2REP_u} follows from Proposition~\ref{min_con} for $N=2$,
    $\cM=\overline{\Omega}^2$, $\cM'$ as given above,
    and $(T_1(y_1), T_2(y_2))=(\delta_{y_1}, \delta_{y_2})$ for  $(y_1,y_2) \in \cM$.
\end{proof}

If the assumption $\alpha_1 < 0 < \alpha_N$ is not fulfilled, then
all $\alpha_i$ have the same sign.
In this case we can use the representation given by Proposition~\ref{one_height}.

\subsection{Point forces at fixed locations}
\label{sec:point_forces_fixed}
We now consider  forces exerted to the membrane
that are localized to certain points $X_i\in \overline{\Omega}$, $i=1,\dots,N$.
These forces are perpendicular to $\Omega$
with  positive or negative direction and give rise to the 
additional term 
\begin{equation} \label{eq:PFW_NEU}
    \ell_X(u) = \sum_{i=1}^N \beta_i \delta_{X_i}u
\end{equation} 
in the energy functional to be minimized.
Here, $\beta_i\in \R\setminus \{0\}$ are  given constants representing the magnitude of  point forces
at the locations $X_i$.
We set 
\[
    \cE(u,X)=\cJ(u)-\ell_{X}(u), \qquad u\in V, \quad 
     X \in \overline{\Omega}^{N}
\]
with the closed subspace $V \subset H^2(\Omega)$  defined in \eqref{eq:VDEF}
and consider the following minimization problem.

\begin{problem}[Point forces at fixed locations]\ \\ \label{pres_point}%
    For given $X = (X_i) \in \overline{\Omega}^N$ 
    find $u \in V$ minimising the energy $\cE(u,X)$ on $V$.
\end{problem}   

Existence and uniqueness of a solution $u \in V$
of Problem~\ref{pres_point}  follows from the Lax-Milgram lemma.
It is characterized by the variational equality
\begin{equation} \label{eq:POFOEQ}
    a(u,v)=\ell_X(v) \qquad \forall v \in V.
\end{equation}
The solution can be represented by Green's functions $\phi_x$ as defined
in~\eqref{eq:GREEN_DEF}.
\begin{lemma}\label{lem:POINT_FORCES_REPRESENTATION}
    For given $X \in \overline{\Omega}^N$ the solution $u\in V$ of Problem~\ref{pres_point}
    is given by
    \begin{align*}
        u = \sum_{i=1}^N \beta_i \phi_{X_i}.
    \end{align*}
\end{lemma}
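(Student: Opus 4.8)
The plan is to exploit the linearity of the defining variational equation
\eqref{eq:POFOEQ} together with the defining property of the Green's functions
$\phi_x$ in \eqref{eq:GREEN_DEF}. First I would set
$w = \sum_{i=1}^N \beta_i \phi_{X_i} \in V$ and verify that $w$ satisfies the
characterizing variational equality \eqref{eq:POFOEQ}. To this end, I would
compute, for arbitrary $v \in V$, using bilinearity of $a(\wc,\wc)$ and the
definition of each $\phi_{X_i}$ from \eqref{eq:GREEN_DEF}, that
\[
    a(w,v) = \sum_{i=1}^N \beta_i\, a(\phi_{X_i},v)
           = \sum_{i=1}^N \beta_i\, \delta_{X_i}(v)
           = \ell_X(v),
\]
where the final step is simply the definition of $\ell_X$ in \eqref{eq:PFW_NEU}.

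Having shown that $w$ satisfies \eqref{eq:POFOEQ}, I would invoke the uniqueness
of the solution of Problem~\ref{pres_point}. This uniqueness was already
established from the Lax-Milgram lemma in the discussion immediately preceding
the statement (coercivity and continuity of $a(\wc,\wc)$ come from
Lemma~\ref{lem:GLOBAL_COERCIVITY}, and each $\delta_{X_i}$ is a bounded linear
functional on $V$ thanks to the embedding $V \subset H^2(\Omega) \subset
C(\overline{\Omega})$, so $\ell_X \in V'$). Since the unique minimizer $u$ and my
candidate $w$ both satisfy the same variational equality, I would conclude
$u = w$, which is precisely the claimed representation.

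I do not anticipate a serious obstacle here: the statement is essentially a
direct unwinding of definitions combined with linearity, and all the needed
ingredients (existence and uniqueness via Lax-Milgram, well-definedness of
$\phi_{X_i}$, boundedness of the point functionals) are already in place from the
preceding text. The only point requiring mild care is confirming that each
$\phi_{X_i}$ is genuinely well-defined as an element of $V$, i.e.\ that the
variational problem \eqref{eq:GREEN_DEF} is solvable; but this again follows from
the Lax-Milgram lemma exactly as noted for the point-curvature Green's functions
earlier, so I would state it as an immediate consequence rather than re-derive
it. The proof is therefore short: produce the candidate, check the variational
identity termwise, and appeal to uniqueness.
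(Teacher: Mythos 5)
Your proof is correct and is essentially the paper's own argument, just written out in full: the paper's one-line proof ("the assertion follows directly from the linear representation of $\ell_X$ by the functionals $\delta_{X_i}$") compresses exactly the steps you give, namely checking $a\bigl(\sum_i \beta_i \phi_{X_i}, v\bigr) = \sum_i \beta_i \delta_{X_i}(v) = \ell_X(v)$ and invoking uniqueness from the Lax-Milgram lemma.
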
 
\begin{proof}
    The assertion follows directly from the linear representation \eqref{eq:PFW_NEU} of
    $\ell_X$ by the  functionals $\delta_{X_i}$.
\end{proof}

\subsection{Point forces at varying locations}\label{sec:point_forces_vary}
\subsubsection{Existence of global minimizers}
We now seek a global minimizer over prescribed point forces, in the sense that we allow the
point forces to be applied at  varying locations $X=(X_i) \in \overline{\Omega}^N$.

\begin{problem}[Point forces at varying locations]\ \\\label{point_force_glob_prob}%
    Find $(u,X) \in V \times \overline{\Omega}^N$
    minimising the energy 
    $\cE$  on  $V\times \overline{\Omega}^N$.
\end{problem}

\begin{proposition} \label{global_min}
    There exists a solution $(u,X) \in V \times \overline{\Omega}^N$
    to Problem \ref{point_force_glob_prob}. 
\end{proposition}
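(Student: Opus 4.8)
The plan is to apply the direct method of the calculus of variations, exploiting compactness of $\overline{\Omega}^N$ together with the coercivity of $a(\wc,\wc)$ from Lemma~\ref{lem:GLOBAL_COERCIVITY} and the continuity result of Lemma~\ref{lem:FCONT}. First I would note that $\cE$ is bounded below and coercive in its first argument, uniformly in $X$. Since $\overline{\Omega}^N \ni X \mapsto F_X \in (V')^N$ is continuous (Lemma~\ref{lem:FCONT}) and $\overline{\Omega}^N$ is compact, there is a constant $C$ with $\|\delta_{X_i}\|_{V'}\le C$ for all $X\in\overline{\Omega}^N$ and all $i$. With $a(v,v)\ge c_0\|v\|^2$ this gives, via Young's inequality,
\[
    \cE(v,X) = \tfrac12 a(v,v) - \sum_{i=1}^N \beta_i \delta_{X_i} v
    \ge \tfrac{c_0}{4}\|v\|^2 - C'
\]
with $C'$ independent of $X$. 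In particular the infimum $m = \inf_{(v,Y)}\cE(v,Y)$ is finite.

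Next I would take a minimizing sequence $(u_n,X_n)$ with $\cE(u_n,X_n)\to m$. The coercive bound forces $\|u_n\|$ to remain bounded, so after passing to a subsequence I may assume $u_n\rightharpoonup u$ weakly in $V$ and, by compactness of $\overline{\Omega}^N$, $X_n\to X$ in $\overline{\Omega}^N$.

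The final step is to pass to the limit and show $\cE(u,X)\le\liminf_n\cE(u_n,X_n)=m$, so that $(u,X)$ is the sought minimizer. The quadratic part $\cJ(v)=\tfrac12 a(v,v)$ is convex and strongly continuous, hence weakly lower semicontinuous, giving $\cJ(u)\le\liminf_n\cJ(u_n)$. The crucial point is the linear coupling term: I need $\ell_{X_n}(u_n)=\sum_i\beta_i\,\delta_{X_{n,i}}(u_n)\to\sum_i\beta_i\,\delta_{X_i}(u)=\ell_X(u)$. This is exactly where the strength of Lemma~\ref{lem:FCONT} matters, since it supplies \emph{norm} convergence $\delta_{X_{n,i}}\to\delta_{X_i}$ in $V'$, which paired with the weak convergence $u_n\rightharpoonup u$ yields $\delta_{X_{n,i}}(u_n)\to\delta_{X_i}(u)$. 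The main obstacle is precisely this strong--weak passage in the coupling term: weak convergence of $u_n$ alone does not control the point values $u_n(X_{n,i})$, because point evaluation is not weakly continuous, and it is only the simultaneous norm convergence of the evaluation functionals that rescues the argument.

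Alternatively, and perhaps more in keeping with the surrounding development, I could eliminate $u$ first using Lemma~\ref{lem:POINT_FORCES_REPRESENTATION}: for fixed $X$ the unique minimizer is $u_X=\sum_i\beta_i\phi_{X_i}$, and substitution (using $a(u_X,u_X)=\ell_X(u_X)$) gives the reduced energy
\[
    E(X)=\cE(u_X,X)=-\tfrac12\sum_{i,j}\beta_i\beta_j\,\phi_{X_j}(X_i)
\]
to be minimized over the compact set $\overline{\Omega}^N$. Continuity of $E$ then follows from continuity of the Green's function map $x\mapsto\phi_x\in V$ (a consequence of Lemma~\ref{lem:FCONT} together with boundedness of the solution operator associated with $a$) and continuity of point evaluation, whereupon Weierstrass' theorem yields a minimizer. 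Either route reduces the apparent difficulty of the joint minimization in $(u,X)$ to the single genuine issue of continuity of point functionals under location changes, already settled in Lemma~\ref{lem:FCONT}.
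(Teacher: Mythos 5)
Your proposal is correct, but your primary argument takes a genuinely different route from the paper. The paper disposes of $u$ first: it invokes the abstract Proposition~\ref{pro:PARSOU} (built on Lemma~\ref{lem:A_parametrized_global_unconstrained}), in which one minimizes over $v\in V$ for each fixed location vector, shows that the reduced value function $X\mapsto\min_{v\in V}\cE(v,X)$ is continuous on the compact set $\overline{\Omega}^N$ --- the continuity coming from a Strang-lemma stability estimate that even yields strong convergence $u_{X_n}\to u_X$ in $V$ --- and then applies Weierstrass. You instead run the direct method on the joint problem: minimizing sequence, uniform coercivity via Young's inequality, weak compactness in $V$ together with compactness of $\overline{\Omega}^N$, weak lower semicontinuity of the quadratic part, and the strong--weak pairing of $\delta_{X_{n,i}}\to\delta_{X_i}$ in $V'$ against $u_n\rightharpoonup u$ for the coupling term. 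Both arguments hinge on exactly the same key ingredient, namely the norm continuity of $X\mapsto\delta_X$ from Lemma~\ref{lem:FCONT}, and your identification of the strong--weak passage as the one genuine difficulty (point evaluation not being weakly continuous) is accurate. What each approach buys: your direct method is self-contained and needs neither uniqueness of the minimizer for fixed $X$ nor any value-function machinery; the paper's reduction is more modular, since the same abstract lemma is reused for Propositions~\ref{QPP_Global_exist} and~\ref{QPP_Global_penalized_exist}, and the continuity of the reduced energy feeds directly into the later characterization of minimizers. Your alternative route --- eliminating $u$ via the representation $u_X=\sum_i\beta_i\phi_{X_i}$ and minimizing $-\tfrac12\beta^\top A_X\beta$ over $\overline{\Omega}^N$ --- is essentially the paper's strategy in concrete form (compare Lemmas~\ref{lem:POINT_FORCES_REPRESENTATION} and~\ref{lem:POINT_FORCES_ENERGY} and Proposition~\ref{prop:POINT_FORCES_GLOBAL_MINIMITER}), with the explicit Green's function representation replacing the abstract stability argument.
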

\begin{proof}
 In light of the continuity of $\overline{\Omega}\ni X_i\to \delta_{X_i}\in V'$ as stated in Lemma~\ref{lem:FCONT}, the assertion follows from Proposition~\ref{pro:PARSOU}. 
\end{proof}

In general, there is no  uniqueness of solutions of Problem~\ref{point_force_glob_prob}. 
For example, let $N=2$, $\beta_2 = -\beta_1$, and assume that
$(u, (X_1,X_2))$  is a solution of Problem~\ref{point_force_glob_prob}.
Then $(-u, (X_2,X_1))$  is another solution.

Using the representation for fixed $X$ given in
Lemma~\ref{lem:POINT_FORCES_REPRESENTATION},
we will also construct a representation of solutions to
Problem~\ref{point_force_glob_prob}.
To this end, we from now on denote by $u_Y \in V$ the unique
minimizer of $\cE(\wc,Y)$ for given $Y \in \overline{\Omega}^N$.
As a first step, we compute the energy of such minimizers.
\begin{lemma}\label{lem:POINT_FORCES_ENERGY}
    Let $Y \in \overline{\Omega}^N$ be given and $A_Y = (a(\phi_{Y_i}, \phi_{Y_j})) \in \R^{N \times N}$.
    Then
    \begin{align}
        \min_{v \in V} \cE(v,Y)
        = \cE(u_Y,Y)
         = - \tfrac{1}{2} a(u_Y,u_Y)
        = -\tfrac{1}{2} \ell_Y(u_Y)
        = - \tfrac{1}{2} \beta^\top A_Y \beta.
    \end{align} 
\end{lemma}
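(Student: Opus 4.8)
The plan is to read everything off from the quadratic structure of $\cE(\cdot,Y)$ together with the variational characterization of its minimizer. First I would record that the bilinear form $a$ is precisely the one associated with $\cJ$ (Lemma~\ref{lem:GLOBAL_COERCIVITY}), so that $\cJ(v)=\tfrac12 a(v,v)$ and hence $\cE(v,Y)=\tfrac12 a(v,v)-\ell_Y(v)$. The leftmost identity $\min_{v\in V}\cE(v,Y)=\cE(u_Y,Y)$ is then just the definition of $u_Y$ as the unique minimizer of $\cE(\cdot,Y)$, so nothing needs to be proved there.

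Next I would invoke the Euler--Lagrange equation~\eqref{eq:POFOEQ}, which states that $a(u_Y,v)=\ell_Y(v)$ for all $v\in V$. Testing with the admissible choice $v=u_Y$ yields the key identity $a(u_Y,u_Y)=\ell_Y(u_Y)$. Substituting this into $\cE(u_Y,Y)=\tfrac12 a(u_Y,u_Y)-\ell_Y(u_Y)$ collapses the two terms into $-\tfrac12 a(u_Y,u_Y)$, and the same identity rewrites this quantity as $-\tfrac12\ell_Y(u_Y)$. This accounts for the three middle expressions in the claimed chain.

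For the final equality I would insert the representation $u_Y=\sum_{i=1}^N\beta_i\phi_{Y_i}$ from Lemma~\ref{lem:POINT_FORCES_REPRESENTATION} and expand by bilinearity of $a$:
\[
    a(u_Y,u_Y)=\sum_{i=1}^N\sum_{j=1}^N \beta_i\beta_j\, a(\phi_{Y_i},\phi_{Y_j})=\beta^\top A_Y\beta,
\]
using the definition $A_Y=(a(\phi_{Y_i},\phi_{Y_j}))$. Multiplying through by $-\tfrac12$ matches the rightmost term and closes the chain.

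There is no genuine obstacle here: the argument is a direct computation, and the only things to watch are the bookkeeping of the factor $\tfrac12$ coming from $\cJ=\tfrac12 a(\wc,\wc)$ and the sign convention in $\cE=\cJ-\ell_Y$. The one structural fact being exploited is that $u_Y$ is itself an admissible test function in~\eqref{eq:POFOEQ}, which is what lets the energy be expressed purely through $a(u_Y,u_Y)$; everything after that is elementary linear algebra via the Green's-function representation.
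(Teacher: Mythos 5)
Your proof is correct and follows essentially the same route as the paper: test the variational equality \eqref{eq:POFOEQ} with $v=u_Y$ to get $a(u_Y,u_Y)=\ell_Y(u_Y)$, then use the representation $u_Y=\sum_i\beta_i\phi_{Y_i}$ from Lemma~\ref{lem:POINT_FORCES_REPRESENTATION} to identify the quadratic form with $\beta^\top A_Y\beta$. The only cosmetic difference is that you expand $a(u_Y,u_Y)$ by bilinearity, whereas the paper expands $\ell_Y(u_Y)=\sum_i\beta_i u_Y(Y_i)$ and then invokes $(A_Y)_{i,j}=a(\phi_{Y_i},\phi_{Y_j})=\phi_{Y_j}(Y_i)$; the two computations are equivalent.
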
 
\begin{proof}
    After inserting $v=u_Y$ into the variational equality \eqref{eq:POFOEQ} for $u_Y$, 
    we  use the definition of $\ell_Y$,
    and the representation of $u_Y$ as given in Lemma~\ref{lem:POINT_FORCES_REPRESENTATION}
    to obtain
    \begin{align*}
        \cE(u_Y,Y)
         = - \tfrac{1}{2} a(u_Y,u_Y)
        = -\tfrac{1}{2} \ell_Y(u_Y)
        = -\tfrac{1}{2} \sum_{i=1}^N \beta_i u_Y(Y_i)
        = -\tfrac{1}{2}  \sum_{i,j=1}^N  \beta_i \beta_j \phi_{Y_j}(Y_i).
    \end{align*} 
    Now definition~\eqref{eq:GREEN_DEF} of the Green's functions $\phi_{Y_i}$ yields
    $(A_Y)_{i,j} = a(\phi_{Y_i},\phi_{Y_j}) = \phi_{Y_j}(Y_i)$.
    This completes the proof.
\end{proof} 
As a direct consequence we get
\begin{proposition}\label{prop:POINT_FORCES_GLOBAL_MINIMITER}
    Let $A_Y \in \R^{N \times N}$ as in Lemma~\ref{lem:POINT_FORCES_ENERGY}.
    Then $(u,X) \in V \times \overline{\Omega}^N$ minimizes $\cE$, if and only if
    $u=u_X$ with $X$ minimizing the function
    \begin{align*}
        \overline{\Omega}^N \ni Y \mapsto -\tfrac{1}{2} \beta^\top A_Y \beta \in \R.
    \end{align*} 
\end{proposition}

\subsubsection{Clustering} \label{subsec:CLUSTF}
Having established the existence of global minimizers we will now explore
the properties of minimizers for particular combinations of the parameters
$\beta_i$, $i=1,\dots,N$.
Of particular interest will be exhibiting cases, 
where optimal locations of point forces lie inside $\Omega$. 
Of course, this is of no interest under periodic boundary conditions. 
As such, we will assume $V=H^2_0(\Omega)$ or $V=H^2(\Omega) \cap H^1_0(\Omega)$ for the rest of this section, 
but will remark where this assumption plays a role. 
We will also show clustering behaviour for larger numbers of point forces
 and that  opposite point forces do not annihilate each other. 
 
We first show that point forces 
do not cluster on the boundary $\partial \Omega$ of $\Omega$.
\begin{lemma} \label{lem:BND}
    Assume that $(u,X) \in V \times \overline{\Omega}^N$
    is a solution of Problem~\ref{point_force_glob_prob}.
    Then $\cE(u,X)<0$ and $X \notin (\partial \Omega)^N$.
\end{lemma}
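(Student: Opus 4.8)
The plan is to read off both claims from the closed-form energy of the inner minimiser. First I would recall that, by Lemma~\ref{lem:POINT_FORCES_ENERGY}, for any fixed locations $Y \in \overline{\Omega}^N$ the minimal value of $\cE(\cdot,Y)$ over $V$ equals $-\tfrac12 \beta^\top A_Y \beta$, where $A_Y = (a(\phi_{Y_i},\phi_{Y_j}))$ is the Gram matrix of the Green's functions $\phi_{Y_i}$ from \eqref{eq:GREEN_DEF} with respect to the inner product $a(\cdot,\cdot)$. Since $a(\cdot,\cdot)$ is symmetric and coercive (Lemma~\ref{lem:GLOBAL_COERCIVITY}), $A_Y$ is positive semidefinite, so $\cE(u_Y,Y) = -\tfrac12 a(u_Y,u_Y) \leq 0$ for every $Y$, with $u_Y = \sum_i \beta_i \phi_{Y_i}$. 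In particular the global minimum is $\leq 0$. To upgrade this to $\cE(u,X) < 0$ it suffices to exhibit a single competitor configuration with strictly negative energy, since a global minimiser cannot do worse than any competitor.

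For the strict inequality I would choose $N$ pairwise distinct interior points $Y_1^*,\dots,Y_N^* \in \Omega$. The Dirac functionals $\delta_{Y_i^*}$ are then linearly independent on $V$ (as already used for Problem~\ref{p:PHC}), and this transfers to the Green's functions: if $\sum_i c_i \phi_{Y_i^*} = 0$ then $\sum_i c_i \delta_{Y_i^*}(v) = a(\sum_i c_i \phi_{Y_i^*},v)=0$ for all $v\in V$, forcing all $c_i=0$. Because every $\beta_i \neq 0$, the function $u_{Y^*} = \sum_i \beta_i \phi_{Y_i^*}$ is nonzero, and coercivity of $a(\cdot,\cdot)$ gives $\beta^\top A_{Y^*}\beta = a(u_{Y^*},u_{Y^*}) > 0$. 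Hence $\cE(u_{Y^*},Y^*) < 0$, so the global minimum, and therefore $\cE(u,X)$, is strictly negative.

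For the second claim I would argue by contradiction, using that we restrict to $V=H^2_0(\Omega)$ or $V=H^2(\Omega)\cap H^1_0(\Omega)$. For both spaces every $v\in V$ has vanishing boundary trace and is continuous up to $\partial\Omega$ (via $H^2(\Omega)\subset C(\overline{\Omega})$), so $\delta_x = 0$ in $V'$ for every $x\in\partial\Omega$; equivalently $\Omega_V=\Omega$. Now suppose $X\in(\partial\Omega)^N$. Then $\ell_X(v)=\sum_i\beta_i v(X_i)=0$ for all $v\in V$, so $\cE(v,X)=\cJ(v)=\tfrac12 a(v,v)\geq 0$, with minimum $0$ attained at $v=0$. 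Since $(u,X)$ is a global minimiser, $\cE(u,X)=\min_{v\in V}\cE(v,X)=0$, contradicting $\cE(u,X)<0$ established above. Therefore $X\notin(\partial\Omega)^N$.

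The routine parts, namely the Gram-matrix positivity and the vanishing of $\ell_X$ on the boundary, are immediate. The one step needing care is the strict negativity: the Gram structure only yields $\leq 0$, so the argument genuinely needs a nondegenerate competitor, and the cleanest way to produce one is the linear-independence argument for distinct interior points combined with $\beta_i\neq 0$. This is also exactly the ingredient that makes the contradiction in the boundary case sharp.
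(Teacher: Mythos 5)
Your proof is correct and follows essentially the same route as the paper: both hinge on the energy representation $\cE(u_Y,Y)=-\tfrac{1}{2}a(u_Y,u_Y)$ from Lemma~\ref{lem:POINT_FORCES_ENERGY}, the fact that $\delta_x=0$ on $V$ for $x\in\partial\Omega$, and a competitor configuration whose energy is strictly negative by coercivity. The only cosmetic differences are the ordering (you prove strict negativity first and then derive the boundary exclusion, the paper runs the contradiction directly) and the choice of competitor: the paper moves a single force into the interior while leaving the rest on the boundary, so $u_Y=\beta_1\phi_{Y_1}\neq 0$ is immediate, whereas you place all $N$ forces at distinct interior points and invoke linear independence of the Green's functions --- both work.
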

\begin{proof}
    Assume that $(u,X)\in
    V\times (\partial \Omega)^{N}$
    solves Problem~\ref{point_force_glob_prob}. Then $\ell_X(v)=0$  holds for all $v\in V$ and therefore
    $u=u_X=0$. Hence, we have
    \[
        \textstyle \cE(u,X ) 
        = - \frac{1}{2}a(u,u)=0 > -\frac{1}{2}a(u_Y,u_Y)
        = \cE(u_Y,Y)
    \]
    for $Y=(Y_i)$ with $Y_1 \in \Omega$ and $Y_i=X_i$, $i=2,\dots,N$.
    This contradicts optimality of $(u,X)$.
\end{proof}

The following lemma quantifies the change of energy that is caused by moving
a single point force. This is the key ingredient to prove clustering of  point forces later on.
\begin{lemma}\label{lem:energy_single_particle_move}
    Let $X,Y \in \overline{\Omega}^N$ and assume
    $Y_i=X_i$ for $i\neq k$ with some fixed $k$.
    Then
    \begin{align*}
        \cE(u_Y,Y)
        = \cE(u_X,X) - \beta_k(\delta_{Y_k}-\delta_{X_k})(u_X)
        - \tfrac{1}{2} \beta_k^2 a(\phi_{Y_k} - \phi_{X_k},\phi_{Y_k} - \phi_{X_k}).
    \end{align*}
\end{lemma}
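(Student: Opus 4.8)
The plan is to exploit the Green's-function representation of the minimizers together with the symmetry of the bilinear form $a(\wc,\wc)$. By Lemma~\ref{lem:POINT_FORCES_REPRESENTATION} we have $u_X=\sum_{i=1}^N\beta_i\phi_{X_i}$ and $u_Y=\sum_{i=1}^N\beta_i\phi_{Y_i}$. Since $Y_i=X_i$ for every $i\neq k$, all but the $k$-th summand coincide, so that
\[
    u_Y - u_X = \beta_k(\phi_{Y_k}-\phi_{X_k}).
\]
Writing $w=\phi_{Y_k}-\phi_{X_k}\in V$, this reads $u_Y=u_X+\beta_k w$, which is the single structural fact that drives the whole computation.

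First I would invoke Lemma~\ref{lem:POINT_FORCES_ENERGY}, which gives $\cE(u_Y,Y)=-\tfrac12 a(u_Y,u_Y)$ and likewise $\cE(u_X,X)=-\tfrac12 a(u_X,u_X)$. Substituting $u_Y=u_X+\beta_k w$ and expanding by bilinearity and symmetry of $a(\wc,\wc)$ yields
\[
    \cE(u_Y,Y)=-\tfrac12 a(u_X,u_X)-\beta_k a(u_X,w)-\tfrac12\beta_k^2 a(w,w)
    =\cE(u_X,X)-\beta_k a(u_X,w)-\tfrac12\beta_k^2 a(w,w).
\]

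The only remaining step is to identify the cross term $a(u_X,w)$. Here I would use the symmetry of $a(\wc,\wc)$ together with the defining property $a(\phi_x,v)=\delta_x(v)$ of the Green's functions from \eqref{eq:GREEN_DEF}, applied with the test function $v=u_X\in V$: this gives $a(u_X,\phi_{Y_k})=a(\phi_{Y_k},u_X)=\delta_{Y_k}(u_X)$ and likewise $a(u_X,\phi_{X_k})=\delta_{X_k}(u_X)$, whence $a(u_X,w)=(\delta_{Y_k}-\delta_{X_k})(u_X)$. Inserting this and recalling $w=\phi_{Y_k}-\phi_{X_k}$ produces exactly the claimed identity. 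I do not anticipate any serious obstacle: the statement is a direct algebraic consequence of the affine dependence of $u_X$ on the locations through the Green's functions and the symmetry of $a$. The only point requiring a moment of care is the legitimacy of using the Green's-function identity \eqref{eq:GREEN_DEF} with $u_X$ itself as the test function, which is justified precisely because $u_X\in V$.
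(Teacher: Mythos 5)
Your proof is correct and follows essentially the same route as the paper: both rely on the energy identity $\cE(u_Z,Z)=-\tfrac12 a(u_Z,u_Z)$ from Lemma~\ref{lem:POINT_FORCES_ENERGY}, the decomposition $u_Y-u_X=\beta_k(\phi_{Y_k}-\phi_{X_k})$ from Lemma~\ref{lem:POINT_FORCES_REPRESENTATION}, and a binomial expansion, with the cross term identified via the Green's-function relation (the paper phrases this through $(\ell_Y-\ell_X)(u_X)$, which is the same computation). No gaps.
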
 
\begin{proof}
    The representation of energy in Lemma~\ref{lem:POINT_FORCES_ENERGY}
    and the binomial formula provide the estimate
    \begin{align}\label{eq:energy_estimate_perturbed_forces}
        \cE(u_Y,Y)
            = \cE(u_X,X) - (\ell_Y-\ell_X)(u_X)
            - \tfrac{1}{2} a(u_Y-u_X,u_Y-u_X)
    \end{align}
    for any $X,Y \in \overline{\Omega}^N$.
    Now let $X_i=Y_i$ for $i\neq k$. Then
    we have $\ell_Y = \ell_X + \beta_k(\delta_{Y_k}-\delta_{X_k})$
    and $u_Y = u_X + \beta_k(\phi_{Y_k} - \phi_{X_k})$.
    Inserting these identities into~\eqref{eq:energy_estimate_perturbed_forces}
    we obtain the assertion.
\end{proof} 

In the forthcoming clustering analysis we will
make use of the equivalence relation
\begin{align} \label{eq:EQRE}
    x \REL y \quad \Leftrightarrow \quad  \delta_x v= \delta_y v \;\; \forall v\in V.\
\end{align} 
Recall that  we have $\delta_x =0$ on $V$ for all $x \in \partial \Omega$.
Hence, $x \REL y $ holds, if and only if $x=y$ or  $x,y \in \partial \Omega$.
By definition, the locations of a solution
can be replaced by equivalent locations,
i.e., if $(u,X)$ solves Problem~\ref{point_force_glob_prob} and
$Y_i\REL X_i$ holds for all $i=1,\dots,N$, then
$(u,Y)$ does also solve Problem~\ref{point_force_glob_prob}.

Now we are ready to prove clustering of point forces.

\begin{proposition}\label{prop:clustering_three_cases}
    Assume that $(u,X) \in V \times \overline{\Omega}^N$
    is a solution to Problem~\ref{point_force_glob_prob}.
    Then there exist $(X^+, X^-) \in \overline{\Omega}^2$
    such that $(X^+,X^-) \notin (\partial \Omega)^2$ and
    \begin{align} \label{eq:clustering_solution}
        \beta_i>0 \;\; \Rightarrow \;\;X_i \REL X^+, \qquad \quad
        \beta_i<0\;\; \Rightarrow \;\; X_i \REL X^-
    \end{align}
    holds for all $i=1,\dots,N$.
\end{proposition}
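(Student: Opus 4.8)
The plan is to test optimality of the minimizer $(u,X)$ against configurations obtained by displacing a single point force, using the exact energy difference from Lemma~\ref{lem:energy_single_particle_move}. Writing $u=u_X$, that lemma gives, for any $Y$ agreeing with $X$ except in the $k$-th slot,
\[
\cE(u_Y,Y) - \cE(u,X) = -\beta_k(\delta_{Y_k}-\delta_{X_k})(u) - \tfrac12\beta_k^2\, a(\phi_{Y_k}-\phi_{X_k},\phi_{Y_k}-\phi_{X_k}).
\]
Since $a(\wc,\wc)$ is coercive (Lemma~\ref{lem:GLOBAL_COERCIVITY}), the quadratic term is $\le 0$, so minimality of $(u,X)$ forces $-\beta_k(u(Y_k)-u(X_k)) \ge 0$ for every $Y_k\in\overline\Omega$. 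First I would read off from this inequality, using $\beta_k\neq 0$, that each $X_k$ with $\beta_k>0$ is a global maximizer of $u$ on $\overline\Omega$ and each $X_k$ with $\beta_k<0$ is a global minimizer.

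Next I would upgrade \emph{equal extremal value} to \emph{equivalent location}. Fix $i\neq j$ with $\beta_i,\beta_j>0$ and move the $j$-th force onto $X_i$, i.e.\ take $Y_j=X_i$ and $Y_\ell=X_\ell$ otherwise. Because $X_i$ and $X_j$ are both global maxima, $(\delta_{X_i}-\delta_{X_j})(u)=u(X_i)-u(X_j)=0$, so the linear term in the displayed identity vanishes and only the nonpositive quadratic term survives. Minimality then forces $a(\phi_{X_i}-\phi_{X_j},\phi_{X_i}-\phi_{X_j})=0$, whence $\phi_{X_i}=\phi_{X_j}$ by coercivity; testing the defining relation \eqref{eq:GREEN_DEF} against arbitrary $v\in V$ gives $\delta_{X_i}v=\delta_{X_j}v$, that is $X_i\REL X_j$ in the sense of \eqref{eq:EQRE}. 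The identical argument with minima treats the case $\beta_i,\beta_j<0$. Thus all positive-force locations are mutually equivalent and all negative-force locations are mutually equivalent, and I would then set $X^+$ to be any positive-force location and $X^-$ to be any negative-force location, choosing them arbitrarily in the vacuous case where no force of that sign is present.

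It remains to verify $(X^+,X^-)\notin(\partial\Omega)^2$. Here I would invoke Lemma~\ref{lem:BND}, which guarantees $X\notin(\partial\Omega)^N$, so some $X_{i_0}$ lies in the interior $\Omega$. If $\beta_{i_0}>0$ I take $X^+=X_{i_0}\in\Omega$, and if $\beta_{i_0}<0$ I take $X^-=X_{i_0}\in\Omega$; either way one of the two cluster points lies off the boundary, giving the claim. I expect the crux of the argument to be the second step: the observation that displacing one maximizer onto another keeps the linear (force) contribution fixed and isolates the strictly negative quadratic contribution, so that any genuine separation of two like-signed forces would strictly lower the energy. This is precisely what converts \emph{equal values} into the much stronger clustering conclusion $X_i\REL X_j$.
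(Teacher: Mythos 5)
Your proof is correct and rests on exactly the same two pillars as the paper's: testing optimality of $(u,X)$ against single-force relocations via Lemma~\ref{lem:energy_single_particle_move}, and invoking Lemma~\ref{lem:BND} to place at least one cluster point in $\Omega$. The only difference is organizational: you first upgrade the paper's inequality $\beta_i\delta_{X_i}(u_X)\geq 0$ to global extremality of $u$ at each force location (by relocating to arbitrary $Y_k\in\overline{\Omega}$ rather than only to $\partial\Omega$), which makes the linear term vanish exactly when one like-signed force is moved onto another and yields $\phi_{X_i}=\phi_{X_j}$, hence $X_i\REL X_j$, directly — whereas the paper selects the location with maximal value of $u$ in each sign class and argues by contradiction from a strictly negative quadratic term; both routes use coercivity of $a(\wc,\wc)$ and the injectivity of $x\mapsto\delta_x$ modulo $\partial\Omega$ in the same way.
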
 
\begin{proof}
    Let $(u_X,X) \in V \times \overline{\Omega}$ be a solution
    of Problem~\ref{point_force_glob_prob}.
    Then we have 
    \begin{equation} \label{eq:GEQH}
        \beta_i \delta_{X_i}(u_X) \geq 0 \qquad \forall =1,\dots, N.
    \end{equation}
    Indeed,  if there is a $k$ such that $\beta_k\delta_{X_k}(u_X) < 0$ then we can chose
    $Y_i=X_i$, $i\neq k$ and $Y_k\in \partial \Omega$ to obtain the contradiction 
    $\cE(u_Y,Y) < \cE(u_X,X)$ from Lemma~\ref{lem:energy_single_particle_move}.

    Recall that Lemma~\ref{lem:BND} implies  $X \notin (\partial \Omega)^N$. 
    Hence, at least one point force must be located in $\Omega$.
    Let $X_j \in \Omega$ be arbitrarily chosen.
    Then it is sufficient to show that  $X_i = X_j$  must hold for all
    $i \in \cI_j = \{l=1,\dots, N \st \op{sgn} (\beta_l)=\op{sgn}(\beta_j) \}$.

    Without loss of generality assume that $\beta_j>0$ and that
    $j$ is selected such that
    $\delta_{X_j}(u_X) \geq \delta_{X_i}(u_X)$ holds for all other $X_i \in \Omega$ with $i \in \cI_j$.
    Then the same estimate  is valid for  all $i \in \cI_j$,
    because \eqref{eq:GEQH} yields 
    $\delta_{X_j}(u_X) \geq 0= \delta_{X_i}(u_X)$ for all $X_i \in \partial \Omega$.

    In contradiction to the assertion, we now  assume that
     $X_k \neq X_j$ holds for some $k \in \cI_j$.
    Application of Lemma~\ref{lem:energy_single_particle_move}
    with $Y_i=X_i$, $i\neq k$ and $Y_k=X_j$,
    together with $\delta_{X_j}(u_X) \geq \delta_{X_k}(u_X)$ provides
    \begin{align*}
        \cE(u_Y,Y)
        &= \cE(u_X,X) - \beta_k(\delta_{X_j}-\delta_{X_k})(u_X)
        - \tfrac{1}{2} \beta_k^2 a(\phi_{X_j} - \phi_{X_k},\phi_{X_j} - \phi_{X_k}) \\
        &\leq \cE(u_X,X)
        - \tfrac{1}{2} \beta_k^2 a(\phi_{X_j} - \phi_{X_k},\phi_{X_j} - \phi_{X_k}).
    \end{align*}
    Now we have either  $X_k \in \partial \Omega$, and therefore  $ \phi_{X_k}=0$, or  $X_k \in  \Omega$,
    and therefore that $\phi_{X_k}$ and  $\phi_{X_j}$ are linearly independent. In both cases
    $a(\phi_{X_j} - \phi_{X_k},\phi_{X_j} - \phi_{X_k})>0$ providing $\cE(u_Y,Y) < \cE(u_X,X)$.
    This contradicts optimality of $(u_X,X)$.
\end{proof}

\begin{remark} \label{rem:CLUST}
As a consequence of Proposition~\ref{prop:clustering_three_cases}, 
the point forces with positive (negative) sign  either
cluster in one point $X^+\in \Omega$ ($X^-\in \Omega$) or 
are all located on the boundary $\partial \Omega$.
By Lemma~\ref{lem:BND}, not all $N$ point forces can be located at the boundary.
Hence,  point forces of a solution $(u,X)$ of Problem~\ref{point_force_glob_prob}
are clustering in exactly one of the following three ways.
\begin{enumerate}[label=(\roman{*})]
    \item \label{GlobMin_InDomain}
        $X_i = X^+ \in \Omega$ for all $i$ with $\beta_i>0$
        and $X_i = X^- \in \Omega$ for all $i$ with $\beta_i<0$,
    \item 
        $X_i = X^+ \in \Omega$ for all $i$ with $\beta_i>0$
        and $X_i \in \partial \Omega$ for all $i$ with $\beta_i<0$,
    \item \label{GlobMin_OnBoundary}
        $X_i = X^- \in \Omega$ for all $i$ with $\beta_i<0$
        and $X_i \in \partial \Omega$ for all $i$ with $\beta_i>0$.
\end{enumerate}
We may regard the occurrence of one of these three cases as a property of
$a(\wc,\wc)$, the parameters $\beta_i$, and $\Omega$.
\end{remark}

As another consequence of Proposition~\ref{prop:clustering_three_cases}
we can characterize the solutions to Problem~\ref{point_force_glob_prob}
with $N$ forces in terms of an equivalent problem with at most two forces.

\begin{corollary}\label{cor:clustering_two_particle_problem}
    Let 
    \begin{equation} \label{eq:CVAL}
    \beta^+=\sum_{\beta_i >0} \beta_i \geq 0, \qquad\beta^- = \sum_{\beta_i<0}\beta_i\leq 0.
    \end{equation}
    Then $(u,X) \in V \times \overline{\Omega}^N$ is a solution of Problem~\ref{point_force_glob_prob}
    with $(X^+,X^-) \in \overline{\Omega}^2$ satisfying   \eqref{eq:clustering_solution},
    if and only if  $(u,(X^+,X^-)) \in V \times \overline{\Omega}^2$
    is a solution of Problem~\ref{point_force_glob_prob} with point forces
    \begin{align*}
        \ell_{\XX} = \beta^+ \delta_{X^+} + \beta^-\delta_{X^-},\qquad \XX=(X^+,X^-).
    \end{align*}
\end{corollary}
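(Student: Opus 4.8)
The plan is to reduce the whole statement to one algebraic observation: the clustering relation~\eqref{eq:clustering_solution} forces the two point–force functionals to coincide as elements of $V'$. First I would show that whenever $X\in\overline{\Omega}^N$ and $(X^+,X^-)\in\overline{\Omega}^2$ are related by~\eqref{eq:clustering_solution}, one has $\ell_X=\ell_{\XX}$ on $V$. Indeed, splitting the sum in~\eqref{eq:PFW_NEU} according to the sign of $\beta_i$ and using that $X_i\REL X^+$ means $\delta_{X_i}=\delta_{X^+}$ on $V$ (and likewise $X_i \REL X^-$), the definition~\eqref{eq:CVAL} of $\beta^\pm$ gives
\[
    \ell_X(v)=\sum_{\beta_i>0}\beta_i\,\delta_{X^+}(v)+\sum_{\beta_i<0}\beta_i\,\delta_{X^-}(v)
    =\beta^+\delta_{X^+}(v)+\beta^-\delta_{X^-}(v)=\ell_{\XX}(v)
\]
for all $v\in V$. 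Hence $\cE(v,X)=\cJ(v)-\ell_X(v)=\cJ(v)-\ell_{\XX}(v)$ for every $v$, so the inner minimization over $v\in V$ is \emph{literally} the same problem in both settings; by uniqueness in the variational equality~\eqref{eq:POFOEQ}, the associated minimizers coincide as well.

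Next I would establish that the two global minima agree. Write $m_N$ and $m_2$ for the infima of the $N$-force energy $\cJ(\cdot)-\ell_X(\cdot)$ over $V\times\overline{\Omega}^N$ and of the two-force energy $\cJ(\cdot)-\ell_{\XX}(\cdot)$ over $V\times\overline{\Omega}^2$, respectively. For the bound $m_N\le m_2$, given any two-force configuration $(v,(Z^+,Z^-))$ I aggregate it to an $N$-force configuration by placing every positive-sign particle at $Z^+$ and every negative-sign particle at $Z^-$; since Problem~\ref{point_force_glob_prob} imposes no distinctness of the $X_i$, this is admissible, and by the computation above the resulting energy equals $\cJ(v)-\beta^+\delta_{Z^+}(v)-\beta^-\delta_{Z^-}(v)$, whence $m_N\le m_2$. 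For the reverse bound I use that the $N$-force problem has a solution (Proposition~\ref{global_min}) which, by Proposition~\ref{prop:clustering_three_cases}, admits clustering points $(X^+,X^-)$ satisfying~\eqref{eq:clustering_solution}; its minimal energy then coincides with the energy of an admissible two-force configuration and is therefore $\ge m_2$, giving $m_N\ge m_2$. Thus $m_N=m_2$.

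Finally I would assemble the equivalence. Assuming $X$ and $(X^+,X^-)$ are related by~\eqref{eq:clustering_solution}, the first step gives $\cE(u,X)=\cJ(u)-\ell_{\XX}(u)$. Consequently $(u,X)$ is a global minimizer of the $N$-force energy, i.e.\ $\cE(u,X)=m_N$, if and only if $\cJ(u)-\ell_{\XX}(u)=m_2$, that is, if and only if $(u,(X^+,X^-))$ solves Problem~\ref{point_force_glob_prob} with point forces $\ell_{\XX}$. In the forward direction the existence of clustering points $(X^+,X^-)$ satisfying~\eqref{eq:clustering_solution} is supplied by Proposition~\ref{prop:clustering_three_cases}; for the converse one simply defines $X$ by the clustering prescription, so that~\eqref{eq:clustering_solution} holds by construction.

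The algebra identifying $\ell_X$ with $\ell_{\XX}$ and the two inequalities for the infima are routine. The one point that needs care—and the main obstacle—is the equality $m_N=m_2$: the inequality $m_N\ge m_2$ genuinely relies on Proposition~\ref{prop:clustering_three_cases} to guarantee that the minimal $N$-force energy is attained by a \emph{clustered} configuration, and one must explicitly note that collapsing several forces onto a single location is permitted, since no separation of the locations $X_i$ is required in Problem~\ref{point_force_glob_prob}.
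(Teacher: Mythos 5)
Your proof is correct and takes essentially the same approach as the paper: both arguments rest on Proposition~\ref{prop:clustering_three_cases} together with the identity $\ell_X=\ell_{\XX}$ for configurations related by \eqref{eq:clustering_solution}, and both use the existence result of Proposition~\ref{global_min} to pass between the $N$-force and the two-force problem. The only difference is bookkeeping: the paper restricts the feasible set to the clustered configurations $M$ and identifies $M$ with $\overline{\Omega}^2$ up to the equivalence \eqref{eq:EQRE}, whereas you compare the two optimal values $m_N=m_2$ directly; the ingredients are identical.
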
 
\begin{proof}
    Proposition~\ref{prop:clustering_three_cases} implies that
    the locations $X$ of all solutions to Problem~\ref{point_force_glob_prob} are contained in the subset
    \begin{align*}
        M = \bigl\{X \in \overline{\Omega}^N \st \exists (X^+,X^-) \in \overline{\Omega}^2 \text{ with \eqref{eq:clustering_solution} } \bigr\}  \subset  \overline{\Omega}^N.
    \end{align*}
    Hence minimizing $\cE(u,X)$ over $V\times \overline{\Omega}^N$ is equivalent to minimization 
    over $V\times M$.

    By definition of $M$, we can identify $M$ with $\overline{\Omega}^2$ 
    by the condition \eqref{eq:clustering_solution} up to componentwise equivalence 
    in the sense of \eqref{eq:EQRE}. 
    Now, let $X\in M$ be identified with $\XX=(X^+,X^-)\in \overline{\Omega}^2$  in this way.
    As a consequence of  \eqref{eq:clustering_solution},  we then have $\ell_X= \ell_{\XX}$
    and thus $u_X=u_{\XX}$. In light of Lemma~\ref{lem:POINT_FORCES_ENERGY}, this leads to
    \begin{equation*}
        \begin{split}
            \cE(u_X,X) &= - \tfrac{1}{2} a(u_X,u_X)= - \tfrac{1}{2}a(u_{\XX},u_{\XX})\\
                       &= \cJ(u_{\XX})- \ell_{\XX}(u_{\XX})=: \cE_0(u_{\XX},\XX)
        \end{split}
    \end{equation*}
    Therefore, minimization of $\cE(u,X)$ over $V\times M$ is equivalent to
    minimization of  the energy $\cE_0(u_{\XX},\XX)$ over $V\times \overline{\Omega}^2$.
    This concludes the proof.
\end{proof} 

By Proposition~\ref{prop:clustering_three_cases} 
at least one of the clustering  points $X^+$, $X^-\in \overline{\Omega}$ must be contained in $\Omega$.
Utilizing  the values of $\beta^+$ and $\beta^-$ defined in \eqref{eq:CVAL},
we can often exclude one of the three cases in Remark~\ref{rem:CLUST}.

\begin{proposition}
    Let $(u,X)$ be a solution of Problem~\ref{point_force_glob_prob}.
    If $|\beta^+| > |\beta^-|$, then  $X^+ \in \Omega$ and if $|\beta^+| < |\beta^-|$,
    then $X^- \in \Omega$.
\end{proposition}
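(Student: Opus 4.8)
The plan is to reduce to the two-force problem and then eliminate, via an explicit competitor, the clustering scenario of Remark~\ref{rem:CLUST} in which the positive forces sit on $\partial\Omega$. By Corollary~\ref{cor:clustering_two_particle_problem} a solution $(u,X)$ induces a solution $(u,(X^+,X^-))$ of the two-force version of Problem~\ref{point_force_glob_prob} with $\ell_{\XX}=\beta^+\delta_{X^+}+\beta^-\delta_{X^-}$, $\XX=(X^+,X^-)$, and Lemma~\ref{lem:POINT_FORCES_ENERGY} gives its energy as
\[
\cE(u,X)=-\tfrac12\bigl[(\beta^+)^2\phi_{X^+}(X^+)+2\beta^+\beta^-\phi_{X^+}(X^-)+(\beta^-)^2\phi_{X^-}(X^-)\bigr],
\]
where $\phi_x$ is the Green's function from~\eqref{eq:GREEN_DEF}. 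Throughout this subsection $V=H^2_0(\Omega)$ or $V=H^2(\Omega)\cap H^1_0(\Omega)$, so $\Omega_V=\Omega$; hence $\phi_x=0$ for $x\in\partial\Omega$ and $\phi_x(x)=a(\phi_x,\phi_x)>0$ for $x\in\Omega$.

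First I would introduce $M^*=\max_{x\in\overline{\Omega}}\phi_x(x)$ and check that it is attained at some $x^*\in\Omega$ with $M^*>0$. Indeed, $x\mapsto\delta_x\in V'$ is continuous by Lemma~\ref{lem:FCONT}, so composing with the (continuous) inverse Riesz isomorphism associated with the inner product $a(\wc,\wc)$, the map $x\mapsto\phi_x\in V$, and thus $x\mapsto\phi_x(x)=a(\phi_x,\phi_x)$, is continuous on the compact set $\overline{\Omega}$; since this function vanishes on $\partial\Omega$ and is strictly positive in $\Omega$, its maximum $M^*$ is positive and is attained at an interior point $x^*\in\Omega$.

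Next, assuming $|\beta^+|>|\beta^-|$, I would argue by contradiction that $X^+\in\partial\Omega$ (the third case of Remark~\ref{rem:CLUST}) is impossible. If $X^+\in\partial\Omega$ then $\phi_{X^+}=0$, so the first and cross terms above vanish and
\[
\cE(u,X)=-\tfrac12(\beta^-)^2\phi_{X^-}(X^-)\ \ge\ -\tfrac12(\beta^-)^2 M^*.
\]
The two-force configuration placing the positive force at $x^*\in\Omega$ and the negative force on $\partial\Omega$ is admissible and, again by Lemma~\ref{lem:POINT_FORCES_ENERGY}, has energy $-\tfrac12(\beta^+)^2M^*$. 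Since $(\beta^+)^2>(\beta^-)^2$ and $M^*>0$, this competitor beats $(u,X)$,
\[
-\tfrac12(\beta^+)^2M^*<-\tfrac12(\beta^-)^2M^*\le\cE(u,X),
\]
contradicting optimality. Hence $X^+\in\Omega$. Interchanging the roles of the positive and negative forces gives $X^-\in\Omega$ when $|\beta^+|<|\beta^-|$.

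I expect the only delicate point to be the verification that $M^*$ is attained in the \emph{interior} and is strictly positive; this is exactly where continuity of $x\mapsto\phi_x(x)$ and the identity $\Omega_V=\Omega$ for the two admissible boundary conditions enter. The remaining steps are straightforward bookkeeping with the energy formula of Lemma~\ref{lem:POINT_FORCES_ENERGY} and the trichotomy of Remark~\ref{rem:CLUST}.
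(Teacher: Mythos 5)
Your proof is correct, and it shares the paper's overall strategy---reduce to the two-force problem via Corollary~\ref{cor:clustering_two_particle_problem}, then contradict optimality using the energy formula of Lemma~\ref{lem:POINT_FORCES_ENERGY}---but the competitor you construct is genuinely different. The paper's competitor is simply the \emph{swap} of the two cluster locations: assuming $X^+ \in \partial\Omega$, it uses Lemma~\ref{lem:BND} to get $X^- \in \Omega$, notes $u_X = \beta^-\phi_{X^-}$, and compares the energy $-\tfrac12|\beta^-|^2 a(\phi_{X^-},\phi_{X^-})$ with that of the configuration $(X^-,X^+)$, namely $-\tfrac12|\beta^+|^2 a(\phi_{X^-},\phi_{X^-})$; everything is expressed through the single Green's function $\phi_{X^-}$, so no additional analytic input is required. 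Your competitor instead places the positive cluster at a maximizer $x^*$ of $x\mapsto\phi_x(x)$, which obliges you to prove an auxiliary fact the paper never needs: that this function is continuous on $\overline{\Omega}$ (via Lemma~\ref{lem:FCONT} and the Riesz isomorphism for $a(\wc,\wc)$), vanishes on $\partial\Omega$, and attains a strictly positive maximum $M^*$ at an interior point. In exchange, your argument dispenses with Lemma~\ref{lem:BND} entirely---the bound $\phi_{X^-}(X^-)\le M^*$ holds regardless of where $X^-$ lies---and it gives the slightly stronger, uniform statement that \emph{any} configuration with the positive cluster on the boundary has energy at least $-\tfrac12(\beta^-)^2 M^*$, independently of $X^-$. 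Both routes are sound; the paper's swap is the more economical, while yours trades a small extra continuity argument for independence from the clustering location of the opposite sign.
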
 
\begin{proof}
    Let $(u_X,X)$ be a solution of Problem~\ref{point_force_glob_prob}
    and $|\beta^+| > |\beta^-|$.
    In contradiction to the assertion, we assume that  $X^+ \in \partial \Omega$.
    Then,  Lemma~\ref{lem:BND} yields $X^- \in \Omega$ and thus $\delta_{X^-}\neq 0$.
    In addition, Corollary~\ref{cor:clustering_two_particle_problem} implies that
    $( u_X,(X^+,X^-))$ is a minimizer of  the energy $\cE_0 =\cJ - \ell_{\XX}$ on $V \times \overline{\Omega}^2$.
    From  $X^+ \in \partial \Omega$, we get $u_X = \beta^- \phi_{X_-}$.  This leads to 
    \begin{align*}
        \begin{split}
            \cE_0(u_X,(X^+,X^-))
                &= - \tfrac{1}{2} |\beta^-|^2 a(\phi_{X^-}, \phi_{X^-})\\
                &> - \tfrac{1}{2} |\beta^+|^2 a(\phi_{X^-}, \phi_{X^-})
                    = \cE_0(u_{(X^-,X^+)},(X^-,X^+))
        \end{split} 
    \end{align*} 
    in contradiction to the optimality of $(u_X,(X^+,X^-))$.
    In the remaining case $|\beta^+| < |\beta^-|$ the assertion follows by symmetry.
\end{proof} 

We now assume that all forces point in the same direction. In this case, 
the solutions of Problem~\ref{point_force_glob_prob} can be obtained 
by solving Problem~\ref{point_force_glob_prob} with a single force.

\begin{corollary}
    Assume that all of the coefficients $\beta_i$ have the same sign.
    Then $(u,X) \in V \times \overline{\Omega}^N$
    is a solution to Problem~\ref{point_force_glob_prob},
    if and only if
    $X_1= \cdots = X_N \in \Omega$
    and $(u,X_1)$ is a solution of Problem~\ref{point_force_glob_prob}
    with one point force
    \begin{align*}
        \ell_{X_1} = \Bigl(\sum_{i=1}^N \beta_i\Bigr) \delta_{X_1}.
    \end{align*}
\end{corollary}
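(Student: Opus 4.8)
The plan is to read this corollary off the clustering analysis, specialised to the situation in which only one sign occurs. First I would reduce to the case $\beta_i>0$ for all $i$: since $\cJ$ is quadratic (hence even) and $\ell_X$ is linear, the substitution $u\mapsto -u$ carries Problem~\ref{point_force_glob_prob} for the coefficients $(\beta_i)$ into the same problem for $(-\beta_i)$, and likewise transforms the single-force problem with coefficient $\sum_i\beta_i$; so the all-negative case follows from the all-positive one. With $\beta_i>0$ for all $i$ we have $\beta^+=\sum_i\beta_i>0$ and $\beta^-=0$ in the notation of \eqref{eq:CVAL}, so the clustering machinery applies with only one relevant cluster.

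For the forward implication, let $(u,X)$ solve Problem~\ref{point_force_glob_prob}. Proposition~\ref{prop:clustering_three_cases} supplies a point $X^+\in\overline{\Omega}$ with $X_i\REL X^+$ for every $i$ (the alternative $X_i\REL X^-$ is vacuous, since no $\beta_i$ is negative). I would then argue $X^+\in\Omega$: if $X^+\in\partial\Omega$, the characterisation of $\REL$ forces every $X_i\in\partial\Omega$, i.e.\ $X\in(\partial\Omega)^N$, contradicting Lemma~\ref{lem:BND}; equivalently this is the preceding interiority proposition applied with $|\beta^+|>|\beta^-|=0$. Since $X^+\notin\partial\Omega$, the relation $X_i\REL X^+$ now forces $X_i=X^+$ for all $i$, whence $X_1=\cdots=X_N=X^+\in\Omega$ and $\ell_X=(\sum_i\beta_i)\delta_{X^+}$.

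It then remains to match energies. Writing $\cE_0(v,x)=\cJ(v)-(\sum_i\beta_i)\delta_x(v)$ for the single-force energy, the collapse of the locations gives $\cE(u,X)=\cE_0(u,X^+)$. Conversely any admissible pair $(v,x)$ for the single-force problem yields the admissible $N$-force pair $(v,(x,\dots,x))$ with $\cE(v,(x,\dots,x))=\cE_0(v,x)$, so $\min\cE\le\min\cE_0$. Using existence of a minimizer of $\cE$ (Proposition~\ref{global_min}) together with $\cE(u,X)=\cE_0(u,X^+)\ge\min\cE_0$ and optimality of $(u,X)$ pins down $\min\cE=\min\cE_0$ and shows $(u,X^+)$ minimises $\cE_0$, i.e.\ $(u,X_1)$ solves the single-force problem. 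The reverse implication is then immediate from the same identity $\cE(u,(x,\dots,x))=\cE_0(u,x)$ and $\min\cE=\min\cE_0$: if $X_1=\cdots=X_N=x\in\Omega$ and $(u,x)$ minimises $\cE_0$, then $(u,X)$ attains $\min\cE$.

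The essential work is already contained in the clustering results, so the only real obstacle is bookkeeping rather than a new idea: I must be careful to (i) justify that the common cluster lies in the interior $\Omega$, where I lean on Lemma~\ref{lem:BND} (or the preceding interiority proposition with $\beta^-=0$), and (ii) keep the identification of the $N$-force and one-force energies straight in both directions so that the `if and only if' closes cleanly. Alternatively, one could invoke Corollary~\ref{cor:clustering_two_particle_problem} directly: with $\beta^-=0$ the reduced two-force problem degenerates to a one-force problem in $X^+$ (its energy not depending on $X^-$), yielding the same conclusion with slightly less hand computation.
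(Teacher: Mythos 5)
Your proof is correct and follows essentially the same route as the paper: Lemma~\ref{lem:BND} plus Proposition~\ref{prop:clustering_three_cases} force all locations to coincide at a single interior point, after which the $N$-force problem is identified with the one-force problem. The only cosmetic difference is that the paper cites Corollary~\ref{cor:clustering_two_particle_problem} (degenerating to one force since $\beta^-=0$ or $\beta^+=0$) for that final identification, whereas you carry out the energy matching by hand — a step you yourself note could be replaced by invoking that corollary.
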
 
\begin{proof}
    By Lemma~\ref{lem:BND} there must be at least one $X_k \in \Omega$.
    Then, Proposition~\ref{prop:clustering_three_cases} provides  $X_1=\dots= X_k= \dots=X_N$
    and the assertion follows from Corollary~\ref{cor:clustering_two_particle_problem}.
\end{proof} 

We have thus characterised the behaviour of systems with forces  that are all  pointing in one direction:
The global minimizer is simply when all of the particles lie 
at the same point and that point is a global minimizer for only one point force. 
There is still no uniqueness however, as global minimizers for the one  point force
problem are not unique in general. The uniqueness for the one  point force
problem may be regarded as a property of the domain $\Omega$.

\begin{remark}
    All results given above can be extended to the case $V=H^2_{p,0}(\Omega)$ by replacing
    all occurrences of $\Omega$ by $\overline{\Omega}$ and dropping all cases where
    $\partial \Omega$ shows up.
\end{remark}

\subsection{Discussion}
The models formulated in Problem \ref{p:PHC} and Problem \ref{min_heights} describe the optimal shape of a membrane under point constraints and the optimal location of such constraints. This approach could be used to describe the action of actin filaments bound to the membrane. 
Such kind of problems also occur in the study of thin plates. 
For example, Problem \ref{p:PHC} is the central object in the study of thin plate splines
and  Problem \ref{min_heights} is analysed in \cite{ButNaz12} 
which studies support points of a plate, producing this problem with 
homogeneous data $\alpha=0$.

The model set out in  Problem~\ref{pres_point} and further extended  and analysed
in Section~\ref{sec:point_forces_vary}
is motivated by the general approach in \cite{EvaTurSen03} where protein membrane
interaction is described by an additional term
in the membrane energy functional
representing the work done by the pressure exerted by  proteins.
In  \cite{EvaTurSen03} the particles are assumed to have a positive diameter
and are bound to membrane. We have adapted this model to particles anchored
to the cytoskeleton applying point forces. Note that the results on clustering
of point forces, derived above, do not agree with the interaction of finite sized
particles, as investigated in \cite{EvaTurSen03}.
The key difference
between the two models is that the point forces in Problem~\ref{point_force_glob_prob} do apply a net force to the
membrane which is not the case for the interactions studied in \cite{EvaTurSen03}.
The action of protrusive forces on a membrane is discussed in \cite{GovGop06,VekGov07}.
The variational framework we have introduced may be also applied in this case and
used to analyse the  membrane mediated interactions between particles.

\section{Numerical experiments} \label{sec:NUMEX}

In this section, we present various numerical computations 
with hybrid models as introduced above.
The variational setting of these models naturally suggests discrete finite element counterparts
which will be specified and analysed in more detail in future publications.  
We first concentrate on a comparison of finite-size and point-like particle descriptions
(cf. Section~\ref{sec:rpfs} and \ref{sec:pcc}).
In particular, we investigate
the effect of the arrangement and the separation distance of particles 
on the membrane-mediated interaction energy.
Later, we complement our theoretical investigations of clustering of
point forces (cf.\ Section~\ref{sec:pvc}) 
by numerical computations.
Our numerical results are discussed in comparison with qualitative and quantitative 
results in the existing literature (see, e.g.,
\cite{BarFou03,EvaTurSen03,GouBruPin93,KimNeuOst98,WeiKozHel98}). 

\subsection{Finite size particles  and point curvature constraints} \label{subsec:FBCC}

We consider a membrane decorated with $N=2$ particles with circular or 
elliptical cross-sections and equal or opposite orientations.
Here, we focus on 
two different types of models, treating  particles as rigid objects 
with finite size (cf.\ Section~\ref{sec:rpfs}) 
or as described by point curvature constraints (cf.\ Section~\ref{sec:pcc}), respectively.

\subsubsection{Finite size particles}\label{subsubsec:CPBVP}

We consider a parametrized curve constrained problem on $V=H_0^2(\Omega)$
(cf.\ Section~\ref{sec:BC})
that involves both  varying height and tilt angle of the particle
as described in  Remark \ref{rem:TILTCURVE}.
However, the coupling to the membrane is now 
performed by boundary conditions on $\partial \Omega_B\setminus \partial \Omega$
instead of curve constraints on $\Gamma_i\subset \Omega$ (cf.\ Section~\ref{sec:rpfs}).
More precisely, we aim to approximate
$(u,\gamma,\alpha)\in V_{\Omega_B}\times\R^2\times\R^{2\cdot 2}$, 
minimizing the energy $\cJ(u)$ subject to the boundary conditions \eqref{eq:HEIGHTTILT1}
with unknown height  $\gamma=(\gamma_1, \gamma_2) \in \R^2$
and tilt $\alpha=(\alpha_1, \alpha_2) \in \R^{2\cdot 2}$ of particles.
In the elliptical case, the particle rotation
is an additional parameter that is fixed in each computation.

The discretization is performed by (non-conforming) Morley finite elements 
(see \cite{Ciarlet74},\cite{LasLes75}). 
Straightforward (elementwise) application of the
bilinear form \eqref{eq:bilinearform} to the
Morley finite element space yields
an indefinite stiffness matrix.
We therefore utilize the following equivalent reformulation of \eqref{eq:bilinearform}, depending on a parameter $c\in (0,\kappa]$
\begin{align*}
    \begin{split}
        a(v,w) =
        & \int_{\Omega_B} \left( (\kappa-c) \Delta v \Delta w + c \left(\sum_{i,j = 1}^{2} \frac{\partial^{2}v}{\partial x_i\partial x_j} \frac{\partial^{2}w}{\partial x_i\partial x_j}\right) + \sigma \nabla v \cdot \nabla w\right)\,dx \\
        & + c \Biggl(
            \left\langle \frac{\partial^{2}v}{\partial\tau^{2}},\frac{\partial w}{\partial n}\right\rangle_{\partial \Omega_B}
            - \left\langle\frac{\partial^{2}v}{\partial\tau\partial n},\frac{\partial w}{\partial\tau}\right\rangle_{\partial \Omega_B}
            \Biggr),
    \end{split}
\end{align*}
where $\langle \wc, \wc \rangle_{\partial \Omega_B}$ stands for the dual pairing of 
$H^{-\frac{1}{2}}(\partial \Omega_B)$ and $H^{\frac{1}{2}}(\partial \Omega_B)$. 
For sufficiently smooth functions $\langle \wc, \wc \rangle_{\partial \Omega_B}$ is just the $L^2$-inner product on $\partial \Omega_B$.
For details on the numerical treatment of this boundary term,
we refer to~\cite{C04-wolfDiss15}.
We use the parameter value  $c=\kappa$ in all subsequent computations.

We compute approximate displacements $u$ of a membrane 
over the domain $\Omega=(-3,3)^2$, generated by two particles 
that are either circular with radius $r=0.1$ 
or elliptical with major and minor half-axis $a=0.14$ and $b=0.06$. 
Note that the unit of length is $10r$, i.e.,
length is measured in terms of the particle size. 
We choose height functions $h_1 = h_2 = 0$, and
normal derivatives with equal  $s_1 = s_2 = 1$ (left) or
opposite orientation $s_1 = -s_2 = 1$ (right), respectively.  
We select the material parameter $\kappa=1$ and also fix the unit of energy in this way.
Figures \ref{fig:CIRC} and~\ref{fig:CROSS} show the clipping of the results to $(-1,1)^2$ for $\sigma=0$. 
The elliptical particles are arranged collinearly (\;\HEllipse\;\HEllipse\;)
for equal orientation (cf.\ Figure~\ref{ellipse_sameOrientation})
or parallel (\;\VEllipse\;\VEllipse\;)
for opposite orientation (cf.\ Figure \ref{ellipse_differentOrientation}).
As illustrated in Figure~\ref{fig:ARRA}, 
these configurations seem to be energetically optimal, 
in the sense of having lower energies than other configurations.
In these figures, the approximate interaction potential,
i.e., the approximate minimal energy $\cJ(u)$, 
of collinear and parallel arrangements is plotted 
over the separation distance $R$ between the two particles, here defined as the shortest distance between the particle's boundaries.
While collinear particles are energetically preferred for equal orientation
(cf.\ Figure~\ref{ellipse_sameOrientation_horizontalVsVertical}),
parallel arrangement is preferred for oppositely oriented particles
(cf.\ Figure~\ref{ellipse_diffOrientation_horizontalVsVertical}).
For other arrangements we found
energy curves lying in between these two extremal configurations.
We depict the interaction potential for an orthogonal arrangement
(\;\VEllipse\;\HEllipse\;) as an example.
These numerical results confirm related theoretical considerations in \cite{KimNeuOst00}.

\begin{figure}
\begin{subfigure}[t]{0.45\linewidth}
\centering
\includegraphics[width=1\textwidth]{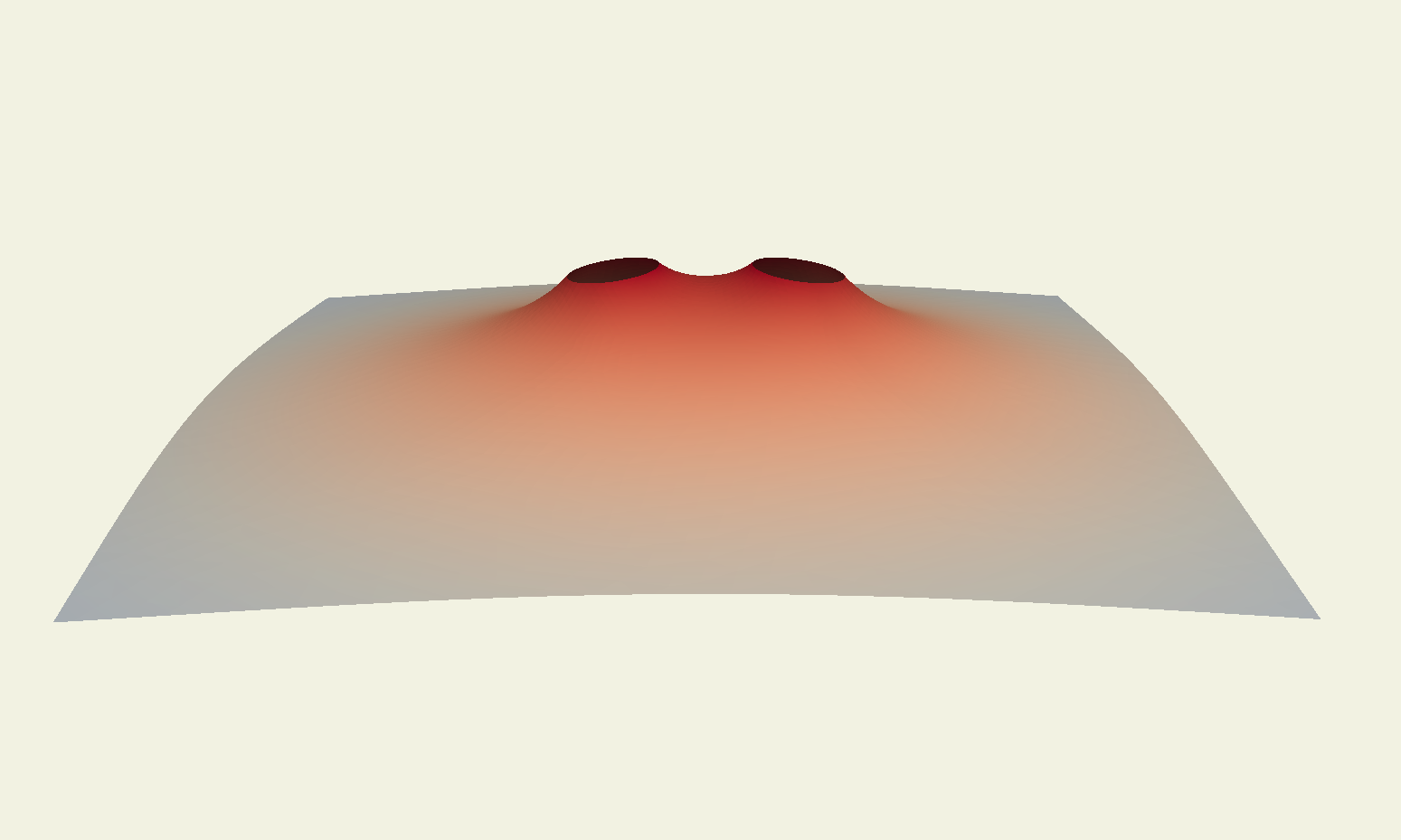}
\caption{equally oriented particles 
}
\label{circle_sameOrientation}
\end{subfigure}
\begin{subfigure}[t]{0.45\linewidth}
\centering
\includegraphics[width=1\textwidth]{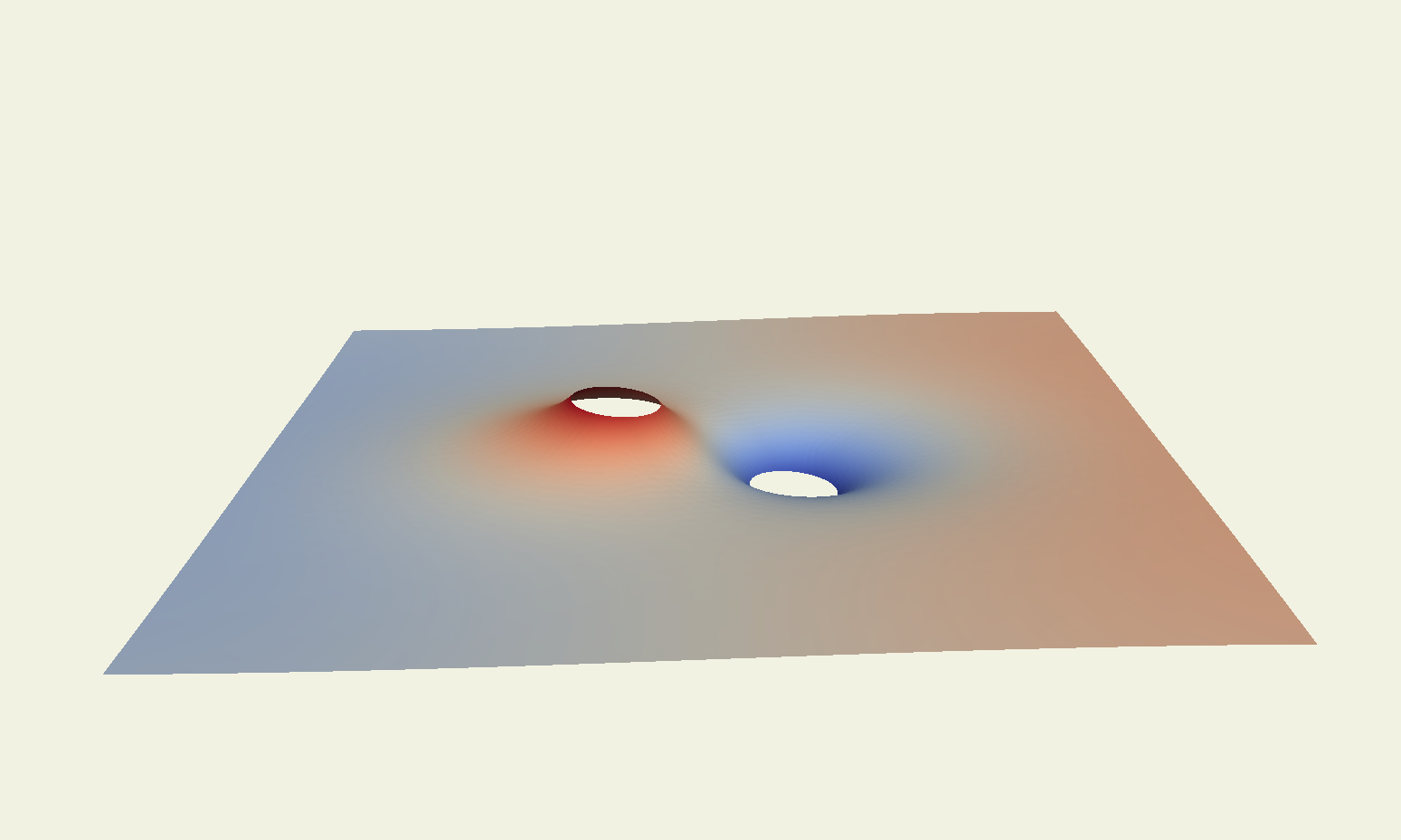}
\caption{oppositely oriented particles 
}
\label{circle_differentOrientation}
\end{subfigure}
\caption{Approximate membrane displacement for particles with circular cross-section.} \label{fig:CIRC}
\end{figure}
\begin{figure}
\begin{subfigure}[t]{0.45\linewidth}
\centering
\includegraphics[width=1\textwidth]{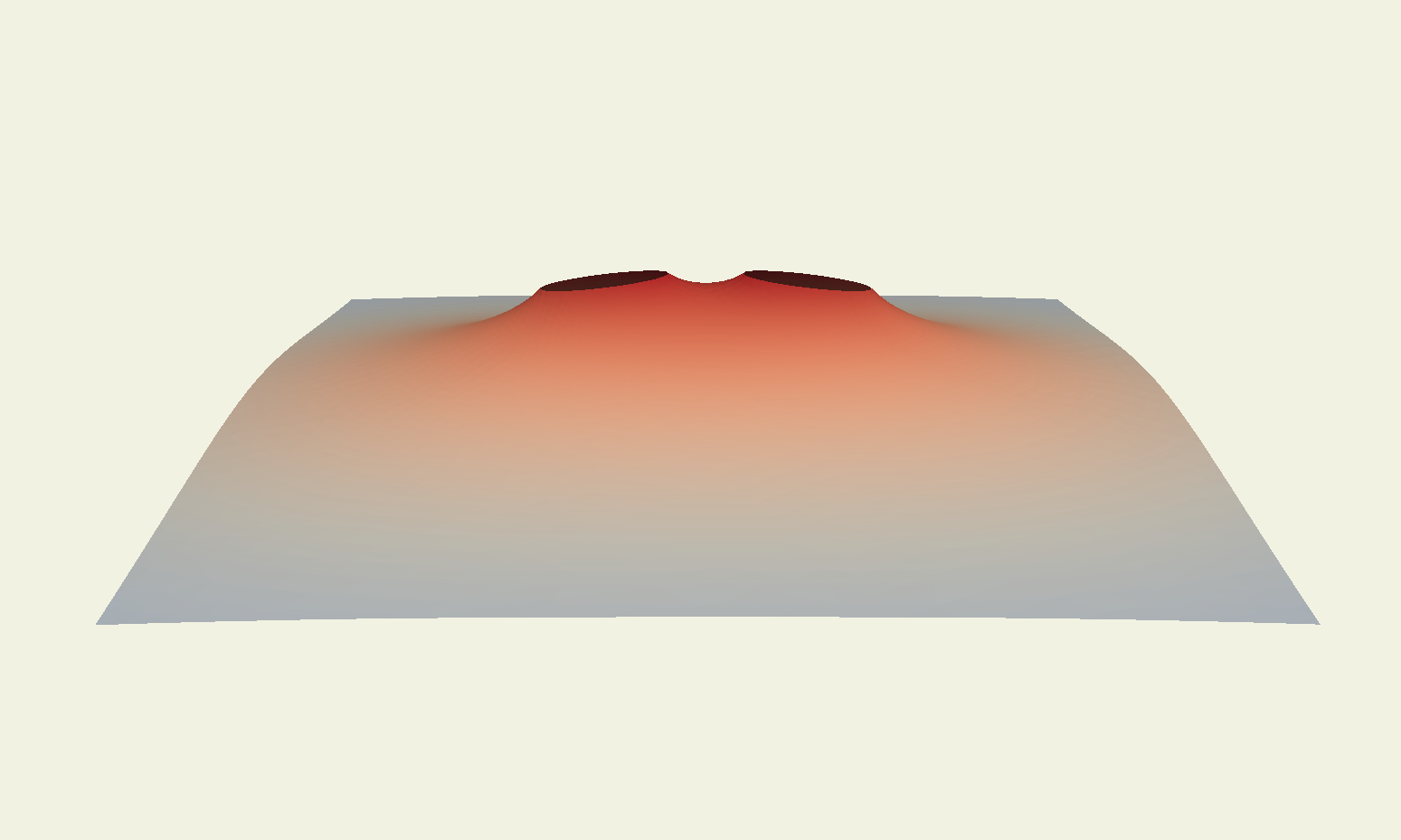}
\caption{equally oriented particles 
}
\label{ellipse_sameOrientation}
\end{subfigure}
\begin{subfigure}[t]{0.45\linewidth}
\centering
\includegraphics[width=1\textwidth]{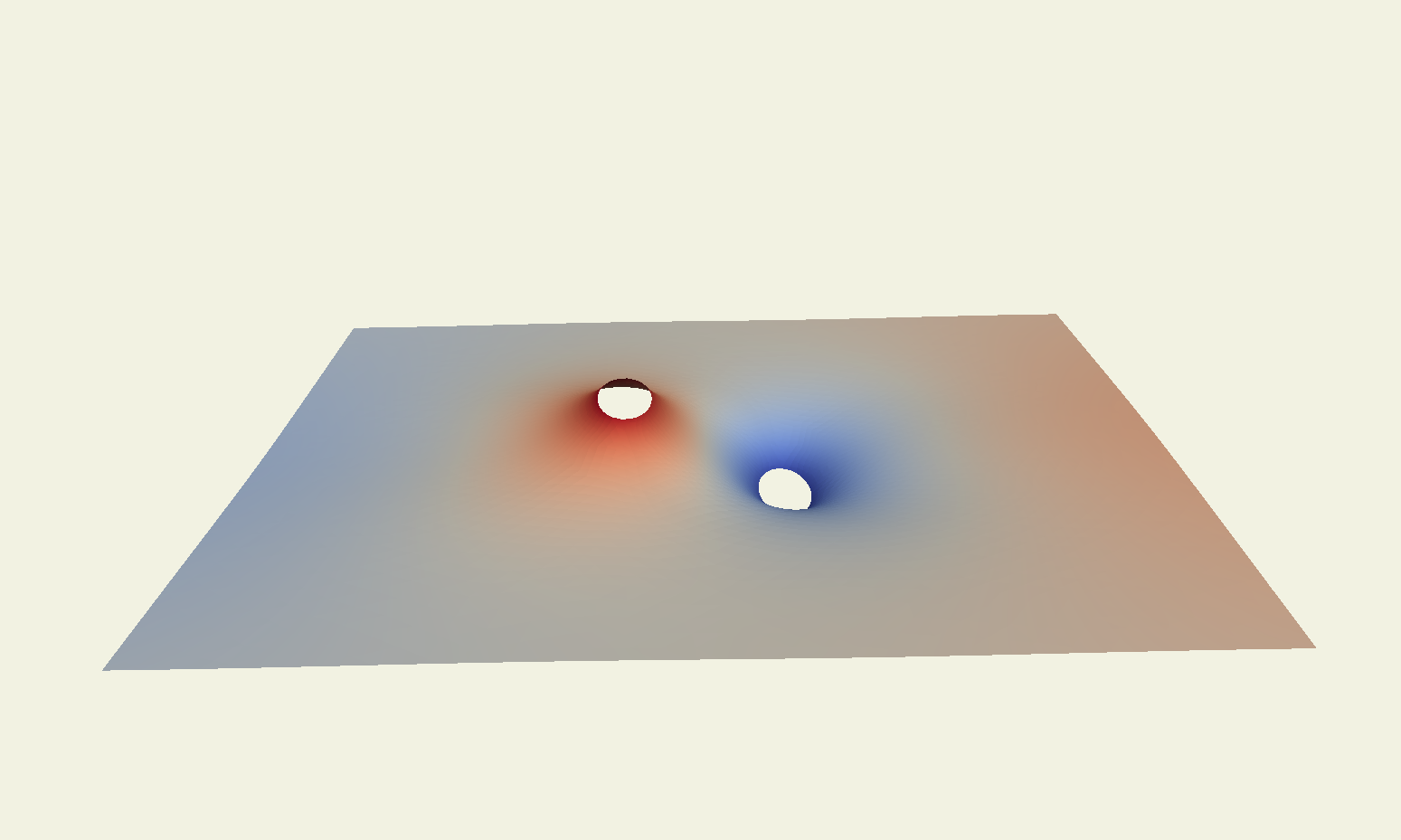}
\caption{oppositely oriented particles 
}
\label{ellipse_differentOrientation}
\end{subfigure}
\caption{Approximate membrane displacement for particles with 
elliptical cross-section.}\label{fig:CROSS}
\end{figure}

\begin{figure} 
\begin{subfigure}[b]{0.45\linewidth}
\centering
\includegraphics{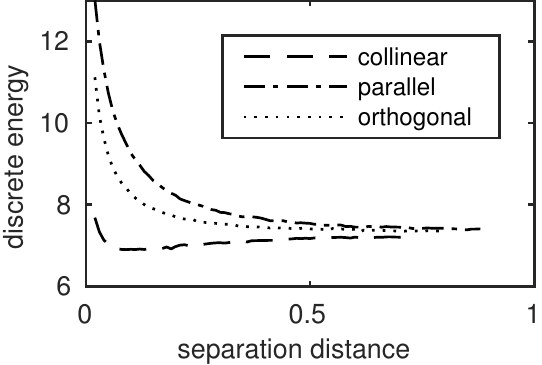}
\caption{equally oriented particles 
}
\label{ellipse_sameOrientation_horizontalVsVertical}
\end{subfigure}
\begin{subfigure}[b]{0.45\linewidth}
\centering
\includegraphics{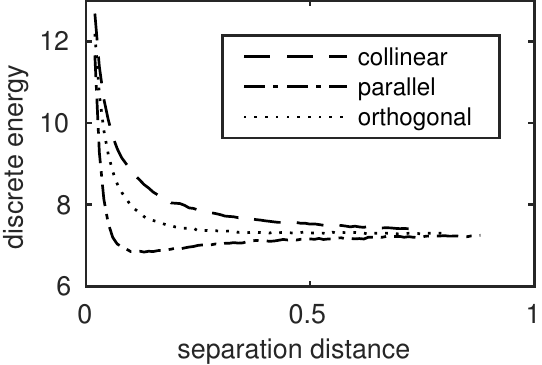}
\caption{oppositely oriented particles 
}
\label{ellipse_diffOrientation_horizontalVsVertical}
\end{subfigure}
\caption{Interaction potential over separation distance for particles 
with elliptical cross-section with collinear, parallel, and orthogonal major axes.}\label{fig:ARRA}
\end{figure}

We now study the membrane-mediated interaction potential
as a function of the separation distance $R$ between two particles which are arranged as in Figure~\ref{fig:CIRC} and \ref{fig:CROSS}.
For ease of comparison with 
point-like particle descriptions (cf.\ Subsection~\ref{subsec:PCCN}), 
the separation distance $R$ from now on stands for the distance 
between the midpoints of  particles. In order to reduce the possible influence of
the (artificial) boundary $\partial \Omega$,
we select separation distances $R\leq 1$.

Figure~\ref{fig:ENERGYCIRCLE} shows the interaction potential
over the separation distance $R$  for two  particles with circular cross-section
and various values of  $\sigma$.
We choose $\sigma=0$ and $\sigma = 1, 4, 9, 16, 25$
which, for a particle size of $3 \hspace*{1pt} nm$, corresponds to 
usual interaction lengths $\sqrt{\kappa/\sigma}$ 
ranging from $6 \hspace*{1pt} nm$ to $30 \hspace*{1pt} nm$~\cite{EvaTurSen03}.
We say that the interaction is repulsive or attractive if the derivative of the energy 
with respect to the separation distance is negative or positive, respectively.
In the case of equally oriented particles, i.e., for contact angles $s_1 = s_2=1$, 
we observe repulsion at all separation distances $R$ and for all values of $\sigma$.
 (cf.\ Figure~\ref{energy_circle_sameOrientation}).
In contrast, for oppositely oriented particles, 
i.e., for  contact angels $s_1 = -s_2=1$,
the interaction is repulsive for small and attractive for larger distances $R$ 
provided that $\sigma>0$ (cf.\ Figure~\ref{circle_differentOrientation}).
The preferred separation distance $R^\ast >0$, i.e., the distance resulting in the lowest energy,
decreases with increasing  membrane tension.
In both cases, the interaction potential becomes larger for growing $\sigma$. 
For  vanishing membrane tension, the interaction is always repulsive.

In the  case of particles with elliptical cross-section, 
we obtain a fundamentally different picture:
Figure~\ref{fig:ENERGYELLIPSE} shows the interaction potential 
over the separation distance for various elliptical shapes  in the case $\sigma=0$.
Both for equal (left) and opposite orientation (right),
the  interaction is repulsive for small and attractive for larger separation distances.
The resulting preferred separation distance $R^\ast>0$
decreases with increasing ratio $a/b$, i.e. for  slimming  particles.
Increasing membrane tension $\sigma$ also yields 
a decrease in preferred distance $R^\ast$ and an
increase in interaction potential, similar to the circular case.

\begin{figure}
\begin{subfigure}[b]{0.45\linewidth}
\centering
\includegraphics{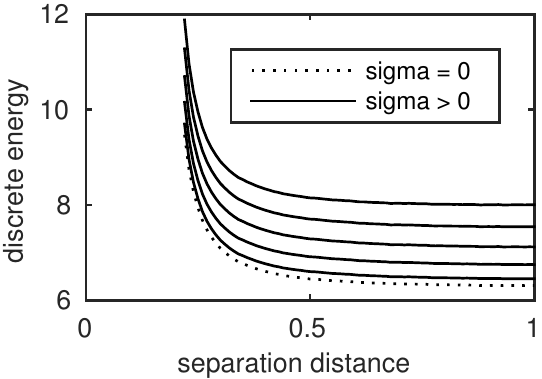}
\caption{equally oriented particles}
\label{energy_circle_sameOrientation}
\end{subfigure}
\begin{subfigure}[b]{0.45\linewidth}
\centering
\includegraphics{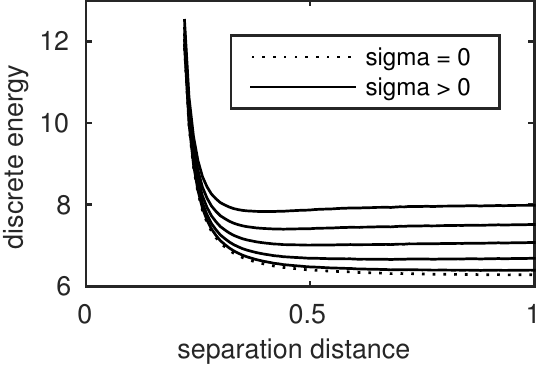}
\caption{oppositely oriented particles}
\label{energy_circle_diffOrientation}
\end{subfigure}
\caption{Interaction potential over separation distance for 
particles with circular cross-section with radius 
$r=0.1$ for membrane tension $\sigma = 0,1,4,9,16,25$ (bottom up).}\label{fig:ENERGYCIRCLE}
\end{figure}

\begin{figure}
\begin{subfigure}[b]{0.45\linewidth}
\centering
\includegraphics{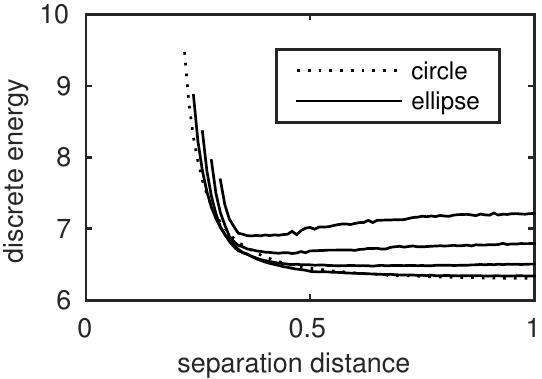}
\caption{equally oriented particles}
\label{energy_ellipse_sameOrientation}
\end{subfigure}
\begin{subfigure}[b]{0.45\linewidth}
\centering
\includegraphics{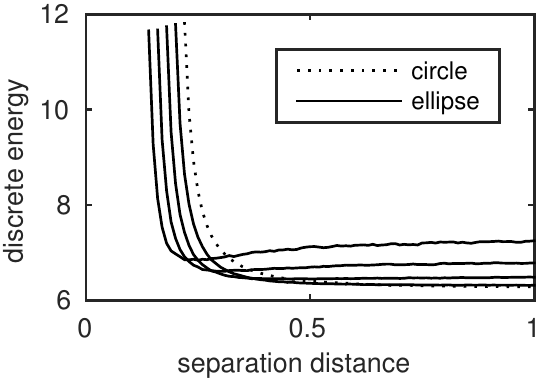}
\caption{oppositely oriented particles}
\label{energy_ellipse_diffOrientation}
\end{subfigure}
\caption{Interaction potential over separation distance for particles with elliptical cross-section 
with half-axes $(a,b)=(0.1,0.1),\ldots ,(0.14,0.06)$ (bottom up) for  membrane tension $\sigma = 0$.}\label{fig:ENERGYELLIPSE}
\end{figure}

\subsubsection{Point curvature constraints} \label{subsec:PCCN}
We now consider the interaction of particles and the membrane 
described by point mean curvature constraints as stated 
in Problem~\ref{p:PC}. 
We select the solution space 
$\VJ=\{v \in H^4(\Omega)\st v=0,\; \Delta v = 0 \text{ on }\partial \Omega\}$.
Our numerical approximation is based on the 
penalized formulation of Problem~\ref{p:SPDC}
with $G=(G_i)\in (\VJ')^N$ and $G_i=\delta_{X_i}(\Delta \wc)$
defined in \eqref{eq:GDELTA}. Recall that  the solution to  Problem~\ref{p:SPDC} 
converges to the solution of Problem~\ref{p:PC} 
as the penalty parameter $\varepsilon$ tends to zero (cf. Remark~\ref{rem:MCPOINT}).

The numerical approximation utilizes a splitting of the eighth order 
Problem~\ref{p:SPDC} into an equivalent system of three  second order equations
for the unknown functions  $u$, $w=\Delta u$ and $z=\Delta w$.
For this purpose, we formally rewrite the energy  $\cJJ(u)$ defined in \eqref{eq:RHT} in terms of $w$: 
\begin{equation} \label{eq:ENGW}
{\textstyle \frac{1}{2}} \int_\Omega \kappa_8(\Delta w)^2 + \kappa_6 |\nabla w|^2 + 
\kappa w^2 + \sigma |\nabla \Delta^{-1}w|^2 \; dx + \frac{1}{2\varepsilon} \sum_{k=1}^N (\delta_{X_k}w - r_k)^2.
\end{equation}
Note that the  corresponding Euler-Lagrange equation is fourth order in $w$.
 We impose essential boundary conditions $u=0$, $w=\Delta u=0$, and $z=\Delta^2 u=0$
 on $\partial \Omega$.
For  fixed $p \in (2,\infty)$ and $q \in (1,2)$ such that $1/p+1/q=1$,
we consider the variational problem to find
 $(u,w,z) \in H^1_0(\Omega) \times W^{1,p}_0(\Omega) \times W^{1,q}_0(\Omega)$ 
 such that the three equations
\begin{equation} \label{eq:SYSTEM}
\begin{array}{rl}
\displaystyle \int_\Omega -\kappa_8 \nabla z \cdot \nabla v + \kappa_6 \nabla w \cdot \nabla v + \kappa wv - \sigma uv \; dx \qquad &  \\
\displaystyle + \frac{1}{\varepsilon} \sum_{k=1}^N w(X_k)v(X_k) 
\hspace*{-2mm}&=  \displaystyle  \frac{1}{\varepsilon} \sum_{k=1}^N r_k v(X_k), \\[4mm]
\displaystyle \int_\Omega \nabla u \cdot \nabla v + wv \; dx  \displaystyle  \hspace*{-2mm} &=0, \\[4mm]
\displaystyle \int_\Omega \nabla w \cdot \nabla v + zv \; dx \displaystyle  \hspace*{-2mm} &=0
\end{array}
\end{equation} 
hold for all $v \in W^{1,p}_0(\Omega)$.

Regularity of solutions to Problem \ref{p:SPDC} implies that 
$u\in\VJ$ solves Problem~\ref{p:SPDC}, if and only if 
$(u,\Delta u, \Delta^2 u)$  solves the system \eqref{eq:SYSTEM}.
For details we refer to \cite{hobbsDiss16}.
The system \eqref{eq:SYSTEM} is finally discretized by $P^1$ finite elements. 
The penalty parameter is taken as 
$\varepsilon= 1 \times 10^{-8}$ in our computations.
For the material parameters we use $\kappa_8=1.23\times 10^{-6}$ 
and  $\kappa_6=1.11\times 10^{-3}$ or, equivalently,
fix the ratios $(\kappa_6/\kappa)^{1/2}=(\kappa_8/\kappa)^{1/4}=1/30$.
Selecting $\kappa=1$ and the same physical length scale 
as in the previous finite size particle case,
this means that both ratios correspond to $1 \hspace*{1pt}nm$ which is of order of 
the thickness of the membrane as suggested in  \cite[section 2]{BarFou03}. 
We also choose the computational
domain to be $\Omega=(-3,3)^2$. The values $r_k = \pm 20$ for the point constraints 
associated with circular particles of radius 0.1 
are obtained from the approximation \eqref{eq:PC}.

Figures \ref{point_curv_same} and \ref{point_curv_opp} plot the clipping to $(-1,1)^2$ of the approximate membrane displacement
obtained for equal and opposite curvature constraints, respectively, which correspond to equal and opposite 
orientations of particles. Investigating their interaction potential for
various values of membrane tension $\sigma$ 
in analogy to the previous section,
Figure~\ref{energy_pointcurv_same} shows that equally oriented particles repel each other for separation distances $R>0.2$. 
The strength depends upon membrane tension $\sigma$. 
For distances $R<0.2$ we observe attraction. For oppositely oriented particles the interaction is repulsive for small and attractive for 
larger separations as depicted in Figure~\ref{energy_pointcurv_opp}.
The strength of the attraction increases with $\sigma$. 
\begin{figure}
\begin{subfigure}[t]{0.45\linewidth}
\centering
\includegraphics[width=1\textwidth]{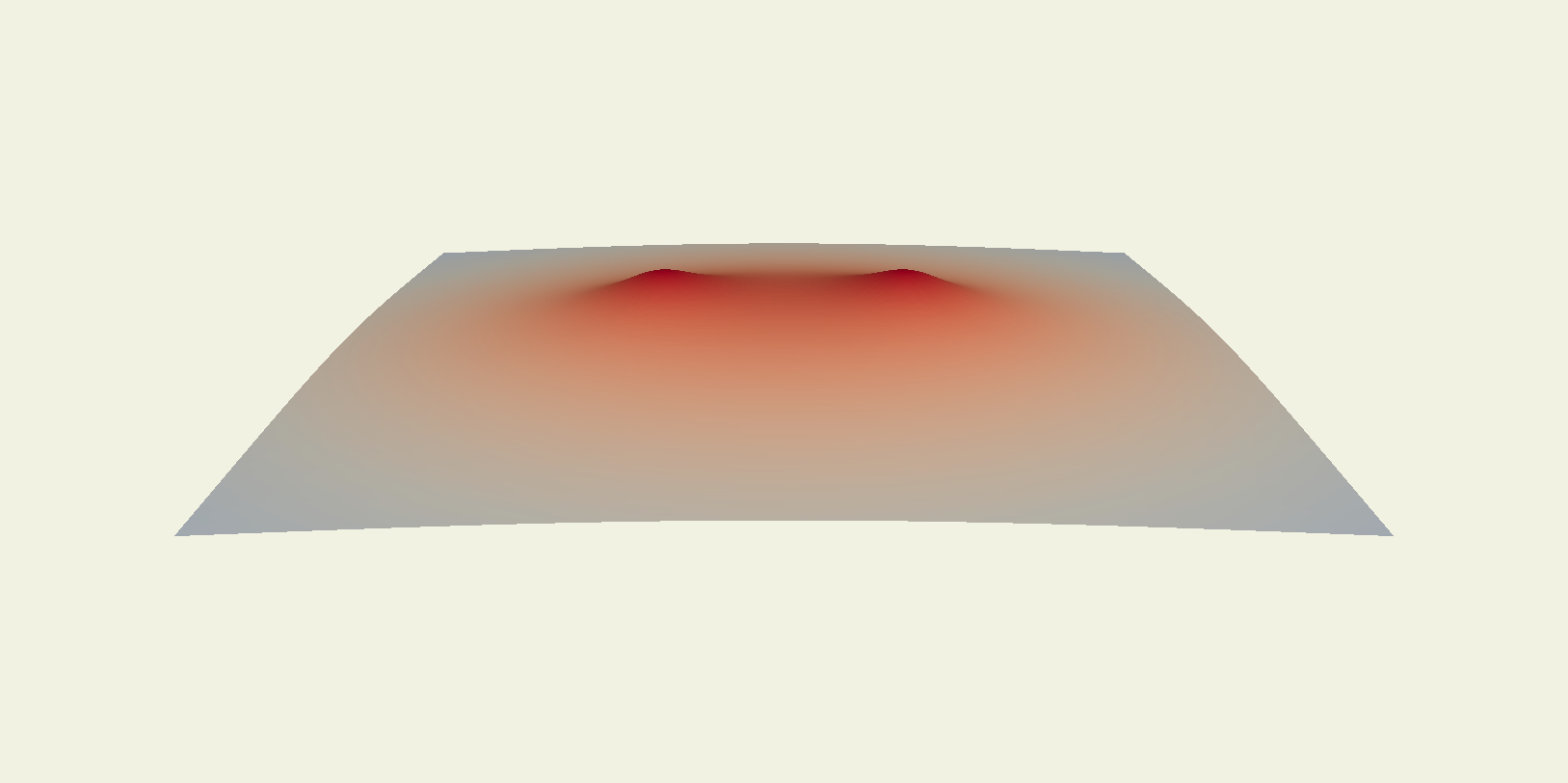}
\caption{equally oriented particles 
}
\label{point_curv_same}
\end{subfigure}
\begin{subfigure}[t]{0.45\linewidth}
\centering
\includegraphics[width=1\textwidth]{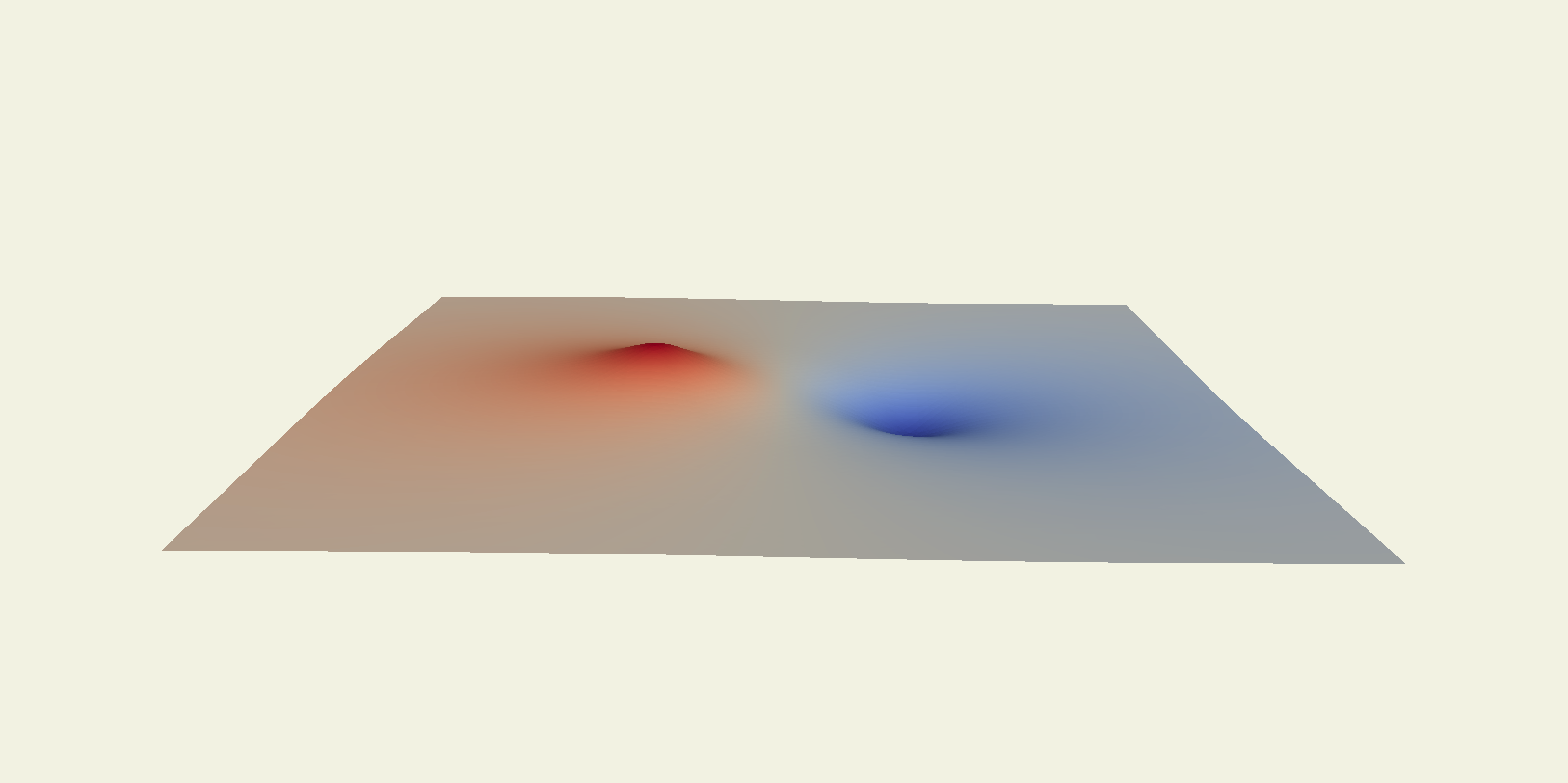}
\caption{oppositely oriented particles 
}
\label{point_curv_opp}
\end{subfigure}
\caption{Approximate membrane displacement for point mean curvature constraints.}\label{fig:POINT}
\end{figure}

\begin{figure}
\begin{subfigure}[b]{0.45\linewidth}
\centering
\includegraphics[angle=270]{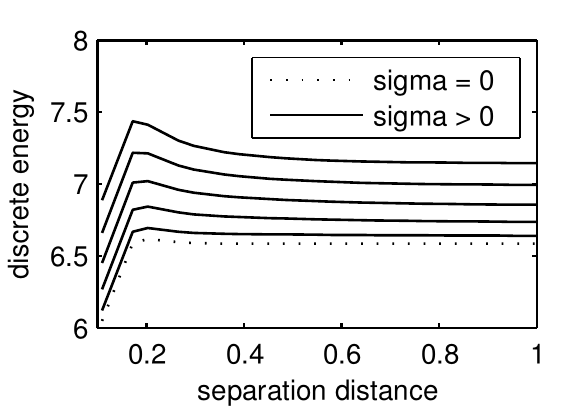}
\caption{equally oriented particles}
\label{energy_pointcurv_same}
\end{subfigure}
\begin{subfigure}[b]{0.45\linewidth}
\centering
\includegraphics[angle=270]{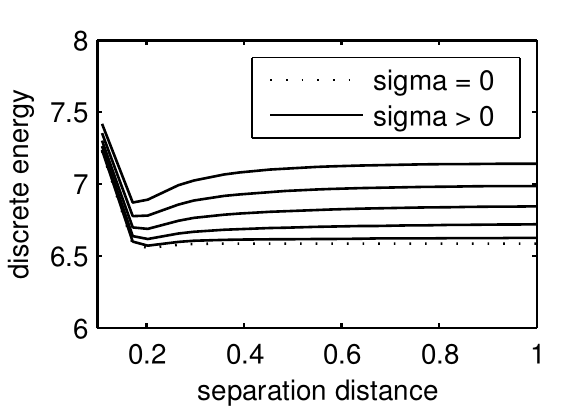}
\caption{oppositely oriented particles}
\label{energy_pointcurv_opp}
\end{subfigure}
\caption{Interaction potential for point mean curvature constraints over
 separation distance for $\sigma=0,1,4,9,16,25$ (bottom up).} \label{fig:INTERACP}
\end{figure}

\subsubsection{Discussion} 
For the point inclusion model, separation distances 
shorter than $R=0.2$ are physically impossible,
as they would represent situations where the particles with radius $r=0.1$ would overlap. 
This explains and contextualises the observed unphysical interactions for short distances $R<0.2$.
For comparison with the results obtained for the finite-sized circular particles, 
it is therefore sufficient to consider separation distances $R\geq 0.2$. 
The major difference is that the point model does not reproduce the strong repulsion 
at the limiting separation $R=0.2$. This is unsurprising as the point model 
introduces a cutoff length similar to the length the strong repulsion acts over. 
For separation distances above this cutoff i.e., for $R>0.3$, 
the observed interactions agree qualitatively and quantitatively as  depicted in
Figures~\ref{fig:INTERACP} and~\ref{fig:ENERGYCIRCLE}. 
A point inclusion counterpart to the  finite-sized model 
with elliptical particles would require
anisotropic functionals of the form \eqref{eq:CF} 
which are not considered here.

The interaction potential between two particles with circular cross-section 
has been intensively studied for more than 20 years (see, e.g., \cite{BarFou03, DomFouGal98, DomFou02,GouBruPin93,KimNeuOst98,MarMis02,
ParLub96,WeiKozHel98}). 
For finite-size particles
Weikl, Kozlov \& Helfrich~\cite{WeiKozHel98} 
found by analytical considerations
that in case of positive membrane tension, i.e., $\sigma>0$, 
the interaction depends on the relative orientation of the two particles: 
Equally oriented particles repel each other at all separation distances, 
whereas for particles with opposite orientation the interaction 
is repulsive at small and attractive at larger distances.
For $\sigma = 0$ the interaction is found to be repulsive at all distances independent of the particles' 
orientation, confirming earlier results for finite-size particles in \cite{GouBruPin93,KimNeuOst98,ParLub96}. 
Similar results have been obtained for point-like particles~\cite{BarFou03,DomFou02,MarMis02}.
These theoretical findings are in  accordance with our numerical computations.

Investigations of membrane-mediated interaction of particles 
with non-circular cross-sections are rare. 
In \cite{ParLub96} and \cite{KimNeuOst00} the effect of the particle shape on the character of interaction is studied. 
The authors of \cite{KimNeuOst00} consider a pair of identical particles 
whose elliptical cross-section is a small perturbation of the unit circle. 
They obtain that introducing the horizontal orientation of the particle as an additional degree of freedom,
 qualitatively changes the asymptotic interaction character from a repulsion to an attraction, which is 
 in agreement with a previous result in \cite{ParLub96} and with our numerical computations.

Note that well-known interaction laws of the form $1/R^4$ for circular and  $1/R^2$
for non-circular particles
are based on large distance asymptotics 
assuming an unbounded asymptotically flat membrane 
with particles separated by distances which are large 
compared to their size, i.e. $r\ll R$. 
Although our numerical experiments 
cover the complementary situation $r \approx R$,
it turns out that our numerical results  quantitatively reproduce the
asymptotical results given in \cite{WeiKozHel98}
even for small separation distances.

\subsection{Point Forces}

\subsubsection{Interaction potential for fixed locations}
We consider Problem \ref{pres_point} with point forces at fixed locations $X=(X_i)\in \overline{\Omega}$
with the solution space $V=H^2(\Omega)\cap H_0^1(\Omega)$. 
Similar to Subsection~\ref{subsec:PCCN}, our numerical approach is based on a splitting of the fourth order problem
\eqref{eq:POFOEQ} into two second order problems for the unknown functions 
$u$ and $w = \kappa\Delta u-\sigma u$.
For  fixed $p \in (2,\infty)$ and $q \in (1,2)$ such that $1/p+1/q=1$,
we consider the variational problem to find
$(u,w) \in H^1_0(\Omega) \times W^{1,q}_0(\Omega)$ such that
\begin{equation} \label{eq:SPLITPF}
\begin{array}{rl}
\displaystyle \int_\Omega \nabla w \cdot \nabla v \; dx \hspace*{-2mm}&= 
\displaystyle  -\sum_{i=1}^{N} \alpha_i \delta_{X_i}v \; dx, \\[5mm]
\displaystyle \int_\Omega \kappa \nabla u \cdot \nabla v + \sigma uv \; dx \hspace*{-2mm} 
&= \displaystyle  -\int_\Omega wv \; dx
\end{array}
\end{equation}
holds for all $v \in W^{1,p}_0(\Omega)$.
Due to the regularity of solutions to Problem \ref{pres_point}, $(u,w)$ solves \eqref{eq:SPLITPF},
 if and only if $u$ solves Problem \ref{pres_point} and $w=\kappa\Delta u -\sigma u$. For details we refer to \cite{hobbsDiss16}. We discretize the system \eqref{eq:SPLITPF} using $P^1$ finite elements.

Recall that  point forces of equal sign cluster to one point (see Corollary~\ref{cor:clustering_two_particle_problem}). 
Thus, we restrict the investigation of membrane-mediated interactions 
to the case $N=2$ and $\alpha_1<0<\alpha_2$.
We choose the same physical length scale as in Section~\ref{subsec:FBCC},
fix the same scaling of the energy by selecting $\kappa=1$,
and consider the domain $\Omega=\{x\in \R^2\st |x| < 1\}$ 
for all subsequent computations.
We also choose the forces $\alpha_1=-10$ and $\alpha_2=10$.
For a motivation of this choice, let us consider the length scale  $\kappa/|\alpha_i|$ and the typical value 
$\kappa\approx 20  \hspace*{1pt} \kappa_B T$
with the Boltzmann constant $\kappa_B$ and 
absolute temperature $T \approx  300 \hspace*{1pt} K$~\cite{EvaTurSen03}.
Then our selection corresponds to the strength $|\alpha_i| \approx 27  \hspace*{1pt} pN$. 
This is reasonable as forces applied by actin polymerisation are of the order 10pN~ \cite{AnaEhr07}.

To study the interaction potential between these two opposite forces, we fix $X_1=(0,0)$, allow $X_2$ to vary along the abscissa and compute the 
resulting approximate minimal energy $\cJ$ as 
a function of the separation distance $R$. 
This is done for a variety of values of $\sigma$
corresponding to interaction lengths $\sqrt{\kappa/\sigma}$ 
ranging from $1 \hspace*{1pt} nm$ to $30 \hspace*{1pt} nm$~\cite{EvaTurSen03}. 

\begin{figure}[ht!]
\includegraphics[angle=270]{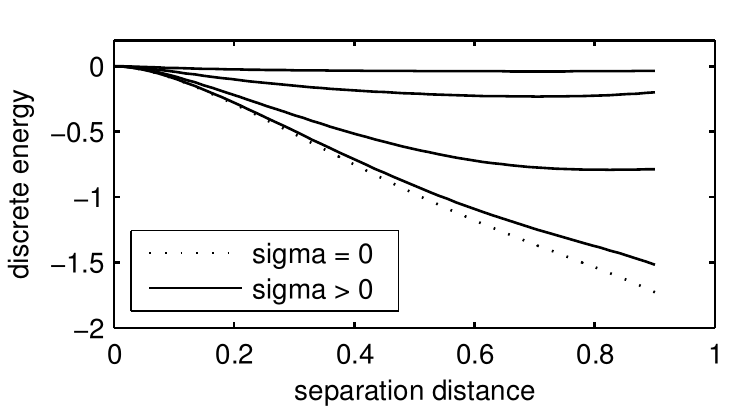}
\caption{Interaction potential for opposite point forces over separation distance for $\sigma=0,1,10,100,1000$ (bottom up).}
\label{opp_pointforce_vary_sigma}
\end{figure}  

The results depicted in Figure \ref{opp_pointforce_vary_sigma} show
that opposite forces repel each other and that this repulsion depends upon the ratio $\kappa/\sigma$. 
Increasing $\sigma$, i.e. decreasing the ratio $\kappa/\sigma$, yields a decrease in the distance 
over which the repulsive interaction plays a role. 
For $\sigma = 1$ the repulsion persists close to the boundary, 
whereas for higher values of $\sigma$ the repulsion has a shorter length scale and is, 
from a certain distance $R^*$ on,  dominated by the inwards force applied at the boundary. 
Note that this inwards force is a consequence of the (artificial) boundary condition $u=0$. 

\subsubsection{Discussion}
The above numerical findings could be related to the theoretical results 
derived in Section~\ref{sec:point_forces_vary}. 
According to Remark~\ref{rem:CLUST}, there are essentially two types of global minimizers
solving Problem \ref{point_force_glob_prob}.
A type 1 global minimizer is characterized by $X^+$, $X^-\in \Omega$  (case (i)) and  type 2 means that 
either $X^+$ or $X^-$ is located on $\partial \Omega$ (cases (ii) and (iii)).
The numerical results  shown in Figure~\ref{opp_pointforce_vary_sigma} indicate
that type 1 or type 2 minimizers occur for sufficiently small  or large ratio $\kappa/\sigma$, respectively.
\begin{figure}
\begin{subfigure}[b]{0.45\linewidth}
\centering
\includegraphics[width=1\textwidth]{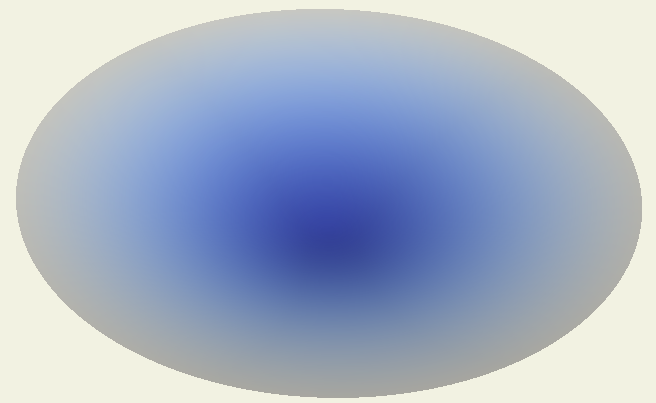}
\caption{type 1 global minimizer}
\label{point_forces_type1}
\end{subfigure}
\begin{subfigure}[b]{0.45\linewidth}
\centering
\includegraphics[width=1\textwidth]{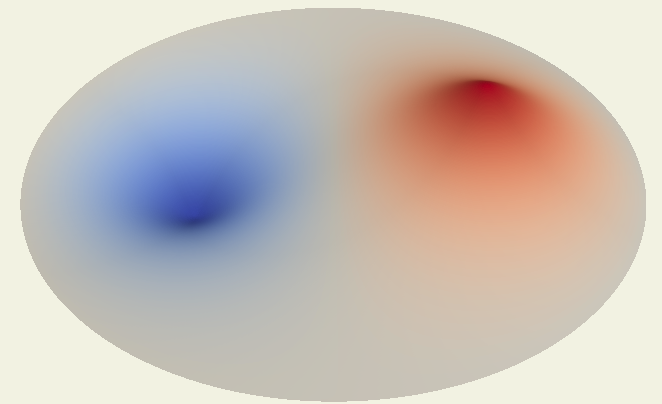}
\caption{type 2 global minimizer}
\label{point_forces_type2}
\end{subfigure}
\caption{Approximate membrane displacement for different types of global energy minimizers.}
\end{figure}
Figure \ref{point_forces_type1} shows a type 1 global minimizer solving
Problem \ref{point_force_glob_prob} for $\kappa=\sigma=1$ while Figure \ref{point_forces_type2} illustrates a type 2 global minimizer
occurring for $\kappa=1$, $\sigma=100$.
Recall that type 2 global minimizers only occur 
due to influence of the domain boundary. Thus, it is not surprising 
that there is no such result in the existing literature
that mostly concentrates on  
unbounded asymptotically flat membranes.
However, dependence of the length scale of repulsive interaction 
between particles which apply forces to the membrane
upon the ratio $\kappa/\sigma$ is well known 
and discussed, e.g.,  in \cite{EvaTurSen03}. 

%

\section{Appendix}\label{sec:APPENDIX}

\subsection{Constrained minimization}

In this section we collect some results on linearly constrained
minimization problems and their approximation by penalization.
Throughout this section, $J$ denotes the quadratic  functional
\begin{align*}
    J(v) = \textstyle \frac{1}{2} a(v,v) - \ell(v), \quad v\in V,
\end{align*} 
defined on a Hilbert space  $V$  with norm $\|\wc\|$. We assume that the bilinear form 
$a(\wc,\wc): V \times V \to \R$ is symmetric and elliptic in the sense that 
\begin{align}\label{eq:ELLAP}
    \gamma \|v\|^2 \leq a(v,v), \qquad a(v,w) \leq \Gamma \|v\| \|w\| \qquad \forall v,w \in V,
\end{align}
holds with positive constants $\gamma, \Gamma \in \R$, and $\ell\in V'$.
We consider  minimization of $J$ on 
the affine subspace $V_b \subset V$, given by
\begin{align} \label{eq:CONSSP}
    V_b = \{v \in V \st Tv \in X_0 + b\}.
\end{align}
Here, $X_0\subset X$ is a complete subspace of a pre-Hilbert space
$X$ with scalar product $(\wc,\wc)_X$
and norm $\|\wc\|_X$, $T:V \to X$ is a bounded linear operator, 
and $b \in X$.  Note that
\begin{equation} \label{eq:PVB}
    V_b = \{v \in V \st P(Tv-b) = 0\},
\end{equation}
where $P:X \to X$ is the orthogonal projection
with $\op{ker} P = X_0$.

\begin{problem}[Constrained minimization]\ \\ \label{eq:A_CONSTR_MIN}%
    Find $u\in V_b$ minimizing the energy $J$ on $V_b$.
\end{problem}
It is well-known that Problem~\ref{eq:A_CONSTR_MIN} is equivalent to  find $u\in V_b$ such that
\begin{align} \label{eq:APVAR}
     a(u,v) = \ell(v) \qquad \forall v \in V_0,
\end{align}
where $V_0 = \{v \in V \st T(v) \in X_0\}$ is obtained for $b=0$.
As  $T$ is continuous and  $X_0$ is closed, the affine subspace $V_b$
is also closed. Hence,  the Lax-Milgram lemma provides the following existence and uniqueness result.
\begin{proposition}\label{prop:A_CONSTR_MIN}
Problem~\ref{eq:A_CONSTR_MIN} has a unique solution $u\in V_b$, if and only if
$V_b \neq \emptyset$.
\end{proposition}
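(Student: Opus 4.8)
The statement to prove is Proposition~\ref{prop:A_CONSTR_MIN}: Problem~\ref{eq:A_CONSTR_MIN} has a unique solution $u \in V_b$ if and only if $V_b \neq \emptyset$. Given the hypotheses—symmetry and ellipticity \eqref{eq:ELLAP} of $a(\wc,\wc)$, boundedness of $T$, completeness of $X_0$, and $\ell \in V'$—this is essentially a clean application of the Lax--Milgram lemma on the affine subspace $V_b$, and indeed the excerpt announces exactly this. The plan is to verify the geometric and analytic prerequisites and then invoke Lax--Milgram, handling both directions of the equivalence.

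First I would dispose of the easy direction: if Problem~\ref{eq:A_CONSTR_MIN} has a solution, then trivially $V_b \neq \emptyset$, since a minimizer is by definition an element of $V_b$. The substance is the converse. Assume $V_b \neq \emptyset$ and fix some $u_0 \in V_b$. The key structural fact is that $V_b = u_0 + V_0$, where $V_0 = \{v \in V \st Tv \in X_0\}$ is a genuine linear subspace. I would then argue that $V_0$ is a closed subspace of $V$: since $X_0$ is complete, hence closed in $X$, and $T$ is bounded (continuous), $V_0 = T^{-1}(X_0)$ is the preimage of a closed set under a continuous map, so it is closed; equivalently, using \eqref{eq:PVB}, $V_0 = \ker(P \circ T)$ is the kernel of a bounded operator. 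As a closed subspace of the Hilbert space $V$, $V_0$ is itself a Hilbert space under the inherited inner product.

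Next I would reduce the constrained problem to an unconstrained variational problem on $V_0$. Writing $u = u_0 + w$ with $w \in V_0$, minimizing $J(u) = \frac{1}{2}a(u,u) - \ell(u)$ over $V_b$ is equivalent to minimizing $w \mapsto \frac{1}{2}a(w,w) - \tilde{\ell}(w)$ over $V_0$, where $\tilde{\ell}(w) = \ell(w) - a(u_0, w)$ is a bounded linear functional on $V_0$ (bounded because $\ell \in V'$ and $a(u_0,\wc)$ is bounded by \eqref{eq:ELLAP}). The bilinear form $a(\wc,\wc)$ restricted to $V_0 \times V_0$ remains symmetric, bounded, and coercive with the same constants $\gamma, \Gamma$ from \eqref{eq:ELLAP}. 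The Lax--Milgram lemma therefore yields a unique $w \in V_0$ with $a(w,v) = \tilde{\ell}(v)$ for all $v \in V_0$, equivalently a unique $u = u_0 + w \in V_b$ satisfying \eqref{eq:APVAR}. Since $a$ is symmetric and coercive, this variational solution coincides with the unique minimizer of $J$ over $V_b$, establishing existence and uniqueness.

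I do not anticipate a serious obstacle here, as every hypothesis needed for Lax--Milgram is supplied directly. The only point requiring a little care is the closedness of $V_b$, which rests on the completeness of $X_0$ together with continuity of $T$; I would flag this explicitly rather than treat it as automatic, since it is precisely what guarantees $V_0$ inherits the Hilbert space structure on which Lax--Milgram operates. I would also note in passing the equivalence of the minimization problem with the variational equality \eqref{eq:APVAR}, which follows from symmetry of $a$ via the standard first-order optimality argument, so that the representation \eqref{eq:APVAR} is available for later use.
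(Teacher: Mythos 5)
Your proof is correct and follows essentially the same route as the paper: closedness of $V_b$ (from continuity of $T$ and completeness, hence closedness, of $X_0$) followed by the Lax--Milgram lemma. The only difference is that you spell out the standard translation $V_b = u_0 + V_0$ with the modified functional $\tilde{\ell}(\wc) = \ell(\wc) - a(u_0,\wc)$, a detail the paper leaves implicit in its appeal to Lax--Milgram on a closed affine subspace.
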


In order to derive a penalty approximation of the constraints $Tv \in X_0+b$ occurring in 
\eqref{eq:CONSSP}, we consider the following  minimization problem.
\begin{problem}[Parametrized minimization problem]\ \\ \label{eq:A_CONSTR_MIN_PARA}%
Find $(u,x)\in V\times X_0$ minimizing $J$ on $\{ (v,y)\in V\times X_0\st Tv=y+b\}$.
\end{problem}
\begin{lemma}\label{lem:A_CONST_EQUIV}
  The minimization Problems  \ref{eq:A_CONSTR_MIN} and~\ref{eq:A_CONSTR_MIN_PARA}
    are equivalent in the sense that
    $(u,x)$ solves Problem~\ref{eq:A_CONSTR_MIN_PARA},
    if and only if $u$ solves Problem~\ref{eq:A_CONSTR_MIN} and $x=Tu-b$.
\end{lemma}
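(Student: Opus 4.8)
The plan is to establish the claimed equivalence by directly relating the feasible sets and objective values of the two problems. The key observation is that the constraint set in Problem~\ref{eq:A_CONSTR_MIN_PARA}, namely $\{(v,y)\in V\times X_0 \st Tv = y+b\}$, is essentially a graph: for each admissible pair, the second component $y$ is completely determined by the first, since $Tv = y+b$ forces $y = Tv - b$. The subtlety is that admissibility requires this forced value $y = Tv-b$ to actually lie in $X_0$, and by the characterization \eqref{eq:PVB} this is precisely the condition $P(Tv-b)=0$ defining $V_b$. So the projection onto the first component gives a bijection between the feasible set of Problem~\ref{eq:A_CONSTR_MIN_PARA} and $V_b$.

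First I would make this bijection explicit. Given $(v,y)$ feasible for Problem~\ref{eq:A_CONSTR_MIN_PARA}, the constraint $Tv = y+b$ with $y\in X_0$ gives $Tv - b = y \in X_0$, hence $Tv \in X_0 + b$, so $v \in V_b$; moreover $y = Tv-b$ is forced. Conversely, given $v\in V_b$, we have $Tv \in X_0+b$, so $y\colonequals Tv-b \in X_0$, and the pair $(v,y)$ satisfies the constraint $Tv = y+b$ and lies in $V\times X_0$. This shows the feasible set of Problem~\ref{eq:A_CONSTR_MIN_PARA} is exactly $\{(v, Tv-b) \st v\in V_b\}$.

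Next I would check that the objective values agree along this correspondence. Since $J(v) = \tfrac12 a(v,v) - \ell(v)$ depends only on the first component $v$ and not on $y$, we have that minimizing $J$ over the feasible set of Problem~\ref{eq:A_CONSTR_MIN_PARA} is the same as minimizing $J(v)$ over $v\in V_b$, which is exactly Problem~\ref{eq:A_CONSTR_MIN}. Consequently, $(u,x)$ minimizes $J$ over $\{(v,y)\st Tv=y+b\}$ if and only if $u$ minimizes $J$ over $V_b$ and $x$ is the forced value $x = Tu-b$. This yields both directions of the stated equivalence.

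I do not expect any genuine obstacle here, as the argument is purely a reparametrization: the whole content is recognizing that the auxiliary variable $y$ is a redundant degree of freedom pinned down by the constraint, and that its membership in $X_0$ encodes precisely the defining condition \eqref{eq:PVB} of $V_b$. The only point requiring a moment's care is confirming that $y=Tu-b$ indeed lands in $X_0$ (so that the pair is genuinely feasible rather than merely satisfying the linear relation), which is exactly what $u\in V_b$ guarantees via \eqref{eq:PVB}. Since $J$ is independent of $y$, no comparison of energies across different $y$ is needed, and the equivalence follows immediately from the bijection of feasible sets.
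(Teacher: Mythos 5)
Your proof is correct and follows essentially the same route as the paper's: the paper's (very terse) proof likewise observes that $v\in V_b$ if and only if $Tv=y+b$ for some $y\in X_0$, so the admissible sets coincide and the auxiliary variable is forced to be $x=Tu-b$. Your write-up simply makes explicit the bijection of feasible sets and the independence of $J$ from the second component, which the paper leaves implicit.
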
 
\begin{proof}
   As $v \in V_b$  $\Leftrightarrow$ $Tv=y+b$ with some $y\in X_0$,
   the sets of admissible $v$ are the  same for both minimization problems.
\end{proof} 
A penalty approximation of  Problem~\ref{eq:A_CONSTR_MIN} now reads  as follows.
\begin{problem}[Penalized parametrized minimization problem]\ \\ \label{eq:A_PENALIZED_MIN_PARA}%
Find $ (u_{\varepsilon},x_\varepsilon)\in V\times X_0$ minimizing the energy
\[
    J(u_\varepsilon) + \tfrac{1}{2\varepsilon} \|T u_\varepsilon -(x_\varepsilon+b)\|_X^2.
\]
on $V\times X_0$ with a given penalty parameter $\varepsilon >0$.  
\end{problem}
The associated variational formulation of
Problem~\ref{eq:A_PENALIZED_MIN_PARA} is given by
\begin{align}\label{eq:A_PENALIZED_VAR_PARA1}
    a(u_\varepsilon,v)
    + \tfrac{1}{\varepsilon} (Tu_\varepsilon,Tv)_X - \tfrac{1}{\varepsilon}(x_\varepsilon,Tv)_X
    &= \tfrac{1}{\varepsilon}(b,Tv)_X + \ell(v) &&\forall v \in V,\\ \label{eq:A_PENALIZED_VAR_PARA2}
    - \tfrac{1}{\varepsilon}(Tu_\varepsilon,y)_X + \tfrac{1}{\varepsilon} (x_\varepsilon,y)_X
    &= -\tfrac{1}{\varepsilon}(b,y)_X &&\forall y \in X_0.
\end{align}
The orthogonal projection  $P: X \to X$ with  $\op{ker} P = X_0$ 
was used to characterize the solution space $V_b$ according to \eqref{eq:PVB}.
Now we can use the projection $P$ to characterize
the solution of Problem~\ref{eq:A_PENALIZED_MIN_PARA} as follows.

\begin{proposition}\label{lem:A_PENALIZED_MIN_EQIV}
    The pair $(u_\varepsilon,x_\varepsilon)$ solves  Problem~\ref{eq:A_PENALIZED_MIN_PARA},
    if and only if $u_\varepsilon \in V$ is  the unique mini\-mizer of the energy
    \[
        J(u_\varepsilon) + \tfrac{1}{2\varepsilon} \|P(T u_\varepsilon -b)\|_X^2
    \]
    on $V$ or, equivalently,
    \begin{align} \label{eq:A_PENALIZED_VAR_PROJ}
        a(u_\varepsilon,v) + \tfrac{1}{\varepsilon} (PTu_\varepsilon,PTv)_X &= \tfrac{1}{\varepsilon}(Pb,PTv)_X + \ell(v)
        &&\forall v \in V
    \end{align}
    and  $x_\varepsilon = (I-P)(Tu_\varepsilon-b)$.
\end{proposition}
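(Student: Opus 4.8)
The plan is to exploit the orthogonality built into the projection $P$ in order to split the penalty term and minimize out the auxiliary variable $x_\varepsilon$ explicitly. Since $P:X\to X$ is the orthogonal projection with $\op{ker}P=X_0$, its range is the orthogonal complement $X_0^\perp$, while $I-P$ is the orthogonal projection onto $X_0$; in particular $(I-P)(Tu_\varepsilon-b)\in X_0$, so it is an admissible choice for $x_\varepsilon$.

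First I would fix $u_\varepsilon\in V$ and abbreviate $z=Tu_\varepsilon-b$. Because $x_\varepsilon\in X_0=\op{ker}P$ we have $Px_\varepsilon=0$ and $(I-P)x_\varepsilon=x_\varepsilon$, so that $z-x_\varepsilon=Pz+\bigl((I-P)z-x_\varepsilon\bigr)$ with $Pz\in X_0^\perp$ and $(I-P)z-x_\varepsilon\in X_0$. The two summands are orthogonal in $X$, which yields the Pythagorean identity
\[
    \|Tu_\varepsilon-(x_\varepsilon+b)\|_X^2
    =\|P(Tu_\varepsilon-b)\|_X^2+\|(I-P)(Tu_\varepsilon-b)-x_\varepsilon\|_X^2 .
\]
The first term is independent of $x_\varepsilon$, so minimizing the penalized energy over $x_\varepsilon\in X_0$ for fixed $u_\varepsilon$ reduces to minimizing the second term; its unique minimizer is $x_\varepsilon=(I-P)(Tu_\varepsilon-b)$, at which it vanishes. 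Substituting back shows that Problem~\ref{eq:A_PENALIZED_MIN_PARA} is equivalent to minimizing $J(u_\varepsilon)+\tfrac{1}{2\varepsilon}\|P(Tu_\varepsilon-b)\|_X^2$ over $u_\varepsilon\in V$ alone, together with the stated formula for $x_\varepsilon$.

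Next I would establish that the reduced problem is uniquely solvable and characterized by \eqref{eq:A_PENALIZED_VAR_PROJ}. The reduced energy is quadratic in $u_\varepsilon$ with associated bilinear form $a(u,v)+\tfrac{1}{\varepsilon}(PTu,PTv)_X$, which is symmetric, bounded (by continuity of $T$ and $P$), and coercive (since $a$ is elliptic in the sense of \eqref{eq:ELLAP} and the penalty term is nonnegative). Hence the Lax--Milgram lemma gives existence and uniqueness of the minimizer, and computing the G\^ateaux derivative of the reduced energy yields exactly \eqref{eq:A_PENALIZED_VAR_PROJ}, where self-adjointness of $P$ is used to place $P$ on both arguments of the scalar product.

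The only genuine subtlety is tracking which subspace each projection lands in: one must use that $\op{ker}P=X_0$ forces $P$ to project onto $X_0^\perp$ and hence $I-P$ onto $X_0$, so that $(I-P)(Tu_\varepsilon-b)$ is truly admissible as $x_\varepsilon$ and the cross term in the expansion of $\|z-x_\varepsilon\|_X^2$ vanishes. Beyond this structural observation, the argument is a routine orthogonal decomposition followed by an application of Lax--Milgram.
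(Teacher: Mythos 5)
Your proof is correct, but it takes a genuinely different route from the paper's. The paper argues entirely at the level of the Euler--Lagrange system: it starts from the coupled variational equations \eqref{eq:A_PENALIZED_VAR_PARA1}--\eqref{eq:A_PENALIZED_VAR_PARA2} of the parametrized problem, reads off $x_\varepsilon = (I-P)(Tu_\varepsilon-b)$ from the second equation, then inserts the particular test elements $y=(I-P)Tv$ and adds the two equations to produce \eqref{eq:A_PENALIZED_VAR_PROJ}; the converse direction reverses these algebraic manipulations, with Lax--Milgram invoked for the projected equation. You instead work at the level of energies: the Pythagorean splitting
\[
    \|Tu_\varepsilon-(x_\varepsilon+b)\|_X^2
    =\|P(Tu_\varepsilon-b)\|_X^2+\|(I-P)(Tu_\varepsilon-b)-x_\varepsilon\|_X^2
\]
lets you minimize out $x_\varepsilon\in X_0$ explicitly, which immediately identifies both the optimal $x_\varepsilon$ and the reduced (projected) energy, after which Lax--Milgram and the first-order condition give uniqueness and \eqref{eq:A_PENALIZED_VAR_PROJ}. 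Your partial-minimization argument is more elementary and arguably more transparent: it explains structurally why the auxiliary variable decouples, and it never needs the coupled variational system. The paper's computation, on the other hand, stays in the variational-equation format in which the penalized problems are stated and later used (e.g., in the convergence proof of Proposition~\ref{prop:A_PENALTY_CONV}), so the projected equation emerges in exactly the form needed downstream. One cosmetic caveat in your write-up: in a pre-Hilbert space $X$ the range of $P$ need not be all of $X_0^\perp$; what your argument actually uses is only that $\op{ran}P\perp X_0$ and $(I-P)z\in X_0$, both of which do hold for the orthogonal projection with $\op{ker}P=X_0$, so the Pythagorean identity and the admissibility of $x_\varepsilon=(I-P)(Tu_\varepsilon-b)$ are unaffected.
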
 
\begin{proof}
    Let $(u_\varepsilon,x_\varepsilon)$ be a solution to Problem~\ref{eq:A_PENALIZED_MIN_PARA}.
    Then~\eqref{eq:A_PENALIZED_VAR_PARA2} implies
    \begin{align} \label{eq:JKL}
        \tfrac{1}{\varepsilon} (x_\varepsilon,y)_X
        = \tfrac{1}{\varepsilon}((I-P)(Tu_\varepsilon-b),y)_X
        \qquad \forall y \in X_0,
    \end{align}
    which, by $(I-P)(Tu_\varepsilon-b) \in X_0$, yields $x_\varepsilon = (I-P)(Tu_\varepsilon-b)$.
    Let $v\in V$. Inserting $y = (I-P)Tv \in X_0$ into \eqref{eq:JKL} we get
    \begin{align}\label{eq:A_PENALIZED_MIN_EQIV_1}
        \tfrac{1}{\varepsilon} (x_\varepsilon,Tv)_X
        = \tfrac{1}{\varepsilon}((I-P)(Tu_\varepsilon-b),Tv)_X.
    \end{align}
    Adding this equation  to~\eqref{eq:A_PENALIZED_VAR_PARA1} provides \eqref{eq:A_PENALIZED_VAR_PROJ}.

    Conversely, exploiting continuity of $T$ and $P$ there is a unique solution 
    $u_\varepsilon$  of~\eqref{eq:A_PENALIZED_VAR_PROJ} by the Lax-Milgram lemma. 
    Let $x_\varepsilon = (I-P)(Tu_\varepsilon-b)$.
    Applying $\tfrac{1}{\varepsilon}(\wc,y)_X$ for arbitrary $y \in X_0$
    to $x_\varepsilon = (I-P)(Tu_\varepsilon-b)$
    gives~\eqref{eq:A_PENALIZED_VAR_PARA2}, which, for $y=(I-P)Tv$,
    implies~\eqref{eq:A_PENALIZED_MIN_EQIV_1}.
    Subtracting this  equation from~\eqref{eq:A_PENALIZED_VAR_PROJ} we get~\eqref{eq:A_PENALIZED_VAR_PARA1}.
\end{proof} 

Now we show convergence for $\varepsilon \to 0$.

\begin{proposition}\label{prop:A_PENALTY_CONV}
    Assume that $V_b \neq \emptyset$ and let $u$, $u_\varepsilon$ denote the solutions of
    Problem~\ref{eq:A_CONSTR_MIN} and \ref{eq:A_PENALIZED_MIN_PARA}, respectively.
    Then $(u_\varepsilon,x_\varepsilon) \to (u,x)$  converges in $V\times X$ as $\varepsilon \to 0$.
    Moreover, $\|PT (u_\varepsilon - u)\|_X^2 \leq  C \varepsilon$  holds with a positive constant $C$,
    depending only on  $\gamma$, $\Gamma$, and $\ell$.
\end{proposition}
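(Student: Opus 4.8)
The plan is to establish convergence by exploiting the fact that $u_\varepsilon$ minimizes the penalized energy while $u$ minimizes $J$ over the feasible set $V_b$, and that $u$ itself is feasible, i.e.\ $P(Tu-b)=0$. First I would recall from Proposition~\ref{lem:A_PENALIZED_MIN_EQIV} that the parametrized penalized problem is equivalent to minimizing the reduced functional $J(v) + \tfrac{1}{2\varepsilon}\|P(Tv-b)\|_X^2$ over $V$, so it suffices to control $u_\varepsilon$ in this form and then recover $x_\varepsilon = (I-P)(Tu_\varepsilon - b)$ at the end. The key comparison is the energy inequality
\[
    J(u_\varepsilon) + \tfrac{1}{2\varepsilon}\|P(Tu_\varepsilon - b)\|_X^2 \leq J(u),
\]
which holds because $u$ is admissible for the penalized problem (its penalty term vanishes since $u\in V_b$) and $u_\varepsilon$ is the minimizer.

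From this single inequality I would extract two facts. Dropping the nonnegative penalty term gives $J(u_\varepsilon)\leq J(u)$, which together with coercivity \eqref{eq:ELLAP} yields a uniform bound $\|u_\varepsilon\|\leq M$ independent of $\varepsilon$; keeping the penalty term instead gives $\tfrac{1}{2\varepsilon}\|P(Tu_\varepsilon-b)\|_X^2 \leq J(u) - J(u_\varepsilon) \leq J(u) + \tfrac{1}{2}\Gamma M^2 + \|\ell\| M$, so that $\|P(Tu_\varepsilon-b)\|_X^2 \leq C\varepsilon$ with $C$ depending only on $\gamma$, $\Gamma$, and $\ell$. This already delivers the claimed rate for the constraint residual once I show the limit is $u$, since $P(Tu-b)=0$ gives $\|PT(u_\varepsilon-u)\|_X^2 = \|P(Tu_\varepsilon-b)\|_X^2 \leq C\varepsilon$.

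Next I would upgrade weak convergence to strong convergence in $V$. By the uniform bound, a subsequence $u_\varepsilon \rightharpoonup w$ weakly in $V$; the constraint residual estimate forces $P(Tw-b)=0$, so $w\in V_b$ by continuity of $T$ and $P$. To identify $w=u$ and obtain norm convergence, I would test the variational equation \eqref{eq:A_PENALIZED_VAR_PROJ} with $v=u_\varepsilon - u$, using $PTu = Pb$ to simplify the penalty contribution, and combine with \eqref{eq:APVAR} for $u$ evaluated on the admissible direction. The cleanest route is the coercivity estimate
\[
    \gamma \|u_\varepsilon - u\|^2 \leq a(u_\varepsilon - u, u_\varepsilon - u),
\]
expanding the right-hand side and controlling each term by the already-established constraint residual bound together with continuity of the forms; the cross terms involving $PT(u_\varepsilon-u)$ are $O(\sqrt{\varepsilon})$, and the remaining terms vanish because $u$ is feasible. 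This shows $u_\varepsilon\to u$ strongly in $V$, hence the weak limit is unique and the full family converges.

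The main obstacle I anticipate is the identification step: handling the penalty term in the variational equation so that testing with $u_\varepsilon-u$ genuinely closes the estimate, since $u_\varepsilon-u$ need not lie in $V_0$ and thus \eqref{eq:APVAR} cannot be applied to it directly. The resolution is to use $PTu = Pb$ to rewrite $\tfrac{1}{\varepsilon}(PT u_\varepsilon - Pb, PT(u_\varepsilon-u))_X = \tfrac{1}{\varepsilon}(PT(u_\varepsilon-u), PT(u_\varepsilon-u))_X \geq 0$, so the penalty term contributes with a favorable sign and can simply be discarded in the coercivity lower bound. Finally, convergence $x_\varepsilon\to x$ in $X$ follows from $x_\varepsilon=(I-P)(Tu_\varepsilon-b)\to (I-P)(Tu-b)=x$ by continuity of $T$ and $I-P$ together with the strong convergence $u_\varepsilon\to u$.
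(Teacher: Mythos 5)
Your proof is correct, and its overall skeleton coincides with the paper's: reduction via Proposition~\ref{lem:A_PENALIZED_MIN_EQIV}, a uniform bound plus an $O(\varepsilon)$ bound on the constraint residual, extraction of a weakly convergent subsequence whose limit $w$ satisfies $PTw=Pb$ (i.e.\ $w\in V_b$), identification through \eqref{eq:APVAR} applied to $u-w\in V_0$, and finally $x_\varepsilon=(I-P)(Tu_\varepsilon-b)\to(I-P)(Tu-b)=x$. The one genuine difference is where the two key estimates come from. The paper obtains both in a single stroke by testing \eqref{eq:A_PENALIZED_VAR_PROJ} with $v=u_\varepsilon-u$ and keeping the penalty term on the coercive side,
\begin{align*}
\gamma\|u_\varepsilon-u\|^2+\tfrac{1}{\varepsilon}\|PT(u_\varepsilon-u)\|_X^2
\leq a(u,u-u_\varepsilon)-\ell(u-u_\varepsilon)\leq c\,\|u_\varepsilon-u\|,
\end{align*}
which yields the uniform bound $\|u_\varepsilon-u\|\leq c/\gamma$ and the rate simultaneously. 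You instead derive them from the minimization property $J(u_\varepsilon)+\tfrac{1}{2\varepsilon}\|P(Tu_\varepsilon-b)\|_X^2\leq J(u)$. Your variant is equally valid and slightly more robust---it uses only that $u_\varepsilon$ is a minimizer, not its Euler--Lagrange equation, so it would survive nonquadratic convex perturbations of $J$---at the price of a second pass through the variational equation for the strong-convergence step, which is then exactly the paper's computation (you discard the nonnegative penalty term where the paper retains it; this is immaterial).

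Two small points. First, your phrase ``the remaining terms vanish because $u$ is feasible'' is imprecise: the right-hand side $a(u,u-u_\varepsilon)-\ell(u-u_\varepsilon)$ does not vanish for fixed $\varepsilon$; it tends to zero only along a weakly convergent subsequence, because the weak limit $w$ lies in $V_b$, so that $u-w\in V_0$ and \eqref{eq:APVAR} applies---feasibility of $u$ alone is not enough, since $u-u_\varepsilon\notin V_0$ in general. Your surrounding argument (weak subsequence, then the subsequence principle for the full family) shows you have this straight, so this is wording, not a gap. Second, your constant $C$ involves $J(u)$ and $M$, hence depends on $b$ and $T$ through $u$, not only on $(\gamma,\Gamma,\ell)$ as you (and the statement) assert. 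The paper's proof has the same defect, concealed in the step $c=\Gamma\|u\|+\|\ell\|\leq(\Gamma/\gamma+1)\|\ell\|$, which presumes $\|u\|\leq\|\ell\|/\gamma$---valid for the unconstrained minimizer but false in general for the constrained one (take $\ell=0$ and $b\notin X_0$, so that $u\neq 0$). So neither derivation actually supports the claimed dependence of $C$; yours is merely more transparent about what the constant really involves.
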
 
\begin{proof}  Proposition~\ref{lem:A_PENALIZED_MIN_EQIV}
implies that $u_\varepsilon$ solves \eqref{eq:A_PENALIZED_VAR_PROJ}.
    Testing \eqref{eq:A_PENALIZED_VAR_PROJ} with $v=u_\varepsilon-u$,
    adding $a(u,u-u_\varepsilon)$,  using $PTu=Pb$ and coercivity of $a(\wc,\wc)$, we obtain
    \begin{align}\label{eq:A_LEM_PENALIZED_CONV_ESTIMATE}
        \begin{split}
            \gamma\|u_{\varepsilon}-u\|^2 &\leq
            a(u_\varepsilon-u,u_\varepsilon-u) + \tfrac{1}{\varepsilon} \|PT(u_\varepsilon-u)\|_X^2 \\
            &= a(u,u-u_\varepsilon)- \ell(u-u_\varepsilon)
            \leq c\|u_\varepsilon-u\|
        \end{split}
    \end{align}
    with $c=\Gamma \|u\| + \| \ell \|\leq (\Gamma/\gamma + 1)\| \ell \|$.  
    Thus, $\|u_\varepsilon-u\| \leq \tfrac{c}{\gamma}$ 
    and  $\|PT(u_\varepsilon-u)\|_X^2 \leq C \varepsilon$ holds with $C=\tfrac{c^2}{\gamma}$.
    As a consequence, $PTu_\varepsilon \to PTu$ converges in $X$ as $\varepsilon \to 0$.
    
    Now let $\varepsilon_n\to 0$. Then, boundedness of $\|u_{\varepsilon_n}\|$ implies existence
    of  $\bar{u}\in V$ and of a  subsequence $u_{\varepsilon_{n_k}}\rightharpoonup \bar{u}$  converging weakly
    in $V$.
    Since $PT$ is linear and continuous,  $PT$ is also weakly continuous.
    Hence,  strong convergence $PTu_\varepsilon \to PTu$  in $X$ implies $PT\bar{u}=PTu$
    and thus $u-\bar{u} \in V_0$.
    Therefore, exploiting $u_{\varepsilon_{n_k}} \rightharpoonup \bar{u}$ and~\eqref{eq:A_LEM_PENALIZED_CONV_ESTIMATE}
    we obtain
    \begin{align*}
        \gamma \|u_{\varepsilon_{n_k}}-u\|^2
        \leq a(u,u-u_{\varepsilon_{n_k}}) - \ell(u-u_{\varepsilon_{n_k}})
        \rightarrow a(u,u-\bar{u}) - \ell(u-\bar{u})=0.
    \end{align*} 
    for $k\to \infty$. Since this holds for any weakly convergent subsequence, 
    we have shown $u_\varepsilon \to u$ in $V$.
    Now continuity of $P$ and $T$ provides $x_\varepsilon \to x$ in $X$.
\end{proof}

Note, that constraints of the form $Tu=b$ are covered by the choice  $X_0=\{0\}$.

\subsection{Global minimizers}
We now consider a generalization of Problem~\ref{prob:APPFINITEDIM} 
by parametrizing the functional $T$ over a suitable set $\cM$.
More precisely, for a given metric space $\cM$,
and a given mapping $T:\cM \to L(V,X)$, with $L(V,X)$ denoting
the space of all bounded linear mappings from $X$ to $V$,
we define the admissible set 
\begin{align}\label{eq:A_PARAM_SOL_MANIFOLD}
    \cW &= \{(v,y) \in V \times \cM \st v \in V_{b,y}\}, &
    V_{b,y}=\{v\in V \st  P(T(y)v-b)=0 \}
\end{align}
with $P: X \to X$ denoting the orthogonal projection with
$\op{ker} P = X_0$ as in the preceding section.
Furthermore we allow for an additional term $\cV:\cM \to \Rinfty$ to
model certain preferences of $y \in \cM$.
\begin{problem}[Global minimization]\ \\ \label{QPP_Global}%
    Find $(u,x) \in \cW$ minimizing the energy
    $J(u) + \cV(x)$    on $\cW$.
\end{problem}

By definition of $\cW$, any solution $(u,x) \in \cW$ to Problem~\ref{QPP_Global} satisfies
\begin{align*}
    u &= \argmin{v \in V_{b,x}} J(v), &
    x &= \argmin{\stackrel{y \in \cM}{V_{b,y}\neq \emptyset}}
        \Bigl(\Bigl( \min_{v \in V_{b,y}} J(v) \Bigr) + \cV(y)\Bigr).
\end{align*}
Hence Problem~\ref{QPP_Global} is equivalent to minimizing
$\xi + \cV$ over $\cM$, where the functional
$\xi:\cM\to \Rinfty$ is given by
\begin{equation}\label{eq:XIDEF}
    \xi(y)=
        \begin{cases}
            \min_{v\in V_{b,y}} J(v) &\text{if }V_{b,y}\neq \emptyset,\\
            \infty & \text{else}.
        \end{cases}
\end{equation}
Note that $\xi$ is well-defined by Proposition~\ref{prop:A_CONSTR_MIN}.

\begin{proposition}\label{QPP_Global_exist}
    Assume that $\cM$ is compact, $T:\cM \to L(V,X)$ is continuous,
    $\cV:\cM \to \Rinfty$ is lower semi-continuous,
    and that there is $x_0 \in \cM$ with $\cV(x_0) < \infty$ and $V_{b,x_0} \neq \emptyset$.
    Then Problem~\ref{QPP_Global} has a solution.
\end{proposition}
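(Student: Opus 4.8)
The plan is to apply the direct method of the calculus of variations to the reduced functional $\xi + \cV$ over the compact metric space $\cM$, exploiting the reformulation already recorded before the statement: $(u,x)$ solves Problem~\ref{QPP_Global} precisely when $x$ minimizes $\xi + \cV$ on $\cM$ and $u = \argmin{v \in V_{b,x}} J(v)$ is the associated constrained minimizer. Since $\cM$ is compact and $\cV$ is lower semi-continuous, $\cV$ attains its infimum and is in particular bounded below on $\cM$; likewise $\xi$ is bounded below because $J$ is (coercivity from \eqref{eq:ELLAP} gives $J(v) \ge \tfrac{\gamma}{2}\|v\|^2 - \|\ell\|\,\|v\|$). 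Together with the hypothesis $x_0 \in \cM$, where $V_{b,x_0} \neq \emptyset$ makes $\xi(x_0)$ finite by Proposition~\ref{prop:A_CONSTR_MIN} and $\cV(x_0) < \infty$, this shows $m := \inf_{\cM}(\xi + \cV)$ is finite. First I would take a minimizing sequence $(y_n) \subset \cM$ with $(\xi+\cV)(y_n) \to m$ and, by sequential compactness of $\cM$, pass to a subsequence $y_n \to y^\ast$.

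Next I would control the constrained minimizers. For large $n$ we have $(\xi+\cV)(y_n) < \infty$, so $V_{b,y_n} \neq \emptyset$ and there is a unique $u_n \in V_{b,y_n}$ with $J(u_n) = \xi(y_n)$. As $\cV$ is bounded below, $J(u_n) = (\xi+\cV)(y_n) - \cV(y_n)$ stays bounded above, and coercivity of $J$ yields a uniform bound $\|u_n\| \le C$. Hence, after passing to a further subsequence (not relabeled), $u_n \rightharpoonup u^\ast$ weakly in $V$.

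The key step, and the one I expect to be the main obstacle, is to pass the constraint to the limit, i.e. to show $u^\ast \in V_{b,y^\ast}$. Writing $T(y_n)u_n - T(y^\ast)u^\ast = \bigl(T(y_n) - T(y^\ast)\bigr)u_n + T(y^\ast)(u_n - u^\ast)$, the first term tends to $0$ strongly in $X$ by continuity of $T:\cM \to L(V,X)$ combined with the uniform bound on $\|u_n\|$, while the second tends to $0$ weakly because the bounded operator $T(y^\ast)$ is weakly continuous and $u_n - u^\ast \rightharpoonup 0$. Thus $T(y_n)u_n \rightharpoonup T(y^\ast)u^\ast$ in $X$, and applying the (weakly continuous) orthogonal projection $P$ to the identities $P(T(y_n)u_n - b) = 0$ gives $P(T(y^\ast)u^\ast - b) = 0$ in the limit. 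Hence $u^\ast \in V_{b,y^\ast}$; in particular $V_{b,y^\ast} \neq \emptyset$ and $\xi(y^\ast) < \infty$.

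Finally I would combine weak lower semi-continuity of the convex, strongly continuous functional $J$, which gives $\xi(y^\ast) \le J(u^\ast) \le \liminf_n J(u_n) = \liminf_n \xi(y_n)$, with lower semi-continuity of $\cV$, which gives $\cV(y^\ast) \le \liminf_n \cV(y_n)$. Using $\liminf_n \xi(y_n) + \liminf_n \cV(y_n) \le \liminf_n\bigl(\xi(y_n)+\cV(y_n)\bigr) = m$, this shows $(\xi+\cV)(y^\ast) \le m$, whence equality, so $y^\ast$ minimizes $\xi + \cV$ on $\cM$. Setting $x = y^\ast$ and taking $u$ to be the unique minimizer of $J$ over $V_{b,y^\ast}$ provided by Proposition~\ref{prop:A_CONSTR_MIN}, we obtain $(u,x) \in \cW$ with $J(u) + \cV(x) = \xi(y^\ast) + \cV(y^\ast) = m$, i.e. a solution of Problem~\ref{QPP_Global}.
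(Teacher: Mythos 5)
Your proposal is correct and takes essentially the same approach as the paper: the paper's proof establishes that $\xi$ is sequentially lower semi-continuous on $\cM$ using exactly your ingredients (finiteness via coercivity of $J$, boundedness and weak compactness of the constrained minimizers $u_n$, continuity of $T$ plus weak continuity of $P$ to pass the constraint $P(T(y_n)u_n-b)=0$ to the weak limit, and weak lower semi-continuity of the convex continuous $J$), and then cites an abstract theorem that a lower semi-continuous function on a compact set attains its minimum. You simply inline that abstract step by running the direct method on a minimizing sequence for $\xi+\cV$; the mathematical content is identical.
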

\begin{proof}
    We will show that $\xi$ is lower semi-continuous.
    Then $\xi + \cV: \cM \to \Rinfty$ is also lower semi-continuous on the
    compact set $\cM$ which provides existence of a minimizer $x \in \cM$
    \cite[(12.7.9)]{Dieudonne1970TreatiesAnalysis2}.

    Let $(x_n) \in \cM$ with $\lim_{n \to \infty} x_n = \bar{x} \in \cM$.
    We have to show
    \[
        m = \liminf_{n \to \infty} \xi(x_n) \geq \xi(\bar{x}).
    \]
    Since this is trivial for $m = \infty$ we now assume $m<\infty$.
    Coercivity of $J$ implies $m> - \infty$.
    Furthermore, there is a subsequence, still denoted by  $(x_n)$, such that $\lim_{n \to \infty} \xi(x_n) = m$
    and $\xi(x_n)< \infty$ for all $n \in \N$.
    As a consequence of  $\xi(x_n)< \infty$, we have $V_{b,x_n} \neq \emptyset$ so that 
    \begin{align*}
        u_n = \argmin{v \in V_{b,x_n}}J(v)
    \end{align*}
    is well-defined. Boundedness of $J(u_n) = \xi(x_n)<\infty$ and coercivity of $J$ 
    imply that $u_n$ is bounded.
    Hence there is a $\bar{u} \in V$ and another subsequence, still denoted by $(u_n)$,
    such that
    \begin{align*}
        u_n\rightharpoonup \bar{u} \quad \text{for}\quad n\to \infty.
    \end{align*}
    Now continuity $T(x_n)\to T(\bar{x})$ yields $PT(x_n)u_n\rightharpoonup PT(\bar{x})\bar{u}$.
    Hence,  $\bar{u}\in V_{b,\bar{x}}$ and $(\bar{u}, \bar{x}) \in \cW$.
    As $J$ is convex and continuous, $J$ must be weakly lower semi-continuous
    (see, e.g.~\cite{EkeTem99}[Chapter~I, Corollary~2.2]) which gives
    \[
        \xi(\bar{x}) \leq J(\bar{u})
            \leq \liminf_{n \to \infty} J(u_n)
            = \lim_{n \to \infty} \xi(x_n) = m.
    \]
    This concludes the proof.
\end{proof}

In order to show existence of minimizers for a penalized
version of Problem~\ref{QPP_Global} we will use a general
result on parametrized problems without constraints.
In the following we will identify continuous bilinear forms
$a_x(\wc,\wc):V \times V \to \R$ with their operator
representations $a_x : L(V,V')$.

\begin{lemma}\label{lem:A_parametrized_global_unconstrained}
    Assume that $\cM$ is compact, that the functions
    \begin{align*}
        \cM \ni y &\mapsto a_y(\wc,\wc) \in L(V,V'), &
        \cM \ni y &\mapsto \ell_y(\wc) \in V'
    \end{align*}
    are continuous, that $a_y(\wc,\wc)$ is symmetric 
    and uniformly coercive for all $y \in \cM$,
    that $\cV:\cM \to \Rinfty$ is lower semi-continuous,
    and that there is $x_0 \in \cM$ with $\cV(x_0) < \infty$.
    Then there is $(u,x) \in V \times \cM$ minimizing $J_x(u)+ \cV(x)$ with
    \begin{align}\label{eq:A_parametrized_functional}
        J_x(u)= \tfrac{1}{2} a_x(u,u) - \ell_x(u) 
    \end{align}
    on $V \times \cM$.
\end{lemma}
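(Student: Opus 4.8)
The plan is to reduce the joint minimization over $V \times \cM$ to the minimization of a reduced functional over the compact set $\cM$ alone, exactly as in the proof of Proposition~\ref{QPP_Global_exist}. For each fixed $y \in \cM$ the form $a_y(\wc,\wc)$ is bounded and coercive, so the Lax-Milgram lemma provides a unique minimizer $u_y \in V$ of $J_y$ on $V$. Setting $\xi(y) = J_y(u_y) = \min_{v \in V} J_y(v)$, the problem of minimizing $J_x(u) + \cV(x)$ over $V \times \cM$ is equivalent to minimizing $\xi + \cV$ over $\cM$. Since $\cM$ is compact and $\cV$ is lower semi-continuous with $\cV(x_0) < \infty$, it then suffices to show that $\xi$ is finite and lower semi-continuous; then $\xi + \cV$ is a lower semi-continuous function that is not identically $\infty$ on a compact set, hence attains its infimum, and the minimizing $x$ together with $u = u_x$ furnishes the desired solution.

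The core of the argument is therefore the lower semi-continuity of $\xi$. First I would record two uniform bounds following from compactness of $\cM$ and continuity of $y \mapsto a_y$ and $y \mapsto \ell_y$: a common coercivity constant $\gamma > 0$ with $\gamma \|v\|^2 \leq a_y(v,v)$ for all $y$, and $\sup_{y \in \cM} \|\ell_y\|_{V'} < \infty$. Testing $J_y(u_y) \leq J_y(0) = 0$ together with coercivity gives $\|u_y\| \leq \tfrac{2}{\gamma}\|\ell_y\|_{V'}$, so $\xi$ is finite and bounded below. Now take $x_n \to \bar{x}$ in $\cM$; passing to a subsequence I may assume $\xi(x_n) \to m := \liminf_n \xi(x_n)$ with $m$ finite, and the associated minimizers $u_n = u_{x_n}$ are bounded by the above estimate, so after a further subsequence $u_n \rightharpoonup \bar{u}$ in $V$. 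It then remains to show $m \geq \xi(\bar{x})$.

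The decisive step is to pass to the limit in $J_{x_n}(u_n)$ although the parameter and the weak convergence act simultaneously. For the linear part I would write $\ell_{x_n}(u_n) - \ell_{\bar{x}}(\bar{u}) = (\ell_{x_n} - \ell_{\bar{x}})(u_n) + \ell_{\bar{x}}(u_n - \bar{u})$, where the first term vanishes since $\|\ell_{x_n} - \ell_{\bar{x}}\|_{V'} \to 0$ with $\|u_n\|$ bounded, and the second since $u_n \rightharpoonup \bar{u}$. For the quadratic part I would split $a_{x_n}(u_n,u_n) = a_{\bar{x}}(u_n,u_n) + (a_{x_n} - a_{\bar{x}})(u_n,u_n)$: the perturbation term is controlled by $\|a_{x_n} - a_{\bar{x}}\|_{L(V,V')}\|u_n\|^2 \to 0$, while $v \mapsto a_{\bar{x}}(v,v)$ is convex and continuous, hence weakly lower semi-continuous, giving $\liminf_n a_{\bar{x}}(u_n,u_n) \geq a_{\bar{x}}(\bar{u},\bar{u})$. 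Combining these, $\liminf_n J_{x_n}(u_n) \geq J_{\bar{x}}(\bar{u}) \geq \xi(\bar{x})$, and since $J_{x_n}(u_n) = \xi(x_n) \to m$ along the subsequence we obtain $m \geq \xi(\bar{x})$, as required. The main obstacle is precisely this simultaneous limit in the quadratic term; the remedy is to isolate the fixed coercive form $a_{\bar{x}}$ (whose quadratic functional is weakly lower semi-continuous) from the operator-norm-small perturbation, which is absorbed using the uniform bound on $\|u_n\|$.
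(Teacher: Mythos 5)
Your proof is correct, but it follows a genuinely different route from the paper's. The paper proves that the value function $\eta(y)=\min_{v\in V}J_y(v)$ is not merely lower semi-continuous but \emph{continuous}: it invokes the first Strang lemma \cite[Theorem 4.1.1]{Cia78} to obtain the quantitative perturbation estimate
\begin{equation*}
    \|u_y - u_{y_n}\| \leq (1+\|a_y\|)\bigl(\|a_y - a_{y_n}\|\,\|u_y\| + \|\ell_y - \ell_{y_n}\|\bigr) \longrightarrow 0 ,
\end{equation*}
i.e.\ \emph{strong} convergence of the minimizers $u_{y_n}\to u_y$ in $V$, and then concludes via joint continuity of $(v,y)\mapsto J_y(v)$. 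You instead establish only lower semi-continuity of the value function, by extracting a weakly convergent subsequence of the (uniformly bounded) minimizers, splitting off the parameter perturbation $(a_{x_n}-a_{\bar{x}})(u_n,u_n)$ and $(\ell_{x_n}-\ell_{\bar{x}})(u_n)$ in operator norm, and using weak lower semi-continuity of the fixed convex quadratic form $v\mapsto a_{\bar{x}}(v,v)$. This is essentially the soft compactness argument the paper itself uses for the \emph{constrained} existence result in Proposition~\ref{QPP_Global_exist}, transplanted to the unconstrained parametrized setting; it suffices, since lower semi-continuity on the compact set $\cM$ is all that is needed to conclude. What the paper's approach buys is the stronger statement (continuity of the value function and norm convergence of minimizers under parameter perturbations), obtained at the price of invoking Strang's lemma; what your approach buys is economy of hypotheses in spirit --- it would survive in situations where one controls the data maps only in a weaker sense than operator-norm continuity, and it unifies the proof technique with the constrained case. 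One cosmetic remark: you derive a common coercivity constant from compactness and continuity of $y\mapsto a_y(\wc,\wc)$; this deduction is valid, but unnecessary here since uniform coercivity is already assumed in the statement.
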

\begin{proof}
    For any $y \in \cM$ we define $u_y = \inlineargmin{v \in V} J_y(v)$.
    We will show that the function $\eta : \cM \to \R$ defined by
    \begin{align*}
        \eta(y) = \min_{v \in V} J_y(v) = J_y(u_y)
    \end{align*}
    is continuous. Then $\eta + \cV: \cM \to \Rinfty$ is lower semi-continuous on the
    compact set $\cM$  which provides existence of a minimizer $x \in \cM$ of $\eta + \cV$ on $\cM$
    \cite[(12.7.9)]{Dieudonne1970TreatiesAnalysis2} and thus of a minimizer $(u,x)$ of $J_x(u) + \cV(x)$
    on $V \times \cM$ with $u = \inlineargmin{v \in V} J_x(v)$.
    
    First we note that $(v,y) \mapsto J_y(v)$ is continuous on $V\times \cM$, because
    $y \mapsto a_y(\wc,\wc)\in L(V,V')$ and $y \mapsto \ell_y(\wc)\in V'$ are continuous.
    Now let $(y_n) \in \cM$ with $\lim_{n \to \infty}y_n = y \in \cM$.
    Then the first lemma of Strang~\cite[Theorem 4.1.1]{Cia78} implies
    \begin{align*}
        \|u_y - u_{y_n}\|
            &\leq (1 + \|a_y\|) \Bigl(
                \sup_{w \in V} \frac{|a_y(u_y,w)-a_{y_n}(u_y,w)|}{\|w\|}
                + \sup_{w \in V} \frac{|\ell_y(w)-\ell_{y_n}(w)|}{\|w\|}
                \Bigr) \\
                &\leq (1 + \|a_y\|) \bigl(
                    \|a_y-a_{y_n}\| \|u_y\| + \|\ell_y-\ell_{y_n}\|\bigr)
                    \underset{n \to \infty} \longrightarrow 0.
    \end{align*}
    Hence, $u_{y_n} \to u_y$ in $V$ for $n\to \infty$. 
    Continuity of  $(v,y) \mapsto J_y(v)$ on $V\times \cM$ yields 
    \[
        \lim_{n \to \infty} \eta(y_n)=   \lim_{n \to \infty} J_{y_n}(u_{y_n})  = J_{y}(u_{y})=\eta(y).
    \]
\end{proof}

After these preparations, we  consider  a  penalized version of Problem~\ref{QPP_Global}.
\begin{problem}[Penalized global minimization]\ \\ \label{QPP_Global_penalized}%
    Find $(u_\varepsilon,x_\varepsilon) \in V \times \cM$ minimizing the energy
    \begin{align*}
        J(u_\varepsilon) + \tfrac{1}{2\varepsilon} \|P(T(x_\varepsilon) u_\varepsilon -b)\|_X^2 + \cV(x_\varepsilon)
    \end{align*}
    on $V\times \cM$ with a given penalty parameter $\varepsilon >0$.
\end{problem}
\begin{proposition}\label{QPP_Global_penalized_exist}
    Assume that $\cM$ is compact, $T:\cM \to L(V,X)$ is continuous,
    $\cV:\cM \to \Rinfty$ is lower semi-continuous,
    and that there is $x_0 \in \cM$ with $\cV(x_0) < \infty$.
    Then Problem~\ref{QPP_Global_penalized} has a solution.
\end{proposition}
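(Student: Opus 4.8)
The plan is to recognize Problem~\ref{QPP_Global_penalized} as an instance of the unconstrained parametrized minimization treated in Lemma~\ref{lem:A_parametrized_global_unconstrained}, with the parameter-dependent quadratic functional \eqref{eq:A_parametrized_functional} playing the role of the penalized energy for a fixed location. Concretely, for fixed $x\in\cM$ I would expand the penalty term,
\[
    \tfrac{1}{2\varepsilon}\|P(T(x)u-b)\|_X^2
    = \tfrac{1}{2\varepsilon}\|PT(x)u\|_X^2 - \tfrac{1}{\varepsilon}(Pb,PT(x)u)_X + \tfrac{1}{2\varepsilon}\|Pb\|_X^2,
\]
and absorb the first two terms into a new bilinear form and linear functional. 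Since the last summand $\tfrac{1}{2\varepsilon}\|Pb\|_X^2$ is a fixed constant independent of $(u,x)$, it does not affect the minimizer and may simply be dropped.

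This suggests setting
\[
    a_x(u,v) = a(u,v) + \tfrac{1}{\varepsilon}(PT(x)u,PT(x)v)_X,
    \qquad
    \ell_x(v) = \ell(v) + \tfrac{1}{\varepsilon}(Pb,PT(x)v)_X,
\]
so that the energy in Problem~\ref{QPP_Global_penalized} equals $J_x(u)+\cV(x)$ up to the constant above, with $J_x$ as in \eqref{eq:A_parametrized_functional}. Symmetry of $a_x$ is then inherited from the symmetry of $a$ and of the $X$-scalar product. Uniform coercivity is immediate, since the added term $\tfrac{1}{\varepsilon}\|PT(x)v\|_X^2$ is nonnegative, whence $a_x(v,v)\ge a(v,v)\ge\gamma\|v\|^2$ for every $x\in\cM$, with the constant $\gamma$ from \eqref{eq:ELLAP} independent of $x$.

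The main work, though essentially routine, lies in verifying continuity of the maps $\cM\ni x\mapsto a_x\in L(V,V')$ and $\cM\ni x\mapsto \ell_x\in V'$ demanded by Lemma~\ref{lem:A_parametrized_global_unconstrained}. Here I would exploit continuity of $T:\cM\to L(V,X)$ together with boundedness of $P$. Writing the difference through the telescoping identity
\[
    (PT(x)u,PT(x)v)_X - (PT(y)u,PT(y)v)_X
    = (PT(x)u,P(T(x)-T(y))v)_X + (P(T(x)-T(y))u,PT(y)v)_X,
\]
the Cauchy--Schwarz inequality and the uniform bound $\sup_{z\in\cM}\|T(z)\|<\infty$ (finite by continuity of $T$ on the compact set $\cM$) yield $\|a_x-a_y\|_{L(V,V')}\le C\|T(x)-T(y)\|\to 0$ as $y\to x$; an analogous estimate gives continuity of $x\mapsto\ell_x$.

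With these continuity properties, together with the symmetry and uniform coercivity established above, the lower semi-continuity of $\cV$, compactness of $\cM$, and the existence of $x_0$ with $\cV(x_0)<\infty$, all hypotheses of Lemma~\ref{lem:A_parametrized_global_unconstrained} are satisfied. The lemma then delivers a minimizer $(u_\varepsilon,x_\varepsilon)\in V\times\cM$ of $J_x(u)+\cV(x)$ and hence, after reinstating the dropped constant, of the penalized energy in Problem~\ref{QPP_Global_penalized}. The only point requiring genuine care is the continuity estimate for $x\mapsto a_x$; everything else is a direct transcription into the abstract framework.
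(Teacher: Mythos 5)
Your proof is correct and takes essentially the same route as the paper: both recast the penalized energy as the parametrized quadratic functional of the form \eqref{eq:A_parametrized_functional} with exactly the same choices of $a_x(\wc,\wc)$ and $\ell_x(\wc)$, verify symmetry and uniform coercivity via $a(v,v) \leq a_x(v,v)$, deduce continuity of $x \mapsto a_x$ and $x \mapsto \ell_x$ from continuity of $T$, and then invoke Lemma~\ref{lem:A_parametrized_global_unconstrained}. The only differences are cosmetic: you spell out the continuity estimate through a telescoping identity with Cauchy--Schwarz (the paper phrases it via the operators $(PT(y))^* PT(y)$ and $(PT(y))^* Pb$) and you explicitly note that the constant $\tfrac{1}{2\varepsilon}\|Pb\|_X^2$ can be dropped, a detail the paper leaves implicit.
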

\begin{proof}
    For $(v,y) \in V \times$ the energy functional
    \begin{align*}
        J_y(v) = J(v) + \tfrac{1}{2\varepsilon} \|P(T(y) v -b)\|_X^2
    \end{align*}
    takes the form~\eqref{eq:A_parametrized_functional} with
    \begin{align*}
        a_y(w,v) &= a(w,v) + \tfrac{1}{\varepsilon} (PT(y) w,PT(y) v)_X, &
        \ell_y(v)&= \ell(v) + \tfrac{1}{\varepsilon}(Pb,PT(y)v)_X.
    \end{align*} 
    Now continuity of $T$ implies continuity of $y \mapsto (PT(y))^* PT(y) \in L(V,V')$
    and $y \mapsto (PT(y))^* Pb \in V'$ and thus of $y \mapsto a_y(\wc,\wc)$ and
    $y \mapsto \ell_y(\wc)$.
    Furthermore $a(v,v,) \leq a_y(v,v)$ implies uniform coercivity of $a_y(\wc,\wc)$
    with respect to $y \in \cM$.
    Hence Lemma~\ref{lem:A_parametrized_global_unconstrained} provides the assertion.
\end{proof}

To conclude this subsection we consider a minimization problem
with parametrized source term $\cM\ni y \to \ell_y(\wc)\in V'$.
\begin{problem}[Parametrized  source term]\ \\ \label{prob:PARSOU}%
Find $(u,x)\in V\times \cM$ minimizing the energy
$
J_x(u)={\textstyle \frac{1}{2}} a(u,u)-\ell_x(u)
$
on $ V\times \cM$.
\end{problem}

Existence is an immediate consequence of 
Lemma~\ref{lem:A_parametrized_global_unconstrained} with 
$a_y(\wc,\wc)=a(\wc,\wc)$ and $\cV=0$.

\begin{proposition} \label{pro:PARSOU}
Assume that $\cM$ is compact and that $\cM \ni y \to \ell_y (\wc)\in V'$ is continuous.
Then there is a solution $(u,x)\in V\times \cM$ of Problem~\ref{prob:PARSOU}.
\end{proposition}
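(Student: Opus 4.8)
The plan is to obtain the assertion as an immediate specialization of Lemma~\ref{lem:A_parametrized_global_unconstrained}, which already provides global minimizers for parametrized, unconstrained quadratic energies. The key observation is that Problem~\ref{prob:PARSOU} is exactly the case of that lemma in which the bilinear form carries no parameter dependence and no preference functional $\cV$ is imposed. Accordingly, my whole strategy reduces to exhibiting the correct specialization and then checking, one by one, that the hypotheses of the lemma are met.

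Concretely, I would set $a_y(\wc,\wc) = a(\wc,\wc)$ for all $y \in \cM$ and take $\cV \equiv 0$. With these choices the energy $J_x(u) = \tfrac{1}{2} a(u,u) - \ell_x(u)$ is precisely the parametrized functional \eqref{eq:A_parametrized_functional}, so it only remains to verify the assumptions of Lemma~\ref{lem:A_parametrized_global_unconstrained}. Compactness of $\cM$ is assumed; the map $y \mapsto a_y(\wc,\wc)$ is constant and hence continuous; $a(\wc,\wc)$ is symmetric and coercive by the standing assumptions \eqref{eq:ELLAP}, and since it does not depend on $y$ the required uniform coercivity over $\cM$ holds with the single ellipticity constant $\gamma$. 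Continuity of $y \mapsto \ell_y(\wc) \in V'$ is exactly the hypothesis of the proposition, while $\cV \equiv 0$ is trivially lower semi-continuous and any $x_0 \in \cM$ satisfies $\cV(x_0) = 0 < \infty$.

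Having verified all assumptions, Lemma~\ref{lem:A_parametrized_global_unconstrained} yields a minimizer $(u,x) \in V \times \cM$ of $J_x(u) + \cV(x) = J_x(u)$, which is the desired solution of Problem~\ref{prob:PARSOU}. I do not anticipate any genuine obstacle: the argument is a routine verification that the cited lemma applies, and in fact the text already announces this reduction. The only point deserving a moment's attention is the uniform coercivity requirement, but this is automatic here, since the parameter $y$ enters the energy only through the linear term $\ell_y$ and never through the bilinear form.
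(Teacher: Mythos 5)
Your proposal is correct and coincides with the paper's own proof, which likewise obtains the result as an immediate consequence of Lemma~\ref{lem:A_parametrized_global_unconstrained} with $a_y(\wc,\wc)=a(\wc,\wc)$ and $\cV=0$. Your verification of the hypotheses (constant bilinear form, hence continuity and uniform coercivity from \eqref{eq:ELLAP}; $\cV\equiv 0$ trivially lower semi-continuous and finite) is exactly the specialization the paper intends.
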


\subsection{Equivalent characterization of minimizers}
We first consider Problem~\ref{eq:A_CONSTR_MIN} in the special case
\begin{equation} \label{eq:APPSC}
    X=\R^N, \quad X_0=\{0\}, \quad b=(b_i)\in \R^N, \quad T=(T_i)\in (V')^N.
\end{equation}
\begin{problem}[Finite dimensionally constrained minimization]\ \\ \label{prob:APPFINITEDIM}%
    Find $u\in V$ minimizing the energy $J$ subject to the constraints 
    \begin{equation} \label{eq:APFC}
        T_i u=b_i, \qquad i=1,\dots, N.
    \end{equation}
\end{problem}
While Proposition~\ref{prop:A_CONSTR_MIN} provides existence 
and uniqueness of solution to Problem~\ref{prob:APPFINITEDIM}, 
if and only if $V_b\neq \emptyset$, we will now derive an equivalent 
characterization of the solution in terms of suitable basis functions. 
Utilizing the Lax-Milgram lemma, 
we define  $\phi_0 \in V$ to be the unique solution to
\[
a(\phi_0,v) = \ell(v) \qquad \forall \, v \in V.
\]
We also define $\phi_i\in V$, $i= 0,\dots, N$, such that
\begin{equation} \label{eq:VIEHDEF}
    a(\phi_i,v) = T_iv \qquad  \forall  v \in V.
\end{equation}
and the associated Gramian matrix  $A = (A_{ij})\in \R^{N\times N}$ 
with $A_{ij} = a(\phi_i,\phi_j)$.

\begin{proposition} \label{quad_exist_NEU}
    Assume that Problem~\ref{prob:APPFINITEDIM} is non-degenerate in the sense 
    that the functionals $T_i$, $i=1,\dots, N$, are linearly independent.
    Then there is a unique solution of Problem~\ref{prob:APPFINITEDIM} given by
    \begin{equation}\label{eq:APPREP}
        u = \phi_0 + \sum_{i=1}^N U_i \phi_i
    \end{equation}
    with $U = A^{-1}(b - T\phi_0)$.
\end{proposition}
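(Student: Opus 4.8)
The plan is to work entirely with the variational characterization \eqref{eq:APVAR} and to exploit that, since $a(\wc,\wc)$ is symmetric and coercive (see \eqref{eq:ELLAP}), it is an inner product on $V$ whose Riesz representation sends each functional $T_i$ to the element $\phi_i$ defined in \eqref{eq:VIEHDEF}. First I would extract two consequences of the assumed linear independence of $T_1,\dots,T_N$. On one hand, linearly independent bounded functionals make the joint map $T=(T_i):V\to\R^N$ surjective, so that $V_b=\{v\st Tv=b\}\neq\emptyset$; by Proposition~\ref{prop:A_CONSTR_MIN} this already yields existence and uniqueness of the solution $u$, and it only remains to \emph{identify} it. On the other hand, the $\phi_i$ inherit linear independence: if $\sum_i c_i\phi_i=0$, then for every $v\in V$ we get $0=a(\sum_i c_i\phi_i,v)=\sum_i c_i T_i v$, hence $\sum_i c_i T_i=0$ in $V'$ and therefore $c=0$.

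Second, I would use this to show that the Gramian $A=(a(\phi_i,\phi_j))$ is invertible. Being the Gram matrix of the linearly independent vectors $\phi_1,\dots,\phi_N$ with respect to the inner product $a(\wc,\wc)$, it is symmetric positive definite; thus $A^{-1}$ exists and the coefficient vector $U=A^{-1}(b-T\phi_0)$ from the statement is well defined.

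Third, I would simply verify that the candidate $u=\phi_0+\sum_i U_i\phi_i$ of \eqref{eq:APPREP} is the solution. Feasibility follows by testing the defining relations \eqref{eq:VIEHDEF}: for each $j$,
\[
   T_j u = T_j\phi_0 + \sum_{i=1}^N U_i\, a(\phi_j,\phi_i) = (T\phi_0)_j + (AU)_j = (T\phi_0)_j + (b-T\phi_0)_j = b_j,
\]
where I used $T_j\phi_i=a(\phi_j,\phi_i)=A_{ji}$ and the symmetry of $A$. The variational identity \eqref{eq:APVAR} then holds automatically: for any $v\in V_0$ one has $T_i v=0$, so
\[
   a(u,v)=a(\phi_0,v)+\sum_{i=1}^N U_i\,a(\phi_i,v)=\ell(v)+\sum_{i=1}^N U_i\,T_i v=\ell(v).
\]
Hence $u\in V_b$ satisfies \eqref{eq:APVAR}, and by the uniqueness from Proposition~\ref{prop:A_CONSTR_MIN} it is \emph{the} solution.

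The only genuinely substantive step is the passage from linear independence of the functionals $T_i$ to invertibility of $A$; everything else is bookkeeping built on the Riesz correspondence $T_i\leftrightarrow\phi_i$. I expect no real obstacle here, the main point to state carefully being that $a(\wc,\wc)$ is a bona fide inner product (guaranteed by \eqref{eq:ELLAP}), so that the principle \emph{the Gram matrix of independent vectors is positive definite} applies verbatim.
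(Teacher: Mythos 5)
Your proposal is correct and follows essentially the same route as the paper's proof: surjectivity of $T$ from linear independence gives existence and uniqueness via Proposition~\ref{prop:A_CONSTR_MIN}, and the representation is then confirmed by directly verifying feasibility ($T_iu=b_i$) and the variational identity \eqref{eq:APVAR} for the candidate $u=\phi_0+\sum_i U_i\phi_i$. The only difference is that you spell out why the Gramian $A$ is invertible (linear independence of the $\phi_i$ inherited from that of the $T_i$, plus positive definiteness of Gram matrices under the inner product $a(\wc,\wc)$), a step the paper asserts without detail.
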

\begin{proof}
    Linear independence of the functionals $T_i$, $i=1,\dots, N$,
    implies that $T$ is surjective.  
    Hence, $V_b \neq \emptyset$ and  Problem~\ref{prob:APPFINITEDIM} has a unique solution
    by Proposition~\ref{prop:A_CONSTR_MIN}.
    Furthermore the functions $\phi_i$ defined in \eqref{eq:VIEHDEF}
    are linearly independent and the associated Gramian matrix $A$ is regular.

    Let $u$ be given by \eqref{eq:APPREP}. Then 
    $u\in V_b$, as for any $i=1, \dots, N$ we have
    \[
        T_iu = T_i\phi_0 + \sum_{j=1}^N U_j T_i\phi_j 
        = T_i\phi_0+\sum_{j=1}^N U_j A_{ij}  = b_i.
    \]
    In addition, for all $v\in V_0$, we have
    \[ 
        a(u,v) = a(\phi_0,v) + \sum_{i=1}^N U_i a(\phi_i,v)  = 
        \ell(v)+\sum_{i=1}^N U_i T_iv  = \ell(v).
    \]
    Hence, $u$ is the unique solution of Problem~\ref{prob:APPFINITEDIM}.
  \end{proof}

We now consider Problem~\ref{QPP_Global} in the  related  special case 
\begin{equation} \label{eq:APPSCMC}
    X=\R^N,  \;\; X_0=\{0\}, \;\; b=(b_i)\in \R^N, \;\;  \cM\ni x\to T(x)=(T_i(x))\in (V')^N.
\end{equation}
Recall that $\cW$ is defined in \eqref{eq:A_PARAM_SOL_MANIFOLD} and
$V_{b,y}=\{v\in V \st  T_i(y)v-b_i=0, \; i=1,\dots, N \}$.

\begin{problem}[Global finite dimensionally constrained minimization]\ \\ \label{prob:GLOBFINITEDIM}%
    Find $(u,x)\in \cW$ minimizing the energy $J(u) + \cV(x)$ on $\cW$.
\end{problem}

While existence of solutions is considered in Proposition~\ref{QPP_Global_exist}, 
we now derive an equivalent characterization in terms of suitable basis functions. 
For all $y\in \cM$, we introduce  $y$-dependent counterparts $\phi_i(y)\in V$ of $\phi_i$ 
by replacing $T_i$ by $T_i(y)$ in \eqref{eq:VIEHDEF}. The associated Gramian matrix is
denoted by $ A(y)$.
We also define 
\[\cM'=\{y\in \cM\st T_i(y), \; i=1,\dots, N, \text{ are linearly independent}\}.
\]
\begin{lemma}\label{lem:min_con}
    For $y \in \cM'$ the functional $\xi$ defined in~\eqref{eq:XIDEF}
    can be written as
    \begin{align} \label{eq:XIREP}
        \xi(y) = \tfrac{1}{2}(b-T(y)\phi_0)^\top A(y)^{-1} (b-T(y)\phi_0) - \tfrac{1}{2}a(\phi_0,\phi_0).
    \end{align}
 \end{lemma}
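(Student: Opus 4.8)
The plan is to use the explicit representation of the constrained minimizer furnished by Proposition~\ref{quad_exist_NEU} and to substitute it directly into $J$. First I would note that for $y \in \cM'$ the functionals $T_i(y)$, $i=1,\dots,N$, are linearly independent by definition of $\cM'$. Hence Proposition~\ref{quad_exist_NEU}, applied with $T$, $\phi_i$, $A$ replaced by their $y$-dependent counterparts $T(y)$, $\phi_i(y)$, $A(y)$, guarantees that $V_{b,y}\neq\emptyset$ and that the unique minimizer $u_y$ of $J$ on $V_{b,y}$ is
\[
    u_y = \phi_0 + \sum_{i=1}^N U_i \phi_i(y), \qquad U = A(y)^{-1}(b - T(y)\phi_0).
\]
In particular $\xi(y)=J(u_y)$ is finite and well-defined, so it remains only to evaluate $J(u_y)$.

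Next I would expand $J(u_y) = \tfrac12 a(u_y,u_y) - \ell(u_y)$. Writing $w = \sum_{i=1}^N U_i\phi_i(y)$ so that $u_y = \phi_0 + w$, bilinearity and symmetry of $a(\wc,\wc)$ give $a(u_y,u_y) = a(\phi_0,\phi_0) + 2a(\phi_0,w) + a(w,w)$. The decisive simplification comes from the defining property $\ell(v) = a(\phi_0,v)$ for all $v\in V$, which yields $\ell(u_y) = a(\phi_0,\phi_0) + a(\phi_0,w)$. Subtracting, the cross term $a(\phi_0,w)$ and half of $a(\phi_0,\phi_0)$ cancel, leaving
\[
    J(u_y) = \tfrac12 a(w,w) - \tfrac12 a(\phi_0,\phi_0).
\]

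Finally I would recast the quadratic term in matrix form. By definition of the Gramian $A(y)$ we have $a(w,w) = \sum_{i,j=1}^N U_i U_j\, a(\phi_i(y),\phi_j(y)) = U^\top A(y)\, U$, and inserting $U = A(y)^{-1}(b-T(y)\phi_0)$ together with symmetry of $A(y)^{-1}$ gives $U^\top A(y)\, U = (b-T(y)\phi_0)^\top A(y)^{-1}(b-T(y)\phi_0)$. Substituting this into the previous display establishes~\eqref{eq:XIREP}. There is no genuine obstacle here: the computation is entirely routine, and the only two points requiring attention are the use of $\ell=a(\phi_0,\wc)$ to cancel the linear contributions and the symmetry of $A(y)$ to collapse $U^\top A(y)\,U$ into the stated quadratic form.
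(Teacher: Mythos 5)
Your proof is correct and follows essentially the same route as the paper: both invoke Proposition~\ref{quad_exist_NEU} for the representation $u(y)=\phi_0+\sum_i U_i\phi_i(y)$ and then evaluate $J$ there, using $\ell=a(\phi_0,\wc)$ to reduce to $\tfrac12 U^\top A(y)U-\tfrac12 a(\phi_0,\phi_0)$. The paper merely packages the cancellation you carry out explicitly as the identity $J(v+\phi_0)=\tfrac12 a(v,v)-\tfrac12 a(\phi_0,\phi_0)$; the content is identical.
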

\begin{proof}
    First note that the definition of $\phi_0$ implies
    \begin{align}\label{eq:min_con_proof_orthogonality}
        J(v+\phi_0) =
            \tfrac{1}{2}a(v,v) - \tfrac{1}{2}a(\phi_0,\phi_0)
    \end{align} 
    for any $v \in V$.
    Now let $y \in \cM'$.
    Then,  $V_{b,y} \neq \emptyset$, because $T(y)$ is surjective.
    Hence, by Proposition~\ref{prop:A_CONSTR_MIN}, we can define
    $u(y) = \argmin{v \in V_{b,y}} J(v)$.    Inserting the representation
    \begin{align*}
        u(y) = \phi_0 + \sum_{i=1}^N U(y)_i \phi_i(y), \qquad U(y) = A(y)^{-1}(b - T(y)\phi_0)
    \end{align*}
    as obtained from Proposition~\ref{quad_exist_NEU}, 
    we exploit~\eqref{eq:min_con_proof_orthogonality},
    the definition of $A(y)$ and $U(y)$, to obtain
    \begin{align*}
        J(u(y)) &= \tfrac{1}{2} U(y)^\top  A(y) U(y)
        - \tfrac{1}{2}a(\phi_0, \phi_0) \\
        &= 
        \tfrac{1}{2}(b-T(y)\phi_0)^\top A(y)^{-1}(b-T(y)\phi_0) - \tfrac{1}{2}a(\phi_0,\phi_0).
    \end{align*}
\end{proof} 

Now the following characterization of solutions to Problem~\ref{prob:GLOBFINITEDIM} 
is an immediate consequence of
Lemma~\ref{lem:min_con} and Proposition~\ref{quad_exist_NEU}.

\begin{proposition}\label{min_con}
    Assume that Problem~\ref{prob:GLOBFINITEDIM}  is non-degenerate
    in the sense that all solutions $(u,x)$ satisfy $x \in\cM'$.
    Then $(u,x)\in V\times \cM$ solves Problem~\ref{QPP_Global}, if and only if $x\in \cM'$ is a minimizer of 
    $\xi+ \cV$ on $\cM'$ with $\xi$ given in \eqref{eq:XIREP} and 
    \begin{equation}\label{eq:APPREPMC}
        u = \phi_0 + \sum_{i=1}^N U_i(x) \phi_i(x)
    \end{equation}
    holds with $U(x) = A(x)^{-1}(b-T(x)\phi_0)$.
\end{proposition}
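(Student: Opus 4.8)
The plan is to assemble the claim from two ingredients that are already available: the reduction of Problem~\ref{QPP_Global} to a scalar minimization of $\xi+\cV$, and the explicit formulas of Lemma~\ref{lem:min_con} and Proposition~\ref{quad_exist_NEU}. Recall from the discussion following \eqref{eq:XIDEF} that, in the special case \eqref{eq:APPSCMC}, a pair $(u,x)\in\cW$ solves Problem~\ref{QPP_Global} if and only if $x$ minimizes $\xi+\cV$ over the full parameter set $\cM$ and $u=\argmin{v\in V_{b,x}}J(v)$; here $\xi$ is well-defined by Proposition~\ref{prop:A_CONSTR_MIN}. Thus only two things remain: (i) turn the abstract inner minimizer $u$ into the closed form \eqref{eq:APPREPMC}, and (ii) replace minimization of $\xi+\cV$ over $\cM$ by minimization over the non-degenerate subset $\cM'$.

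For (i) I would note that for any $x\in\cM'$ the functionals $T_i(x)$ are linearly independent by definition of $\cM'$, so Proposition~\ref{quad_exist_NEU} applies with $T=T(x)$ and yields the unique inner minimizer $u=\phi_0+\sum_{i=1}^N U_i(x)\phi_i(x)$ with $U(x)=A(x)^{-1}(b-T(x)\phi_0)$, which is exactly \eqref{eq:APPREPMC}; simultaneously Lemma~\ref{lem:min_con} evaluates $\xi(x)$ through the explicit expression \eqref{eq:XIREP}. Hence, once the optimal parameter is known to lie in $\cM'$, both the value $\xi(x)$ and the minimizer $u$ are given by the stated formulas.

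For (ii) the decisive input is the non-degeneracy hypothesis, which says that every solution $(u,x)$ has $x\in\cM'$, i.e. every minimizer of $\xi+\cV$ over $\cM$ already lies in $\cM'\subseteq\cM$. The forward implication is then immediate: if $(u,x)$ solves the problem, then $x\in\cM'$ and $x$ minimizes $\xi+\cV$ over $\cM$, hence a fortiori over the smaller set $\cM'$, while $u$ has the form \eqref{eq:APPREPMC} by (i). The converse is the step needing care: given $x\in\cM'$ minimizing $\xi+\cV$ on $\cM'$, one must promote it to a minimizer over all of $\cM$. Here I would invoke existence of a global minimizer $(u^\ast,x^\ast)$ (Proposition~\ref{QPP_Global_exist}, which holds in the compact settings where this proposition is applied); by non-degeneracy $x^\ast\in\cM'$, so $\min_{\cM}(\xi+\cV)=(\xi+\cV)(x^\ast)\ge\min_{\cM'}(\xi+\cV)$, while $\cM'\subseteq\cM$ gives the reverse inequality trivially, whence $\min_{\cM}(\xi+\cV)=\min_{\cM'}(\xi+\cV)$. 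Consequently any $\cM'$-minimizer attains the global minimum over $\cM$, and defining $u$ by \eqref{eq:APPREPMC} produces a genuine solution of Problem~\ref{QPP_Global}.

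The main obstacle is exactly this reduction from $\cM$ to $\cM'$ in the converse direction: the formula \eqref{eq:XIREP} for $\xi$ is only valid on $\cM'$, and on $\cM\setminus\cM'$ the constraint functionals degenerate, so that $V_{b,y}$ may be empty with $\xi(y)=\infty$ or the constraints become redundant. Non-degeneracy is precisely what forbids a strictly better parameter from hiding in $\cM\setminus\cM'$, and together with existence of a global minimizer it closes the biconditional. All remaining manipulations — linear independence implying surjectivity of $T(x)$, regularity of the Gramian $A(x)$, and the algebra behind \eqref{eq:XIREP} — are routine and already performed in Lemma~\ref{lem:min_con} and Proposition~\ref{quad_exist_NEU}.
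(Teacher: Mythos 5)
Your proof is correct in substance and is assembled from exactly the two ingredients the paper itself cites: the reduction of Problem~\ref{QPP_Global} to minimization of $\xi+\cV$ over $\cM$ (the discussion around \eqref{eq:XIDEF}), Proposition~\ref{quad_exist_NEU} for the inner minimizer and the formula \eqref{eq:APPREPMC}, and Lemma~\ref{lem:min_con} for the representation \eqref{eq:XIREP} of $\xi$ on $\cM'$. The paper's own proof is a one-liner declaring the statement an immediate consequence of the latter two results, and your forward direction is precisely that argument. Where you genuinely add something is the converse: you correctly observe that a minimizer of $\xi+\cV$ over $\cM'$ is not automatically a minimizer over $\cM$, and you close this by producing a global solution $(u^*,x^*)$, using non-degeneracy to place $x^*$ in $\cM'$, and then running the sandwich $\min_{\cM'}(\xi+\cV)\le(\xi+\cV)(x^*)=\min_{\cM}(\xi+\cV)\le\min_{\cM'}(\xi+\cV)$.

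One caveat on that step: obtaining $(u^*,x^*)$ from Proposition~\ref{QPP_Global_exist} imports hypotheses (compact $\cM$, continuous $T$, lower semi-continuous $\cV$, feasibility at some point) that are not among the assumptions of Proposition~\ref{min_con}, so strictly speaking your argument establishes the statement only in those settings. A cleaner patch, staying within the proposition's hypotheses, is to read non-degeneracy as presupposing that at least one solution exists and to run your sandwich argument with that solution; no compactness is then needed. Some input of this kind is genuinely unavoidable: if no solution exists at all, the non-degeneracy hypothesis holds vacuously, yet $\xi+\cV$ may still attain a minimum on $\cM'$ (for instance when $\inf_{\cM\setminus\cM'}(\xi+\cV)$ is strictly smaller than $\min_{\cM'}(\xi+\cV)$ but is not attained), and then the ``if'' direction of the biconditional fails. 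In the paper's actual applications this pathology never arises, since there either $\cM'=\cM$ (via Lemma~\ref{lem:GX_SURJECTIVE}) or $\xi\equiv\infty$ on $\cM\setminus\cM'$ because the constraints are infeasible at degenerate parameters; that is presumably why the authors could treat the converse as immediate, and your proof makes explicit the step they skip.
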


\subsection{Regularity of Green's functions}

We provide the following 
regularity result for Green's functions of eighth order problems on unbounded domains
as exploited in Section~\ref{subsec:UNBD}.
\begin{lemma} \label{BF_4o_reg}
    Let $X \in \mathbb{R}^2$, $a_0, \dots,a_4 \geq 0$ with $a_0$, $a_4>0$, 
    and
    \[
        a(u,v) = \int_{\R^2} a_4 \Delta^2 u \Delta^2 v + a_3 \nabla\Delta u \cdot \nabla\Delta v + a_2 \Delta u \Delta v + a_1 \nabla u \cdot \nabla v + a_0 uv\;dx
    \]
    for $u$, $v\in H^4(\mathbb{R}^2)$.
    Then the corresponding Green's function $u \in H^4(\mathbb{R}^2)$ 
    characterized by the variational equality
    \[
        a(u,v)=v(X) \qquad \forall v \in H^4(\mathbb{R}^2)
    \]
    satisfies $u \in H^s(\mathbb{R}^2)$ for any $s \in (0,7)$.
\end{lemma}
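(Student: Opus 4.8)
The plan is to pass to the Fourier transform, which is natural because $a(\wc,\wc)$ has constant coefficients. Writing $\hat{v}$ for the Fourier transform on $\R^2$ and using $\widehat{\Delta v}=-|\xi|^2\hat v$, Plancherel's theorem turns the bilinear form into a multiplier:
\[
    a(u,v)=\int_{\R^2} p(\xi)\,\hat u(\xi)\,\overline{\hat v(\xi)}\;d\xi,
    \qquad
    p(\xi)=a_4|\xi|^8+a_3|\xi|^6+a_2|\xi|^4+a_1|\xi|^2+a_0 .
\]
The two structural bounds I would record first are $p(\xi)\geq a_0>0$ and, since $a_0,a_4>0$, the two-sided estimate $\gamma(1+|\xi|^2)^4\leq p(\xi)\leq\Gamma(1+|\xi|^2)^4$ with positive constants $\gamma,\Gamma$ (this uses only $(1+|\xi|^2)^4\leq 8(1+|\xi|^8)$). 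The lower bound re-establishes $H^4(\R^2)$-coercivity of $a(\wc,\wc)$, and the upper bound its continuity, so the Green's function $u$ exists and is unique by the Lax--Milgram lemma.

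Next I would identify $\hat u$ explicitly. Define a candidate $w$ through $\hat w(\xi)=c_0\,e^{-iX\cdot\xi}/p(\xi)$, where $c_0\neq0$ is the convention-dependent Fourier constant making $v\mapsto v(X)$ correspond to pairing against $e^{-iX\cdot\xi}$. Using $p(\xi)\geq\gamma(1+|\xi|^2)^4$, the function $w$ lies in $H^4(\R^2)$ because the defining integral is the case $s=4<7$ of the estimate below. For $v$ in the Schwartz class one verifies $a(w,v)=v(X)$ directly from the multiplier representation and the Fourier inversion formula, and this identity then extends to all $v\in H^4(\R^2)$ by density together with continuity of both sides, the right-hand side being bounded via the Sobolev embedding $H^4(\R^2)\hookrightarrow C(\R^2)$. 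Uniqueness of the Lax--Milgram solution then forces $u=w$, hence $\lvert\hat u(\xi)\rvert=\lvert c_0\rvert/p(\xi)$.

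Finally I would compute the fractional norm and reduce everything to a radial integral. By definition,
\[
    \|u\|_{H^s(\R^2)}^2=\int_{\R^2}(1+|\xi|^2)^s\,|\hat u(\xi)|^2\;d\xi
    =|c_0|^2\int_{\R^2}\frac{(1+|\xi|^2)^s}{p(\xi)^2}\;d\xi .
\]
On $\{|\xi|\leq1\}$ the integrand is bounded by $2^s/a_0^2$ and the region has finite measure, so this part is finite. On $\{|\xi|\geq1\}$ I would use $p(\xi)\geq a_4|\xi|^8$ and $(1+|\xi|^2)^s\leq C_s|\xi|^{2s}$ to bound the integrand by $C|\xi|^{2s-16}$; passing to polar coordinates reduces the tail to $\int_1^\infty \rho^{2s-15}\,d\rho$, which converges precisely when $2s-15<-1$, i.e.\ $s<7$. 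Hence $u\in H^s(\R^2)$ for every $s\in(0,7)$, as claimed (and the threshold $7$ is seen to be sharp, since $s=7$ yields the divergent integral $\int_1^\infty\rho^{-1}\,d\rho$). The one step requiring genuine care is the middle paragraph: because $\delta_X\notin L^2(\R^2)$ one cannot apply Plancherel to the right-hand side naively, so the rigorous identification of $\hat u$ with $c_0 e^{-iX\cdot\xi}/p$ must proceed through the explicit candidate $w$, the dense class of test functions, and the uniqueness statement.
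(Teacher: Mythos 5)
Your proof is correct and follows essentially the same route as the paper's: both invert the constant-coefficient symbol on the Fourier side, identify the Green's function with the inverse transform of $c_0/p(\xi)$ via test functions, density, and uniqueness, and obtain the threshold $s<7$ from the same radial tail integral $\int_1^\infty \rho^{2s-15}\,d\rho$. The only differences are cosmetic: you handle general $X$ by a phase factor where the paper translates to $X=0$, you spell out the Lax--Milgram coercivity bounds explicitly, and you add the (correct) bonus observation that the exponent $7$ is sharp.
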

\begin{proof}
    Without loss of generality take $X=0$ as Green's functions for
    $X \neq 0$ will be translations of this case. 
    Let $\mathcal{F}:L^2(\mathbb{R}^2) \rightarrow L^2(\mathbb{R}^2)$
    denote the Fourier transform which is the continuous extension of $\mathcal{F}[\varphi]$, 
    $\varphi \in C^\infty_0(\mathbb{R}^2)$, defined by
    \[
        \mathcal{F}[\varphi](\xi)= \int_{\mathbb{R}^2} \varphi(x) e^{-2\pi i \xi \cdot x} \; dx, \qquad  \xi \in \R^2 .
    \]
    Let 
    \[
        f(\xi)= \left[a_4 (4\pi^2|\xi|^2)^4 + a_3 (4\pi^2|\xi|^2)^3 + a_2 (4\pi^2|\xi|^2)^2 + a_1 (4\pi^2|\xi|^2) +  a_0 \right]^{-1},\quad  \xi \in \R^2 .
    \] 
    and observe that $f \in L^2(\mathbb{R}^2)$, because $a_0$, $a_4>0$.
    We can thus define $g=\mathcal{F}^{-1}[f]\in L^2(\mathbb{R}^2)$. 
    Note that
    \[
        (1+r^2)^s|f((r,0))|^2r\leq Cr^{2s-15}
    \]
    holds for suitable $C>0$ and sufficiently large $r$ so that 
    \[
        \int_{\mathbb{R}^2} (1+|\xi|^2)^s|\mathcal{F}[g](\xi)|^2 \;d\xi = 2\pi\int_0^\infty (1 + r^2)^s |f((r,0))|^2 r \;dr < \infty
    \]
    holds for $2s-15 < -1$ or, equivalently, $s<7$.
    Hence $g \in H^s(\mathbb{R}^2)$ for $s <7$ and, in particular, $g \in H^4(\mathbb{R}^2)$.
    Now, for arbitrary $\varphi \in C^\infty_0(\mathbb{R}^2)$ Parseval's formula provides 
    {\small %
    \begin{align*}
            a(g,\varphi)
                &=\int_{\mathbb{R}^2} \biggl(a_4 (4\pi^2|\xi|^2)^4 + a_3 (4\pi^2|\xi|^2)^3 + a_2 (4\pi^2|\xi|^2)^2 + a_1 (4\pi^2|\xi|^2) +  a_0 \biggr)
                    \mathcal{F}[g]\mathcal{F}[\varphi] \, d\xi \\
                &= \int_{\mathbb{R}^2} \mathcal{F}[\varphi] \, d\xi = \varphi(0) = \delta_0(\varphi).
    \end{align*}
    }%
    As $C^\infty_0(\mathbb{R}^2)$ is dense in $H^4(\mathbb{R}^2)$ and $\delta_0:H^4(\mathbb{R}^2) \rightarrow \mathbb{R}$ is continuous, it follows $a(g,v)=v(0)$ $\forall v \in H^4(\mathbb{R}^2)$. Thus $u=g \in H^s(\mathbb{R}^2)$ for $s <7$.
\end{proof}

\bibliography{paper}
\bibliographystyle{siam}

\end{document}